\newtheoremstyle{newplain}
  {8pt}
  {8pt}
  {\itshape}  
  {}       
  {\rmfamily\scshape} 
  {.}         
  {5pt plus 1pt minus 1pt} 
  {}          
\theoremstyle{newplain}
\newtheorem{theorem}{Theorem}[section]
\newtheorem{definition}[theorem]{Definition}
\newtheorem{proposition}[theorem]{Proposition}
\newtheorem{corollary}[theorem]{Corollary}
\newtheorem{lemma}[theorem]{Lemma}
\theoremstyle{remark}
\newtheorem{remark}[theorem]{Remark}
\newcommand{\bb}[1]{\mathbb{#1}}
\newcommand{\mc}[1]{\mathcal{#1}}
\newcommand{\mf}[1]{\mathfrak{#1}}
\newcommand{\ms}[1]{\mathsmaller{#1}}
\DeclareMathOperator{\GL}{GL}
\DeclareMathOperator{\Aut}{\mathrm{Aut}}
\DeclareMathOperator{\Iso}{Iso}
\DeclareMathOperator{\Der}{Der}
\DeclareMathOperator{\ad}{Ad}
\DeclareMathOperator{\grb}{gr_{\bullet}}
\DeclareMathOperator{\Ima}{Im}
\DeclareMathOperator{\id}{id}
\DeclareMathOperator{\codim}{codim}
\DeclareMathOperator{\rk}{rank}
\DeclareMathOperator{\End}{End}
\DeclareMathOperator{\Mod}{Mod}
\DeclareMathOperator{\VB}{VB}
\DeclareMathOperator{\Lie}{Lie}
\DeclareMathOperator{\Stab}{Stab}
\newcommand{\Cher}[1][c]{\mathcal{H}_{1, #1, X, G}}
\DeclareMathOperator{\supp}{supp}
\DeclareMathOperator{\fflat}{flat}
\DeclareMathOperator{\Spf}{Spf}
\newcommand{\OO}{\mathcal{O}}
\DeclareMathOperator{\pr}{pr}
\DeclareMathOperator{\adlie}{ad}
\DeclareMathOperator{\Loc}{Loc}
\DeclareMathOperator{\gr}{Gr}
\DeclareMathOperator{\Sym}{Sym}
\DeclareMathOperator{\Sh}{Sh}
\newcommand{\pd}[1]{\frac{\partial}{\partial #1}}
\newcommand{\pdf}[2]{\frac{\partial #1}{\partial #2}}
\newcommand{\ratCherhat}{\widehat{H}_{1, c}(\bb C^l, H)}
\newcommand{\invlim}{\varprojlim}
\renewcommand{\*}{\ast}
\DeclareMathOperator{\res}{res}
\newcommand{\Cs}{\underline{\bb C}}
\newcommand{\coor}[1]{{#1}^{\textrm{coord}}}
\newcommand{\aff}[1]{{#1}^{\textrm{aff}}}
\newcommand{\presup}[2]{\prescript{\ms{#1}}{}{\mkern-3.0mu #2}}
\newcommand{\Csf}{\mathsf{C}}
\newcommand{\A}{\pmb{\alpha}}
\newcommand{\B}{\pmb{\beta}}
\newcommand{\E}{\pmb{\epsilon}}
\newcommand{\M}{\pmb{\mu}}
\newcommand{\Ga}{\pmb{\gamma}}
\newcommand{\Ii}{\pmb{I}}
\newcommand{\Jj}{\pmb{J}}
\newcommand{\HC}{\mc{A}_{n-l, l}^H}
\newcommand{\fl}{\coor{\pi}_{\*}\OO_{\textrm{flat}}(\coor{\mc{N}}\times\HC)}
\newcommand{\one}{\coor{\pi}_{\*}\OO_{\textrm{flat}}(\coor{\mc{N}}\times\mc{A}_{n-1, 1}^H)}
\DeclareMathOperator{\ind}{ind}
\DeclareMathOperator{\Indu}{\mathbf{Ind}}
\DeclareMathOperator{\Induc}{\mathbf{\mf{Ind}}}
\DeclareMathOperator{\Sheaf}{Sheaf}
\begin{document}
\title{Construction of Sheaves of Cherednik Algebras via Formal Geometry}
\author{Alexander Vitanov}
\email{\textsf{avitanov@protonmail.com}}
\begin{abstract}
In \cite{eti04} Etingof introduces a sheaf of Cherednik algebras $\mc{H}_{1, c, X, G}$, attached to a complex algebraic variety $X$ with an action by a finite group $G$, by means of generators and relations. In this note we realize the sheaf of Cherednik algebras on a general good complex orbifold $X/G$ by gluing sheaves of flat sections of flat holomorphic vector bundles on orbit type strata in $X$ which result from a localization procedure as explained in \cite{BK04}. In the case, when $c$ is formal, this construction can be interpreted as a formal deformation of $\mc{D}_X\rtimes\bb CG$ via Gel'fand-Kazhdan formal geometry.\\
Contrary to the original definition of $\mc{H}_{1, c, X, G}$ the presented construction permits the computation of trace densities, Hochschild homologies and an algebraic index theorem for formal deformations of $\mc{D}_X\rtimes\bb CG$. We also hope that the methods developed here will contribute towards a full proof of Dolgushev-Etingof's conjecture \cite{dol05}.  
\end{abstract} 
\maketitle

\tableofcontents

\section{Introduction}
In this paper we realize the sheaf of Cherednik algebras $\mc{H}_{1, c, X, G}$ on a complex global quotient orbifold $X/G$ by merging $\Cs$-algebras of flat sections of flat holomorphic bundles on orbit type strata in $X$ which result from localization procedures with respect to Harish-Chandra torsors and modules. Our construction does not depend on whether the formal parameter $c$ is complex valued or formal. Moreover, when $c$ is taken to be formal, our work generalizes Tang and Halbout's global symplectic reflection algebra on a symplectic orbifold $M/\bb Z_2$ to the case of an arbitrary group. The price that we pay for the generalization of the group action is that we give up the symplectic form and work instead on a complex $G$-manifold $X$. In \cite{HT12} Halbout and Tang quantize a $\bb Z_2$-invariant $\epsilon$-tubular neighborhood, $\epsilon>0$, of the zero section of the normal bundle to the fixed point submanifold $M_2^{\gamma}$ of codimension $2$. They then obtain via a push-forward along the exponential map a  formal deformation of the $\bb Z_2$-invariant quantum functions on an $\epsilon$-tubular neighborhood $B_{\epsilon}$ of $M_2^{\gamma}$. Their key idea is to glue the restriction of this deformation to the punctured neighborhood $B_{\epsilon}\setminus M_2^{\gamma}$ with Fedosov's standard quantization of the complement $M^{-}:=M\setminus M_2^{\gamma}$. There are two basic difficulties which obstruct a naive generalization of this approach to the case of a general finite group $G$. The first one is technical and deals with the fact that the restriction of the quantization of a tubular neighborhood of a fixed point submanifold is possible only if the the star product is local with respect to the $G$-action which in general is difficult to verify. The second one is more fundamental and deals with the fact that different fixed point submanifolds of $G$ may intersect. For these cases we need more intricate gluing procedure than a naive gluing of deformations on the fixed point-free subspace of the manifold. We are able to resolve both of these problems for the case of formal deformation of $\mc{D}_X\rtimes\bb CG$. 
We partition the $G$-manifold $X$ in orbit type strata. For each stratum $X_H^i$, we define a special Harish-Chandra module $\mc{A}_{n-l, l}^H$ and apply Gel'fand-Kazhdan's formal geometry for holomorphic vector bundles to get a Maurer-Cartan connection $(1, 0)$-form on the formal coordinate bundle attached to the normal bundle to $X_H^i$. We then prove a formal holomorphic tubular neighborhood theorem for analytic sets and use it to prove that the sheaves obtained via localization of the Harish-Chandra modules $\mc{A}_{n-l, l}^H$ yield sheaves of deformations of the skew-group algebra of holomorphic differential operators on formal neighborhoods of the strata. Since the products are manifestly local with respect to the group action, we completely resolve the problem of locality. Next, we define a series of necessary and sufficient conditions for the merging of the sheaves, obtained through localization, along the strata of different codimension. In particular, these conditions take under consideration the  general case of intersection of fixed point submanifolds. 
It is expected by the author that the work done here will contribute to the proof of the still unresolved Dolgushev-Etingof's conjecture. On more practical level, the representation of $\mc{H}_{1,c, X, G}$ in terms of flat sections of flat vector bundles makes the computation of trace densities, Hochschild homologies and algebraic index theorems for the sheaf of formal global Cherednik algebras accessible by means of standard methods as in \cite{RT12}, for instance. We deal with these computations in an upcoming paper. 
\section{Preliminaries}
\subsection*{Complex reflections in linear spaces}
\addtocontents{toc}{\protect\setcounter{tocdepth}{1}}
Let $\mathfrak{h}$ be a finite-dimensional complex vector space and let $\mathfrak{h}^*$ be the dual space of $\mathfrak{h}$.  A semisimple endomorphism $s$ of $\mathfrak{h}$ is called a \emph{complex reflection} in $\mathfrak{h}$ if $\rk(\id_{\mathfrak{h}}-s)=1$. 
The fixed point subspace $\mathfrak{h}^s:=\ker(\id_{\mathfrak{h}}-s)$ of the complex reflection $s\in\End(\mathfrak{h})$ is a hyperplane, which is referred to as the \emph{reflecting hyperplane} of $s$. If $a\in\mathfrak{h}$ spans the one-dimensional $\Ima(\id_{\mathfrak{h}}-s)$, there is a linear form $a^*\in\mathfrak{h}^*$ with $\ker(a^*)=\mathfrak{h}^s$ such that 
$s(v)=v-(v, a^*)a$ for all $v\in\mathfrak{h}$ 
where $(\cdot, \cdot)$ is the natural pairing between $\mathfrak{h}$ and $\mathfrak{h}^*$. 
Suppose $G$ is a finite subgroup of $\GL(\mathfrak{h})\subset\End(\mathfrak{h})$ and let $\mathcal{S}$ denote the set of all complex reflections in $\mathfrak{h}$ contained in $G$. The group $G$ is said to be a \emph{complex reflection group} if it is generated by $\mathcal{S}$. 
The group $G$ acts naturally on $\mathfrak{h}^*$ via the dual representation thanks to which
every complex reflection $s\in G$ defines a unique complex reflection $s^*$ in $\mathfrak{h}^*$. For clarity of the exposition we shall use the same notation $s\in G$ for complex reflections in $\mathfrak{h}$ and their unique counterparts in $\mathfrak{h}^*$.\\
Given a complex reflection $s\in\mathcal{S}$ in $\mathfrak{h}$ we denote its unique non-trivial eigenvalue by $\lambda_s^{\vee}$ and by $\alpha_s^{\vee}\in\mathfrak{h}$ an eigenvector of $s$ corresponding to $\lambda_s^{\vee}$ which we call a \emph{root}. The root generates the image of $\id_{\mathfrak{h}}-s$ and vanishes everywhere on the reflecting hyperplane of $s$ in $\mathfrak{h}^*$. 
Similarly, $\lambda_s$ designates the unique non-trivial eigenvalue of $s$ in $\mathfrak{h}^*$ and $\alpha_s\in\mathfrak{h}^*$ an eigenvector of $s$ in $\mathfrak{h}^*$ corresponding to $\lambda_s$, which we call \emph{coroot}. The eigenvalues of the root and coroot are related via $\lambda_{s}^{\vee}=\lambda_{s}^{-1}$. Analogously to $\alpha_s^{\vee}$ the linear form $\alpha_s$ generates the image of $\id_{\mathfrak{h}^*}-s$ and vanishes identically on the hyperplane of $s$ in $\mathfrak{h}$. 
From now on we assume that for any complex reflection $s$ the corresponding root $\alpha_s^{\vee}\in\mathfrak{h}$ and coroot $\alpha_s\in\mathfrak{h}^*$ are normalized such that $(\alpha_s, \alpha_s^{\vee})=2$. 
Note that this normalization assumption implies the following formulae for any complex reflection $s\in G$:
\begin{align}
&sv=v+\frac{1-\lambda_s}{2\lambda_s}(v, \alpha_s)\alpha_s^{\vee},~~~~~~~~~~~~~\text{for all}~~ v\in\mathfrak{h}\label{relation1}\\  
&\nonumber\\
&sy=y-\frac{1-\lambda_s}{2}(\alpha_s^{\vee}, y)\alpha_s,~~~~~~~~~~~~~~\text{for all}~~ y\in\mathfrak{h}^*.\label{relation2}
\end{align}
\subsection*{Rational Cherednik algbera}
\label{rational}
To the data $\mathfrak{h}$, $\mathfrak{h}^*$ and $G$ one attaches the rational Cherednik algebra $H_{t, c}(\mathfrak{h}, G)$:
\begin{definition}
\label{cherednik}
The rational Cherednik algebra $H_{t, c}(\mathfrak{h}, G)$ is defined as the quotient of the smash-product algebra $T^{\bullet}(\mathfrak{h}\oplus\mathfrak{h}^*)\rtimes\mathbb{C}G$ by the ideal generated by
\begin{align*}
&gyg^{-1}-\presup{g}{y},\quad\qquad~gug^{-1}-\presup{g}{u},\quad\qquad~[y, y'],\quad\qquad~[u, u'],\quad\qquad~[u, y]-t\left(u, y\right)-\sum_{s\in\mathcal{S}}c(s)\left(u, \alpha_s\right)\left(y, \alpha_s^{\vee}\right)s,
\end{align*} 
where $u, u'\in\mathfrak{h}$ and $y, y'\in\mathfrak{h}^*$ and $c\in\mathbb{C}[\mathcal{S}]^{\ad G}$, where $\ad$ refers to the adjoint action of $G$ on itself and $t\in\mathbb{C}$.
\end{definition}
The algebra $H_{t, c}(\mathfrak{h}, G)$ comes with two natural \emph{increasing filtrations}. The first one is the \emph{Bernstein fitration} $\widetilde{F}^{\bullet}$ which shall not be dealt with in this work.  
The second one is the \emph{geometric filtration} $F^{\bullet}$, given by the rule $\deg(\mathfrak{h}^*)=\deg(G)=0$, $\deg(\mathfrak{h})=1$. Let $\mathfrak{m}$ be a maximal ideal in $\bb{C}[\mathfrak{h}]$. 
The \emph{degree-wise completion} $\widehat{H}_{t, c}(\mathfrak{h}, G)$ of $H_{t, c}(\mathfrak{h}, G)$ as a $\bb C[\mf{h}]$-module is by definition the scalar extension of $H_{t, c}(\mathfrak{h}, G)$ to formal functions on the formal neighborhood of zero in $\mathfrak{h}$, that is, $\widehat{H}_{t, c}(\mathfrak{h}, G):=\bb{C}[[\mathfrak{h}]]\otimes_{\bb{C}[\mathfrak{h}]}H_{t, c}(\mathfrak{h}, G)$. We remark that since the underlying $\bb C[\mf{h}]$-module of the  rational Cherednik algebra is not finitely generated over the Noetherian ring $\bb{C}[\mathfrak{h}]$, the degree-wise formal completion does not coincide with the formal completion of the Cherednik algebra as a $\bb C[\mf{h}]$-module with respect to the induced $\mathfrak{m}$-adic topology on the $\bb C[\mf{h}]$-module $H_{t, c}(\mathfrak{h}, G)$, that is, the equality $\lim_{\leftharpoonup\\i}H_{t, c}(\mathfrak{h}, G)/\mathfrak{m}^iH_{t, c}(\mathfrak{h}, G)\cong\widehat{H}_{t, c}(\mathfrak{h}, G)$ does not hold true.
 \begin{remark} The degree-wise completion $\bb C[[\mf{h}]]\otimes_{\bb C[\mf{h}]}H_{1, c}(\mf{h}, G)$ of the rational Cherednik algebra has a natural structure of a $\bb C$-algebra extending the one of the rational Cherednik algebra. 
 \end{remark}
For expository purposes we shall abuse notation throughout the paper denoting the degree-wise completion of the rational Cherednik algebra by $\widehat{H}_{t, c}(\mathfrak{h}, G)$. 
This completion inherits the increasing filtration $F^{\bullet}$ from $H_{t, c}(\mathfrak{h}, G)$ by the rule $F^i\widehat{H}_{t, c}(\mathfrak{h}, G):=\bb{C}[[\mathfrak{h}]]\otimes_{\bb{C}[\mathfrak{h}]}F^iH_{t, c}(\mathfrak{h}, G)$. 
\subsubsection*{Deformation theory of the rational Cherednik algebra}
In this paper we restrict our attention to rational Charednik algebras with $t\in\bb{C}^{\times}$.  Since for any $\lambda\neq0$ the assignment $u\mapsto\lambda u$, $y\mapsto\lambda y$ and $g\mapsto g$ induces an algebra isomorphism $H_{t, c}(\mathfrak{h}, G)\cong H_{\lambda^2 t, \lambda^2 c}(\mathfrak{h}, G)$, we can assume without any loss of generality $t=1$. According to \cite[Theorem 1.3]{EG02} the Cherednik algebra is a $PBW$-deformation of $\Sym^{\bullet}(\mf{h}\oplus\mf{h}^{\*})\rtimes\bb CG$.
Furthermore, as $HH^2(\mathcal{D}(\mathfrak{h})\rtimes\bb{C}G, \mathcal{D}(\mathfrak{h})\rtimes\bb{C}G)$ is isomorphic to the space $\bb{C}[\mathfrak{h}]^{\ad G}$ and the odd Hochschild cohomology of $\mathcal{D}(\mathfrak{h})\rtimes\bb{C}G$ vanisches, the first order deformations of $H_{1, 0}(\mathfrak{h}, G)=\mathcal{D}(\mathfrak{h})\rtimes\bb{C}G$ are unobstructed. Thus there exists a universal formal deformation of $\mathcal{D}(\mathfrak{h})\rtimes\bb{C}G$, parametrized by $\bb{C}[\mathfrak{h}]^{\ad G}$. It was shown in \cite[Theorem 2.16]{EG02} that $H_{1, \hbar}(\mathfrak{h}, G)$, when $\hbar$ a formal parameter, is a universal formal deformation of $H_{1, 0}(\mathfrak{h}, G)=\mathcal{D}(\mathfrak{h})\rtimes\bb{C}G$. 
\subsection*{Dunkl operator embedding}
Let $\mathfrak{h}_{reg}:=\mathfrak{h}\diagdown\cup_{s\in\mathcal{S}}\alpha_s^{-1}(0)=\{\xi\in\mathfrak{h}: \Stab(\xi)=id_{\mathfrak{h}}\}$. This is a Zariski open $G$-invariant subset of $\mathfrak{h}$, on which $G$ acts freely. Let $\delta:=\prod_{s\in\mathcal{S}}\alpha_s\in\bb{C}[\mathfrak{h}]$ be the polynomial, called \emph{discriminant}, which vanishes on the union of all reflection planes in $\mathfrak{h}$. Then the algebra of differential operators on $\mathfrak{h}_{reg}$, $\mathcal{D}(\mathfrak{h}_{reg})$, is isomorphic to the localization $\mathcal{D}(\mathfrak{h})[\delta^{-1}]$ of the Weyl algebra at the multiplicative set $\{\delta^i | i\in\bb{Z}_{\geq0}\}$. The $G$-action on $\mathfrak{h}_{reg}$ extends naturally to an action by algebra automorphisms on  $\mathcal{D}(\mathfrak{h}_{reg})$, allowing us to define the smash product $\mathcal{D}(\mathfrak{h}_{reg})\rtimes\bb{C}G$. To any given vector $\xi\in\mathfrak{h}$ there is an associated operator $D_{\xi}\in\mathcal{D}(\mathfrak{h}_{reg})\rtimes\bb{C}G$ defined by the formula
\begin{equation*}
\label{do}
D_{\xi}:=\partial_{\xi}-\sum_{s\in\mathcal{S}}\frac{2c(s)}{1-\lambda_s}\frac{(\xi, \alpha_s)}{\alpha_s}(1-s)
\end{equation*}
where $\partial_{\xi}$ is the directional derivative of $\xi$ and $\sum_{s\in\mathcal{S}}\frac{2c(s)}{1-\lambda_s}\frac{(\xi, \alpha_s)}{\alpha_s}(1-s) \in\bb{C}[\mathfrak{h}_{reg}]\rtimes\bb{C}G$. A difference operator of the above form is called a \emph{Dunkl operator}. 
Using the filtration of $H_{t, c}(\mathfrak{h}, G)$ and the differential operator filtration on $\mathcal{D}(\mathfrak{h}_{reg})\rtimes\bb{C}[G]$ one has shown in \cite[Proposition 4.5]{EG02} that for any class function $c\in\bb{C}[\mathcal{S}]^{\ad G}$ the assignments $
\mathfrak{h}^*\ni y\mapsto y,~~~\mathfrak{h}\ni \zeta\mapsto D_{\zeta},~~~G\ni g\mapsto g$,
induce an injective algebra homomorphism $\grb(H_{1, c}(\mathfrak{h}, G))\hookrightarrow \grb(\mathcal{D}(\mathfrak{h}_{reg})\rtimes\bb{C}G)$ and hence an algebra embedding, called \emph{Dunkl embedding}, 
\begin{equation}
\label{dunkl}
\Theta_c: H_{1, c}(\mathfrak{h}, G)\hookrightarrow\mathcal{D}(\mathfrak{h}_{reg})\rtimes\bb{C}G.
\end{equation}
\subsection*{Global Cherednik algebra}
\label{global}
Assume that $X$ is a $n$-dimensional complex manifold equipped with an action by a finite group $G$ of holomorphic automorphisms of $X$. Since the action of $G$ is then properly discontinuous, the quotient $X/G$ is a complex orbifold. Let $X^g\subset X$ denote the fixed point set of $g\in G$. A \emph{nonlinear complex reflection} of $X$ is a pair $(g, Y)$ consisting of a group element $g\in G$ and a connected component $Y$ of $X^g$ of complex codimension $1$ in $X$ which we will refer to as \emph{reflection hypersurface} in accord with the terminology established in \cite{eti04}. \\
The following paragraph is by and large a shortened rehash of Section 1.1 of \cite{FT17}. We define the analogue of Dunkl-Opdam operators for complex reflection representations in the case of a complex manifold with a finite group action. Let us denote by $\mathrm{S}$ the set of all complex reflections of $X$ and let $c: \mathrm{S}\longrightarrow\bb{C}$ be a $G$-invariant function. Let $D:=\bigcup_{(g, Y)\in\mathrm{S}}Y$ and let $j: X\backslash D\longrightarrow X$ be the open inclusion map. For each complex reflection $(g, Y)\in\mathrm{S}$ let $\mathcal{O}_X(Y)$ designate the sheaf of holomorphic functions on $X\backslash Y$ taking poles of at most first order only along $Y$, let $\xi_{(g, Y)}: \mc{T}_X\rightarrow\OO_X(Y)/\OO_X$ be the natural surjective map of $\OO_X$-modules, and let $p: X\longrightarrow X/G$ denote the projection. A \emph{Dunkl operator associated to a holomorphic vector field $Z$ on $X$ } is a section $\mathbb{D}_Z$ of the sheaf $p_*j_*j^*(\mathcal{D}_X\rtimes\bb{C}G)$ over an open subset $U\subset X/G$, which in a $G$-invariant coordinate chart $U'\subset p^{-1}(U)\subset X$ has the form
\begin{equation} 
\mathbb{D}_Z=\mathcal{L}_Z +\sum_{(g, Y)\in\mathrm{S}}\frac{2c((g, Y))}{1-\lambda_{(g, Y)}}f_{(g, Y)}(g-1).    
\end{equation}
Here $\mathcal{L}_Z$ is the Lie derivative with respect to $Z$, $\lambda_{(g, Y)}$ is the nontrivial eigenvalue of $g$ on the conormal bundle to $(g, Y)$ and $f_{(g, Y)}\in\Gamma(U', \mathcal{O}_X(Y))$ is a function whose residue agrees with $V$ once both are restricted to the normal bundle of $(g, Y)$ in $X$, that is $f_{(g, Y)}\in\xi_{(g, Y)}(V)$. 
To the data $X$ and $G$ one attaches the following sheaf of non-commutative associative algebras: 
\begin{definition}
The sheaf of global Cherednik algebras $\Cher$ on the orbifold $X/G$ is a subsheaf of the sheaf $p_*j_*j^*(\mathcal{D}_X\rtimes\bb{C}G)$ generated locally by $p_{\*}j_{\*}j^{\*}\mathcal{O}_X$, $\bb{C}G$ and Dunkl operators $\mathrm{D}_Z$ associated to holomorphic vector fields $Z$ on $X$.  
\end{definition}
Note that the definition of $\mc{H}_{1,c, X, G}$ is independent on the choice of a function $f_{(g, Y)}\in\Gamma(U', \mathcal{O}_X(Y))$ in the Dunkl operators. 
The sheaf of Cherednik algebras $\Cher$ possesses a natural increasing and exhaustive filtration $\mc{F}^{\bullet}$ which is defined on the generators by $\deg(\mathcal{O}_X)=\deg(\bb{C}G)=0$ and $\deg(\bb{D}_Z)=1$ for Dunkl operators $\bb{D}_Z$ , $Z\in\mathcal{V}(X)$. It is the analogue of the geometric filtration of the rational Cherednik algebra.
\subsubsection*{Deformation theory of the sheaf of Cherednik algebras}
By \cite[Theorem 2.17]{eti04} the sheaf $\mc{H}_{1, c, X, G}$ satisfies a $PBW$-property, i.e. there is an isomorphism of $\Cs$-algebras $\gr_{\mc{F}^{\bullet}}(\mc{H}_{1, c, X, G})\cong\Sym^{\bullet}(\mc{T}_X)\rtimes\bb CG$. The algebra of holomorphic differential operators is a locally convex topological algebra. The Hochschild (co)homology for such algebras is defined the same way as for non-topological algebras by replacing everywhere in the respective definitions the algebraic tensor products with completed projective tensor products $\hat{\otimes}_{\pi}$.
By  \cite[Corollary 2.4]{eti04} we have that $HH^2(D(U)\rtimes\bb{C}G, D(U)\rtimes\bb{C}G)\cong(\oplus_{(g, Y)\in\mathrm{S}}H^{0}(Y\cap U, \bb{C}))^G\oplus H^2(U,\bb{C})^G$ and $HH^3(D(U)\rtimes\bb{C}G, D(U)\rtimes\bb{C}G)\neq0$. Nevertheless, the formal deformations of $D(U)\rtimes\bb{C}G$ are unobstructed and \cite[Theorem 2.23]{eti04} implies that 
for a family of formal parameters $\{c((g, Y))\}$, the $\Cs$-algebra $\Cher$ is a formal deformation of $\mathcal{H}_{1, 0, X, G}$.
\section{Stratification of $X$ by orbit types}
\label{orbittypes}
The \emph{fixed point set} with respect to a subgroup $H$ of $G$ is the set $\{x\in X~|~H\subset\Stab(x)\}$. It is a closed (not necessarily connected) complex submanifold of $X$. Two points $x, y\in X$ are said to be of the same \emph{isotropy type} if $\Stab(x)=\Stab(y)$. This definition give rise to equivalence relations on $X$ whose equivalence classes are called \emph{isotropy types}. For every subgroup $H$ of $G$ we shall call the according isotropy type $X_H:=\{x\in X~|~\Stab(x)=H\}$ $H$-\emph{isotropy type}. In general a $H$-isotropy type might be empty. The connected components thereof are locally closed complex submanifolds of $X$. By the continuity of the inclusion map $X_H\hookrightarrow X^H$, each connected component of $X_H$ is contained in a connected component of $X^H$. Moreover, according to the invariance domain theorem the image of $X_H^i$ in $X_i^H$ under the continuous iinclusion mapping is open in $X_i^H$ whence $X_H$ is an open submanifold of $X^H$.  
Two distinct point $x$ and $y$ in $X$ are said to be of the same \emph{orbit type} if $\Stab(x)$ and $\Stab(y)$ are conjugate subgroups of $G$, that is, there is an element $g\in G$ such that $\Stab(y)=g\Stab(x)g^{-1}$. This gives rise to an equivalence relations on $X$ whose equivalence classes are called \emph{orbit types}. Given a subgroup $H$ of $G$, denote its conjugacy class in $G$ by $(H)$. Then the corresponding orbit type, which is defined as $X_{(H)}:=\{x\in X~|~\Stab(x)=gHg^{-1}~\textrm{for some}~g\in G\}$, is called \emph{$(H)$-orbit type}. The relation between an $K$-isotropy type with $K\in(H)$ and an $(H)$-orbit type is given by the equality   
\begin{align}
\label{orbit/iso}
X_{(H)}&=\{x\in X~|~\Stab(x)=gHg^{-1}~\textrm{for some}~g\in G\}\nonumber\\
&=\{gx~|~x\in X_K, g\in G\}\nonumber\\
&=\coprod_{g\in G\ N_G(K)}g\cdot X_K\nonumber\\
&=\coprod_{g\in G\ N_G(K)}\coprod_{i_g\in I_g}X_{g^{-1}Kg}^{i_g}
\end{align}
where $N_G(H)$ is the normaliser of $H$ in $G$ and $I_g$ is the index set labeling the connected components of $X_{g^{-1}Kg}$. The latter computation shows that the $(H)$-orbit type is the smallest subset of $X$ which simultaneously is $G$-invariant and contains all translates of $X_K$. Moreover, since per assumption $G$ is finite, hence every subgroup $H$ thereof is also finite, by Proposition 2.4.4 in \cite{OR04} the $K$-isotropy type is closed in $X_{(H)}$ for every subgroup $K$ of $G$ in $(H)$. This observation is of crucial importance for the proof of the following  lemma.
\begin{lemma}
\label{orbit-isotropytype}
The connected components of an $(H)$-orbit type are the $G/N_G(K)$-translates of the connected components of the $K$-isotropy type $X_K$ for some fixed subgroup $K$ in the conjugacy class $(H)$.
\end{lemma}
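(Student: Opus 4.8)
The plan is to refine the decomposition of the $(H)$-orbit type into translates of the $K$-isotropy type, recorded in \eqref{orbit/iso}, down to the level of connected components. I fix once and for all a subgroup $K$ in the conjugacy class $(H)$, so that $X_{(H)}=\coprod_{g\in G/N_G(K)}g\cdot X_K$ is a disjoint union indexed by the finitely many cosets of $N_G(K)$ in $G$, the disjointness reflecting the fact that two translates $g\cdot X_K=X_{gKg^{-1}}$ and $g'\cdot X_K=X_{g'Kg'^{-1}}$ are isotropy types of distinct subgroups, hence disjoint, as soon as $gN_G(K)\neq g'N_G(K)$. First I would show that each translate $g\cdot X_K$ is clopen in $X_{(H)}$; then I would observe that its connected components are exactly the $g$-translates of the connected components of $X_K$; and finally I would invoke the elementary fact that the connected components of a clopen subspace are themselves connected components of the ambient space.

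For the clopenness I would argue separately for closedness and openness. By the observation immediately preceding the statement --- itself an application of \cite[Proposition 2.4.4]{OR04}, valid because $G$ is finite --- the isotropy type $X_K$ is closed in $X_{(H)}$. Since $X_{(H)}$ is $G$-invariant, left translation $L_g\colon x\mapsto g\cdot x$ restricts to a self-homeomorphism of $X_{(H)}$ carrying $X_K$ onto $g\cdot X_K$, whence every translate $g\cdot X_K$ is closed in $X_{(H)}$ as well. Openness is precisely the step where finiteness of $G$ is indispensable: because the index set $G/N_G(K)$ is finite, the complement of any one translate in $X_{(H)}$ is the union of the remaining finitely many closed translates, and is therefore closed. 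Hence each $g\cdot X_K$ is clopen in $X_{(H)}$.

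Next I would pass to connected components. Writing $X_K=\coprod_{i\in I}X_K^i$ for the decomposition into its connected components, each a locally closed complex submanifold of $X$, the homeomorphism $L_g$ identifies $\{g\cdot X_K^i\}_{i\in I}$ with the family of connected components of $g\cdot X_K$. To promote these to components of the ambient space, I would use the standard fact that for a clopen subspace $A\subseteq Y$ the connected components of $A$ are precisely the connected components of $Y$ contained in $A$: a connected subset $C\subseteq Y$ meeting $A$ has $C\cap A$ clopen and nonempty in $C$, hence $C\cap A=C$ and $C\subseteq A$, so a maximal connected subset of $A$ admits no strictly larger connected superset in $Y$ and is thus a connected component of $Y$. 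Taking $A=g\cdot X_K$ and $Y=X_{(H)}$ and letting $g$ range over $G/N_G(K)$ and $i$ over $I$, I conclude that the sets $g\cdot X_K^i$ are precisely the connected components of $X_{(H)}$.

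It remains only to note that this description is independent of the chosen coset representatives: if $g'=gn$ with $n\in N_G(K)$, then $g'\cdot X_K=g\cdot X_K$, while $n$ merely permutes the components $\{X_K^i\}_{i\in I}$ among themselves, so the unordered collection $\{g\cdot X_K^i\}$ is intrinsic and only its labeling by pairs $(g,i)$ depends on choices. The single genuine obstacle in this argument is the clopenness of the translates: closedness is handed to us by \cite{OR04}, whereas openness is exactly where the finiteness of $G$ --- equivalently, of the set $G/N_G(K)$ of translates --- enters in an essential way. Everything downstream is a formal transport of connected components along the homeomorphisms $L_g$.
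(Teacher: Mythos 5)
Your proof is correct and takes essentially the same route as the paper's: fix $K\in(H)$, use the decomposition \eqref{orbit/iso}, combine the closedness of each translate (from \cite{OR04}) with the finiteness of $G/N_G(K)$ to get openness, and conclude that $X_{(H)}$ decomposes into the translated components. The only cosmetic difference is the final step, where the paper invokes local connectedness of the manifold $X_K$ to make each component of $X_K$ open in $X_{(H)}$, while you instead transport components along $L_g$ and apply the standard fact that components of a clopen subspace are components of the ambient space; both are routine.
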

\begin{proof}
Fix a subgroup $K$ of $G$ in the conjugacy class $(H)$. Since the group $G$ is finite, the number $[G:N_G(K)]$ of translates of the $K$-isotropy type $X_K$ inside  of the $(H)$-orbit type $X_{(H)}$ is finite. Thus by virtue of the aforementioned fact  that the $K$-isotropy type is closed in the $(H)$-orbit type, we infer that $X_K$ is open in $X_{(H)}$. Moreover, as a topological manifold $X_K$ is a locally connected space, hence its connected components are open inside of $X_K$. Thus, by transitivity each connected component of $X_K$ is open in $X_{(H)}$. Finally,  Equality \eqref{orbit/iso} implies that the $(H)$-orbit type is the disjoint union of connected and open in $X_{(H)}$ sets $X_{g^{-1}Kg}^{i_g}$, $g\in G/N_G(K)$, which completes the proof of the lemma.    
\end{proof}
Observe that if $J$ is the index set denoting the connected components of the $(H)$-orbit type in $X$ and $I$ is the index set labeleing the connected components of the $K$-isotropy type for some $K\in(H)$, then Equality  \eqref{orbit/iso} implies $|J|=[G:N(H)]|I$, where $|\cdot|$ denotes the cardinality of the set, whereas $[G:N(H)]$ is the multiplicity of $N(H)$ in $G$. We recall the definition of a stratification of a general complex space.
\begin{definition}[Definition 1.1, p.67, \cite{camp10}]
\label{strat}
A complex stratification of a complex space $\mc{M}$ is a countable, locally finite covering of $\mc{M}$ by disjoint complex subspaces, called strata $\bb S=(S_{\gamma})_{\gamma\in\Gamma}$ with the following properties
\begin{enumerate}
\item[i)] Each stratum $S_{\gamma}$ is a locally closed submanifold of $X$ that is Zariski-open in its closure.
\item[ii)] The boundary $\partial(S_{\gamma})=\overline{S_{\gamma}}\setminus S_{\gamma}$ of each stratum $S_{\gamma}$, where $\overline{S_{\gamma}}$ is the closure of $S_{\gamma}$ in $X$ and $\mathring{S}_{\gamma}$ is its interior, is a union of strata of (strictly) lower dimension.
\end{enumerate} 
\end{definition}    
Condition $ii)$ in the above definition is equivalent to the \emph{frontier condition}, according to which whenever  $S_{\gamma'}\cap\overline{S}_{\gamma}\neq\varnothing$ for $\gamma'\neq\gamma$ holds, we have that  $S_{\gamma'}\subset\overline{S}_{\gamma}$. Recall that the dimension of a stratified space $\mc{M}$, $\dim(\mc{M})$, is defined as $\sup_{\gamma\in\Gamma}\dim(S_{\gamma})$. If $|\Gamma|$ is finite, the stratification is called \emph{finite} and $\dim(\mc{M})=\max_{\gamma\in\Gamma}\dim(S_{\gamma})$. The ensuing lemma demonstrates how by means of the frontier condition we can get filtration of $\mc{M}$ in terms of open subsets. 
\begin{lemma} 
\label{openfiltration}
Suppose $\mc{M}$ is a complex manifold with a finite stratification $(S_{\gamma})_{\gamma\in\Gamma}$, such that $0$ is the lowest dimension of a stratum in $\mc{M}$. Let $F_n(\mc{M}):=\coprod_{\codim(S_{\gamma})\leq n}S_{\gamma}$ for every $0\leq n\leq\dim\mc{M}$. Then, $F_0(\mc{M})\subseteq F_1(\mc{M})\subseteq\dots\subseteq F_{\dim(\mc{M})}(\mc{M})=\mc{M}$ is an open finite filtration of $\mc{M}$. 
\end{lemma}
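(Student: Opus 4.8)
The plan is to dispose of the two easy assertions first and then concentrate on openness, which is the only substantive point. The chain of inclusions $F_n(\mc{M})\subseteq F_{n+1}(\mc{M})$ is immediate from the definition, since the index set $\{\gamma\in\Gamma : \codim(S_\gamma)\leq n\}$ grows with $n$. For the top equality $F_{\dim\mc{M}}(\mc{M})=\mc{M}$, I would observe that the strata cover $\mc{M}$ and that every stratum satisfies $0\leq\codim(S_\gamma)=\dim\mc{M}-\dim(S_\gamma)\leq\dim\mc{M}$, the maximal value $\dim\mc{M}$ being attained precisely on the zero-dimensional strata, which exist by hypothesis; hence the union defining $F_{\dim\mc{M}}(\mc{M})$ ranges over all of $\Gamma$ and exhausts $\mc{M}$.

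For openness I would pass to complements and prove the dual statement that, for every integer $k\geq 0$, the set $C_k:=\coprod_{\dim(S_\gamma)\leq k}S_\gamma$ of all strata of dimension at most $k$ is closed in $\mc{M}$. Writing $d:=\dim\mc{M}$, the complement of $F_n(\mc{M})$ consists exactly of those strata with $\codim(S_\gamma)\geq n+1$, that is $\dim(S_\gamma)\leq d-n-1$, so that $\mc{M}\setminus F_n(\mc{M})=C_{d-n-1}$. Once closedness of each $C_k$ is established, openness of every $F_n(\mc{M})$ follows at once.

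The heart of the argument is the closedness of $C_k$, and here the frontier condition (equivalently, condition $ii)$ of Definition \ref{strat}) is the essential ingredient. First I would fix any stratum $S_\gamma$ with $\dim(S_\gamma)\leq k$ and invoke condition $ii)$ to write $\overline{S_\gamma}=S_\gamma\cup\partial(S_\gamma)$, where $\partial(S_\gamma)$ is a union of strata of dimension strictly less than $\dim(S_\gamma)\leq k$; hence $\overline{S_\gamma}\subseteq C_k$. Since the stratification is finite, $C_k$ is a finite union of strata and therefore $\overline{C_k}=\bigcup_{\dim(S_\gamma)\leq k}\overline{S_\gamma}\subseteq C_k$, so $C_k=\overline{C_k}$ is closed. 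The main obstacle I anticipate is not depth but care: one must keep straight the translation between the codimension indexing of the $F_n(\mc{M})$ and the dimension indexing natural to the frontier condition, and one must note that the finiteness hypothesis is exactly what legitimizes the identity ``closure of a union equals union of closures''—this is the only place finiteness, as opposed to the weaker local finiteness of Definition \ref{strat}, is actually invoked.
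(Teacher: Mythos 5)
Your proof is correct and follows essentially the same route as the paper: both pass to the complement $\mc{M}\setminus F_n(\mc{M})$, observe via the frontier condition that the closure of each stratum of codimension $\geq n+1$ adds only strata of strictly lower dimension (hence strictly higher codimension), and use finiteness of the stratification to commute closure with the union, concluding the complement is closed. Your explicit bookkeeping of the dimension/codimension translation and your remark that finiteness (rather than mere local finiteness) is what licenses $\overline{\bigcup}=\bigcup\overline{\phantom{S}}$ match the paper's argument, only stated more carefully.
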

\begin{proof}
The ascendence of the chain of the above defined sets $F_n(\mc{M})$ follows directly from their definition. It remains to show that $F_n(\mc{M})$ is open for all $n\in\{0, \dots, \dim\mc{M}\}$. Clearly,  $F_0(\mc{M})\neq\varnothing$ is an open submanifold of $\mc{M}$. Fix $1\leq n\leq\dim(\mc{M})$. The complement, $F_n(\mc{M})^c$, of $F_n(\mc{M})$ in $\mc{M}$ is $\coprod_{n+1\leq \codim(S_{\gamma})}S_{\gamma}$. Then, due to the finiteness of the stratification we have that
\begin{align*}
\overline{F_n(\mc{M})^c}&=\bigcup_{n+1\leq \codim(S_{\gamma})}\overline{S_{\gamma}}\\
&=\bigcup_{n+1\leq \codim(S_{\gamma})}S_{\gamma}\bigcup\partial(S_{\gamma})\\
&=F_n(S)^c
\end{align*}
where in the second to the last line we used the frontier condition. Thus, $F_n(\mc{M})^c$ is closed and consequently $F_n(\mc{M})$ is open within $\mc{M}$ for every $0\leq n\leq\dim(\mc{M})$. \end{proof}
Note that the closure of a stratum in the complex manifold is a closed submanifold which by definition locally is the zero locus of finitely many holomorphic functions. Hence, the closure of a stratum is an analytic set in the manifold. In general, the partition of a complex manifold, equipped with a finite group action, into orbit types defines stratification, which is referred to as \emph{stratification by orbit types}.
\begin{theorem}[Definition] The connected components of the orbit type strata $X_{(H)}$ and their projections onto $X_{(H)}/G$ constitute a complex stratification of $X$ and $X/G$, respectively.   
\end{theorem}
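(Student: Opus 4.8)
The plan is to verify directly the axioms of Definition \ref{strat} for the partition of $X$ into connected components of the orbit type strata $X_{(H)}$, and then to transport the resulting structure to $X/G$ along the projection $p$. Several of the requirements are immediate from what has already been established. Disjointness and the covering property hold because ``same orbit type'' is an equivalence relation, so the $X_{(H)}$ partition $X$ and passing to connected components only refines this partition; countability and local finiteness follow from the finiteness of $G$, since there are only finitely many subgroups $H\subseteq G$, hence finitely many conjugacy classes $(H)$ and finitely many orbit types, while the second countability of $X$ bounds the number of components of each $X_{(H)}$. That each component is a locally closed complex submanifold, Zariski-open in its closure, follows from the facts recorded before Lemma \ref{orbit-isotropytype}: each $X_H$ is open in the closed complex submanifold $X^H$, and by Lemma \ref{orbit-isotropytype} the components of $X_{(H)}$ are the $G$-translates of the components of $X_K$, so they are again locally closed submanifolds whose closures are analytic sets in which they sit as the complement of a proper analytic subset. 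Thus the only substantive point is the frontier condition together with a strict drop in dimension along boundaries.

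The key tool will be the holomorphic linearization of the $G$-action. Since $G$ is finite and acts by holomorphic automorphisms, Cartan's (Bochner's) averaging argument provides, near any $x$ with $H:=\Stab(x)$, an $H$-equivariant biholomorphism from a neighborhood of $x$ onto a neighborhood of $0$ in $T_xX$ with the linear isotropy representation of $H$. In this local model the orbit type decomposition becomes the decomposition of a linear $H$-representation by isotropy subgroups, and each fixed set $X^K$ corresponds to the linear subspace $(T_xX)^K$. From this one reads off the semicontinuity of stabilizers: if $x_m\to x$ with the $x_m$ lying in a single component $X_{(H')}^{j}$, on which the stabilizer is constantly some conjugate $K'$ of $H'$ by Lemma \ref{orbit-isotropytype}, then any $g\in K'$ satisfies $g\cdot x=\lim g\cdot x_m=\lim x_m=x$, so $K'\subseteq\Stab(x)=H$. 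Hence the stabilizer can only increase in the limit, and choosing the representative appropriately we may assume $H'\subseteq H$.

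To verify the frontier condition, suppose $X_{(H)}^{i}\cap\overline{X_{(H')}^{j}}\neq\varnothing$ for distinct strata, and pick $x$ in this intersection with an approximating sequence in $X_{(H')}^{j}$. By the semicontinuity just established $H'\subseteq H$, and the inclusion must be strict, for equality would place $x$ in $X_{(H')}$, contradicting the disjointness of distinct orbit types. In the local linear model this forces $(T_xX)^{H}\subsetneq (T_xX)^{H'}$ as complex subspaces: were the two fixed subspaces equal near $x$, the approximating points, lying in $(T_xX)^{H'}$, would also be fixed by $H$, contradicting that their stabilizer is the strictly smaller $H'$. Since $X_{(H)}^{i}$ is open in $X^{H}$ and $X_{(H')}^{j}$ is open in $X^{H'}$, their dimensions equal those of the respective fixed subspaces, whence $\dim X_{(H)}^{i}<\dim X_{(H')}^{j}$. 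That the entire component $X_{(H)}^{i}$, and not merely the point $x$, lies in $\overline{X_{(H')}^{j}}$ follows because the set of points of $X_{(H)}^{i}$ admitting such an approximating sequence is nonempty and, by the local constancy of the normal isotropy representation along the connected stratum, both open and closed in $X_{(H)}^{i}$. This yields condition ii) of Definition \ref{strat} and completes the stratification of $X$.

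Finally, I would descend along $p\colon X\to X/G$, which is continuous, open, finite-to-one and proper, and which carries each $G$-invariant orbit type $X_{(H)}$ onto $X_{(H)}/G$ by identifying points in a common $G$-orbit. Because $G$ permutes the components of $X_{(H)}$ among themselves, the images $X_{(H)}/G$ are locally closed complex analytic subspaces of $X/G$ of the same dimension as $X_{(H)}$, and the order-preserving correspondence between strata upstairs and downstairs transports disjointness, local finiteness, Zariski-openness in the closure, and the frontier condition through the open proper map $p$. I expect the main obstacle to be the frontier step, specifically the strict dimension drop: it is here that holomorphic linearization and the collapse argument for equal fixed-point dimensions are indispensable, and the real care lies in ensuring these statements hold in the complex-analytic rather than merely the smooth category.
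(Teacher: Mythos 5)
The paper supplies no proof of this statement: it is recorded as a ``Theorem [Definition]'' and treated as the classical orbit-type stratification theorem, the only supporting material being the surrounding remarks (closedness of the isotropy types inside the orbit types via \cite{OR04}, and Lemma \ref{orbit-isotropytype} identifying the components of $X_{(H)}$ with translates of components of $X_K$). Your proposal therefore does strictly more than the paper: you reconstruct the standard proof --- Cartan linearization, semicontinuity of stabilizers along convergent sequences, strict nesting of fixed subspaces to get the dimension drop, an open-and-closed argument along the connected stratum to promote a single boundary point to the whole stratum, and descent through the open, proper, finite-to-one projection $p$. This architecture is correct and is the canonical route; it is also fully compatible with the paper's toolkit, since Cartan's Lemma and the slice charts of Subsection \ref{basis/charts} are exactly the local models you invoke. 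Your dimension-drop step is right as written: you do not claim abstractly that $K'\subsetneq H$ forces $(T_xX)^{H}\subsetneq(T_xX)^{K'}$ (which is false for general representations), but derive strictness from the accumulation at $x$ of points with stabilizer exactly $K'$, which is the correct mechanism.

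Two steps need repair. First, your strictness argument conflates ``distinct strata'' with ``distinct orbit types'': if $X_{(H)}^{i}$ and $X_{(H')}^{j}$ are different connected components of the \emph{same} orbit type, then $K'=\Stab(x)$ places $x$ in $X_{(H')}$ without contradicting disjointness of orbit types, so your stated contradiction does not arise. The fix is the observation underlying Lemma \ref{orbit-isotropytype}: $X_{(H)}$ is a manifold, hence locally connected, so its components are open in $X_{(H)}$; a sequence from component $j$ converging to a point of component $i$ would eventually lie in component $i$, forcing $i=j$. (In the complex-analytic setting one can also note that in a linear slice the exact-$K'$ locus is the complement of finitely many proper complex linear subspaces of $(T_xX)^{K'}$, of real codimension at least $2$, hence connected, so the nearby exact-$K'$ points form a single local piece.) Second, local finiteness does not follow from second countability --- that yields only countability of the family of components; local finiteness follows instead from the slice: a linear slice $W_x$ meets only strata whose isotropy is contained in $\Stab(x)$, and in the linearized model there are finitely many orbit-type pieces, each meeting the slice in finitely many components by the connectedness observation just made. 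With these two patches your argument is complete, and the descent to $X/G$ is consistent with the paper's remark that $X/G$ is a complex orbifold, hence a complex space in the sense required by Definition \ref{strat}.
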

Although $X/G$ in the above theorem is not a manifold in general, since the action of the finite group $G$ is not required to be free, $X/G$ has the structure of a complex orbifold which is an example of a complex space. This fact justifies the generality in which Definition \ref{strat} is formulated.  By virtue of Lemma \ref{orbit-isotropytype} 
every orbit type stratum $X_{(H)}^i$ in the manifold $X$ uniquely coincides with a connected component of some isotropy type $X_K^j$ with $K\in(H)$. That is the reason why throughout the paper we shall abuse language by calling the connected components of the isotropy types orbit type strata. The frontier condition of a stratification implies that whenever $X_H^i\cap\overline{X_K^j}\neq\varnothing$, then $\codim(X_K^j)<\codim(X_H^i)$. Also since $X_K^j\subset X_j^K$, it follows that $X_H^i\subset\overline{X_K^j}\subset\overline{X_j^K}=X_j^K$ whence $K$ is a subgroup of $H$.  
\subsection{Isotropy and slice representation along connected components of isotropy types}
\addtocontents{toc}{\protect\setcounter{tocdepth}{2}}
We recall that the differential of the action of the isotropy group $\Stab(x)$ of a point $x$ in $X$ defines a linear representation $\tau_x$ of $\Stab(x)$ on the holomorphic tangent space $T_x^{(1, 0)}X$ called the \emph{isotropy representation} at $x$. In the following we study properties of the isotropic representation along the connected components of an $H$-isotropy type for subgroups $H$ of $G$.
We begin by reminding the reader of the following classical result from Complex Geometry due to H. Cartan.   
\begin{lemma}[Cartan's Lemma, \cite{C57}]
Let $X$ be a complex manifold and let $G$ be a finite group acting on $X$ by biholomorphisms. Then, if $x\in X$ is a fixed point of $G$, there exists $G$-invariant open neighborhoods $U\subset X$
of $x$ and $V\subset T_x^{(1, 0)}X$ of the origin $0$ in $T_x^{(1, 0)}X$ along with a $G$-equivariant biholomorphism $\varphi: U\rightarrow V$ with $\varphi(x)=0$.
\end{lemma}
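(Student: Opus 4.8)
The plan is to prove this classical \emph{linearization} statement (due to Bochner and Cartan) by an averaging argument over the finite group $G$. Since $x$ is fixed by all of $G$, its isotropy group is $\Stab(x)=G$, and the isotropy representation $\tau_x\colon G\to\GL(T_x^{(1,0)}X)$ records the differentials $\tau_x(g)=d(g\cdot)_x$ of the $G$-action at $x$. First I would fix an arbitrary holomorphic chart $\psi_0\colon U_0\to T_x^{(1,0)}X$ on some open neighborhood $U_0$ of $x$ with $\psi_0(x)=0$, and normalize it by post-composing with $(d(\psi_0)_x)^{-1}\in\GL(T_x^{(1,0)}X)$, so that the resulting holomorphic chart $\psi$ satisfies $\psi(x)=0$ and $d\psi_x=\id_{T_x^{(1,0)}X}$. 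In general $\psi$ fails to intertwine the $G$-action on $X$ with the linear action $\tau_x$ on $T_x^{(1,0)}X$; the whole point of the averaging is to repair this.

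Next I would pass to the $G$-invariant open neighborhood $W:=\bigcap_{g\in G}g^{-1}(U_0)$ of $x$ (invariant because $G$ merely permutes the translates $g^{-1}(U_0)$), on which every $g\cdot y$ lies in $U_0$, and define
\[
\varphi(y):=\frac{1}{|G|}\sum_{g\in G}\tau_x(g)^{-1}\bigl(\psi(g\cdot y)\bigr),\qquad y\in W.
\]
This is holomorphic, and a short reindexing $g\mapsto gh$ shows $\varphi(h\cdot y)=\tau_x(h)\,\varphi(y)$ for all $h\in G$, i.e. $\varphi$ is $G$-equivariant with respect to the $G$-action on $X$ and the linear representation $\tau_x$ on $T_x^{(1,0)}X$. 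Differentiating at the fixed point $x$ and using $d(g\cdot)_x=\tau_x(g)$ together with the normalization $d\psi_x=\id$ gives
\[
d\varphi_x=\frac{1}{|G|}\sum_{g\in G}\tau_x(g)^{-1}\circ d\psi_x\circ\tau_x(g)=\frac{1}{|G|}\sum_{g\in G}\id=\id_{T_x^{(1,0)}X},
\]
so $d\varphi_x$ is invertible.

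By the holomorphic inverse function theorem, $\varphi$ is then a biholomorphism from some open neighborhood $U_1\subset W$ of $x$ onto an open neighborhood of $0$. To upgrade this to the required $G$-invariant data I would set $U:=\bigcap_{g\in G}g^{-1}(U_1)$, which is a $G$-invariant open neighborhood of $x$ contained in $U_1$, so $\varphi|_U$ remains injective and a local biholomorphism. Putting $V:=\varphi(U)$, which is open since $\varphi$ is an open map, equivariance yields $\tau_x(g)(V)=\varphi(g\cdot U)=\varphi(U)=V$, so $V$ is $G$-invariant; and $\varphi\colon U\to V$ is the desired $G$-equivariant biholomorphism with $\varphi(x)=0$.

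The argument is essentially formal once the averaged map is written down; the only points demanding care — and the closest thing to an obstacle — are bookkeeping ones: one must restrict to a $G$-invariant domain before averaging so that $\varphi$ is even defined, and at the end one must manufacture genuinely $G$-invariant neighborhoods $U$ and $V$ from the a priori non-invariant neighborhood supplied by the inverse function theorem, which is exactly what intersecting over the finitely many translates and invoking equivariance accomplishes. The holomorphicity of $\varphi$ is automatic because $G$ acts by biholomorphisms and each $\tau_x(g)$ is $\bb C$-linear, so no input beyond the holomorphic inverse function theorem is needed.
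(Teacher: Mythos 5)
Your proof is correct: it is the classical Bochner--Cartan averaging argument, and every step checks out --- the reindexing $g\mapsto gh$ does give $\varphi(h\cdot y)=\tau_x(h)\varphi(y)$, the normalization $d\psi_x=\id$ forces $d\varphi_x=\id$, and intersecting over the finitely many translates correctly produces the $G$-invariant $U$ and, via equivariance, the $G$-invariant $V=\varphi(U)$. The paper itself states this lemma as a classical citation without proof, and your argument is exactly the standard one it implicitly relies on, so there is nothing to compare beyond noting that you have supplied the expected proof in full.
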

Cartan's Lemma implies that the $G$-action can be linearlized in the vicinity of a fixed point of $G$. Its application can be extended to any subgroup of $G$ which has a fixed point on $X$. Cartan's Lemma is instrumental in showing that at every point $x\in X$ the isotropy representation is faithful. 
\begin{lemma}
\label{fixpointset}
Let $H$ be a subgroup of $G$ and let $X_H^i$ be the $i$-th connected component of the $H$-isotropy type submanifold $X_H$. The isotropy representations of $H$ at all points of $X_H^i$ are equivalent to the same faithful linear representation $\sigma: H\rightarrow\GL(\bb C^n)$. 
\end{lemma}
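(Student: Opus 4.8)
The plan is to reduce the statement to two independent facts about a single reference point $x_0\in X_H^i$: first, that the isotropy representation $\tau_{x_0}$ of $H=\Stab(x_0)$ on $T_{x_0}^{(1,0)}X$ is faithful; and second, that the assignment $x\mapsto[\tau_x]$, sending a point of $X_H^i$ to the equivalence class of its isotropy representation, is \emph{locally constant}. Granting both, faithfulness of the common representation is immediate, while local constancy together with the connectedness of the submanifold $X_H^i$ forces $[\tau_x]$ to be constant on $X_H^i$; fixing a $\bb C$-basis of $T_{x_0}^{(1,0)}X$ then identifies this common class with a single faithful representation $\sigma\colon H\to\GL(\bb C^n)$, where $n=\dim_{\bb C}X$, as claimed.

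For faithfulness I would argue at $x_0$ as follows. If $h\in H$ lies in the kernel of $\tau_{x_0}$, i.e. $dh_{x_0}=\id$ on $T_{x_0}^{(1,0)}X$, then Cartan's Lemma linearizes the $H$-action near $x_0$ and exhibits $h$ as acting trivially on an entire neighborhood of $x_0$; by the identity principle for biholomorphisms the automorphism induced by $h$ is then the identity on the connected component of $X$ through $x_0$, whence $h=\id$. This is precisely the faithfulness at every point already recorded in the text following Cartan's Lemma, so I would cite it there.

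The crux is local constancy, which I would obtain from the \emph{equivariant} linearization. Fix $x_0\in X_H^i$. Since $H$ fixes $x_0$, Cartan's Lemma applied to the subgroup $H$ furnishes an $H$-invariant neighborhood $U$ of $x_0$, an $H$-invariant neighborhood $V$ of the origin in the vector space $W:=T_{x_0}^{(1,0)}X$ on which $H$ acts linearly through $\tau_{x_0}$, and an $H$-equivariant biholomorphism $\varphi\colon U\to V$ with $\varphi(x_0)=0$. For any $x\in X_H\cap U$ one has $\Stab(x)=H$ by definition of $X_H$, so $x$ is fixed by $H$; equivariance of $\varphi$ then places $v:=\varphi(x)$ in the linear fixed subspace $W^H\cap V$. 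Differentiating the identity $\varphi(h\cdot y)=\tau_{x_0}(h)\,\varphi(y)$ at $y=x$ shows that $d\varphi_x$ intertwines $\tau_x(h)$ with the differential at $v$ of the linear map $\tau_{x_0}(h)$; since $v$ is a fixed point of a \emph{linear} action, that differential is $\tau_{x_0}(h)$ itself under the canonical identification $T_vV\cong W$. Hence $d\varphi_x$ is an $H$-equivariant isomorphism realizing $\tau_x\cong\tau_{x_0}$, and as $X_H^i\cap U$ is a relative neighborhood of $x_0$, the class $[\tau_x]$ is constant there.

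Finally, viewing $x\mapsto[\tau_x]$ as a map from the connected space $X_H^i$ to the discrete set of isomorphism classes of $n$-dimensional representations of the finite group $H$ — discrete because such a class is determined by a character whose values are sums of $n$ roots of unity of order dividing $|H|$ — local constancy yields continuity and hence constancy. I expect the main obstacle to be the bookkeeping in the crux step: the clean identification of the isotropy representation at a fixed point $v$ of the linearized action with $\tau_{x_0}$ through the canonical isomorphism $T_vV\cong W$, together with the verification that $d\varphi_x$ is genuinely $H$-equivariant for points $x\neq x_0$. An alternative route that sidesteps the linearization is to note that each character value $h\mapsto\operatorname{tr}(\tau_x(h))$ depends continuously on $x$ while ranging in a discrete set of algebraic integers, so the characters, and therefore the classes, are automatically locally constant; I would keep this as a fallback should the equivariance bookkeeping prove cumbersome.
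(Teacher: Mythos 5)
Your proof is correct and follows essentially the route the paper intends: the paper's own ``proof'' merely defers to a straightforward adaptation of Stewart's Lemma 1.1, describing it as a point-set-topology consequence of Cartan's Lemma, which is exactly your argument (equivariant linearization at a point of $X_H^i$ giving local constancy of $[\tau_x]$, then connectedness, with faithfulness via the identity principle). You have simply written out in full the details the paper leaves to the reader.
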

\begin{proof}
The proof of this lemma is in essence a straightforward  adaptation of the proof of \cite[Lemma 1.1]{stewart63}. It uses elementary ideas from point set topology and is a consequence of Cartan's Lemma. We leave the details to the reader.
\end{proof}
Note that the representation $\sigma$ differ from one connected component of an isotropy type to another. We shall call the restriction of the isotropy representation at a point $x\in X$ to the unique $\Stab(x)$-invariant complement $N_x$ of the isotypic component $\big(T_x^{(1, 0)}X\big)^H$ the \emph{slice representation} at $x$. From the lemma proven above we draw the following natural conclusion. 
\begin{corollary}
The slice representation of $H$ at every point of X$_H^i$ is equivalent to the same faithful linear subrepresentation $\sigma|_{\bb C^l}: H\rightarrow\GL(\bb C^l)$, where $l=\dim_{\bb C}N_x$ for all $x\in X_H^i$.
\end{corollary}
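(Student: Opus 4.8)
The plan is to deduce the corollary directly from Lemma~\ref{fixpointset} together with the canonicity of the isotypic decomposition of a representation of a finite group. First I would recall that, by Lemma~\ref{fixpointset}, for every $x\in X_H^i$ the isotropy representation $\tau_x$ on $T_x^{(1, 0)}X$ is equivalent to the single faithful representation $\sigma\colon H\to\GL(\bb C^n)$. Since $H$ is finite, $\tau_x$ is completely reducible, so I would decompose it as the direct sum of its trivial isotypic component $\big(T_x^{(1, 0)}X\big)^H$ and the sum $N_x$ of its nontrivial isotypic components; by definition the slice representation at $x$ is precisely $\tau_x|_{N_x}$.

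The key step is to exploit that any $H$-equivariant isomorphism carries each isotypic component onto the isotypic component of the same type. Fixing an intertwiner $\varphi_x\colon T_x^{(1, 0)}X\to\bb C^n$ realizing the equivalence $\tau_x\cong\sigma$, it sends $\big(T_x^{(1, 0)}X\big)^H$ isomorphically onto the fixed subspace $(\bb C^n)^H$ and sends $N_x$ isomorphically onto the sum $W$ of the nontrivial isotypic components of $\sigma$; writing $W=\bb C^l$ I obtain $\tau_x|_{N_x}\cong\sigma|_{\bb C^l}$ for every $x\in X_H^i$, with a target independent of the point, which is the asserted equivalence. Because all $\tau_x$ are equivalent to the one representation $\sigma$, the multiplicities of the irreducible constituents agree for all $x$; in particular $\dim_{\bb C}\big(T_x^{(1, 0)}X\big)^H$ is constant, and therefore so is $l=\dim_{\bb C}N_x=n-\dim_{\bb C}\big(T_x^{(1, 0)}X\big)^H$.

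It remains to check faithfulness. Here I would observe that $H$ acts trivially on the fixed summand $(\bb C^n)^H$ by its very definition, so if $h\in H$ lies in the kernel of $\sigma|_{\bb C^l}$ it acts trivially on $W=\bb C^l$ and on $(\bb C^n)^H$, hence on all of $\bb C^n=(\bb C^n)^H\oplus W$. Faithfulness of $\sigma$ then forces $h=\id$, so $\sigma|_{\bb C^l}$ is faithful, completing the argument.

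I do not expect a serious obstacle: the statement is a formal consequence of Lemma~\ref{fixpointset} and the uniqueness of the isotypic decomposition. The only point deserving care is that the equivalences $\tau_x\cong\sigma$ respect the splitting $T_x^{(1, 0)}X=\big(T_x^{(1, 0)}X\big)^H\oplus N_x$ uniformly in $x$; but this is immediate from the fact that the projection onto the trivial isotypic component is canonical and therefore commutes with every intertwiner, so no further input beyond standard representation theory of the finite group $H$ is needed.
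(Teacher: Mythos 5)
Your proof is correct and takes essentially the same approach as the paper: both rest on Lemma \ref{fixpointset} together with the observation that an $H$-equivariant isomorphism preserves the canonical isotypic splitting and therefore restricts to an equivalence between the invariant complements $N_x$ (you compare each $\tau_x$ to the fixed model $\sigma$, while the paper compares tangent spaces at pairs of points of $X_H^i$ — a cosmetic difference). Your explicit faithfulness check, that an element acting trivially on $\bb C^l$ also acts trivially on $(\bb C^n)^H$ and hence on all of $\bb C^n$, is a point the paper's one-line proof leaves implicit.
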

\begin{proof}
The statement follows from the fact that for each pair of distinct points $y, y'\in X_H^i$ according to Lemma \ref{fixpointset} there is  
an $H$-equivariant linear isomorphism $T_y^{(1, 0)}X\rightarrow T_{y'}^{(1, 0)}X$ which due to its commutativity with the $H$-action restricts to an $H$-equivariant linear isomorphism between the $H$-invariant complements $N_y$ and $N_{y'}$ of the isotypic components $\big(T_{y}^{(1, 0)}X\big)^H$ and $\big(T_{y'}^{(1, 0)}X\big)^H$, respectively.   
\end{proof}
We shall abuse language by calling $\sigma|_{\bb C^l}$ the \emph{slice representation of $H$ on the stratum $X_H^i$}. Furthermore, because $\sigma|_{\bb C^l}$ is an injective group homomorphism from $H$ to $\GL(l, \bb C)$, we shall often identify $H$ with its image $\sigma(H)$ in $\GL(l, \bb C)$ without explicitly mentioning it. In all such cases we shall abuse notation by treating $H$ as a subgroup of $\GL(l, C)$. 
\subsection{Basis for the $G$-equivariant topology of $X$ and coordinate slice charts for orbit type strata}
\label{basis/charts}
Recall that a \emph{slice} at a point $x\in X$ is a $\Stab(x)$-invariant neighborhood $W_x$ such that $W_x\cap gW_x=\varnothing$ for all $g\in G\setminus \Stab(x)$. A slice $W_x$ at a point $x$ with $\Stab(x)=H$ cannot contain any element $y$ whose stabilizer $\Stab(y)$ is not contained in $H$ since the contrary would imply that there is a group element $g\in G\setminus \Stab(x)$, for which $W_x\cap gW_x\neq\varnothing$, which would then contradict the assumption, that $W_x$ is a slice. This implies that all strata, crossing a $H$-invariant slice $W_x$, have an isotropy type contained in $H$. A slice is called a \emph{linear} if there is a $\Stab(x)$-invariant open set $V$ in $\bb C^n$ such that $W_x$ is $\Stab(x)$-equivariantly biholomorphic to $V$. If every point $x$ in $X$ has a linear slice, the group action of $G$ is referred to as \emph{locally linear}. Since in the case we consider the group $G$ is finite, it acts on $X$ properly discontinuously and thus each point $x$ in $X$ possesses a slice $W_x$. By Cartan's Lemma one can shrink the slice $W_x$ till the $\Stab(x)$-action is linearized which implies that the $G$-action is locally linear. It is of crucial importance for our work that a linear slice $W_x$ at a point $x$ on a stratum $X_H^i$ intersects apart from $X_H^i$ only strata $X_K^j$ of codimension lower than or equal to the codimension of $X_H^i$ such that $x$ lies in the intersection of their closures which by the frontier condition of the stratification means that $X_H^i\subseteq\bigcap\{\overline{X_K^j}~|~X_K^j\cap W_x\neq\varnothing\}$. 
Let $p:X\rightarrow X/G$ be the holomorphic projection. The $G$-equivariant topology of $X$ is by definition comprised of the preimages of open sets in the quotient topology of $X/G$. Since $p$ is surjective, $pp^{-1}(U)=U$ for any set $U$ in $X/G$. This means in particular that every open set $U$ in the orbifold is the image of some open set $V$ in $X$ under $p$ whence the $G$-equivariant topology of $X$ can be written as  
\begin{equation}
\label{equivtopology}
\mf{T}_X^G:=\{p^{-1}(p(V))=\bigcup_{g\in G}gV~|~V~\textrm{is an arbitrary open set in}~X\}.
\end{equation}
We note that the main difference of that topology as opposed to the standard one is that the $G$-equivariant topology is not Hausdorff. Next, we define a basis for the $G$-equivariant topology $\mf{T}_X^G$ on $X$. 
it is a matter of a straightforward verification that for any given open set $V$ in $X$ and every $x\in V$ with $\Stab(x):=H\leq G$, there is an $H$-invariant linear slice $W_x\subset V$ which is $H$-equivariantly biholomorphic to a box in $\bb C^{n-l}\times\bb C^l$ where $\bb C^{n-l}$ is the fixed point subspace of $\bb C^n$ with respect to $H$. Hence, $\ind_H^G(W_x):=\coprod_{g\in G/H}gW_x\subset\bigcup_{g\in G}gV$. Hence, the set 
\begin{equation*}
\label{equivbasis}
\mf{B}_X^G:=\Bigg\{\ind_H^G(W_x)~|~\begin{matrix}\mathsmaller{W_x~\textrm{a $H$-invariant linear slice,}}\\\mathsmaller{\textrm{biholomorphic to a box in $\bb C^{n-l}\times\bb C^l$ for all $H\leq  G$}}\end{matrix} \Bigg\}
\end{equation*}
forms a basis for the $G$-eqiuvariant topology \eqref{equivtopology}. A short computation yields  for every $H$-invariant linear slice $W_x$, that $\mc{T}_{X|W_x}\cong\OO_{X|W_x}^{\oplus n}$ implying that the holomorphic tangent bundle and consequently every holomorphic subbundle thereof trivialize over $H$-invariant linear slices. This basic observation will play a role in Section \ref{localisationstratumofcodim1orhigher}.\\
%
Note that for each $x\in X_H^i$, every linear slice $W_x$ constitutes a holomorphic slice chart for the orbit type submanifold $X_H^i$. That is, 
if $x^1, x^2, \dots, x^{n-l}$ are the holomorphic coordinates on the complex vector subspace $\big(\bb C^n\big)^H=\bb C^{n-l}$ and $y^1, \dots, y^l$ are the holomorphic coordinates on the $l$-dimensional complement of $\bb C^{n-l}$ in $\bb C^n$, then $(x^1, \dots, x^{n-l}, y^1, \dots, y^l)$ define local holomorphic coordiantes of $X$ on $W_x$ such that $(x^1, \dots, x^{n-l})$ are local holomorphic coordinates of $X_H^i$ on $W_x\cap X_H^i$ and $(y^1, \dots y^l)$ are local holomorphic coordinates on $W_x$ in transversal direction to the stratum. 
\section{Harish-Chandra torsors on the orbit type strata of $X$}
\label{hctorsorsonstrata}
 Let $H$ be a subgroup of $G$ which has at least one fixed point in $X$. Because of the isotropy representation all the fibers of the holomorphic tangent bundle $T^{(1, 0)}X$ restricted to the stratum $X_H^i$ of codimenion $l\geq0$ are closed with respect to the $H$-action and thus, the trivial $H$-action on $X_H^i$ can be lifted to the total space of $T^{(1, 0)}X|_{X_H^i}$ by $(x, v)\mapsto(x, \tau_x(h)v)$ for all $x\in X_H^i$ and all $v\in T_x^{(1, 0)}X$. This way, the tangent space over the stratum $X_H^i$ becomes an $H$-equivariant holomorphic vector bundle which leads to a reduction of the structure group $\GL(n, \bb C)$ of $T^{(1, 0)}X|_{X_H^i}$.
The holomorphic normal bundle $\pi_{H}^{i}:N\rightarrow X_{H}^{i}$ of rank $l$ over the stratum $X_H^i$, $i\in\bb Z_{\geq0}$, is defined by the short exact sequence 
\begin{equation}
\label{normalbundle}
0\rightarrow T^{(1, 0)}(X_H^i)\rightarrow T^{(1, 0)}X|_{X_H^i}\rightarrow N\rightarrow0 
\end{equation}
in the category of holomorphic vector bundles over $X_H^i$. 
In general, there is no way per se how to identify the holomorphic normal bundle of the submanifold $X_H^i$ with some holomorphic subbundle of $T^{(1, 0)}X|_{X_H^i}$. However, since the holomorphic tangent bundle $T^{(1, 0)}X|_{X_H^i}$ is equivariant with respect to the group action of $H$, there is a natural way how to do that. Indeed, define a map $F: T^{(1, 0)}X|_{X_H^i}\rightarrow T^{(1, 0)}X|_{X_H^i}$ by $(x, v)\mapsto(x, Pv)$ with $P:=\frac{1}{|H|}\sum_{h\in H}\tau_x(h)$, $x\in X_H^i$, and $v\in T_x^{(1, 0)}X|_{X_H^i}$. By construction, $F$ is a bundle map over the identity map of $X_H^i$. Furthermore, note that, since $\tau$ varies holomorphically with the base point, $F$ is holomorphic, too. Moreover, since $X_H^i$ is a connected manifold, the dimension of the fixed point space $T_{x}^{(1, 0)}X_H^i$ for each $x\in X_H^i$ is the same, wherefore the rank of the complex linear map $F_x$ is constant. Hence, the holomorphic bundle map $F$ has a constant rank. Consequently, the kernel of $F$, $\ker(F)=\coprod_{x\in X_H^i}N_x$, is a holomorphic subbundle of $TX|_{X_H^i}$. Form the direct sum of holomorphic vector bundles $TX_H^i\oplus\ker(F)$ over $X_H^i$. Next, define a map $T^{(1, 0)}X|_{X_H^i}\rightarrow T^{(1, 0)}X_H^i\oplus \ker(F)$ by $(x,v)\mapsto\big(x,(Pv, (P-\id)v)\big)$. Evidently, since $H$ is finite, $P^2=P$, whence $(P-\id)v\in\ker(F_x)$ for every $v\in T_x^{(1, 0)}X$. Ergo, the map is well-defined and we check that it is in fact a bundle mapping. It has a holomorphic inverse $T^{(1, 0)}X_H^i\oplus\ker(F)\rightarrow T^{(1, 0)}X$ given by $\big(x, (v, w)\big)\mapsto v+w$, $v\in T_x^{(1, 0)}X_H^i$, $w\in \ker(F_x)$. So, 
\begin{equation}
\label{holbundliso}
T^{(1, 0)}X|_{X_H^i}\cong T^{(1, 0)}X_H^i\oplus\ker(F)
\end{equation}
in the category of holomorphic vector bundles. Finally, we deduce from \eqref{normalbundle} and Isomorphism \eqref{holbundliso} that
\begin{align*}
N
\cong\coprod_{x\in X_H^i}N_x
\end{align*} 
as holomorphic bundles. From now on we shall not make any distinction between the holomorphic normal bundle defined by the short exact sequence \eqref{normalbundle} and the holomorphic subbundle $\coprod_{x\in X_H^i}N_x$ of $T^{(1, 0)}X|_{X_H^i}$. We shall abuse the language and shall treat the holomorphic normal bundle $N$ as a subbundle of the holomorphic tangent bundle of $X$ restricted to $X_H^i$ and shall consider the projection $\pi_{H}^{i}$ of $N$ onto $X_{H}^{i}$ as the restriction of $\pi: T^{(1, 0)}X\big|_{X_{H}^{i}}\rightarrow X_{H}^{i}$ to $N$.\\
The fibers of the holomorphic normal bundle are simultaneously exposed to the action of the group $H$ via the isotropy representation and the matrix Lie group $\GL(l, \bb C)$, which operates by coordinate transformations on the fibers induced by transition maps, where $l$ is the rank of the normal bundle. Both group actions are compatible with each other in the following sense. Let $(U\cap X_H^i, \psi_{U})$ and $(V\cap X_H^i, \psi_{V})$ be local trivializations of the normal bundle over $X_H^i$ such that $(U\cap V)\cap X_H^i=\varnothing$. Let $\psi_{U,x}\circ\psi_{V, x}^{-1}\in\GL(l, \bb C)$ be the respective transition function and let $\psi_{V, x}\circ\tau_x\circ\psi_{V, x}^{-1}$ be the isotropy representation on $T_{x}^{(1, 0)}X_H^i$ expressed in the coordinates on $V$. Then, we have
\begin{equation}
\psi_{U,x}\circ\psi_{V, x}^{-1}(\psi_{V, x}\circ\tau_x(h)\circ\psi_{V, x})=(\psi_{U, x}\circ\tau_x(h)\circ\psi_{U, x}^{-1})\circ\psi_{U, x}\circ\psi_{V, x}^{-1}
\end{equation}
Considering that $\tau_x\sim\sigma$ for every $x\in X_H^i$, the above computation shows that the transition map commutes with $H$ expressed in the proper coordinates. That means that the structure group of $N$ gets reduced from $\GL(l, \bb C)$ to the centralizer $Z$ of $H$ in $\GL(l, \bb C)$. 
Since all the fibers of the holomorphic normal bundle of $X_H^i$ are endowed with an $H$-action, equivalent to the same linear representation $\sigma: H\rightarrow\GL(l, \bb C)$, one can assign to the fiber of the holomlorphic normal bundle at every point $x\in X_H^i$ a degree-wise completed rational Cherednik algebra $\ratCherhat$. 
The structure group of the holomorphic normal bundle $N\rightarrow X_H^i$ acts naturally on $\ratCherhat$ by $\bb C$-algebra automorphisms.  
\begin{lemma}
\label{centralizeraction}
Let $\sigma: H\rightarrow\GL(l, \bb C)$ be a faithful linear representation. Then the centralizer $Z$ of $H\equiv\sigma(H)$ in $\GL(l, \bb C)$ acts by algebra automorphisms on the degree-wise completed rational Cherednik algebra $\ratCherhat$.   
\end{lemma}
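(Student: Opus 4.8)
The plan is to exhibit, for each $z\in Z$, an explicit algebra automorphism $\phi_z$ of the rational Cherednik algebra $H_{1,c}(\bb C^l, H)$, to check that it respects the defining relations of Definition \ref{cherednik}, and then to extend it along the degree-wise completion to an automorphism of $\ratCherhat$; the assignment $z\mapsto\phi_z$ will be the sought $Z$-action. First I would define $\phi_z$ on the generators of the smash product $T^{\bullet}(\mf h\oplus\mf h^{*})\rtimes\bb CH$, with $\mf h=\bb C^l$, by letting $z$ act on $\mf h$ through the inclusion $Z\hookrightarrow\GL(\mf h)$, on $\mf h^{*}$ through the contragredient representation $y\mapsto y\circ z^{-1}$, and trivially on $\bb CH$. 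The triviality on $\bb CH$ is both forced and legitimate: since $z$ centralizes $H$ in $\GL(\mf h)$, conjugation by $z$ fixes every element of $H$. One checks at once that this prescription is compatible with the pairing, $(zu,\,y\circ z^{-1})=(u,y)$, and that $z\mapsto\phi_z$ is a left action by automorphisms of the tensor algebra.

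Second, I would verify that $\phi_z$ preserves the two-sided ideal of relations. The commutativity relations $[y,y']$ and $[u,u']$ are preserved because $\phi_z$ acts $\bb C$-linearly on $\mf h$ and on $\mf h^{*}$ separately. The cross relations $gyg^{-1}=\presup{g}{y}$ and $gug^{-1}=\presup{g}{u}$ are preserved precisely because the $z$-action commutes with the $H$-action on both $\mf h$ and $\mf h^{*}$, which is once more the defining property of $Z$ read on $\mf h$ and its dual.

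The heart of the argument is the commutator relation $[u,y]=(u,y)+\sum_{s\in\mc S}c(s)(u,\alpha_s)(y,\alpha_s^{\vee})\,s$. The decisive observation is that $z$ commutes with each reflection $s\in\mc S$ of $H$, hence preserves the eigenspaces of $s$; in particular it preserves the one-dimensional lines $\Ima(\id_{\mf h}-s)=\bb C\,\alpha_s^{\vee}\subset\mf h$ and $\bb C\,\alpha_s\subset\mf h^{*}$. Writing $z\,\alpha_s^{\vee}=\mu_s\,\alpha_s^{\vee}$, the invariance of the pairing together with the normalization $(\alpha_s,\alpha_s^{\vee})=2$ forces $\alpha_s\circ z=\mu_s\,\alpha_s$, i.e. the dual scaling is the reciprocal $\mu_s^{-1}$. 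Applying $\phi_z$ to the sum, the $s$-th term acquires a factor $(zu,\alpha_s)=\mu_s(u,\alpha_s)$ from the coroot pairing and a factor $(y\circ z^{-1},\alpha_s^{\vee})=\mu_s^{-1}(y,\alpha_s^{\vee})$ from the root pairing, while $s$ itself and the value $c(s)$ are left unchanged; the two scalars cancel, and together with $(zu,\,y\circ z^{-1})=(u,y)$ this shows the relation is sent to itself termwise. Hence $\phi_z$ descends to an automorphism of $H_{1,c}(\bb C^l, H)$.

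Finally, I would extend $\phi_z$ to the completion. On the polynomial subalgebra $\bb C[\mf h]=\Sym(\mf h^{*})$ the automorphism $\phi_z$ restricts to the ring automorphism induced by $z$, which fixes the origin and hence preserves the maximal ideal, so it extends continuously to $\bb C[[\mf h]]$. Because $\phi_z$ intertwines multiplication by $f\in\bb C[\mf h]$ with multiplication by its image, the rule $f\otimes a\mapsto\hat z(f)\otimes\phi_z(a)$ is well-defined on $\ratCherhat=\bb C[[\mf h]]\otimes_{\bb C[\mf h]}H_{1,c}(\bb C^l, H)$ and yields an algebra automorphism restricting to $\phi_z$; functoriality of the construction makes $z\mapsto\phi_z$ a genuine $Z$-action. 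I expect the only truly delicate point to be the commutator relation, specifically the verification that the root and coroot scalings under $z$ are mutually reciprocal, which is exactly where the normalization $(\alpha_s,\alpha_s^{\vee})=2$ is used.
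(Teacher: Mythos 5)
Your proposal is correct and takes essentially the same route as the paper's proof: act trivially on $\bb CH$ (forced by centrality), by $z$ on $\mf h=\bb C^l$ and contragrediently on $\mf h^{*}$, observe that $z$ preserves the lines $\bb C\,\alpha_s^{\vee}$ and $\bb C\,\alpha_s$ so that the root and coroot scalings are mutually reciprocal by the normalization $(\alpha_s,\alpha_s^{\vee})=2$, whence the $s$-terms of the commutator relation are fixed, and finally extend to $\ratCherhat$ via the induced action on $\bb C[[\bb C^l]]$. The paper's Lemma \ref{centralizeraction} proof writes this as $g\cdot\alpha_s=\xi_g\alpha_s$, $g\cdot\alpha_s^{\vee}=\nu_g\alpha_s^{\vee}$ with $\xi_g\nu_g=1$, which is precisely your cancellation argument.
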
  
\begin{proof}
The group $\GL(l, \bb{C})$ acts naturally by automorphisms on the tensor algebra $T^{\bullet}(\mathbb{C}^l\oplus\mathbb{C}^{l*})$ by 
\begin{equation*}
g\cdot(v_1\oplus w_1\otimes\dots\otimes v_n\oplus w_n)=g\cdot v_1\oplus g\cdot w_1\otimes\dots\otimes g\cdot v_n\oplus g\cdot w_n,\end{equation*} 
for every matrix $g\in\GL(l, \mathbb{C})$ and every element $v_1\oplus w_1\otimes\dots\otimes v_n\oplus w_n\in T^n(\mathbb{C}^l\oplus\mathbb{C}^{l*})$. Here the dot signifies the group action of $\GL(l, \mathbb{C})$ on $\mathbb{C}^l$, respectively its dual action on $\mathbb{C}^{l*}$. This gives rise to a $\GL(l, \mathbb{C})$-linear action $\theta: \GL(l, \mathbb{C})\rightarrow\GL(T^{\bullet}(\mathbb{C}^l\oplus\mathbb{C}^{l*})\rtimes\mathbb{C}H$ on the skew-group algebra $T^{\bullet}(\mathbb{C}^l\oplus\mathbb{C}^{l*})\rtimes\mathbb{C}H$  defined by $\theta(g)(f*h)=(g\cdot f)* h$, where $f\in T^{\bullet}(\mathbb{C}^l\oplus\mathbb{C}^{l*})$, $h\in H$, the asterix $\*$ denotes the twisted multiplication in the skew-group algebra and the dot signifies again the group action. The centralizer $Z$ is a matrix Lie subgroup of $\GL(l, \mathbb{C})$, thus it acts on $T^{\bullet}(\mathbb{C}^l\oplus\mathbb{C}^{l*})\rtimes\mathbb{C}H$ by linear transformations via $\theta$. It remains to show that for every $g\in Z$ the linear homomorphism $\theta(g)$ respects the ring structure of $T^{\bullet}(\mathbb{C}^l\oplus\mathbb{C}^{l*})\rtimes\mathbb{C}H$, as well. The straightforward computation
\begin{align*}
\theta(g)((f_1*h_1)*(f_2*h_2))&=
\theta(g)(f_1\otimes\sigma(h_1)(f_2)*h_1h_2)\\
&=g\cdot(f_1)\otimes g\cdot(\sigma(h_1)(f_2))*h_1h_2\\
&=g\cdot f_1\otimes\sigma(h_1)(g\cdot f_2)*h_1h_2\\
&=\theta(g)(f_1*h_1)*\theta(g)(f_2*h_2)
\end{align*}
corroborates that $\theta$ acts on $T^{\bullet}(\bb C^l\oplus\bb C^{l\ast})\rtimes\bb CH$ by automorphisms.\\
We are left to check whether the action $\theta$ preserves the ideal
 \[I= \left\langle [w, w'],~ [v, v'],~  [v, w]-(v, w)+\sum_{s\in \mc{S}}c(s)(v, \alpha_s)(w, \alpha_s^{\vee})s: v, v'\in \mathbb{C}^{l}, w, w'\in \mathbb{C}^{l*}\right\rangle,\]
as well. We let the centralizer $Z$ act on each of the generators in $I$:
\begin{align}
\label{gen1}
&\theta(g)([w, w'])= [g\cdot w, g\cdot w' ]=0 \mod I\\
\label{gen2}
&\theta(g)([v, v'])= [g\cdot v, g\cdot v']=0 \mod I.
\end{align}
Before we proceed with the third generator, we note that since by definition 
\begin{align}
(1-s)\alpha_s&=\mu\alpha_s\label{mu}\\
(1-s)\alpha_s^{\vee}&=\mu'\alpha_s^{\vee},\label{mu'}
\end{align}
$\mu$, $\mu'\in\mathbb{C}^{\times}$, an action of an element $g\in Z$ on \eqref{mu} and \eqref{mu'} results in $(1-s)\cdot g\cdot\alpha_s=\mu g\cdot\alpha_s$ and 
$(1-s)\cdot g\cdot\alpha_s^{\vee}=\mu' g\cdot\alpha_s^{\vee}$, respectively. This shows that $g\cdot\alpha_s$ and $g\cdot\alpha_s^{\vee}$ are eigenvectors to $(1-s)|_{\bb C^{l\*}}$ and $(1-s)|_{\bb C^{l}}$, accordingly. On the other side, since $\Ima(1-s|_{\bb C^{l\*}})$ and $\Ima(1-s|_{\bb C^{l}})$ are one-dimensional complex vector spaces , spanned by $\alpha_s$ and $\alpha_s^{\vee}$, respectively, there are some $\xi_g, \nu_g\in\mathbb{C}^{\times}$ satisfying $g\cdot\alpha_s=\xi_g\alpha_s$ and $g\cdot\alpha_s^{\vee}=\nu_g\alpha_s^{\vee}$. By the normalization condition $(\alpha_s, \alpha_s^{\vee})=(g\cdot\alpha_s, g\cdot\alpha_s^{\vee})=2$ we have $\xi_g\nu_g=1$. Then, applying $\theta(g)$ for $g\in Z$ on the last generator of the $I$ yields 
\begin{align}
\label{gen3}
&\theta(g)\big([v, w]-(v, w)+\sum_{s\in\mc{S}}c(s)(v, \alpha_s)(w, \alpha_s^{\vee})s\big)\nonumber\\
&=[g\cdot v, g\cdot w]-(v, w)+ \sum_{s\in\mc{S}}c(s)(v, \alpha_s)(w, \alpha_s^{\vee})s\nonumber\\
&=(v, w)-\sum_{s'\in S}c(s')(g\cdot v, \alpha_{s'})(g\cdot w, \alpha_{s'}^{\vee})s'-(v, w)+\sum_{s\in\mc{S}}c(s)(v, \alpha_s)(w, \alpha_s^{\vee})s \mod I\nonumber\\
&=-\sum_{s'\in S}c(s')(v, g^{-1}\cdot\alpha_{s'})(w, g^{-1}\cdot\alpha_{s'}^{\vee})s'+ \sum_{s\in\mc{S}}c(s)(v, \alpha_s)(w, \alpha_s^{\vee})s \mod I\nonumber\\
&=-\sum_{s'\in S}c(s')(v, \xi_{g^{-1}}\alpha_{s'})(w, \nu_{g^{-1}}\alpha_{s'}^{\vee})s'+ \sum_{s\in\mc{S}}c(s)(v, \alpha_s)(w, \alpha_s^{\vee})s \mod I\nonumber\\
&=-\sum_{s'\in S}c(s')\underbrace{\xi_{g^{-1}}\nu_{g^{-1}}}_{=1}(v, \alpha_{s'})(w, \alpha_{s'}^{\vee})s'+ \sum_{s\in\mc{S}}c(s)(v, \alpha_s)(w, \alpha_s^{\vee})s \mod I\nonumber\\
&=\mod I
\end{align}
It is conspicuous from \eqref{gen1}, \eqref{gen2} and \eqref{gen3} that $\theta(g)I\subseteq I$ which implies that $Z$ acts by algebra automorphisms on $H_{1, c}(\bb C^l, H)$. As $Z$ acts on $\bb C[[\bb C^l]]$ by algebra automorphisms via the dual representation, $Z$ acts consequently by $\bb C$-algebra automorphisms on the algebra $\bb{H}$ generated by $\bb C[[\bb C^l]]$ and $H_{1, c}(\bb C^l, H)$. Finally, the natural $\bb C$-algebra isomorphisms between $\ratCherhat$ and $\bb H$ induces the desired $H$-action on the former $\bb C$-algebra. 
\end{proof}
In the course of this section we will need the following two easy technical lemmas. For expository purposes we omit their proofs.
\begin{lemma}\label{(co)root} Let $h\in H$, let $s\in H$ be a complex reflection and let $\alpha_s$ and $\alpha_s^{\vee}$ be the corresponding root and coroot. Then, 
\begin{enumerate}
 \item[i)] there are some $\gamma, \gamma'\in\mathbb{C}$ such that $h\cdot\alpha_s=\gamma\alpha_{hsh^{-1}}$ and $h\cdot\alpha_s^{\vee}=\gamma'\check{\alpha}_{hsh^{-1}}$, respectively, and 
\item[ii)] the nontrivial eigenvalues $\lambda_s$ and $\lambda_s^{\vee}$ of the root $\alpha_s$ and the coroot $\alpha_s^{\vee}$ satisfy $ \lambda_{hsh^{-1}}=\lambda_s$ and $\lambda_{hsh^{-1}}^{\vee}=\lambda_s^{\vee}$, accordingly.
\end{enumerate}
\end{lemma}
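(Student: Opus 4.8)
The plan is to use that conjugation by $h \in \GL(\mathfrak{h})$ preserves both the rank-one condition defining a complex reflection and the full spectrum of $s$, so that $hsh^{-1}$ is again a complex reflection with the same nontrivial eigenvalue; the statement then drops out of the one-dimensionality of the nontrivial eigenspace of any complex reflection.

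First I would record that $hsh^{-1}$ is a complex reflection in $\mathfrak{h}$ (and, via the dual representation, in $\mathfrak{h}^*$): it is semisimple, being conjugate to the semisimple $s$, and $\rk(\id_{\mathfrak{h}} - hsh^{-1}) = \rk\big(h(\id_{\mathfrak{h}} - s)h^{-1}\big) = \rk(\id_{\mathfrak{h}} - s) = 1$. Consequently $hsh^{-1}$ has a unique nontrivial eigenvalue whose eigenspace is one-dimensional and spanned by its root $\check{\alpha}_{hsh^{-1}} = \alpha_{hsh^{-1}}^{\vee}$, and analogously for its coroot $\alpha_{hsh^{-1}}$ on $\mathfrak{h}^*$. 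For part i) on the root side I would then compute the action of $hsh^{-1}$ on $h \cdot \alpha_s^{\vee}$ directly: from $s\,\alpha_s^{\vee} = \lambda_s^{\vee}\alpha_s^{\vee}$ one obtains $(hsh^{-1})(h\cdot\alpha_s^{\vee}) = h\cdot(s\,\alpha_s^{\vee}) = \lambda_s^{\vee}\,(h\cdot\alpha_s^{\vee})$. Since $h$ is invertible and $\alpha_s^{\vee}\neq 0$, the vector $h\cdot\alpha_s^{\vee}$ is a nonzero eigenvector of $hsh^{-1}$ for the eigenvalue $\lambda_s^{\vee}\neq 1$, hence lies in the one-dimensional nontrivial eigenspace spanned by $\alpha_{hsh^{-1}}^{\vee}$. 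This single computation delivers both conclusions at once: $h\cdot\alpha_s^{\vee} = \gamma'\,\check{\alpha}_{hsh^{-1}}$ for some $\gamma'\in\mathbb{C}^{\times}$ (part i)) and $\lambda_{hsh^{-1}}^{\vee} = \lambda_s^{\vee}$ (part ii)).

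The coroot assertions follow by running the identical argument for the contragredient action on $\mathfrak{h}^*$, starting from $s\cdot\alpha_s = \lambda_s\alpha_s$ to get $(hsh^{-1})\cdot(h\cdot\alpha_s) = \lambda_s\,(h\cdot\alpha_s)$ and concluding $h\cdot\alpha_s = \gamma\,\alpha_{hsh^{-1}}$ with $\lambda_{hsh^{-1}} = \lambda_s$; alternatively, the eigenvalue part of ii) for the root can simply be read off from the relation $\lambda_s^{\vee} = \lambda_s^{-1}$ already recorded in the preliminaries. There is no genuine obstacle in this lemma — the only point needing a moment's care is the bookkeeping of the dual action, namely verifying the left-action identity $(hsh^{-1})\cdot(h\cdot y) = h\cdot(s\cdot y)$ on $\mathfrak{h}^*$, after which one-dimensionality of the nontrivial eigenspace closes the argument exactly as on $\mathfrak{h}$.
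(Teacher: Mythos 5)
Your proof is correct. There is in fact no proof of record to compare against: the paper states this as one of "two easy technical lemmas" and explicitly omits the proofs "for expository purposes," and your argument — establishing that $hsh^{-1}$ is again a complex reflection via $\rk(\id_{\mathfrak{h}}-hsh^{-1})=\rk\big(h(\id_{\mathfrak{h}}-s)h^{-1}\big)=1$, conjugating the eigenvalue equations, and invoking one-dimensionality of the nontrivial eigenspace (with the dual-action identity $(hsh^{-1})\cdot(h\cdot y)=h\cdot(s\cdot y)$ checked, which is the one step needing care) — is precisely the routine argument the paper intends, and it even yields the slightly sharper conclusion $\gamma,\gamma'\in\mathbb{C}^{\times}$.
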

\begin{lemma}
\label{4subtasks}
Let $A, B\in Z$ and let $s\in H$ be a complex reflection. Then there is a complex number $\lambda_{A, s}\in\bb C^{\times}$ such that
\begin{enumerate}
\item[i)] $A\cdot\alpha_s=\lambda_{A, s}\alpha_s$ and $A\cdot\alpha_s^{\vee}=-\lambda_{A, s}\alpha_s^{\vee}$, 
\item[ii)] $\lambda_{A, hsh^{-1}}=\lambda_{A, s}$ for every $h\in H$,
\item[iii)] $\lambda_{[A, B], s}=0$,
\item[iv)] The element $\sum_{s\in\mc{S}}\frac{2c(s)}{1-\lambda_s}\lambda_{A, s}(1-s)$ is a central element in the algebra $\bb C H$.
 \end{enumerate}
\end{lemma}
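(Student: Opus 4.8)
The plan is to derive all four items from a single structural fact: every $A\in Z$ commutes with each element of $H$, in particular with every complex reflection $s\in H$, and therefore $A$ preserves each eigenspace of $s$ in $\bb C^{l}$ and in its dual $\bb C^{l\ast}$. Since $\alpha_s^{\vee}$ spans the one-dimensional line $\Ima\big(1-s|_{\bb C^{l}}\big)$ and $\alpha_s$ spans $\Ima\big(1-s|_{\bb C^{l\ast}}\big)$, both of which are nontrivial eigenlines of $s$, the commuting operator $A$ must send each of these lines into itself. This already yields scalars with $A\cdot\alpha_s=\lambda_{A, s}\alpha_s$ and $A\cdot\alpha_s^{\vee}=\mu_{A, s}\alpha_s^{\vee}$, and I would take the first of these as the definition of $\lambda_{A, s}$. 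To fix the relation $\mu_{A, s}=-\lambda_{A, s}$ claimed in $i)$ I would use that the relevant action is infinitesimal: reading $A$ as an element of $\Lie(Z)$, as is natural for the Maurer--Cartan form, the $\GL(l, \bb C)$-invariance of the natural pairing differentiates to $(A\cdot\alpha_s, \alpha_s^{\vee})+(\alpha_s, A\cdot\alpha_s^{\vee})=0$, and substituting the two eigenrelations together with the normalization $(\alpha_s, \alpha_s^{\vee})=2\neq0$ forces $\lambda_{A, s}+\mu_{A, s}=0$. This settles $i)$.

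For $ii)$ I would feed Lemma \ref{(co)root}$i)$ into the definition: it gives $\alpha_{hsh^{-1}}=\gamma\,h\cdot\alpha_s$ for some nonzero $\gamma$, so applying $A$ and commuting it past $h$ (allowed because $A$ centralizes $H$) produces $A\cdot\alpha_{hsh^{-1}}=\gamma\,h\cdot(A\cdot\alpha_s)=\lambda_{A, s}\,\gamma\,h\cdot\alpha_s=\lambda_{A, s}\,\alpha_{hsh^{-1}}$, i.e. $\lambda_{A, hsh^{-1}}=\lambda_{A, s}$. Item $iii)$ then drops out of $i)$: since $\alpha_s$ is a common eigenvector of $A$ and $B$, the commutator acts by $[A, B]\cdot\alpha_s=A\cdot(\lambda_{B, s}\alpha_s)-B\cdot(\lambda_{A, s}\alpha_s)=(\lambda_{A, s}\lambda_{B, s}-\lambda_{B, s}\lambda_{A, s})\alpha_s=0$, whence $\lambda_{[A, B], s}=0$.

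For $iv)$ I would recall that an element of the group algebra $\bb CH$ is central precisely when it is invariant under conjugation by every $h\in H$, and verify this directly. Writing $E_A:=\sum_{s\in\mc S}\frac{2c(s)}{1-\lambda_s}\lambda_{A, s}(1-s)$, conjugation gives $hE_Ah^{-1}=\sum_{s\in\mc S}\frac{2c(s)}{1-\lambda_s}\lambda_{A, s}(1-hsh^{-1})$. I would then reindex by $t=hsh^{-1}$, which permutes the reflections of $H$, and match coefficients using three inputs: $c$ is a class function, $\lambda_{hsh^{-1}}=\lambda_s$ by Lemma \ref{(co)root}$ii)$, and $\lambda_{A, hsh^{-1}}=\lambda_{A, s}$ by part $ii)$. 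Each summand of $hE_Ah^{-1}$ is thereby identified with the $(1-t)$-summand of $E_A$, giving $hE_Ah^{-1}=E_A$ and hence centrality.

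The computation is essentially bookkeeping, so the only genuinely delicate point will be the sign in $i)$. The hard part, such as it is, will be to keep straight that one must use the \emph{infinitesimal} invariance of the pairing, valid for $A\in\Lie(Z)$, rather than the multiplicative relation $\xi_A\nu_A=1$ that governs group elements in the proof of Lemma \ref{centralizeraction}; the former produces $\mu=-\lambda$ while the latter would produce $\mu=\lambda^{-1}$, and only the additive version is consistent with the vanishing in $iii)$ and with the centrality in $iv)$.
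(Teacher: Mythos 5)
Your proof is correct. Note first that the paper never gives its own argument for Lemma \ref{4subtasks} (it explicitly omits the proofs of both technical lemmas), so the only in-paper point of comparison is the computation inside Lemma \ref{centralizeraction}, where for a \emph{group} element $g\in Z$ one derives $g\cdot\alpha_s=\xi_g\alpha_s$, $g\cdot\alpha_s^{\vee}=\nu_g\alpha_s^{\vee}$ and the multiplicative relation $\xi_g\nu_g=1$ from the normalization $(\alpha_s, \alpha_s^{\vee})=2$. You correctly identified the one genuinely delicate point: despite the lemma's literal wording ``$A, B\in Z$'', the statement only makes sense for $A, B\in\mf{z}=\Lie(Z)$, and your additive invariance $(A\cdot\alpha_s, \alpha_s^{\vee})+(\alpha_s, A\cdot\alpha_s^{\vee})=0$ is the right infinitesimal counterpart of $\xi_g\nu_g=1$. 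This reading is forced both internally (the sign in $i)$, the vanishing in $iii)$) and by the downstream use, where $\lambda_{A, s}$ enters through $\varphi_{c}:\mf{z}\rightarrow\ratCherhat$ and the verification that $[\varphi_c(A), \cdot]$ equals $\theta_{\*}(A)$ on generators. Your eigenline-preservation argument for $i)$, the commutation past $h$ for $ii)$, and the reindexing $t=hsh^{-1}$ using the class-function property of $c$ together with Lemma \ref{(co)root} for $iv)$ are all sound. One caveat worth recording: as your own item $iii)$ already shows, $\lambda_{A, s}$ can vanish (take $A\in\mf{z}$ acting by zero on the nontrivial eigenline of $s$), so the lemma's claim $\lambda_{A, s}\in\bb C^{\times}$ is an overstatement; your proof correctly produces only $\lambda_{A, s}\in\bb C$, and nothing downstream uses nonvanishing.
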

Let $x$ be a point lying on the stratum $X_H^i$ and let $(x^1, \dots, x^{n-l})$ be the local coordinates at $x$ along the stratum and $(y^1, \dots, y^l)$ be the local coordinates in transversal direction. The degree-wise completed Weyl algebra associated to the tangent space $T_y^{(1, 0)}X_H^i$ at every point $y$ on $X_H^i$ in the vicinity of $x$ can be expressed as $\widehat{D}_{n-l}=\big<\bb C[[x^1, \dots, x^{n-l}]], \pd{x^1}, \dots, \pd{x^{n-l}}\big>$. The Lie group $\GL(n-l, \bb C)$ possesses a natural left action on $\widehat{D}_{n-l}$ by automorphisms via 
\begin{equation}
\label{glaction}
\eta: D\mapsto A^{-1\*}\circ D\circ A^{\*}
\end{equation}
for $A\in\GL(n-l, \bb C)$, $D\in\widehat{D}_{n-l}$. On generators the action $\eta$  has the explicit form  
\begin{equation*}
\eta:A\mapsto\eta(A):
\begin{cases}
f\mapsto f\circ A^{-1}, ~~f\in\bb C[[x^1, \dots, x^{n-l}]]\\
\pd{x^i}\mapsto\sum_{j, i}(A^T)_{ij}\pd{x^j},~ i=1, \dots, n-l
\end{cases}
\end{equation*}  
The differential $\eta_{\*}$ of \eqref{glaction} yields an action  
\begin{equation*}
\eta_{\*}: a\mapsto[-\sum_{i, j=1}^{n-l}(a^T)_{ij}x^i\pd{x^j}, \cdot~]
\end{equation*}
by inner derivations of the Lie algebra $\mf{gl}(n-l, \bb C)$ on $\widehat{D}_{n-l}$.\\ 
Similarly, the degree-wise completed Cherednik algebra attached to the fiber of the holomorphic normal bundle at every point of the stratum near $x$ is isomorphic to $\ratCherhat$.\\
Take the algebraic tensor product of $\widehat{D}_{n-l}$ and $\ratCherhat$ over $\bb C$. The increasing filtrations $F'$ and $F$ by order of degrees of generators of $\widehat{D}_{n-l}$ and $\ratCherhat$, respectively, induce an increasing filtration on $\widehat{D}_{n-l}\otimes\ratCherhat$ by 
\[F_p\big(\widehat{D}_{n-l}\otimes\ratCherhat\big)=F'^{p}(\widehat{D}_{n-l})\otimes\ratCherhat+\widehat{D}_{n-l}\otimes F_p(\ratCherhat)\]
for any $p\in\bb Z_{\geq0}$. The completed tensor product $\widehat{D}_{n-l}\hat{\otimes}\ratCherhat$ is accordingly given by the completion \[\invlim_{p}\widehat{D}_{n-l}\otimes\ratCherhat/\big(F'^{p}(\widehat{D}_{n-l})\otimes\ratCherhat+\widehat{D}_{n-l}\otimes F_p(\ratCherhat)\big)\] of the algebraic tensor product with respect to the aforementioned induced filtration. For more clarity of the exposition we shall henceforth use the notation $\HC$ for the completed tensor product $\widehat{D}_{n-l}\hat{\otimes}\ratCherhat$. Denote by $\widehat{D}(\bb C^l_{\textrm{reg}})$ the completed $\bb C$-algebra $\bb C[[\bb C^l]]\otimes_{\bb C[\bb C^l]}\mc{D}(\bb C_{\textrm{reg}}^l)$. In the ensuing propositions we first demonstrate that the $\bb C$-algebras $\HC$ and $\widehat{D}_{n-l}\hat{\otimes}\widehat{\mc{D}}(\bb C^l_{\textrm{reg}})\rtimes\bb CH$ are Harish-Chandra modules and second that the map $\id\otimes\widehat{\Theta}_{c}: \HC\hookrightarrow\widehat{D}_{n-l}\hat{\otimes}\widehat{\mc{D}}(\bb C^l_{\textrm{reg}})\rtimes\bb CH$, 
where $\widehat{\Theta}_{c}=\id\otimes\Theta_c$ is the completion of the Dunkl embedding \eqref{dunkl}, is a morphism of Harish-Chandra modules.  
\begin{proposition}
$\HC$ is a Harish-Chandra $(W_{n-l}\ltimes\mf{z}\otimes\hat{\OO}_{n-l}, \GL(n-l, \bb C)\times Z)$-module. 
\end{proposition}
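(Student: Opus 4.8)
The plan is to read $W_{n-l}$ as the Lie algebra of formal vector fields $\sum_i f^i\partial_{x^i}$, $f^i\in\hat{\OO}_{n-l}=\bb C[[x^1,\dots,x^{n-l}]]$, and $\mf{z}=\Lie(Z)$, and to realize $\HC=\widehat{D}_{n-l}\hat{\otimes}\ratCherhat$ as an algebra object on which $\mf{g}:=W_{n-l}\ltimes(\mf{z}\otimes\hat{\OO}_{n-l})$ acts by continuous derivations and $K:=\GL(n-l,\bb C)\times Z$ acts by continuous automorphisms, and then to verify the two Harish--Chandra axioms: that the differential of the $K$-action coincides with the restriction of the $\mf{g}$-action to $\Lie(K)=\mf{gl}(n-l,\bb C)\oplus\mf{z}$, and that the $\mf{g}$-action is $K$-equivariant. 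Geometrically the two tensor factors carry the two independent symmetries being glued: $W_{n-l}$ and $\GL(n-l,\bb C)$ act through the Weyl factor $\widehat{D}_{n-l}$ by the classical Gelfand--Kazhdan structure on formal differential operators, while $Z$ and the current algebra $\mf{z}\otimes\hat{\OO}_{n-l}$ act through the Cherednik factor, the latter twisted by functions on the base.

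The technical core is a Lie algebra homomorphism $\Phi\colon\mf{g}\to(\HC,[\,\cdot\,,\,\cdot\,])$ through which $\mf{g}$ acts by the inner derivations $X\mapsto[\Phi(X),\,\cdot\,]$; working with inner derivations guarantees at once that the action is by derivations and that it respects brackets. On $W_{n-l}$ I put $\Phi(\xi)=\xi\otimes1$, regarding a formal vector field as a first--order element of $\widehat{D}_{n-l}$; since formal vector fields form a Lie subalgebra of the Weyl algebra this is a homomorphism, and $[\xi\otimes1,\,\cdot\,]$ restricts to the usual adjoint $W_{n-l}$-action on $\widehat{D}_{n-l}\otimes1$ and kills $1\otimes\ratCherhat$. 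On the current algebra I put $\Phi(a\otimes f)=f\otimes h_a$ with $f\in\hat{\OO}_{n-l}\subset\widehat{D}_{n-l}$ and $h_a\in\ratCherhat$ a distinguished element realizing the infinitesimal symmetry $\delta_a$ --- the differential of the $Z$-action of Lemma \ref{centralizeraction} --- as an inner derivation of $\ratCherhat$. Producing $h_a$ and proving $[h_a,\,\cdot\,]=\delta_a$ is the main obstacle.

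To construct $h_a$, fix dual bases $y^i\in(\bb C^l)^*$ and $\zeta_j\in\bb C^l$ and set $h_a=\sum_{i,j}a_{ij}\,y^i\zeta_j+c_a$. Evaluating $[h_a,y^k]$ by means of the defining relation $[u,y]=(u,y)+\sum_{s\in\mc{S}}c(s)(u,\alpha_s)(y,\alpha_s^{\vee})s$ yields the wanted linear term $\delta_a(y^k)$ together with a spurious reflection contribution whose coefficients are the scalars $\lambda_{A,s}$ of Lemma \ref{4subtasks}(i); I cancel it with a group-algebra correction $c_a\in\bb CH$ supported on $\mc{S}$ with coefficients $\tfrac{2c(s)}{1-\lambda_s}\lambda_{A,s}$. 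The admissibility of this single correction is exactly the centrality statement of Lemma \ref{4subtasks}(iv): because $c_a$ is central in $\bb CH$, the one element $h_a$ reproduces $\delta_a$ simultaneously on $(\bb C^l)^*$, on $\bb C^l$, and on $\bb CH$, on which $\delta_a$ vanishes since $a$ commutes with $H$. Lemma \ref{(co)root} and Lemma \ref{4subtasks}(ii) ensure these coefficients are stable under $H$-conjugation, so that $c_a$, and hence $h_a$, is well defined.

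It then remains to assemble $\Phi$ and check the axioms, which is formal. On $\mf{z}\otimes\hat{\OO}_{n-l}$ the homomorphism property reduces, since functions commute in $\widehat{D}_{n-l}$, to the identity $[h_a,h_b]=h_{[a,b]}$ in $\ratCherhat$: the quadratic parts $\sum a_{ij}y^i\zeta_j$ close into the $\mf{gl}$-bracket without any central term, precisely as the elements $y^i\partial_j$ do in a Weyl algebra, and the reflection corrections in $c_a$ cancel on commutators thanks to $\lambda_{[A,B],s}=0$ (Lemma \ref{4subtasks}(iii)). The cross bracket $[\xi\otimes1,f\otimes h_a]=\xi(f)\otimes h_a=\Phi(a\otimes\xi(f))$ reproduces the semidirect action of $W_{n-l}$ on $\hat{\OO}_{n-l}$, so $\Phi$ respects the full bracket of $\mf{g}$ and, $\Phi(X)$ being of bounded filtration degree, the inner derivations extend continuously to $\HC$. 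For the group I let $\GL(n-l,\bb C)$ act by $\eta\otimes\id$ via \eqref{glaction} and $Z$ by $\id\otimes(\text{Lemma \ref{centralizeraction}})$; these commute. The first axiom holds because the differential $\eta_{\ast}$ is the adjoint action of the linear vector fields $\mf{gl}(n-l,\bb C)\subset W_{n-l}$ recorded after \eqref{glaction}, while the differential of the $Z$-action is $\id\otimes\delta_a=[1\otimes h_a,\,\cdot\,]$, the action of $\mf{z}\otimes1\subset\mf{z}\otimes\hat{\OO}_{n-l}$. The second reduces to the equivariance $k\cdot\Phi(X)=\Phi(\ad_kX)$, which for the Weyl part is the naturality of $\eta$ and for the current part follows from $A\cdot h_a=h_{\ad_A a}$ (Lemma \ref{4subtasks}(ii)) together with the $\GL(n-l,\bb C)$-action on $f$. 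This establishes the proposition.
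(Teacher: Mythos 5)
Your proposal is correct and is essentially the paper's own argument: your element $h_a=\sum_{i,j}a_{ij}y^i\zeta_j+c_a$ is, up to an inessential central constant and a sign convention, precisely the map $\varphi_{c}(A)=-\sum_{i,j}A_{ij}y_ju_i+\sum_{s\in\mc{S}}\frac{2c(s)}{1-\lambda_s}\lambda_{A,s}(1-s)$ of the paper, your $\Phi(v+a\otimes f)=v\otimes1+f\otimes h_a$ is the embedding $\Phi_{c}$ of \eqref{Phi_cembedding} composed with $\adlie$, and your verification that $[h_a,\cdot\,]$ recovers $\theta_{*}(a)$ on the generators $\bb C^{l\*}$, $\bb C^l$, $\bb CH$ (using Lemma \ref{4subtasks}) reproduces the paper's generator computations. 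The only divergence is cosmetic: you additionally check $K$-equivariance of the $\mf g$-action, which the paper's Definition of a $(\mf g,K)$-module does not demand, and your appeal to a Weyl-algebra analogy for $[h_a,h_b]=h_{[a,b]}$ compresses (but does not replace) the paper's explicit cancellation of the reflection terms via $\lambda_{A,s}\lambda_{B,s}$-symmetry and $\lambda_{[A,B],s}=0$.
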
 
\begin{proof}
We proceed as follows: First, we construct a Lie algebra homomorphism $W_{n-l}\ltimes\mf{z}\otimes\hat{\OO}_{n-l}\rightarrow\End(\HC)$. Subsequently we show that the differential of the $\GL(n-l, \bb C)\times Z$-action on $\HC$ coincides with the restriction of the above defined Lie algebra homomorphism to the Lie subalgebra $\mf{gl}(n-l, \bb C)\oplus\mf{z}\subset W_{n-l}\ltimes\mf{z}\otimes\hat{\OO}_{n-l}$.
Let $(u_i)$ and $(y_i)$, $i=1, \dots, l$ be accordingly the basis of $\bb C^l$ and the dual basis of $\bb C^{l\*}$. We claim that the mapping $\varphi_{c}: \mf{z}\rightarrow\ratCherhat$ given by $A\mapsto-\sum_{i, j}A_{ij}y_ju_i+\sum_{s\in\mc{S}}\frac{2c(s)}{1-\lambda_s}\lambda_{A, s}(1-s)$ is a Lie algebra homomorphism, where the Lie algebra bracket on $\HC$ is given by the commutator. Indeed, 
for $A, B\in\mf{z}$, we have:
\begin{align*} 
&[\varphi_{c}(A), \varphi_{c}(B)]=\\
&=[-\sum_{i,j}A_{ij}y_ju_i+\sum_{s\in\mc{S}}\frac{2c(s)}{1-\lambda_s}\lambda_{A, s}(1-s), -\sum_{k,l}B_{kl}y_lu_k+\sum_{t\in\mc{S}}\frac{2c(t)}{1-\lambda_t}\lambda_{B, t}(1-t)]\\
&=\underbrace{\sum_{i, j, k, l}A_{ij}B_{kl}[y_ju_i, y_lu_k]}_{(1)}+\underbrace{\sum_{i, j}A_{ij}\frac{2c(t)}{1-\lambda_t}\lambda_{B, t}[y_ju_i, t]}_{(2)}+\underbrace{\sum_{k, l}B_{kl}\frac{2c(s)}{1-\lambda_s}\lambda_{A, s}[s, y_lu_k]}_{(3)}.
\end{align*}
In the following computation of $(1)$, $(2)$ and $(3)$ we will constantly make use of Relations \eqref{relation1} and \eqref{relation2}, as well as of Lemma \eqref{4subtasks} without explicitly saying it for the sake of brevity. Also, to keep the exposition clear, we designate the actions of Lie groups and the corresponding Lie algebras suggestively by a dot.  
\begin{align*}
(1):~~\sum_{i, j, k, l}&A_{ij}B_{kl}[y_ju_i, y_lu_k]=
\sum_{i, j, k, l}\left(A_{ij}B_{kl}-A_{kl}B_{ij}\right)y_j[u_i, y_l]u_k\\
&=\sum_{i, j, k, l}\left(A_{ij}B_{kl}-A_{kl}B_{ij}\right)\left(\delta_{il}y_ju_k-\sum_{s\in\mc{S}}c(s)(u_i, \alpha_s)(\alpha_s^{\vee}, y_l)y_jsu_k\right)\\
&=-\sum_{k, j}[A, B]_{k, j}y_ju_k-\sum_{i, j, k, l; s\in\mc{S}}c(s)\left(A_{ij}B_{kl}-A_{kl}B_{ij}\right)(\alpha_s)_i(\alpha_s^{\vee})_ly_jsu_k\\
&=-\sum_{k, j}[A, B]_{k, j}y_ju_k+\sum_{s\in\mc{S}}c(s)(A\cdot\alpha_s)s(B\cdot\alpha_s^{\vee})-\sum_{s\in\mc{S}}c(s)(B\cdot\alpha_s)s(A\cdot\alpha_s^{\vee})\\
&=-\sum_{k, j}[A, B]_{k, j}y_ju_k+\sum_{s\in\mc{S}}c(s)\lambda_{A, s}\lambda_{B, s}\alpha_ss\alpha_s^{\vee}-\sum_{s\in\mc{S}}c(s)\lambda_{B, s}\lambda_{A, s}\alpha_ss\alpha_s^{\vee}\\
&=-\sum_{k, j}[A, B]_{k, j}y_ju_k.
\end{align*}
\begin{align*}
(2):~~\sum_{i, j; t\in\mc{S}}&A_{ij}\frac{2c(t)}{1-\lambda_t}\lambda_{B, t}[y_ju_i, t]=\sum_{i, j; t\in\mc{S}}A_{ij}\frac{2c(t)}{1-\lambda_t}\lambda_{B, t}\left(y_j[u_i, t]+[y_j, t]u_i\right)\\
&=\sum_{i, j; t\in\mc{S}}A_{ij}\frac{2c(t)}{1-\lambda_t}\lambda_{B, t}\left(y_j(u_it-tu_i)+(y_jt-ty_j)u_i\right)\\
&=\sum_{i, j; t\in\mc{S}}A_{ij}\frac{2c(t)}{1-\lambda_t}\lambda_{B, t}(y_j(u_i-t\cdot u_i)t+(y_j-t\cdot y_j)tu_i) \\
&=\sum_{i, j; t\in\mc{S}}A_{ij}\frac{2c(t)}{1-\lambda_t}\lambda_{B, t}\left(y_j\left(-\frac{1-\lambda_t}{2\lambda_t}\right)(u_i, \alpha_t)\alpha_t^{\vee}t+\left(\frac{1-\lambda_t}{2}\right)(\alpha_t^{\vee}, y_j)\alpha_ttu_i\right)\\
&=\sum_{t\in\mc{S}}\frac{2c(t)}{1-\lambda_t}\lambda_{B, t}\left(\frac{1-\lambda_t}{2\lambda_t}(A\cdot\alpha_t)\alpha_t^{\vee}t+\frac{1-\lambda_t}{2}\alpha_tt(A\cdot\alpha_t^{\vee})\right)\\
&=\sum_{t\in\mc{S}}\frac{2c(t)}{1-\lambda_t}\lambda_{B, t}\lambda_{A, t}\left(\frac{1-\lambda_t}{2\lambda_t}\alpha_t\alpha_t^{\vee}t-\frac{1-\lambda_t}{2}\alpha_tt\alpha_t^{\vee}\right).
\end{align*}
\begin{align*}
(3):~~\sum_{k, l; s\in\mc{S}}&B_{kl}\frac{2c(s)}{1-\lambda_s}\lambda_{A, s}[s, y_lu_k]=\sum_{k, l; s\in\mc{S}}B_{kl}\frac{2c(s)}{1-\lambda_s}\lambda_{A, s}\left(y_l[s, u_k]+[s, y_l]u_k\right)\\
&=\sum_{k, l; s\in\mc{S}}B_{kl}\frac{2c(s)}{1-\lambda_s}\lambda_{A, s}\left(y_l(su_k-u_ks)+(sy_l-y_ls)u_k\right)\\
&=\sum_{k, l; s\in\mc{S}}B_{kl}\frac{2c(s)}{1-\lambda_s}\lambda_{A, s}\left(-y_l(u_k-s\cdot u_k)s-(y_l-s\cdot y_l)su_k\right)\\
&=-\sum_{k, l; s\in\mc{S}}B_{kl}\frac{2c(s)}{1-\lambda_s}\lambda_{A, s}\left(-\frac{1-\lambda_s}{2\lambda_s}(u_k, \alpha_s)y_l\alpha_s^{\vee}s+\frac{1-\lambda_s}{2}(\alpha_s^{\vee}, y_l)\alpha_ssu_k\right)\\
&=-\sum_{s\in\mc{S}}\frac{2c(s)}{1-\lambda_s}\lambda_{A, s}\left(\frac{1-\lambda_s}{2\lambda_s}(B\cdot\alpha_s)\alpha_s^{\vee}s+\frac{1-\lambda_s}{2}\alpha_ss(B\cdot\alpha_s^{\vee}) \right)\\
&=-\sum_{s\in\mc{S}}\frac{2c(s)}{1-\lambda_s}\lambda_{A, s}\lambda_{B, s}\left(\frac{1-\lambda_s}{2\lambda_s}\alpha_s\alpha_s^{\vee}s-\frac{1-\lambda_s}{2}\alpha_ss\alpha_s^{\vee} \right).
\end{align*}
Adding $(1)$, $(2)$ and $(3)$ together yields
\begin{align}
\label{[varphi(A),varphi(B)]}
[\varphi_{c}(A), \varphi_{c}(B)]&
=-\sum_{kj}[A, B]_{kj}y_ju_k.
\end{align}
On the other hand, we have by Lemma \ref{4subtasks} that $\lambda_{[A, B], s}=0$, ergo 
\begin{equation}
\label{varphi([A,B])}
\varphi_{c}([A, B])=-\sum_{ij}[A, B]_{ij}y_ju_i.
\end{equation}
The equality of \eqref{[varphi(A),varphi(B)]} with \eqref{varphi([A,B])} establishes $\varphi_{c}$ as Lie algebra homomorphism.
In turn it induces an injective Lie algebra homomorphism 
\begin{align}
\label{Phi_cembedding}
&\Phi_{c}: W_{n-l}\ltimes\mf{z}\otimes\hat{\OO}_{n-l}\rightarrow\HC\nonumber\\
&v+A\otimes p\mapsto v\otimes1+p\otimes\varphi_{c}(A).
\end{align}
Indeed, a straightforward computation shows that 
\begin{align*}
&[\Phi_{c}(v+A\otimes p), \Phi_{c}(w+b\otimes q)]=\\
&=[v\otimes1, w\otimes1]+[v\otimes1, q\otimes\varphi_{c}(B)]+[p\otimes\varphi_{c}(A), w\otimes1]+[p\otimes\varphi_{c}(A), q\otimes\varphi_{c}(B)]\\
&=[v, w]\otimes1+v(q)\otimes\varphi_{c}(B)-w(p)\otimes\varphi_{c}(A)+pq\otimes\varphi_{c}([A, B])\\
&=\Phi_{c}([v, w]+[A, B]\otimes pq+B\otimes v(q)-A\otimes w(p))\\
&=\Phi_{c}([v+A\otimes p, w+B\otimes q]),
\end{align*}
where in the last line we used the definition of the Lie bracket in $W_{n-l}$.
Composing the adjoint action $\adlie$ of $\HC$ 
with $\Phi_{c}$ yields the desired Lie algebra representation 
\begin{align}
\label{Phi_representation}
\Phi: W_{n-l}\ltimes\mf{z}\otimes\hat{\OO}_{n-l}&\longrightarrow\End(\HC)\nonumber\\
v+A\otimes p&\mapsto\adlie(\Phi_{c}(v+A\otimes p)) 
\end{align}
The Lie group $\GL(n-l, \bb C)\times Z$ acts from the left on $\HC$ via the tensor action $\eta\otimes\theta$ induced by the left action $\eta$ of $\GL(n-l, \bb C)$ on $\widehat{D}_{n-l}$ and the left acion $\theta$ of $Z$ on $\ratCherhat$, discussed in Proposition \ref{centralizeraction}.
The infinite-dimensional Lie algebra $W_{n-l}\ltimes\mf{z}\otimes\hat{\OO}_{n-l}$ contains a finite-dimensional subalgebra which is isomorphic to the Lie algebra $\mf{gl}(n-l, \bb C)\oplus\mf{z}$. This identification is achieved via the Lie algebra embedding $i:\mf{gl}(n-l, \bb C)\oplus\mf{z}\hookrightarrow W_{n-l}\ltimes\mf{z}\otimes\hat{\OO}_{n-l}$ given by
\begin{align}
\label{lieembedding}
 &(A, B)\mapsto-\sum_{i, j}(A^T)_{ij}x^i\pd{x^j}+B\otimes1.
 \end{align} 
Let $(B, A)\in\mf{gl}(n-l, \bb C)\oplus\mf{z}$. Then, making use of the embedding $i$, we compute
\begin{align*}
\Phi(i(A, B))(d\otimes\zeta)&=[\Phi_{c}(i(B, A)), d\otimes\zeta]\\
&=[\Phi_{c}(-\sum_{i, j}(B^T)_{ij}x^i\pd{x^j}+A\otimes 1), d\otimes\zeta]=\\
&=[-\sum_{i, j}(B^T)_{ij}x^i\pd{x^j}\otimes 1+ 1\otimes\varphi_{c}(A), d\otimes\zeta]\\
&=[-\sum_{i, j}(B^T)_{ij}x^i\pd{x^j}, d]\otimes\zeta+d\otimes[\varphi_{c}(A), \zeta]\\
&=\eta_{*}(B)(d)\otimes\zeta+d\otimes[\varphi_{c}(A), \zeta].
\end{align*}
Next, in order to see how the endomorphism $[\varphi_{c}(A), \cdot]$ acts on arbitrary elements of the degree-wise completed Cherednik algebra, we apply it on the generators of the Cherednik algebra $\mathbb{C}^l$, $\mathbb{C}^{l*}$ and $\rho(H)$. Let $y\in\bb C^{l\*}$ be an arbitrary linear form. Then
\begin{align*}
&[\varphi_{c}(A), y]=-\sum_{i, j}A_{ij}[y_ju_i, y]-\sum_{s\in\mc{S}}\frac{2c(s)}{1-\lambda_s}\lambda_{A, s}[s, y]\\
&=-\sum_{i, j}A_{ij}y_j[u_i, y]+\sum_{s\in\mc{S}}\frac{2c(s)}{1-\lambda_s}\lambda_{A, s}[y, s]\\
&=-\sum_{i, j}A_{ij}(u_i, y)y_j+\sum_{i, j, s}c(s) A_{ij}(u_i, \alpha_s)(\alpha_s^{\vee}, y)y_js+\sum_{s\in\mc{S}}\frac{2c(s)}{1-\lambda_s}\lambda_{A, s}(y-s\cdot y)s\\
&=-yA+\sum_{s\in\mc{S}}c(s) (\alpha_s^{\vee}, y)(\alpha_s A)s+\sum_{s\in\mc{S}}c(s)\lambda_{A, s}(\alpha_s^{\vee}, y)\alpha_ss\\
&=-yA-\sum_{s\in\mc{S}}c(s) (\alpha_s^{\vee}, y)(A\cdot\alpha_s)s+\sum_{s\in\mc{S}}c(s) (\alpha_s^{\vee}, y)(A\cdot\alpha_s)s\\
&=-yA\\
&=\theta_*(A)(y).
\end{align*}
In the third term on the fourth line of the calculation we applied Relation \ref{relation2} and then in the third term on the second to the last line we used Lemma \ref{4subtasks}, $i)$. Let $u\in\bb C^l$ an arbitrary vector. Then
\begin{align*}
&[\varphi_{c}(A), u]=-\sum_{i, j}A_{ij}[y_ju_i, u]-\sum_{s\in\mc{S}}\frac{2c(s)}{1-\lambda_s}\lambda_{A, s}[s, u]\\
&=-\sum_{i, j}A_{ij}[y_j, u]u_i+\sum_{s\in\mc{S}}\frac{2c(s)}{1-\lambda_s}\lambda_{A, s}[u, s]\\
&=\sum_{i, j}A_{ij}(u, y_j)u_i-\sum_{i, j, s\in\mc{S}}c(s) A_{ij}(u, \alpha_s)(\alpha_s^{\vee}, y_j)su_i+\sum_{s\in\mc{S}}\frac{2c(s)}{1-\lambda_s}\lambda_{A, s}(u-s\cdot u)s\\
&=Au-\sum_{s\in\mc{S}}c(s)(u, \alpha_s)s(A\alpha_s^{\vee})-\sum_{s\in\mc{S}}c(s)\frac{\lambda_{A, s}}{\lambda_s}(u, \alpha_s)s(s^{-1}\alpha_s^{\vee}s)\\
&=Au-\sum_{s\in\mc{S}}c(s)(u, \alpha_s)s(A\alpha_s^{\vee})-\sum_{s\in\mc{S}}c(s)\frac{\lambda_{A, s}}{\lambda_s}(u, \alpha_s)s\lambda_s\alpha_s^{\vee}\\
&=Au-\sum_{s\in\mc{S}}c(s)(u, \alpha_s)s(A\alpha_s^{\vee})-\sum_{s\in\mc{S}}c(s)\lambda_{A, s}(u, \alpha_s)s\alpha_s^{\vee}\\
&=Au-\sum_{s\in\mc{S}}c(s)(u, \alpha_s)s(A\cdot\alpha_s^{\vee})+\sum_{s\in\mc{S}}c(s)(u, \alpha_s)s(A\cdot\alpha_s^{\vee})\\
&=Au\\
&=\theta_*(A)(u)
\end{align*}
As was the case with the previous computation, in the third term on the fourth line we applied Relation \ref{relation1}, then in the next line we used the fact that $s^{-1}\alpha_s^{\vee}=\lambda_s$ and finally, in the third to the last line we took again advantage of Lemma \ref{4subtasks}, $i)$. Let $h\in\bb CH$. Then
\begin{align*}
&[\varphi_{c}(A), h]=-\sum_{i, j}A_{ij}[y_ju_i, h]-\sum_{s\in\mc{S}}\frac{2c(s)}{1-\lambda_s}\lambda_{A, s}[s, h]\\
&=-\sum_{i, j}A_{ij}(y_ju_ih-hy_ju_i)\\
&=-\sum_{i, j}A_{ij}(y_ju_i-(y_jh^{-1})(hu_i))h\\
&=-\sum_{i, j}A_{ij}(y_ju_i-\sum_{k,l}((h^{-1})_{jk}(h)_{li}w_kv_l)h\\
&=-\sum_{i, j}A_{ij}y_ju_ih+\sum_{i, j}\sum_{k, l}h_{li}A_{ij}h^{-1}_{jk}w_kv_lh\\
&=-\sum_{i, j}A_{ij}y_ju_ih+\sum_{k, l}(hAh^{-1})_{lk}w_kv_lh\\
&=-\sum_{i, j}A_{ij}y_ju_ih+\sum_{k, k}A_{lk}w_kv_lh\\
&=0\\
&=\theta_{*}(A)(h).
\end{align*}
For the calculation of the last bracket we first made use of Lemma \ref{4subtasks}, $iv)$ in the second line and then in the second to the last line we used the fact that since $A\in Z$, it commutes with the matrices $h$ and $h^{-1}$.\\
Since $[\varphi_{c}(A),\cdot]=\theta_*(A)$ on the generators of $\ratCherhat$ for every  $A\in\mf{z}$, it follows that both maps coincide on an arbitrary element $\zeta$ of the Cherednik algebra. A direct ramification of this fact is
\begin{align*}
 \Phi(i(A, B))&=[\Phi_{c}(i(B, A)), d\otimes\zeta]\\
 &=\eta_{\*}(B)(d)\otimes\zeta+d\otimes\theta_{\*}(A)(\zeta)\\
 &=\eta\otimes\theta_{\*}(B, A)(d\otimes\zeta),
 \end{align*}
i.e. the Lie algebra representation $(\eta\otimes\theta)_{\*}$ of $\mf{gl}(n-l, \bb C)\oplus\mf{z}$ on $\HC$ factors through $\Phi$ via the embedding $i$ defined by \eqref{lieembedding}. This concludes the proof.
\end{proof}
Recall that the bundle of formal frames attached to the normal bundle $\mc{N}$ of rank $l$ over $X_H^i$ is designated by $\coor{\mc{N}}$. It is a known fact from Section \ref{ecoor} in Appendix  \ref{Chapter2} that $\coor{\mc{N}}\rightarrow\aff{\mc{N}}$ is a transitive Harish-Chandra $(W_{n-l}\rtimes\mf{z}\otimes\hat{\OO}_{n-l}, \GL(n-l, \bb C)\times Z)$-torsor. Thus, in light of the theory exhibited in Section \ref{ecoor} $\coor{\mc{N}}\times\HC$ is a flat $\GL(n-l, \bb C)\times Z$-equivariant vector bundle over $\coor{\mc{N}}$ equipped with a flat connection $d+\adlie(\phi\circ\omega)$, where $\omega$ is the holomorphic connection $1$-form of $\coor{\mc{N}}$ with values in the Lie algebra $W_{n-l}\rtimes\mf{z}\otimes\hat{\OO}_{n-l}$. 
\begin{proposition}
$\widehat{D}_{n-l}\hat{\otimes}\widehat{\mc{D}}(\bb C^l_{\textrm{reg}})\rtimes\bb CH$ is a Harish-Chandra $(W_{n-l}\ltimes\mf{z}\otimes\hat{\OO}_{n-l}, \GL(n-l, \bb C)\times Z)$-module. 
\end{proposition}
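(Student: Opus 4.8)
The plan is to mirror the construction of the preceding proposition, transporting the Lie algebra homomorphism $\varphi_c$ along the completed Dunkl embedding $\widehat{\Theta}_c\colon\ratCherhat\hookrightarrow\widehat{\mc{D}}(\bb C^l_{\textrm{reg}})\rtimes\bb CH$ and replacing the $Z$-action $\theta$ on the Cherednik algebra by the geometric $Z$-action on the localized algebra. First I would record the group action. Since every $g\in Z$ satisfies $g\cdot\alpha_s=\xi_g\alpha_s$ for some $\xi_g\in\bb C^{\times}$ with $\xi_g\nu_g=1$, as established in the proof of Lemma \ref{centralizeraction}, the element $g$ preserves each reflecting hyperplane $\ker(\alpha_s)$ and hence the regular locus $\bb C^l_{\textrm{reg}}$. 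Consequently $Z$ acts by biholomorphisms on $\bb C^l_{\textrm{reg}}$, inducing an action on $\mc{D}(\bb C^l_{\textrm{reg}})$ by algebra automorphisms; as $Z$ commutes with $H$, this promotes after completion to an action $\vartheta\colon Z\rightarrow\Aut(\widehat{\mc{D}}(\bb C^l_{\textrm{reg}})\rtimes\bb CH)$. I would then verify that $\widehat{\Theta}_c$ is $Z$-equivariant, that is $\widehat{\Theta}_c\circ\theta(g)=\vartheta(g)\circ\widehat{\Theta}_c$: on the generators $y$ and $h$ this is immediate, while on $u\in\bb C^l$ it reduces to the identity $\vartheta(g)(D_{u})=D_{g\cdot u}$, which follows from $\vartheta(g)(1-s)=1-gsg^{-1}=1-s$ together with the scalings $\vartheta(g)(\alpha_s^{-1})=\xi_g^{-1}\alpha_s^{-1}$ and $(g\cdot u,\alpha_s)=\xi_g^{-1}(u,\alpha_s)$.

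For the Lie algebra action I would set $\psi_c:=\widehat{\Theta}_c\circ\varphi_c\colon\mf{z}\rightarrow\widehat{\mc{D}}(\bb C^l_{\textrm{reg}})\rtimes\bb CH$. Because $\widehat{\Theta}_c$ is an algebra homomorphism and $\varphi_c$ is a Lie algebra homomorphism by the preceding proposition, $\psi_c$ is automatically a Lie algebra homomorphism, so no recomputation of the bracket is required. Extending it by the same formula $v+A\otimes p\mapsto v\otimes1+p\otimes\psi_c(A)$ yields a Lie algebra homomorphism $\Psi_c\colon W_{n-l}\ltimes\mf{z}\otimes\hat{\OO}_{n-l}\rightarrow\widehat{D}_{n-l}\hat{\otimes}\widehat{\mc{D}}(\bb C^l_{\textrm{reg}})\rtimes\bb CH$, the verification being formally identical to the one carried out for $\Phi_c$, since it uses only that $\psi_c$ is a Lie algebra homomorphism together with the $W_{n-l}$-module structure of $\hat{\OO}_{n-l}$. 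Composing with the adjoint action gives the representation $\Psi=\adlie\circ\Psi_c$, and the group action is the tensor action $\eta\otimes\vartheta$ of $\GL(n-l,\bb C)\times Z$.

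It remains to verify the Harish-Chandra compatibility, namely that the differential of $\eta\otimes\vartheta$ restricted to $\mf{gl}(n-l,\bb C)\oplus\mf{z}$ factors through $\Psi$ via the embedding $i$ of \eqref{lieembedding}. Exactly as before this reduces to the single assertion $[\psi_c(A),\cdot]=\vartheta_{*}(A)$ as derivations of $\widehat{\mc{D}}(\bb C^l_{\textrm{reg}})\rtimes\bb CH$. On the image of $\widehat{\Theta}_c$ this is immediate: for $\zeta\in\ratCherhat$,
\[[\psi_c(A),\widehat{\Theta}_c(\zeta)]=\widehat{\Theta}_c([\varphi_c(A),\zeta])=\widehat{\Theta}_c(\theta_{*}(A)(\zeta))=\vartheta_{*}(A)(\widehat{\Theta}_c(\zeta)),\]
where the first equality uses that $\widehat{\Theta}_c$ is an algebra map, the second is the identity $[\varphi_c(A),\cdot]=\theta_{*}(A)$ established in the preceding proposition, and the third is the differentiated equivariance $\widehat{\Theta}_c\circ\theta_{*}(A)=\vartheta_{*}(A)\circ\widehat{\Theta}_c$. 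The main obstacle is that this settles only the generators $y_j$, $D_{u_i}$ and $h$, whereas $\widehat{\mc{D}}(\bb C^l_{\textrm{reg}})\rtimes\bb CH$ is strictly larger than the Dunkl image, being additionally generated by the inverse discriminant $\delta^{-1}$; thus compatibility cannot be transported wholesale and the localization must be handled separately. To bridge this gap I would use that both $[\psi_c(A),\cdot]$ and $\vartheta_{*}(A)$ are derivations agreeing on $\bb C[\bb C^l]$ (since the $y_j$ lie in the Dunkl image), so that the Leibniz identity applied to $\delta\cdot\delta^{-1}=1$ forces them to agree on $\delta^{-1}$, hence on all of $\bb C[[\bb C^l]][\delta^{-1}]$; agreement on $\partial_{u_i}$ then follows from agreement on $D_{u_i}$ and on its function-valued correction terms. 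As functions, the $\partial_{u_i}$ and $H$ generate the whole algebra, the two derivations coincide everywhere, which completes the proof.
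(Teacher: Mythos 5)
Your proposal is correct, but it takes a genuinely different route from the paper. The paper's proof defines the Lie algebra map $\sigma$ directly by the closed formula \eqref{sigmaembedding}, sending $v+A\otimes p$ to $v\otimes\id-p\otimes\sum_{i,j}(A^T)_{ij}y^i\pd{y^j}$ --- a plain linear vector field with \emph{no} reflection-term corrections --- verifies by a hands-on bracket computation that $\sigma$ is a Lie algebra embedding, and pairs it with the conjugation action $\tau$ of $\GL(n-l,\bb C)\times Z$; since both $\sigma$ and $\tau$ are given by globally defined conjugation/pullback formulas, the factorization of $\tau_{\*}$ through $\sigma$ can be checked on arbitrary elements, so the localization by $\delta^{-1}$ never causes trouble. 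You instead transport the structure of $\HC$ along the completed Dunkl embedding, setting $\psi_c=\widehat{\Theta}_c\circ\varphi_c$; the fact that your $\Psi_c$ coincides with the paper's $\sigma$ (i.e.\ that the Dunkl correction terms cancel against the reflection part of $\varphi_c(A)$) is precisely the computation \eqref{factorizationofsigma}, which the paper only carries out later, inside the proof of Proposition \ref{hcmorphism}, together with the $Z$-equivariance of $\id\otimes\widehat{\Theta}_c$ that the paper there dismisses as ``conspicuous'' and you actually verify. Your route therefore gets the bracket identities for free but must pay where the paper does not: the Dunkl image is a proper subalgebra of $\widehat{D}_{n-l}\hat{\otimes}\widehat{\mc{D}}(\bb C^l_{\textrm{reg}})\rtimes\bb CH$, and you correctly identify and close this gap with the derivation-extension argument via Leibniz applied to $\delta\cdot\delta^{-1}=1$ and the decomposition of $\partial_{u_i}$ through $D_{u_i}$. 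The only point you should make explicit is that passing from agreement on $\bb C[\bb C^l][\delta^{-1}]$ to agreement on the degree-wise completion requires continuity of both derivations for the filtration topology --- routine here, since $\psi_c(A)=-\sum_{i,j}A_{ij}y_j\pd{y_i}$ preserves the filtration, as does $\vartheta_{\*}(A)$. In net effect, the paper's proof is shorter and self-contained at this stage but defers real work to Proposition \ref{hcmorphism}, whereas your argument front-loads that work and renders Proposition \ref{hcmorphism} essentially a corollary of the construction.
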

\begin{proof}
We proceed here verbatim as in the proof of the preceding proposition. Start by defining a mapping $\sigma: W_{n-l}\ltimes\mf{z}\otimes\hat{\OO}_{n-l}\hookrightarrow\widehat{D}_{n-l}\hat{\otimes}\widehat{\mc{D}}(\bb C^l_{\textrm{reg}})\rtimes\bb CH$ by
\begin{equation}
\label{sigmaembedding}
v+A\otimes p\mapsto v\otimes\id-p\otimes\sum_{i, j=1}^{n-l}(A^T)_{ij}y^i\pd{y^j}.
\end{equation}
The so-defined map $\sigma$ is a Lie algebra embedding. Indeed, for any pair of elements $v+A\otimes p$ and  $w+B\otimes q$ from $W_{n-l}\ltimes\mf{z}\otimes\hat{\OO}_{n-l}$, we have
\begin{align*}
[\sigma(v+A\otimes p), &\sigma(w+B\otimes q)]=[v\otimes1-p\otimes\hspace{-0.32em}\sum_{i, j=1}^{n-l}(A^T)_{ij}y^i\pd{y^j}, w\otimes1-q\otimes\hspace{-0.57em}\sum_{k,m =1}^{n-l}\hspace{-0.22em}(B^T)_{km}y^k\pd{y^m}]\\
&=[v, w]\otimes\id-v(q)\otimes\sum_{k, m=1}^{n-l}(B^T)_{km}y^k\pd{y^m}+w(p)\otimes\sum_{i, j=1}^{n-l}(A^T)_{ij}y^i\pd{y^j}+\\
&\quad\quad\quad\quad\quad\quad\quad\quad\quad\quad\quad\quad\quad\quad\quad\quad+pq\otimes\sum(A^T)_{ij}(B^T)_{km}[y^i\pd{y^j}, y^k\pd{y^m}]\\
&=[v, w]\otimes\id+v(q)\otimes\big(-\hspace{-0.25em}\sum_{k, m=1}^{n-l}(B^T)_{km}y^k\pd{y^m}\big)+w(p)\otimes\sum_{i, j=1}^{n-l}(A^T)_{ij}y^i\pd{y^j}+\\
&\quad\quad\quad\quad\quad\quad\quad\quad\quad\quad\quad\quad\quad\quad\quad\quad\quad+pq\otimes\big(-\sum_{i, j=1}^{n-l}([A, B]^T)_{ij}y^i\pd{y^j}\big)\\
&=\sigma([v, w]+[A, B]\otimes pq+B\otimes v(q)-A\otimes w(p))\\
&=\sigma([v+A\otimes p, w+B\otimes q]).
\end{align*}  
Moreover, injectivity follows directly from the definition of $\sigma$. 
Successive composition of $\sigma$ with the adjoint action $\adlie$ of $\widehat{D}_{n-l}\hat{\otimes}\widehat{\mc{D}}(\bb C^l_{\textrm{reg}})\rtimes\bb CH$ on itself delivers the Lie algebra map 
\begin{align}
\label{Psi_representation}
\Psi: W_{n-l}\ltimes\mf{z}\otimes\hat{\OO}_{n-l}&\rightarrow\Der(\widehat{D}_{n-l}\hat{\otimes}\widehat{D}(\bb C^l_{\textrm{reg}})\rtimes\bb CH)\nonumber\\
v+A\otimes p&\mapsto\adlie(\sigma(a+A\otimes p)). 
\end{align}
Define a left action $\tau$ of the Lie group $\GL(n-l, \bb C)\times Z$ on $\widehat{D}_{n-l}\hat{\otimes}\widehat{\mc{D}}(\bb C^l_{\textrm{reg}})\rtimes\bb CH$ by 
\begin{align}
\tau(A, B)(d\otimes\xi):=A^{-1\*}\circ d\circ A^{\*}\otimes\xi+d\otimes B^{-1\*}\circ\xi\circ B^{\*}
\end{align}
for $(A, B)\in\GL(n-l, \bb C)\times Z$ and $d\otimes\xi\in\widehat{D}_{n-l}\hat{\otimes}\widehat{\mc{D}}(\bb C^l_{\textrm{reg}})\rtimes\bb CH$. 
Finally, the differential $\tau_{\*}$ of the representation map factors through $\sigma$ via the embedding $i:\mf{gl}(n-l)\oplus\mf{z}\hookrightarrow W_{n-l}\ltimes\mf{z}\otimes\hat{\OO}_{n-l}$ given by \eqref{lieembedding} which concludes the proof. 
\end{proof} 
\begin{proposition}
\label{hcmorphism}
The $\bb C$-algebra morphism $\id\otimes\widehat{\Theta}_{c}: \HC\hookrightarrow\widehat{D}_{n-l}\hat{\otimes}\widehat{\mc{D}}(\bb C^l_{\textrm{reg}})\rtimes\bb CH$, where $\Theta_c$ denotes the Dunkl embedding, is a morphism of $(W_{n-l}\ltimes\mf{z}\otimes\hat{\OO}_{n-l}, \GL(n-l, \bb C)\times Z)$-modules.
\end{proposition}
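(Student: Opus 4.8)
The plan is to observe that both Harish--Chandra structures are \emph{inner}: the source action is $\Phi(\xi)=\adlie(\Phi_{c}(\xi))$ and the target action is $\Psi(\xi)=\adlie(\sigma(\xi))$, while the two group actions $\eta\otimes\theta$ and $\tau$ operate by conjugation-type automorphisms on each tensor factor separately. Since $\id\otimes\widehat{\Theta}_{c}$ is a homomorphism of $\bb C$-algebras it preserves commutators, so for every $\xi\in W_{n-l}\ltimes\mf{z}\otimes\hat{\OO}_{n-l}$ and every $a\in\HC$ one has $(\id\otimes\widehat{\Theta}_{c})([\Phi_{c}(\xi),a])=[(\id\otimes\widehat{\Theta}_{c})(\Phi_{c}(\xi)),(\id\otimes\widehat{\Theta}_{c})(a)]$. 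Consequently the Lie-algebra intertwining property reduces to the single identity $(\id\otimes\widehat{\Theta}_{c})\circ\Phi_{c}=\sigma$, and the group intertwining property reduces, since $\id\otimes\widehat{\Theta}_{c}$ is the identity on the $\widehat{D}_{n-l}$-factor and $\eta$ acts only there, to the $Z$-equivariance of the Dunkl embedding on the second factor. I would therefore treat these two reductions in turn.

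For the Lie-algebra identity I first note that on $W_{n-l}$ both $\Phi_{c}$ and $\sigma$ send $v$ to $v\otimes\id$, which is untouched by $\id\otimes\widehat{\Theta}_{c}$, so there is nothing to check. On the $\mf{z}$-summand it remains to verify $\widehat{\Theta}_{c}(\varphi_{c}(A))=-\sum_{i,j}(A^{T})_{ij}y^{i}\pd{y^{j}}$. Substituting the Dunkl values $\widehat{\Theta}_{c}(y_{j})=y^{j}$ and $\widehat{\Theta}_{c}(u_{i})=\partial_{u_{i}}-\sum_{s}\frac{2c(s)}{1-\lambda_{s}}\frac{(u_{i},\alpha_{s})}{\alpha_{s}}(1-s)$ splits the image into a principal part $-\sum_{i,j}A_{ij}y^{j}\partial_{u_{i}}$, which after $\partial_{u_{i}}=\pd{y^{i}}$ and relabelling is exactly $-\sum_{i,j}(A^{T})_{ij}y^{i}\pd{y^{j}}$, plus two reflection contributions: the Dunkl correction coming from $-\sum_{i,j}A_{ij}y_{j}u_{i}$ and the term $\sum_{s}\frac{2c(s)}{1-\lambda_{s}}\lambda_{A,s}(1-s)$ already present in $\varphi_{c}(A)$. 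The crux is that these cancel: collapsing the sum over $i$ turns $\sum_{i}A_{ij}(u_{i},\alpha_{s})$ into the $j$-th component of $A^{T}\alpha_{s}$, and since the differential of the contragredient action is $\theta_{\*}(A)=-A^{T}$ one has by Lemma \ref{4subtasks} i) that $A^{T}\alpha_{s}=-\lambda_{A,s}\alpha_{s}$; hence $\sum_{j}(A^{T}\alpha_{s})_{j}y^{j}=-\lambda_{A,s}\alpha_{s}$, the linear form $\alpha_{s}$ cancels the denominator, and the Dunkl correction becomes $-\sum_{s}\frac{2c(s)}{1-\lambda_{s}}\lambda_{A,s}(1-s)$, annihilating the second term.

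For the group equivariance it suffices to show that $\Theta_{c}$ intertwines the $Z$-action $\theta$ with the conjugation automorphism $\xi\mapsto B^{-1\*}\circ\xi\circ B^{\*}$ of $B\in Z$, which I would check on the three families of generators. On $\bb C^{l\*}$ conjugating the multiplication operator $y$ gives multiplication by $y\circ B^{-1}=B\cdot y$, matching $\theta(B)(y)$; on $\bb CH$ conjugation returns $BsB^{-1}=s$ because $B$ centralises $H$, matching $\theta(B)(s)=s$. The only substantial case is $\bb C^{l}$, where one must verify $B^{-1\*}\circ D_{u}\circ B^{\*}=D_{B\cdot u}$. Here the derivative transforms as $\partial_{u}\mapsto\partial_{Bu}$, while conjugation turns the multiplication by $\tfrac{1}{\alpha_{s}}$ into multiplication by $\tfrac{1}{B\cdot\alpha_{s}}=\xi_{B}^{-1}\tfrac{1}{\alpha_{s}}$ and fixes $(1-s)$ (again by $BsB^{-1}=s$). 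Combined with $(Bu,\alpha_{s})=(u,B^{-1}\cdot\alpha_{s})=\xi_{B}^{-1}(u,\alpha_{s})$, where the multiplicativity $\xi_{B^{-1}}=\xi_{B}^{-1}$ of the character from Lemma \ref{centralizeraction} is used, the single factor $\xi_{B}^{-1}$ produced by the conjugation matches the one hidden in $(Bu,\alpha_{s})$, so the conjugated operator is precisely $\partial_{Bu}-\sum_{s}\frac{2c(s)}{1-\lambda_{s}}\frac{(Bu,\alpha_{s})}{\alpha_{s}}(1-s)=D_{B\cdot u}$.

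The main obstacle is concentrated in the two cancellations above, which are in fact two faces of one phenomenon: the centraliser $Z$ scales each root and coroot by reciprocal (infinitesimal, respectively group) characters and commutes with every reflection, so the reflection part of each Dunkl operator is absorbed without permuting the index set $\mc{S}$. The one place demanding genuine care is the bookkeeping of the transpose between the defining $\GL(l,\bb C)$-action on $\bb C^{l}$ and the contragredient action on $\bb C^{l\*}$, which converts $A\cdot\alpha_{s}=\lambda_{A,s}\alpha_{s}$ into $A^{T}\alpha_{s}=-\lambda_{A,s}\alpha_{s}$; once the conventions of Lemmas \ref{4subtasks} and \ref{centralizeraction} are pinned down, everything else extends from generators to the completions by multiplicativity and continuity.
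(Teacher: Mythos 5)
Your proposal is correct and follows essentially the same route as the paper: the heart of both arguments is the factorization $(\id\otimes\widehat{\Theta}_{c})\circ\Phi_{c}=\sigma$ (the paper's equation \eqref{factorizationofsigma}), verified on the $\mf{z}$-summand by the same cancellation of the reflection terms via $A\cdot\alpha_{s}=\lambda_{A,s}\alpha_{s}$ from Lemma \ref{4subtasks}, after which bracket-preservation by the algebra morphism $\id\otimes\widehat{\Theta}_{c}$ transports the identity to the adjoint actions $\Phi$ and $\Psi$. The only difference is cosmetic: where the paper dismisses the $\GL(n-l,\bb C)\times Z$-equivariance as conspicuous, you spell out the generator-by-generator check $B^{-1\*}\circ D_{u}\circ B^{\*}=D_{B\cdot u}$, which is a sound (and welcome) elaboration rather than a different method.
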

\begin{proof}
The $\GL(n-l)\times Z$-equivariance  of $\id\otimes\Theta_c$ is conspicuous. As for the $W_{n-l}\ltimes\mf{z}\otimes\hat{\OO}_{n-l}$-equivariance of $\id\otimes\hat{\Theta}_c$, take an arbitrary element $v+A\otimes p$ in $ W_{n-l}\ltimes\mf{z}\otimes\hat{\OO}_{n-l}$ and compute 
\begin{align}
\label{factorizationofsigma}
&\id\otimes\Theta_c\circ\Phi_c(v+A\otimes p)=\id\otimes\hat{\Theta}_c(v\otimes\id+p\otimes\big(-\sum_{i, j}A_{ij}y_ju_i+\sum_{s\in\mc{S}}\frac{2c(s)}{1-\lambda_s}\lambda_{A, s}(1-s)\big)\Big)\nonumber\\
&=v\otimes\id+p\otimes\Big(\hspace{-0.3em}-\sum_{i, j}A_{ij}y_j\big(\partial_{u_i}-\sum_{s\in\mc{S}}\frac{2c(s)(u_i, \alpha_s)}{(1-\lambda_s)\alpha_s}(1-s)\big)+\sum_{s\in\mc{S}}\frac{2c(s)}{1-\lambda_s}\lambda_{A, s}(1-s)\Big)\nonumber\\
&=v\otimes\id+p\otimes\Big(\hspace{-0.3em}-\sum_{i, j}A_{ij}y_j\pd{y_i}-\sum_{s\in\mc{S}}\frac{2c(s)(A\cdot\alpha_s)}{(1-\lambda_s)\alpha_s}(1-s)+\sum_{s\in\mc{S}}\frac{2c(s)}{1-\lambda_s}\lambda_{A, s}(1-s)\Big)\nonumber\\
&=v\otimes\id+p\otimes\Big(\hspace{-0.3em}-\sum_{i, j}A_{ij}y_j\pd{y_i}-\sum_{s\in\mc{S}}\frac{2c(s)\lambda_{A, s}}{(1-\lambda_s)}(1-s)+\sum_{s\in\mc{S}}\frac{2c(s)}{1-\lambda_s}\lambda_{A, s}(1-s)\Big)\nonumber\\
&=v\otimes\id-p\otimes\sum_{i, j}A_{ij}y_j\pd{y_i}\nonumber\\
&=\sigma(v+A\otimes p)
\end{align}
where $\Phi_{c}$ is the Lie algebra map, defined in  \eqref{Phi_cembedding}. 
This implies that the injective Lie algebra homomorphism $\sigma: W_{n-l}\ltimes\mf{z}\otimes\hat{\OO}_{n-l}\hookrightarrow\widehat{D}_{n-l}\hat{\otimes}\widehat{\mc{D}}(\bb C^l_{\textrm{reg}})\rtimes\bb CH$, given by \eqref{sigmaembedding}, factors through the Lie algebra map $\Phi_{c}$,
by means of the morphism $\id\otimes\hat{\Theta}_c$. We utilise the factorization \eqref{factorizationofsigma} of $\sigma$ in the evaluation of the following mapping  
\begin{align*}
\id\otimes\hat{\Theta}_c\circ\Phi(v+A\otimes p)(d\otimes\zeta)&=\id\otimes\hat{\Theta}_c\big([\Phi_c(v+A\otimes p), d\otimes\zeta]\big)\\
&=[\id\otimes\hat{\Theta}_c\circ\Phi_c(v+A\otimes p), \id\otimes\hat{\Theta}_c(d\otimes\zeta)]\\
&=[\sigma(v+A\otimes p),  \id\otimes\hat{\Theta}_c(d\otimes\zeta)]\\
&=\Psi(v+A\otimes A)\circ\id\otimes\hat{\Theta}_c(d\otimes\zeta))
\end{align*}
where the $\Phi$ and $\Psi$ are the Lie algebra homomorphism defined in \eqref{Phi_representation} and \eqref{Psi_representation}, respectively, which affirms the $W_{n-l}\ltimes\mf{z}\otimes\hat{\OO}_{n-l}$-equivariance of $\id\otimes\hat{\Theta}_c$. With that shown the claim of the proposition is proved. 
\end{proof}
\section{Localization associated to the torsor of local formal coordinates at the strata in $X$} 
Let $H$ be a subgroup of $G$ of type $(H)$ such that $\Stab(x)=H$ for some $x\in X$ and let $X_H^i$ be a connected component of $X_{(H)}$ of codimension $l$. This section is devoted to the study of the localization of the Harish-Chandra module $\HC$ associated to the Harish-Chandra $(W_{n-l}\rtimes\mf{z}\otimes\hat{\OO}_{n-l}, \GL(n-l, \bb C)\times Z)$-torsor $\coor{\mc{N}}$ over $\aff{\mc{N}}$ for all strata in $X$. In the discussion we basically distinguish between two main cases: the principal stratum corresponding to the trivial subgroup, denoted $\mathring{X}$, and any other stratum $X_H^i$ of codimension $1$ or higher corresponding to an arbitrary nontrivial proper subgroup $H$ of $G$. In the ensuing two subsections we consequently handle 
each of both cases. 
\subsection{Localisation on the principal stratum $\mathring{X}$}
Throughout this subsection let $U$ denote an open set in $X$, which wholly lies within the principal stratum $\mathring{X}$. Consequently, it is open in the subspace topology of $\mathring{X}$, too. The normal bundle of $\mathring{X}$ has rank $l=0$. Therefore, $\HC$ reduces to $\hat{D}_n$ in that case, which is a $(W_n, \GL(n, \bb C))$-module, and $\coor{\mc{N}|_{U}}=\coor{U}$. The flat connection on the bundle $\coor{U}\times\hat{D}_n\rightarrow\coor{U}$ has the form $d+\adlie\circ j\circ\omega=d+[\omega,\cdot]$ where $j$ denotes the injection of $W_n$ in $\hat{D}_n$.  
\begin{lemma}
\label{diffoperatorflatsec}
Given a holomorphic differential operator $D\in\mc{D}(U)$, for every $[\phi]\in\coor{U}$, the assignment 
\begin{equation}
\label{flatsec}
s: [\phi]\mapsto T_{\mathbf{x}=0}\big(\phi^{\*}\circ D\circ\phi^{-1\*}\big),
\end{equation}
where $T_{\mathbf{x}=0}$ denotes the Taylor series operator at $\mathbf{x}=0$, defines a flat section of the bundle $\coor{U}\times\hat{D}_n$.  
\end{lemma}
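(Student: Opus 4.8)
The plan is to establish, in order, that $s$ is well defined, that it is a holomorphic section of $\coor{U}\times\hat{D}_n$, and that it is parallel for the connection $d+[j(\omega),\cdot]$. The first two points are formal. A point $[\phi]\in\coor{U}$ lying over $p\in U$ is the $\infty$-jet of a formal coordinate system $\phi\colon(\bb C^n,0)\xrightarrow{\sim}(\hat{X}_p,p)$, so that $\phi^{\*}\circ D\circ\phi^{-1\*}$ is a differential operator on $\hat{\OO}_n$ whose germ at the origin depends only on the class $[\phi]$ and on the germ of $D$ at $p$. As $D$ is a holomorphic operator of finite order, this germ is determined by finitely many derivatives of its coefficients, whence $T_{\mathbf{x}=0}(\phi^{\*}\circ D\circ\phi^{-1\*})\in\hat{D}_n$ is well defined and depends holomorphically on $[\phi]$; hence $s$ is a holomorphic section.

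For parallelism I would use that, by the theory of Section \ref{ecoor}, $\coor{U}\to U$ is a transitive Harish--Chandra torsor, so its Maurer--Cartan form $\omega$ is pointwise a linear isomorphism $T_{[\phi]}\coor{U}\xrightarrow{\sim}W_n$ and the fundamental vector fields $\xi_v$, $v\in W_n$, frame the tangent bundle. It therefore suffices to verify $ds(\xi_v)+[j(\omega(\xi_v)),s]=0$ for every $v$. Realizing $\xi_v$ by the curve $t\mapsto[\phi_t]$ with $\phi_t:=\phi\circ\psi_t$, where $\psi_t$ is the formal flow of $v$ on $(\bb C^n,0)$, and using the contravariance of pullback, namely $(\phi\circ\psi_t)^{\*}=\psi_t^{\*}\circ\phi^{\*}$ and $(\phi\circ\psi_t)^{-1\*}=\phi^{-1\*}\circ\psi_t^{-1\*}$, one obtains
\[
\phi_t^{\*}\circ D\circ\phi_t^{-1\*}=\psi_t^{\*}\circ\widetilde{D}\circ\psi_t^{-1\*},\qquad\widetilde{D}:=\phi^{\*}\circ D\circ\phi^{-1\*},
\]
which isolates the entire $t$-dependence into the conjugating factors $\psi_t^{\*}$ and $\psi_t^{-1\*}$.

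The heart of the argument is to differentiate this conjugation at $t=0$. As operators on $\hat{\OO}_n$ one has $\frac{d}{dt}\big|_{0}\psi_t^{\*}=j(v)$ and $\frac{d}{dt}\big|_{0}\psi_t^{-1\*}=-j(v)$, since $\psi_t^{\*}f=f\circ\psi_t$ has derivative $v(f)$ and $\psi_t^{-1}$ is the flow of $-v$. The Leibniz rule then gives $\frac{d}{dt}\big|_{0}\big(\psi_t^{\*}\widetilde{D}\psi_t^{-1\*}\big)=[j(v),\widetilde{D}]$, and applying the Taylor operator $T_{\mathbf{x}=0}$, which is continuous and compatible with the product of $\hat{D}_n$, yields $ds(\xi_v)=[j(v),s([\phi])]$. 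Up to the sign discussed below this is exactly $-[j(\omega(\xi_v)),s]$, so the covariant derivative vanishes on each $\xi_v$ and $s$ is flat.

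The step demanding the most care is matching the sign: with the action $\xi_v=\frac{d}{dt}\big|_0[\phi\circ\psi_t]$ the variation of $s$ comes out as $+[j(v),s]$, so flatness forces the appendix's normalization of the Maurer--Cartan form to read $\omega(\xi_v)=-v$ (equivalently, the $W_n$-action on $\coor{U}$ is realized by $\phi\circ\psi_{-t}$); one must check this is consistent with the conventions of Section \ref{ecoor} under which the connection was written as $d+[j(\omega),\cdot]$. The remaining technical point is to justify interchanging $\frac{d}{dt}\big|_0$ with the formal Taylor operator and with the completed product on $\hat{D}_n$, which is immediate once everything is read off in finite jets because $D$ has finite order. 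Finally, transitivity of the torsor is what lets us treat base and fibre directions uniformly: a field $v$ with $v(0)\neq0$ moves the base point, so the single family $\xi_v$ already spans all of $T_{[\phi]}\coor{U}$ and no separate horizontal computation is needed.
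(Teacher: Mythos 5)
Your proposal is correct and follows essentially the same route as the paper's proof: both differentiate the conjugated operator along a curve $[\phi\circ\rho_t]$ (your $\phi\circ\psi_t$), convert the $t$-derivative into the commutator $[j(v),s([\phi])]$ via the Leibniz rule and the multiplicativity of $T_{\mathbf{x}=0}$, and then identify $v=-\omega(X)$ using exactly the sign convention of the $W_n$-valued Maurer--Cartan form from the appendix. The only divergence is one of detail, not of method: the paper proves the compatibility $T_{\mathbf{x}=0}(DD')=T_{\mathbf{x}=0}(D)\,T_{\mathbf{x}=0}(D')$ at length by induction on the order filtration of $\mc{D}(U)$, whereas you correctly flag this step but dispatch it as a finite-jet verification.
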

\begin{proof}
Recall that the algebra $\mc{D}(U)$, generated by holomorphic functions $\OO(U)$ and holomorphic vector fields $\mc{V}(U)$, has an increasing filtration $\OO(O)=\mc{D}_0(U)\subset\mc{D}_1(U)\subset\mc{D}_2(U)\subset\dots$, where $\mc{D}_1(U)=\OO(U)+\mc{V}(U)$ and $\mc{D}_n(U)=\mc{D}_1(U)\cdot\dots\cdot\mc{D}_1(U)$ ($n$-times), for any $n\in\bb Z_{\geq1}$. By definition the Taylor series operator $T_{\mathbf{x}=0}$ acts on a differential operator of arbitrary order through Taylor expansion of its coefficients. 
It is well-known that the Taylor series operator respects multiplication of functions. We now demonstrate that it respects composition of differential operators of arbitrary order, as well. To that aim, we first show by induction that 
\begin{equation}
\label{induction}
T_{\pmb{x}=0}(D_{1}\cdot\dots\cdot D_{n})=T_{\pmb{x}=0}(D_{1})\cdot\dots\cdot T_{\pmb{x}=0}(D_{n})
\end{equation}
for every $n\in\bb N_{0}$ and $D_{1}, \dots, D_{n}\in\mc{D}_1(U)$. Indeed, take first and $m$-th order differential operators $D_1:=\sum_{i=1}^mf_i\pd{x^i}$ with $f_i\in\OO(U)$ for all $i=1,\dots, n$, and $D_2:=\sum_{|\alpha|\leq m}g_{\alpha}\partial^{\alpha}$ with $g_{\alpha}\in\OO(U)$ for every $\alpha\in\bb N^n$ with $|\alpha|\leq m$. By $\bb C$-linearity of the Taylor series operator, Leibniz's rule and Schwartz's Theorem, we have
$T_{\mathbf{x}=0}(D_1D_2)=T_{\mathbf{x}=0}\big(\sum_{i=1}^n\sum_{\alpha}(f_i\frac{\partial g_{\alpha}}{\partial x^i}+f_ig_{\alpha}\pd{x^i})\partial^{\alpha}\big)\nonumber
=T_{\mathbf{x}=0}(D_1)T_{\mathbf{x}=0}(D_2)$.
From this we infer under the assumption $T_{\mathbf{x}=0}(D_2\cdots D_{n+1})=T_{\mathbf{x}=0}(D_2)\cdots T_{\mathbf{x}=0}(D_{n+1})$ for $D_2, \dots, D_{n+1}\in\mc{D}_1(U)$ that $T_{\mathbf{x}=0}(D_1\cdots D_{n+1})=T_{\mathbf{x}=0}(D_1)T_{\mathbf{x}=0}(D_2)\cdots T_{\mathbf{x}=0}(D_{n+1})$ for $D_1, D_2, \dots, D_{n+1}\in\mc{D}_1(U)$
which corroborates the validity of \eqref{induction}. Now, for differential operators $D, D'\in\mc{D}(U)$ of order $m\geq1$ and $n\geq1$, respectively, the filtration of $\mc{D}(U)$ 
stipulates the existence of first order differential operators $D_1, \dots, D_m$ and $D_1', \dots, D_n'$ satisfying the equalities $D=D_1\cdots D_m$ and $D'=D_1'\cdots D_n'$, respectively. Then, with the help of \eqref{induction} we check that \begin{align}
\label{taylormult}
T_{\mathbf{x}=0}(DD')
&=T_{\mathbf{x}=0}(D_1)\cdots T_{\mathbf{x}=0}(D_m)T_{\mathbf{x}=0}(D_1')\cdots T_{\mathbf{x}=0}(D_n')\nonumber\\
&=T_{\mathbf{x}=0}(D)T_{\mathbf{x}=0}(D').
\end{align}
This demonstrates that $T_{\mathbf{x}=0}$ is compatible with composition of arbitrary differential operators. With this in mind and the $\bb C$-linearity of the Taylor series operator, the mapping $\mc{D}(U)\rightarrow\hat{D}_n$ given by $D\mapsto T_{\mathbf{x}=0}(D)$, for any differential operator $D\in\mc{D}(U)$, is actually a $\bb C$-algebra homomorphism.  
For any $X\in T_{[\phi]}\coor{U}$, let $[\phi_t]$ be a path in $\coor{U}$ such that $[\frac{d}{dt}|_{t=0}\phi_t]=X$ and $[\phi_{t=0}]=[\phi]$. Introduce $\rho_t:=\phi^{-1}\circ\phi_t$, where the inverse of $\phi$ is taken on some small region of $U\subset\mathring{X}$, on which the inverse function theorem holds true. Then,  
 \begin{align*}
 (ds_{[\phi]}, X)&=\frac{d}{dt}\Big|_{t=0}s([\phi\circ\rho_t])\nonumber\\
 &=\frac{d}{dt}\Big|_{t=0}T_{\mathbf{x}=0}\big(\rho_t^{\*}\circ(\phi^{\*}\circ D\circ\phi^{-1\*})\circ\rho_t^{-1\*}\big)\nonumber\\
 &=T_{\mathbf{x}=0}\big( \frac{d}{dt}\Big|_{t=0}\rho_t^{\*}(\phi^{\*}\circ D\circ\phi^{-1\*})\circ\id-\id\circ(\phi^{\*}\circ D\circ\phi^{-1\*})\frac{d}{dt}\Big|_{t=0}\rho_t^{\*}\big)\nonumber\\
&=[T_{\mathbf{x}=0}(\frac{d}{dt}\Big|_{t=0}\rho_t^{\*}), T_{\mathbf{x}=0}(\phi^{\*}\circ D\circ\phi^{-1\*})]\nonumber\\
&=[-\omega(X), s([\phi])]
 \end{align*}
where in the second to the last line we implicitly used \eqref{taylormult} and in the last line we recalled the definition of the $W_{n}$-valued connection $(1, 0)$-form $X\mapsto T_{\mathbf{x}=0}\big(-(d\phi_x)^{-1}(\frac{d}{dt}|_{t=0}\phi_t)\big)$. The claim follows.
 \end{proof}
The succeeding proposition yields an important constraint on the flat sections of $\coor{U}\times\widehat{D}_n$.
  \begin{proposition} 
 \label{determiningcoeff}
A flat section $s: [\phi]\mapsto \sum_{|\A|\leq m}\big(\sum_{\B\in\bb Z_{\geq0}^n}f_{\A\B}([\phi])x^{\B}\big)\partial^{\A}$ of the trivial holomorphic pro-finite vector bundle $\coor{U}\times\widehat{D}_{n}$ with respect to the flat connection $\nabla:=d+[\omega, \cdot]$ is an $\Aut_n$-equivariant map. It is uniquely determined by the  family of holomorphic maps $\{f_{\A\pmb{0}}\}_{|\A|\leq m}$ on $\coor{U}$. 
 \end{proposition}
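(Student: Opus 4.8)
The plan is to extract both assertions directly from the flatness equation $\nabla s=ds+[\omega,s]=0$, where the bracket denotes the $W_n$-action on $\widehat{D}_n$ obtained from the injection $j\colon W_n\hookrightarrow\widehat{D}_n$ and the adjoint action. The crucial structural input, available because $\coor{U}$ is a \emph{transitive} Harish-Chandra torsor, is that the connection form is a pointwise (pro-finite) linear isomorphism $\omega\colon T_{[\phi]}\coor{U}\xrightarrow{\sim}W_n$. This lets me probe $\nabla s=0$ along the tangent direction mapping under $\omega$ to any prescribed element of $W_n$; I will treat separately the vertical piece $\Lie(\Aut_n)=W_n^{\ge 0}$ and the translation piece $W_n^{-1}=\Span(\partial_{x^1},\dots,\partial_{x^n})$.

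For the $\Aut_n$-equivariance, I first note that $\widehat{D}_n$ carries a genuine $\Aut_n$-action: a formal coordinate change $g$ acts by conjugation $D\mapsto g^{\*}\circ D\circ g^{-1\*}$, an algebra automorphism whose differential along $a\in\Lie(\Aut_n)$ is precisely $[j(a),\cdot\,]$, so that this action integrates the corresponding part of the $W_n$-action. Letting $a^{\sharp}$ be the fundamental vector field of $a\in\Lie(\Aut_n)$ on the principal bundle $\coor{U}$, the restriction of $\omega$ to the fibres is the Maurer-Cartan form of $\Aut_n$, hence $\omega(a^{\sharp})=a$. Evaluating $\nabla s=0$ on $a^{\sharp}$ yields the infinitesimal equivariance $a^{\sharp}(s)=-[j(a),s]=-a\cdot s$. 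Since $\Aut_n$ is connected---being the semidirect product of the connected group $\GL(n,\bb C)$ with a pro-unipotent group---integrating this identity along one-parameter subgroups gives the global $\Aut_n$-equivariance of $s$ with respect to the conjugation action.

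For the determination by $\{f_{\A\pmb{0}}\}$, let $X_i$ be the unique tangent field on $\coor{U}$ with $\omega(X_i)=\partial_{x^i}$. As $\partial_{x^i}$ acts on $\widehat{D}_n$ by the commutator, a direct computation gives
\begin{equation*}
[\partial_{x^i},\,x^{\B}\partial^{\A}]=\B_i\,x^{\B-e_i}\partial^{\A}.
\end{equation*}
Writing $s=\sum_{|\A|\le m}\big(\sum_{\B}f_{\A\B}x^{\B}\big)\partial^{\A}$ and using that the $f_{\A\B}$ are scalar functions while the $x^{\B}\partial^{\A}$ are constant in $\widehat{D}_n$, evaluation of $\nabla_{X_i}s=0$ and comparison of the coefficients of $x^{\B}\partial^{\A}$ produce the recursion
\begin{equation*}
X_i\big(f_{\A\B}\big)=-(\B_i+1)\,f_{\A,\B+e_i}.
\end{equation*}
The Maurer-Cartan equation $d\omega+\tfrac12[\omega,\omega]=0$ together with $[\partial_{x^i},\partial_{x^j}]=0$ in $W_n$ forces $[X_i,X_j]=0$, so iterating the recursion starting from $\B=\pmb{0}$ is unambiguous and yields the closed form
\begin{equation*}
f_{\A\B}=\frac{(-1)^{|\B|}}{\B!}\,X^{\B}\big(f_{\A\pmb{0}}\big),\qquad X^{\B}:=X_1^{\B_1}\cdots X_n^{\B_n}.
\end{equation*}
Since the order in $\partial$ is preserved by the recursion, every coefficient, and hence $s$ itself, is determined by the family $\{f_{\A\pmb{0}}\}_{|\A|\le m}$, the bound $|\A|\le m$ serving only to guarantee finite order.

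The main obstacle is the careful handling of the pro-finite analytic setting. One must justify that $\omega$ is genuinely an isomorphism onto all of $W_n$, so that the exact translation and vertical fields $X_i$ and $a^{\sharp}$ exist as honest vector fields on the pro-finite manifold $\coor{U}$, and that the recursion---which operates one $\B$-degree at a time---is compatible with the inverse-limit topology defining $\widehat{D}_n$. A secondary point requiring attention is the passage from infinitesimal to global $\Aut_n$-equivariance, where connectedness of $\Aut_n$ and convergence of the exponential on the pro-unipotent factor are invoked; the reductive factor $\GL(n,\bb C)$ presents no difficulty, as it already acts within the Harish-Chandra module structure.
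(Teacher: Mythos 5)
Your proof is correct. The equivariance half is essentially the paper's own argument in different clothing: both evaluate flatness on vertical directions to get the infinitesimal statement (the paper via explicit one-parameter families $[\rho_t]^k$ at each finite jet level $k$, concluding level-by-level before passing to the inverse limit; you via fundamental vector fields $a^{\sharp}$ with $\omega(a^{\sharp})=a$, then integrating using connectedness of $\Aut_n=\GL(n,\bb C)\ltimes\Aut_n^+$), so there is nothing to choose between them. Where you genuinely diverge is the uniqueness half. The paper probes flatness with an \emph{arbitrary} tangent vector, expands $\omega(X)=\sum_{j,\M}\xi^j_{\M}x^{\M}\pd{x^j}$ with all its coefficients, compares coefficients of $x^{\B}\partial^{\A}$ in \eqref{originalrecursiveformula}, and then, choosing $n$ tangent vectors whose matrix $(\xi^{rj}_{\pmb 0})$ of constant terms is invertible, solves the resulting linear system by Cramer's rule to obtain the recursions \eqref{recursiveformula1} and \eqref{recursiveformula2}, with the cases $\A=\pmb 0$ and $\A\geq\E_k$ treated separately. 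You instead invoke the transitivity of the torsor at the outset to define global vector fields $X_i:=\omega^{-1}(\pd{x^i})$; since $\pd{x^i}$ is a pure translation, the bracket $[\pd{x^i},x^{\B}\partial^{\A}]=\beta_i\,x^{\B-e_i}\partial^{\A}$ never shifts $\A$, the matrix $(\xi^{rj}_{\pmb 0})$ becomes the identity and all $\xi^j_{\M}$ with $\M>\pmb 0$ vanish, so the paper's two case distinctions and Cramer's rule collapse into the single recursion $X_i(f_{\A\B})=-(\beta_i+1)f_{\A,\B+e_i}$ with the closed form $f_{\A\B}=\tfrac{(-1)^{|\B|}}{\B !}X^{\B}(f_{\A\pmb 0})$. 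This extracts strictly more than the paper does (explicit reconstruction of $s$ from $\{f_{\A\pmb 0}\}_{|\A|\leq m}$ rather than bare determination), at the price of two points you rightly flag and correctly discharge: the existence of the $X_i$ as honest holomorphic vector fields on the pro-finite manifold $\coor{U}$, which is exactly what transitivity of the Harish-Chandra torsor provides (the paper's invertibility hypothesis on $(\xi^{rj}_{\pmb 0})$ is the same transitivity in disguise), and the commutativity $[X_i,X_j]=0$, which your Maurer--Cartan computation together with injectivity of $\omega$ settles and which the paper never needs since it proves no closed formula. Your normalized frame buys brevity and an explicit inversion; the paper's arbitrary-frame computation is coordinate-free on $\coor{U}$ and is the version whose steps transfer verbatim to the bundle $\coor{\mc{N}}\times\HC$ in Proposition \ref{uniquenessoffs}, though your argument would generalize there just as readily.
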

  \begin{proof}
We start by showing that flat sections of $\coor{\mathring{X}}\times\widehat{D}_n$  are $\Aut_n$-equivariant. Let $\rho_t: \bb C^n\rightarrow\bb C^n$ be a family of germs of biholomorphisms of $\bb C^n$ fixing the origin. Then $[\rho_t]^k$ represents a smooth curve in $\Aut_{n,k}$ such that $\rho_{t=0}=\id$ and $[\dot\rho_{t=0}]^k\in W_{n, k}^0=\oplus_i\hat{\mc{O}}_{n,k}\pd{x^i}$. 
For any point $[\phi]^k$ in $U^k$ the composition $[\phi\circ\rho_t]^k$ determines a path in $U^k$ with a starting point $[\phi]^k$ such that $X^k=[\frac{d}{dt}\Big|_{t=0}\phi\circ\rho_t]^k$ is a tangent vector of $U^k$ at $[\phi]^k$. Consequently, for any such tangent vector we have 
$\frac{d}{dt}\Big|_{t=0}s([\phi\circ\rho_t]^k)
=ds_{[\phi]^k}(X^k)$. 
Hence $
\frac{d}{dt}\Big|_{t=0}\big(s([\phi\circ\rho_t]^k)-[\rho_t^{\*}]^k\circ s([\phi]^k)\circ[\rho_t^{\*-1}]^k\big)
=(ds_{[\phi]^k}, X)+[\omega_{[\phi]^k}(X^k), s([\phi]^k)]=0$ which implies that $s([\phi\circ\rho_t]^k)-[\rho_t^{\*}]^k\circ s([\phi]^k)\circ[\rho_t^{\*-1}]^k=c\in W_{n, k}^0$ for  $t=0$ whence 
\begin{equation}
\label{liegroupequivariant}
s([\phi\circ\rho_t]^k)-[\rho_t^{\*}]^k\circ s([\phi]^k)\circ[\rho_t^{\*-1}]^k=0
\end{equation}
for all $t\in\bb R_{\geq0}$. Since by default $[\rho_t]^k$ is a smooth curve in $\Aut_{n, k}$ and Equality \eqref{liegroupequivariant} is valid for any smooth path in $t$, the equality infers the $\Aut_{n, k}$-equivariance of any flat section $s$ of the trivial vector bundle $U^k\times\widehat{D}_{n, k}$ for any positive integer $k$. Consequently, a flat section $\invlim{s}$ of $\coor{U}\times\widehat{D}_n$ is $\Aut_n$-equivariant. \\
For every $k\in\bb Z_{\geq0}$, every multi-index $\pmb{\nu}$ with $|\pmb{\nu}|\leq k$ and every $j=1, \dots, n$ denote by $z_{\pmb{\nu}}^j$ the $(\nu,j)$-th coordinate $\frac{1}{\pmb{\nu}!}D^{\pmb{\nu}}(x^j\circ\phi\circ\rho_t)(0)$ on $\coor{U}$. With that we compute
  \begin{align*}
  (ds_{[\phi]^k}, X^k)&=\frac{d}{dt}\Big|_{t=0}s([\phi\circ\rho_t]^k)\nonumber\\
  &=\sum_{|\A|\leq m}\big(\sum_{\B\in\bb Z_{\geq0}^n}\sum_{\nu}\sum_j\frac{\partial f_{\A\B}}{\partial z_{\pmb{\nu}}^j}([\phi\circ\rho_t]^k)\frac{d}{dt}\Big|_{t=0}([\phi\circ\rho_t]^k)_{\pmb{\nu}}^jx^{\B}\big)\partial^{\A}.
  \end{align*}
 If we write $\omega(X)=\sum_{j}\sum_{\M}\xi_{\pmb{\mu}}^jx^{\pmb{\mu}}\pd{x^j}$, then
\begin{align}
 \label{summand}
\omega(X)(s([\phi]^k))
&=\sum_{\substack{|\A|\leq m\\\B}}\sum_{\substack{\pmb{\mu}\\j}}\xi_{\pmb{\mu}}^jf_{\A\B-\M+\E_j}([\phi]^k)(\beta_j-\mu_j+1)x^{\B}\partial^{\A}+\sum_{\substack{|\A|\leq m\\\B}}\sum_{\substack{\pmb{\mu}\\j}}
\xi_{\M}^jf_{\A\B-\M}([\phi]^k)x^{\B}\partial^{\A+\E_j}\\
 \label{negativesummand}
s([\phi]^k)(\omega(X))
&=\sum_{|\A|\leq m}\sum_{\B\in\bb Z_{\geq0}^n}\sum_{\substack{\M\\j}}f_{\A\B-\M}([\phi]^k)\xi_{\M}^jx^{\B}\partial^{\A+\E_j}\nonumber\\
&\quad+\sum_{|\A|\leq m}\sum_{\B\in\bb Z_{\geq0}^n}\sum_{\substack{\M\\j}}\sum_{\Ga\leq\{\A, \M\}}f_{\A\B-\M+\Ga}([\phi]^k)\xi_{\M}^j\binom{\A}{\Ga}\frac{\M !}{(\M-\Ga) !}x^{\B}\partial^{\A+\E_j-\Ga}.\nonumber\\
\end{align}
Taking out \eqref{negativesummand} from \eqref{summand} yields the Lie bracket 
\begin{align*}
[\omega(X), s([\phi]^k)]
&=\sum_{\substack{|\A|\leq m\\\B}}\sum_{\substack{\pmb{\mu}\\j}}\xi_{\pmb{\mu}}^jf_{\A\B-\M+\E_j}([\phi]^k)(\beta_j-\mu_j+1)x^{\B}\partial^{\A}\nonumber\\
&\quad-\sum_{\substack{|\A|\leq m\\\B}}\sum_{\substack{\pmb{\mu}\\j}}\sum_{\Ga\leq\{\A, \M\}}f_{\A\B-\M+\Ga}([\phi]^k)\xi_{\M}^j\binom{\A}{\Ga}\frac{\M !}{(\M-\Ga) !}x^{\B}\partial^{\A+\E_j-\Ga}.
\end{align*}
Finally, the flatness of $s$ ultimately implies 
\begin{align}
\label{originalrecursiveformula}
&\sum_{\substack{|\A|\leq m\\\B}}\sum_{j}\xi_{\pmb{0}}^jf_{\A\B+\E_j}([\phi]^k)(\beta_j+1)x^{\B}\partial^{\A}\nonumber\\
&=\sum_{\substack{|\A|\leq m\\\B}}\Big(\sum_{\nu}\sum_j\frac{\partial f_{\A\B}}{\partial z_{\pmb{\nu}}^j}([\phi\circ\rho_t]^k)\frac{d}{dt}\Big|_{t=0}([\phi\circ\rho_t]^k)_{\pmb{\nu}}^j-\sum_{\substack{\pmb{\mu}>\pmb{0}\\j}}\xi_{\pmb{\mu}}^jf_{\A\B-\M+\E_j}([\phi]^k)(\beta_j-\mu_j+1)\Big)x^{\B}\partial^{\A}\nonumber\\
&\quad+\sum_{j}\sum_{\substack{\E_j\leq\A\\1\leq|\A|\leq m\\\B}}\sum_{\pmb{\mu}}\sum_{\Ga\leq\{\A, \M\}}f_{\A+\Ga-\E_j\B-\M+\Ga}([\phi]^k)\xi_{\M}^j\binom{\A+\Ga-\E_j}{\Ga}\frac{\M !}{(\M-\Ga) !}x^{\B}\partial^{\A}.
\end{align}
We consider the situations $\A=\pmb{0}$ and $\A>\pmb{0}$ separately. First, we handle the case where $\A=\pmb{0}$. Then \eqref{originalrecursiveformula} reduces to
\begin{align*}
&\sum_{\B}\sum_{j}\xi_{\pmb{0}}^jf_{\pmb{0}\B+\E_j}([\phi]^k)(\beta_j+1)x^{\B}\nonumber\\
&=\sum_{\B}\Big(\sum_{\nu}\sum_j\frac{\partial f_{\pmb{0}\B}}{\partial z_{\pmb{\nu}}^j}([\phi\circ\rho_t]^k)\frac{d}{dt}\Big|_{t=0}([\phi\circ\rho_t]^k)_{\pmb{\nu}}^j-\sum_{\substack{\pmb{\mu}>\pmb{0}\\j}}\xi_{\pmb{\mu}}^jf_{\pmb{0}\B-\M+\E_j}([\phi]^k)(\beta_j-\mu_j+1)\Big)x^{\B}.
\end{align*}
For each fixed multiindex $\B$, a comparison of coefficients of the terms on the left and on the right hand sides of the above equality yields the following equality
\begin{align*}
&\sum_{j}\xi_{\pmb{0}}^jf_{\pmb{0}\B+\E_j}([\phi]^k)(\beta_j+1)\nonumber\\
&=\Big(\sum_{\nu}\sum_j\frac{\partial f_{\pmb{0}\B}}{\partial z_{\pmb{\nu}}^j}([\phi\circ\rho_t]^k)\frac{d}{dt}\Big|_{t=0}([\phi\circ\rho_t]^k)_{\pmb{\nu}}^j-\sum_{\substack{\pmb{\mu}>\pmb{0}\\j}}\xi_{\pmb{\mu}}^jf_{\pmb{0}\B-\M+\E_j}([\phi]^k)(\beta_j-\mu_j+1)\Big).
\end{align*}
For a set of $n$ linearly independent tangent vectors $X_1, \dots, X_n$ in the tangent space of $U^k$ at $[\phi]^k$ the above equality evolves into a system of $n$ linear equations
\begin{align*}
&\sum_{j=1}^n\xi_{\pmb{0}}^{rj}f_{\pmb{0}\B+\E_j}([\phi]^k)(\beta_j+1)\nonumber\\
&=\Big(\sum_{\nu}\sum_j\frac{\partial f_{\pmb{0}\B}}{\partial z_{\pmb{\nu}}^j}([\phi\circ\rho_t]^k)\frac{d}{dt}\Big|_{t=0}([\phi\circ\rho_t]^k)_{\pmb{\nu}}^j-\sum_{\substack{\pmb{\mu}>\pmb{0}\\j}}\xi_{\pmb{\mu}}^{rj}f_{\pmb{0}\B-\M+\E_j}([\phi]^k)(\beta_j-\mu_j+1)\Big)
\end{align*}
in $n$ indeterminates $f_{\pmb{0}\B+\E_j}([\phi]^k)$ where $r=1,\dots, n$. The matrix $(\xi_{\pmb{0}}^{rj})$ is invertible and thus by applying Cramer's rule we arrive at a recursive formula for the coefficients $f_{\pmb{0}\B}$
\begin{align}
\label{recursiveformula1}
f_{\pmb{0}\B+\E_j}=\mathsmaller{\frac{\sum_{r}C_{rj}\Big(\sum_{\nu}\frac{\partial f_{\pmb{0}\B}}{\partial z_{\pmb{\nu}}^r}([\phi\circ\rho_t]^k)\frac{d}{dt}\Big|_{t=0}([\phi\circ\rho_t]^k)_{\pmb{\nu}}^r-\sum_{\pmb{\mu}>\pmb{0}}\xi_{\pmb{\mu}}^{rj}f_{\pmb{0}\B-\M+\E_r}([\phi]^k)(\beta_r-\mu_r+1)\Big)}{(\beta_j+1)\det(\xi_{\pmb{0}}^{ij})}}
\end{align}
where $C_{rj}$ are cofactors of $(\xi_{\pmb{0}}^{ij})$. Similarly, in the case, where $\A\geq\pmb{\epsilon}_k$, $k$ is fixed, Equality \eqref{originalrecursiveformula} reduces to
\begin{align*}
&\sum_{\substack{\E_k\leq\A\\|\A|\leq m\\\B}}\sum_{j}\xi_{\pmb{0}}^jf_{\A\B+\E_j}([\phi]^k)(\beta_j+1)x^{\B}\partial^{\A}\nonumber\\
&=\sum_{\substack{\E_k\leq\A\\|\A|\leq m\\\B}}\Big(\sum_{\nu}\sum_j\frac{\partial f_{\A\B}}{\partial z_{\pmb{\nu}}^j}([\phi\circ\rho_t]^k)\frac{d}{dt}\Big|_{t=0}([\phi\circ\rho_t]^k)_{\pmb{\nu}}^j-\sum_{\substack{\pmb{\mu}>\pmb{0}\\j}}\xi_{\pmb{\mu}}^jf_{\A\B-\M+\E_j}([\phi]^k)(\beta_j-\mu_j+1)\Big)x^{\B}\partial^{\A}\nonumber\\
&+\sum_{\substack{\E_k\leq\A\\1\leq|\A|\leq m\\\B}}\sum_{\pmb{\mu}}\sum_{\Ga\leq\{\A, \M\}}f_{\A+\Ga-\E_k\B-\M+\Ga}([\phi]^k)\xi_{\M}^k\binom{\A+\Ga-\E_k}{\Ga}\frac{\M !}{(\M-\Ga) !}x^{\B}\partial^{\A}.
\end{align*}
Comparison of coefficients of terms $x^{\B}\partial^{\A}$ yields 
\begin{align*}
&\sum_{j}\xi_{\pmb{0}}^jf_{\A\B+\E_j}([\phi]^k)(\beta_j+1)\nonumber\\
&=\sum_{\nu}\sum_j\frac{\partial f_{\A\B}}{\partial z_{\pmb{\nu}}^j}([\phi\circ\rho_t]^k)\frac{d}{dt}\Big|_{t=0}([\phi\circ\rho_t]^k)_{\pmb{\nu}}^j-\sum_{\substack{\pmb{\mu}>\pmb{0}\\j}}\xi_{\pmb{\mu}}^jf_{\A\B-\M+\E_j}([\phi]^k)(\beta_j-\mu_j+1)\nonumber\\
&+\sum_{\pmb{\mu}}\sum_{\Ga\leq\{\A, \M\}}f_{\A+\Ga-\E_k\B-\M+\Ga}([\phi]^k)\xi_{\M}^k\binom{\A+\Ga-\E_k}{\Ga}\frac{\M !}{(\M-\Ga) !}.
\end{align*}
Analogous argumentation as in the previous case leads to 
\begin{align}
\label{recursiveformula2}
f_{\A\B+\E_j}=\frac{\sum_rC_{rj}}{(\beta_j+1)\det(\xi_{\pmb{0}}^{ij})}\Bigg(& \sum_{\nu}\frac{\partial f_{\A\B}}{\partial z_{\pmb{\nu}}^r}([\phi\circ\rho_t]^k)\frac{d}{dt}\Big|_{t=0}([\phi\circ\rho_t]^k)_{\pmb{\nu}}^r\nonumber\\
&-\sum_{\pmb{\mu}>\pmb{0}}\xi_{\pmb{\mu}}^{rj}f_{\A\B-\M+\E_r}([\phi]^k)(\beta_r-\mu_r+1)\nonumber\\
&+\sum_{\pmb{\mu}}\sum_{\Ga\leq\{\A, \M\}}f_{\A+\Ga-\E_k\B-\M+\Ga}([\phi]^k)\xi_{\M}^k\binom{\A+\Ga-\E_k}{\Ga}\frac{\M !}{(\M-\Ga) !}\Bigg).
\end{align}
Formulae \eqref{recursiveformula1} and \eqref{recursiveformula2} show that each coefficient $f_{\A\B}$ is uniquelly determined by the whole family of coefficients $\{f_{\A\pmb{0}}\}_{|\A|\leq m}$ as desired. 
\end{proof}
Let us denote by $\mc{D}_{\mathring{X}}$ the sheaf of holomorphic differential operators on the principal stratum $\mathring{X}$. Further, let us designate by $\OO(\coor{\mathring{X}}\times\widehat{D}_n)$ the $\Cs$-submodule of flat sections of the holomorphic pro-finite-dimensional bundle $\coor{\mathring{X}}\times\widehat{D}_n$ over $\coor{\mathring{X}}$ with respect to the flat holomorphic connection $\nabla:=d+[\omega, \cdot]$. Observe that this $\Cs$-module is in particular a $\Cs$-algebra. With that notation at hand along with the preceding lemma and proposition we are equipped to state and prove the ensuing important proposition. 
\begin{proposition}
The map \eqref{flatsec} induces an isomorphism $\mc{Y}:\mc{D}_{\mathring{X}}\rightarrow\coor{\pi}_{\*}\OO(\coor{\mathring{X}}\times\widehat{D}_n)$ of $\Cs$-algebras.
\end{proposition}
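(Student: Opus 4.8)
The plan is to exhibit $\mc{Y}$ as a morphism of sheaves of $\Cs$-algebras and then to prove injectivity and surjectivity separately, with surjectivity resting on Proposition \ref{determiningcoeff}. First I would check that $\mc{Y}$ is well defined and multiplicative. That the assignment \eqref{flatsec} lands in the $\Cs$-module of flat sections is precisely Lemma \ref{diffoperatorflatsec}, and for each fixed $[\phi]$ the map $D\mapsto T_{\mathbf{x}=0}(\phi^{\*}\circ D\circ\phi^{-1\*})$ is the composite of the conjugation automorphism $D\mapsto\phi^{\*}\circ D\circ\phi^{-1\*}$ with the Taylor operator $T_{\mathbf{x}=0}$, which was shown in the proof of Lemma \ref{diffoperatorflatsec} (via \eqref{taylormult}) to be a $\bb{C}$-algebra homomorphism. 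As this holds fibrewise, is $\Cs$-linear and is natural in the open set, $\mc{Y}$ is a morphism of sheaves of $\Cs$-algebras.

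For injectivity, suppose $\mc{Y}(D)=0$ over an open $U\subset\mathring{X}$. Fixing $x\in U$ and a holomorphic chart $\psi$ centred at $x$, which yields a point $[\psi_x]\in\coor{U}$, the vanishing $\mc{Y}(D)([\psi_x])=T_{\mathbf{x}=0}(\psi_x^{\*}\circ D\circ\psi_x^{-1\*})=0$ forces the Taylor expansion at $x$ of every coefficient of $D$, expressed in the chart $\psi$, to vanish; since these coefficients are holomorphic and $x\in U$ is arbitrary, $D=0$.

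The substantive part is surjectivity. Let $s$ be a flat section over $\coor{U}$. Because the degree-wise completion $\widehat{D}_n=\bb{C}[[x^1,\dots,x^n]]\langle\partial_{x^1},\dots,\partial_{x^n}\rangle$ retains finite order in the derivatives and the connection $\nabla=d+[\omega,\cdot]$ preserves the order filtration, $s([\phi])$ has a well-defined, locally constant order $m$, so Proposition \ref{determiningcoeff} applies. Working in a chart $\psi\colon U'\to\bb{C}^n$ with its associated holomorphic section $x\mapsto[\psi_x]$ of $\coor{U'}$, I would define $g_{\A}\in\OO(\psi(U'))$ by $g_{\A}(\psi(x)):=f_{\A\pmb{0}}([\psi_x])$ and set $D:=\sum_{|\A|\leq m}g_{\A}\partial^{\A}$; these coefficients are holomorphic since $f_{\A\pmb{0}}$ is holomorphic on $\coor{U}$ and $x\mapsto[\psi_x]$ is holomorphic. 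The key computation is that the flatness equation for $s$, restricted to the directions tangent to the section $\{[\psi_x]\}$ — along which the connection form $\omega$ evaluates to the constant coefficient vector fields $\partial_{x^j}$ — reduces to a recursion of the form $\partial_{x^j}f_{\A\B}([\psi_x])=-(\beta_j+1)f_{\A,\B+\E_j}([\psi_x])$; integrating it identifies $f_{\A\B}([\psi_x])$ with $\tfrac{1}{\B!}\partial^{\B}g_{\A}(\psi(x))$, which are exactly the coefficients of $\mc{Y}(D)([\psi_x])=T_{\mathbf{x}=0}(\psi_x^{\*}\circ D\circ\psi_x^{-1\*})$. Thus $\mc{Y}(D)$ and $s$ agree as full operators along the section $\{[\psi_x]\}$; since both are $\Aut_n$-equivariant flat sections (Proposition \ref{determiningcoeff}) and $\Aut_n$ acts transitively on the fibres of $\coor{\pi}$, this agreement propagates to all of $\coor{U'}$ (equivalently, the common family $\{f_{\A\pmb{0}}\}$ then determines both sections by Proposition \ref{determiningcoeff}). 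Injectivity of $\mc{Y}$ makes $D$ independent of the chart, so the local operators glue to a global $D\in\mc{D}_{\mathring{X}}(U)$ with $\mc{Y}(D)=s$.

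I expect the main obstacle to be the key computation in the surjectivity step: reconciling the Gel'fand--Kazhdan connection form $\omega$ with the Taylor operator so that flatness along $\{[\psi_x]\}$ reproduces precisely the coefficient recursion of Proposition \ref{determiningcoeff}, with the correct signs and normalisations. The accompanying subtlety is upgrading the agreement of $s$ and $\mc{Y}(D)$ from the single section $\{[\psi_x]\}$ to the whole fibre of $\coor{\pi}$, which is exactly what the $\Aut_n$-equivariance furnished by Proposition \ref{determiningcoeff} is used for.
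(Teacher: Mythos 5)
Your proposal is correct and takes essentially the same route as the paper: well-definedness and multiplicativity from Lemma \ref{diffoperatorflatsec}, injectivity from vanishing of all Taylor coefficients, and surjectivity by constructing $D$ from the coefficients $f_{\A\pmb{0}}$ along the chart-induced section of $\coor{\mathring{X}}$ and then using the $\Aut_n$-equivariance and uniqueness of Proposition \ref{determiningcoeff} (the paper compares the zeroth-order terms of $s$ and $\mc{Y}(D)$ at an arbitrary point via equivariance, whereas you integrate the coefficient recursion along the section — the same mechanism, just verified one step earlier). The one caveat you already flagged is real but harmless: with the paper's convention $\omega(X)=T_{\mathbf{x}=0}\big(-(d\phi)^{-1}\tfrac{d}{dt}\big|_{t=0}\phi_t\big)$ the connection form along the section evaluates to $-\partial_{x^j}$, so the recursion comes out as $\partial_{x^j}f_{\A\B}=(\beta_j+1)f_{\A,\B+\E_j}$, without the minus sign you wrote, and then correctly yields $f_{\A\B}([\psi_x])=\tfrac{1}{\B!}\partial^{\B}g_{\A}(\psi(x))$.
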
 
\begin{proof}
The map \eqref{flatsec} in Lemma \ref{diffoperatorflatsec} gives a well-defined map of $\Cs$-algebras $\mc{Y}: \mc{D}_{\mathring{X}}\rightarrow\coor{\pi}_{\*}\OO(\coor{\mathring{X}}\times\widehat{D}_n)$. Moreover,  this morphism of sheaves is injective. Indeed, for any two differential operators $D_1$ and $D_2$ on an arbitrary open set $U$ in $\mathring{X}$ with corresponding local coordinate representations $\sum_{\A}g_{\A}\partial^{\A}$ and $\sum_{\A}h_{\A}\partial^{\A}$ on $U$, the equality $\mc{Y}|_{U_i}(U)(D_1)=\mc{Y}|_{U_i}(U)(D_2)$ implies $D^{\B}(g_{\A}\circ\phi)(0)=D^{\B}(h_{\A}\circ\phi)(0)$ for every multi-index $\B$ and every local parametrization $\phi:\bb C^n\rightarrow U$. This entails that $g_{\A}=h_{\A}$ on $U$, consequently $D_1=D_2$ on $U$ whence $\mc{Y}(U)$ is injective. 
The principal stratum is covered by local holomorphic coordinate charts $(U_{i}, \psi_{i})_{i\in I}$ which in turn induce a local holomorphic section $\varphi_i: x\mapsto[\varphi_{x}^{i}]$ of $\coor{\mathring{X}}$ over each $U_{i}$ given by $\varphi_{x}^{i}(y)=\psi_{i}^{-1}(\psi_{i}(x)+y)$ for all $y\in U_{i}$. 
In order to complete the proof of the claim in the lemma, it suffices to verify that on every open set $U_i$ of the given cover the corresponding restricted morphism of sheaves $\mc{Y}|_{U_i}: \mc{D}_{\mathring{X}}|_{U_i}\rightarrow\coor{\pi}_{\*}\OO(\coor{\mathring{X}}\times\widehat{D}_n)|_{U_i}$ is surjective.\\ 
Fix a chart $U_i$ from the open cover $(U_i, \psi)_{i\in I}$ and let  $U\subset U_i$ be an arbitrary open set within. 
Given a flat section $s:[\phi]\mapsto\sum_{\A, \B}f_{\A\B}([\phi])x^{\B}\partial^{\A}$ of $\coor{\mathring{X}}\times\widehat{D}_{n}$ with $\partial^{\A}\in\End(\widehat{\OO}_n)$, define a holomorphic differential operator $D:=\sum_{\A}(f_{\A\pmb{0}}\circ\varphi_i)(x)~~\varphi_{x}^{i~\*-1}\circ\partial^{\A}\circ\varphi_{x}^{i~\*}$ on $U$, $x\in U$ varying, with respect to $(U_{i}, \psi_{i})$. 
By Lemma \ref{diffoperatorflatsec} the image of $D$ under $\mc{Y}|_{U_i}(U)$ defines a flat section $s'$ of $\coor{\mathring{X}}\times\widehat{D}_n$ given by
\begin{align}
\label{coordinatetransformation}
&s': [\phi]\mapsto T_{\pmb{x}=\pmb{0}}\Big(\sum_{\A}(f_{\A\pmb{0}}\circ\varphi_{i}\circ\phi)~~\phi^{\*}\circ(\varphi_{x}^{i~\*})^{-1}\circ\partial^{\A}\circ\varphi_{x}^{i~\*}\circ\phi^{-1\*}\Big)
\end{align}
with $x=\phi(0)$. Since $\coor{\mathring{X}}$ is in particular a principal $\Aut_n$-bundle, the profinite Lie group $\Aut_n$ acts  transitively from the right on its fibers. Ergo, there is an element $g\in\Aut_{n}$ such that $[\varphi_{x}^{i}]=[\phi\circ g]$. Consequently, we have for \eqref{coordinatetransformation}
\begin{equation*}
s'([\phi])=\sum_{\A}T_{\pmb{x}=\pmb{0}}\Big(f_{\A\pmb{0}}\circ\varphi_{i}\circ\phi\Big)g^{-1\*}\circ\partial^{\A}\circ g^{\*},
\end{equation*}
accordingly for the zeroth order term thereof in $\pmb{x}$, 
\begin{align*}
s'([\phi])|_{\pmb{x}=\pmb{0}}&
=\sum_{\A}f_{\A\pmb{0}}([\phi\circ g])g|_{\pmb{x}=\pmb{0}}^{-1\*}\circ\partial^{\A}\circ g|_{\pmb{x}=\pmb{0}}^{\*}\\
&=\sum_{\A}f_{\A\pmb{0}}([\phi])\partial^{\A}
\end{align*}
where we accounted that $g|_{\pmb{x}=\pmb{0}}\in\GL(n, \bb C)$ and in the last line we employed the $\Aut_{n}$-equivariance of flat sections of $\coor{\mathring{X}}\times\widehat{D}_{n}$, see Proposition \ref{determiningcoeff}. The latter computation shows that similarly to $s$ the flat section $s'$ is uniquely determined by the family of holomorphic maps $(f_{\A\pmb{0}})_{\A\in\mc{A}}$. 
Therefore, by the uniqueness of the coefficients, established in Proposition \ref{determiningcoeff}, it follows that $s'=s$ which corroborates the surjectivity of $\mc{Y}|_{U_i}(U)$. Consequently, the sheaf $\mc{Y}|_{U_i}$ is a surjective monomorphism, that is,  an isomorphism for every local holomorphic coordinate chart $U_{i}$. Finally, this entails that $\mc{Y}$ is an isomorphism of $\Cs$-algebras, as desired.           
\end{proof}
\subsection{Localisation on a stratum of codimension equal to and higher than $1$}
\label{localisationstratumofcodim1orhigher}
The orbit type strata associated to all non-trivial subgroups of $G$ are in general not closed, but rather only locally closed in $X$, whence they are not analytic in $X$.  Nevertheless, it can be shown that the intersection of an orbit type stratum with a linear slice is analytic within the slice. This fact is established in the following proposition.
\begin{proposition}
Let $H$ be a no-trivial subgroup of $G$. Let $W_x$ be a $H$-invariant linear slice at a point $x$ with $\Stab(x)=H$.
Then $X_H^i\cap W_x$ is analytic within $W_x$.
\end{proposition}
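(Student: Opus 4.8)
The plan is to transport the set $X_H^i \cap W_x$ to the linear model furnished by the slice chart, where it becomes the common zero locus of the transversal coordinates, hence analytic. First I would invoke the linear slice structure recorded in Section~\ref{basis/charts}: after shrinking via Cartan's Lemma, $W_x$ is $H$-equivariantly biholomorphic to a box $V = V'\times V'' \subset \bb C^{n-l}\times\bb C^l$ on which $H$ acts linearly through its slice representation $\sigma$, where $\bb C^{n-l} = (\bb C^n)^H$ is the fixed-point subspace and $\sigma|_{\bb C^l}$ has no nonzero fixed vectors. Denote this biholomorphism by $\varphi\colon W_x \to V$ and recall that in the induced coordinates $(x^1, \dots, x^{n-l}, y^1, \dots, y^l)$ the directions $y^1, \dots, y^l$ parametrize the transversal factor $\bb C^l$.

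The crucial step is the identification $X_H \cap W_x = X^H \cap W_x$, where $X^H = \{z \in X : H \subseteq \Stab(z)\}$ is the fixed-point set of $H$. The inclusion $\subseteq$ is immediate. For the reverse inclusion I would use the defining property of a slice recalled at the opening of Section~\ref{basis/charts}: every $z \in W_x$ satisfies $\Stab(z) \subseteq H$. Hence $z \in X^H \cap W_x$ forces $H \subseteq \Stab(z) \subseteq H$, i.e. $\Stab(z) = H$, so $z \in X_H$.

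Since $\varphi$ is $H$-equivariant it sends fixed points to fixed points, so $\varphi(X^H \cap W_x) = V^H = V \cap (\bb C^n)^H = V \cap \bb C^{n-l}$. As the $H$-action on $V$ is the restriction of a linear action, $V^H$ is the trace on $V$ of the linear subspace $\bb C^{n-l}$, that is, the common zero set of the holomorphic functions $y^1, \dots, y^l$ on $V$; this is manifestly an analytic subset of $V$. Pulling back along $\varphi$ shows that $X_H \cap W_x$ is analytic in $W_x$. To pass from the full isotropy type to the component $X_H^i$ through $x$, note that $V \cap \bb C^{n-l} \cong V'$ is a box and hence connected, so $X_H \cap W_x$ is connected; containing $x \in X_H^i$, it lies entirely in $X_H^i$, whence $X_H^i \cap W_x = X_H \cap W_x$ (and for any other component the intersection with $W_x$ is empty, hence trivially analytic).

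The main obstacle is precisely the equality $X_H \cap W_x = X^H \cap W_x$ together with this component refinement: a priori $X_H^i \cap W_x$ is only a relatively open piece of the analytic set $X^H \cap W_x$, and an open subset of an analytic set need not be analytic. The slice property is what collapses $X_H$ onto the \emph{entire} $H$-fixed locus inside $W_x$ by supplying the bound $\Stab(z) \subseteq H$, while connectedness of the box prevents neighbouring isotropy-type components from contributing.
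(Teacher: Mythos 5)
Your proof is correct and takes essentially the same route as the paper's: both identify $X_H^i\cap W_x$, via the linear slice chart, with the common zero locus of the $l$ transversal coordinates, which is manifestly analytic in $W_x$. The paper simply asserts this identification by citing the slice-chart structure, whereas you additionally justify it (the slice bound $\Stab(z)\subseteq H$ giving $X_H\cap W_x = X^H\cap W_x$, $H$-equivariance of the Cartan linearization, and connectedness of the box to single out the component $X_H^i$) — a worthwhile filling-in, but not a different argument.
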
      
\begin{proof}
Assume $\codim(X
_H^i)=l$. Let $\psi: W_x\rightarrow\bb C^n$ be the coordinate chart map corresponding to $W_x$. As explained in Subsection \ref{basis/charts}, $W_x$ is a holomorphic coordinate $(n-l)$-slice chart of the submanifold $X_H^i$. Take the last $l$ component mappings $\psi^{n-l+1}, \dots, \psi^{n}$ of the chart map $\psi$ on $W_x$. Then for every $y$ in $W_x$ and every neighborhood $V$ at $y$ lying in $W_x$, the intersection $V\cap X_H^i$ is identical to the collection of all points $y'$ in $V$, for which $\psi^{n-l+1}|_V(y')=\dots\psi^{l}|_V(y')=0$. This means that  $X_H^i\cap W_x$ is analytic in $W_x$.     
\end{proof}
From now on till the end of this subsection $W_x$ stands for an $H$-invariant linear slice at a point $x$ on the orbit type stratum $X_H^i$. A choice of a basis in $T_x^{(1,0)}X$ defines an $H$-equivariant embedding $\varphi_{x}: W_{x}\hookrightarrow T_{x}^{(1, 0)}X$ whose image $\varphi(W_x)$ is open in the product topology of $T_{x}^{(1, 0)}X_{H}^{i}\oplus N_{x}$. We denote by $W_{x, H}^i$ the intersection of $W_x$ with $X_H^i$, and denote by $\widehat{W}_{x}=(W_{x, H}^i, \OO_{\widehat{W}_x})$ the completion of $W_x$ along the analytic subset $W_{x, H}^i$. 
Further, denote by $\Sigma_0$ the image of the zero section on the restriction of the holomorphic normal bundle $N$ on $X_H^i$ to the open subset $W_{x, H}^{i}$ in the subspace topology. For the given date the following statement holds true. 
\begin{theorem}
\label{completediso}
There is an $H$-equivariant open embedding $\zeta: W_x\hookrightarrow N|_{W_{x, H}^i}$ with $\Ima(\zeta|_{W_{x, H}^i})\subseteq\Sigma_0$ which induces an $H$-equivariant isomorphism of~~$\bb C$-ringed spaces $\widehat{\zeta}:\widehat{W}_x\rightarrow\widehat{V}_{\epsilon}$ where $\widehat{V}_{\epsilon}:=\big(\Sigma_0\cap V_{\epsilon}, \OO_{\widehat{V}_{\epsilon}}\big)$ is the formal completion of~~$V_{\epsilon}:=\zeta(W_x)$ with respect to the analytic subset $\Sigma\cap V_{\epsilon}$.
\end{theorem}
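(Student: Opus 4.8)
The plan is to reduce the theorem to the linear model provided by Cartan's Lemma and the slice charts of Subsection~\ref{basis/charts}: in an $H$-equivariant linearization the tubular neighbourhood map becomes the tautological identification of a box with an open piece of a trivial normal bundle, and the asserted isomorphism of formal completions then follows from the functoriality of completion along an analytic subset under biholomorphisms. The genuine content therefore lies in organizing the equivariant linear data, not in an existence argument for a holomorphic retraction, which in general would be unavailable.

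First I would record the linear model. Shrinking the slice if necessary, Subsection~\ref{basis/charts} lets me take the $H$-equivariant embedding $\varphi_x\colon W_x\hookrightarrow T_x^{(1,0)}X=T_x^{(1,0)}X_H^i\oplus N_x\cong\bb C^{n-l}\oplus\bb C^l$ to have image a box $V=V_1\times V_2$, with $\varphi_x(x)=0$, $H$ acting trivially on $V_1$ and through the slice representation $\sigma|_{\bb C^l}$ on $V_2$. Equivariance identifies the fixed locus $W_{x,H}^i=\varphi_x^{-1}(V^H)=\varphi_x^{-1}(V_1\times\{0\})$, cut out inside $W_x$ by the transverse coordinates $y^1,\dots,y^l$. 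Since the holomorphic tangent bundle, hence its subbundle $N$, trivializes over an $H$-invariant linear slice, I obtain an $H$-equivariant holomorphic trivialization $N|_{W_{x,H}^i}\cong W_{x,H}^i\times N_x$ in which the fibre action is again $\sigma|_{\bb C^l}$ and the zero section is $\Sigma_0\cong W_{x,H}^i\times\{0\}$; concretely this trivialization is the one induced by $d\varphi_x$, which intertwines the $H$-actions because $\varphi_x$ does.

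Next I would write down $\zeta$ in these coordinates. For $p\in W_x$ with $\varphi_x(p)=(a,b)\in V_1\times V_2$ I set $\zeta(p):=\big(\varphi_x^{-1}(a,0),\,b\big)\in W_{x,H}^i\times N_x\cong N|_{W_{x,H}^i}$; the base point $\varphi_x^{-1}(a,0)$ is defined exactly because $V$ is a box, so $(a,0)\in V$. By construction $\zeta$ is a biholomorphism of $W_x$ onto the open set $V_\epsilon:=V_1\times V_2\subseteq N|_{W_{x,H}^i}$, hence an open embedding; it is $H$-equivariant, since on source and target alike $H$ fixes the $V_1$-factor and acts by $\sigma|_{\bb C^l}$ on the $\bb C^l$-factor; and $\zeta(W_{x,H}^i)=V_1\times\{0\}=\Sigma_0\cap V_\epsilon\subseteq\Sigma_0$. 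Thus $\zeta$ is an $H$-equivariant isomorphism of pairs $(W_x,W_{x,H}^i)\xrightarrow{\sim}(V_\epsilon,\Sigma_0\cap V_\epsilon)$ carrying the ideal sheaf $\mc{I}$ of $W_{x,H}^i$ onto that of $\Sigma_0\cap V_\epsilon$.

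Finally I would pass to formal completions. Because $\zeta$ is a biholomorphism matching the two defining ideal sheaves, functoriality of the adic completion yields an isomorphism of $\bb C$-ringed spaces $\widehat\zeta\colon\widehat W_x\to\widehat V_\epsilon$, and this isomorphism is $H$-equivariant since $\zeta$ is; in the description where both completed structure sheaves read $\OO_{W_{x,H}^i}[[y^1,\dots,y^l]]$, the map $\widehat\zeta$ is the identity. I expect the bookkeeping to concentrate in two places: verifying that the trivialization of $N$ over the slice can be chosen at once holomorphic and $H$-equivariant, which is precisely what the equivariant linearization of Cartan's Lemma supplies (the fibre action being constant and equal to $\sigma|_{\bb C^l}$ in the slice coordinates), and checking that $\zeta$ lands well defined in the total space, for which the box shape of $W_x$ is indispensable. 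Once these are secured, the isomorphism on formal neighbourhoods is automatic.
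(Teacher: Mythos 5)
Your proposal is correct and follows essentially the same route as the paper: there, $\zeta$ is built as $\Psi^{-1}\circ\varphi_x$ from the Cartan linearization $\varphi_x$ and the slice-induced $H$-equivariant trivialization of $N|_{W_{x,H}^i}$, which in box coordinates is exactly your map $p\mapsto(\varphi_x^{-1}(a,0),b)$, with the box shape of $\varphi_x(W_x)$ playing the same well-definedness role you identify. The only difference is presentational: where you invoke functoriality of adic completion along matched ideal sheaves, the paper spells this out by constructing the morphisms $\zeta^{(\nu)\#}:\OO_{V_{\epsilon}}/\mc{I}^{\nu+1}\rightarrow\zeta_{*}\big(\OO_{W_x}/\mc{J}^{\nu+1}\big)$, checking they are isomorphisms on stalks, and passing to the projective limit.
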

\begin{proof}
As expounded in Subsection \ref{basis/charts}, the slice $W_x$ induces an $H$-equivariant trivialization $\psi$ of the holomorphic normal bundle of $X_H^i$ over $W_{x, H}^i$. Moreover, by the $H$-equivariance of $\varphi_x$ the image $\varphi_x(W_{x, H}^i)$ lies in $T_x^{(1, 0)}(X_H^i)$. These facts are instrumental in constructing an open embedding of $W_{x}$ into the total space of $N|_{W_{x, H}^{i}}$. Let $\Psi$ be the isomorphism between the trivial holomorphic vector bundles $N|_{W_{x, H^{i}}}$ over $W_{x, H^{i}}$ and $\varphi_{x}(W_{x, H}^{i})\times N_{x}$ over $\varphi_{x}(W_{x, H^{i}})$ arising from the ensuing composition of bundle isomorphism  
\begin{equation}
\label{composediso}
\begin{tikzcd}
N|_{W_{x, H}^i}\arrow[r, "\psi"]\arrow[d]&W_{x, H}^i\times\bb C^l\arrow[r, "\theta"]\arrow[d]&\varphi_x(W_{x, H}^i)\times N_x\arrow[d]\\
W_{x, H}^i\arrow[r, "\id"]&W_{x, H}^i\arrow[r, "\varphi_x"]&\varphi_x(W_{x, H}^i)   
\end{tikzcd}
\end{equation}
where the latter isomorphism is given by $\theta(q,v)=(\varphi_x(q), \psi_x^{-1}(v))$ for all $(q, v)\in W_{x, H}^{i}\times\bb C^{l}$. By means of $\Psi$ we can finally define a holomorphic embedding of $W_{x}$ in $N|_{W_{x, H}^i}$ as the composition 
\begin{equation*}
\begin{tikzcd}
\zeta: W_x\arrow[r, hook, "\varphi_x"]&\varphi_x(W_{x, H}^i)\times N_x\arrow[r, "\Psi^{-1}"]&N|_{W_{x, H}^i}
\end{tikzcd}
\end{equation*} 
where we utilize the above-mentioned assumption that $\varphi_{x}(W_{x})$ is open in the box topology of $T_{x}^{(1, 0)}X$. 
Moreover, as a biholomorphism $\Psi$ is an open map. Hence, the composed mapping $\zeta$ is an open injective embedding of $W_x$ into the total space $N|_{W_{x, H}^i}$ whose restriction to $W_{x, H}^i$ by definition is a section of $N|_{W_{x, H}^i}$. Clearly, the $H$-equivariance of $\varphi_x$ and $\Psi$ imply that the map $\zeta$ is $H$-equivariant, too. A restriction of the range of that morphism to the open image $\Ima(\zeta)=: V_{\epsilon}$ yields a biholomorphism $\zeta: W_x\rightarrow V_{\epsilon}$. Moreover, from the standard fact that the inverse images of analytic sets of holomorphic maps are analytic sets, see e.g. \cite[Ch.~4, \S~1.6]{GR84}, it follows that $\zeta(W_{x, H}^i)=(\zeta^{-1})^{-1}(W_{x, H}^i)=\Sigma_0\cap V_{\epsilon}$ is analytic in $V_{\epsilon}$. Let $\mc{I}\subset\OO_{V_{\epsilon}}$ be the ideal sheaf of the analytic set $\Sigma\cap V_{\epsilon}$ in $V_{\epsilon}$ and let $\mc{J}\subset\OO_{W_x}$ be the respective ideal sheaf of the analytic set $W_{x, H}^i$ in $W_x$. For every open set $U\subset V_{\epsilon}$, define a natural morphism of $\bb C$-algebras $F: \OO_{V_{\epsilon}}(U)\rightarrow\zeta_{\*}\OO_{W_x}(U)$ by $f\mapsto f\circ\zeta$. The so-defined mapping is compatible with restrictions and for any non-negative integer $\nu$ and any open set $U\subset V_{\epsilon}$, we have $F(\mc{I}(U)^{\nu+1})\subset\zeta_{\*}\mc{J}^{\nu+1}(U)$. Therefore, $F$ gives rise to a morphism of sheaves of $\bb C$-algebras
 \begin{equation}
 \label{Fsharp}
 \zeta^{(\nu)\#}: \OO_{V_{\epsilon}}/\mc{I}^{\nu+1}\rightarrow\zeta_{\*}\big(\OO_{W_x}/\mc{J}^{\nu+1}\big).
 \end{equation}
For every element $y$ in the set $W_{x, H}^i$, the above morphism of sheaves of $\bb C$-algebras induces a morphism of $\bb C$-algebras $\zeta_y^{(\nu)\#}:\OO_{V_{\epsilon}, \zeta(y)}/\mc{I}_{\zeta(y)}^{\nu+1}\rightarrow\OO_{W_x, y}/\mc{J}_y^{\nu+1}$ at the level of stalks which is given by $f \mod \mc{I}_{\zeta(y)}^{\nu+1}\mapsto f\circ\zeta \mod \mc{J}_y^{\nu+1}$. Due to the fact that $\zeta$ is invertible this map is an isomorphism of $\bb C$-algebras. This implies that the map \eqref{Fsharp} is an isomorphism of sheaves of $\bb C$-algebras. The direct image functor $\zeta_{\*}$ is left exact and commutes with limits. Thus,  after taking the projective limit of \eqref{Fsharp}, we get an isomorphism of completed sheaves of $\bb C$-algebras $\zeta^{\#}:\OO_{\widehat{V}_{\epsilon}}\rightarrow\zeta_{\*}\OO_{\widehat{W}_x}$. Thus the pair $\hat{\zeta}=(\zeta, \zeta^{\#})$ defines an $H$-equivariant isomorphism between the $\bb C$-ringed spaces $\widehat{V}_{\epsilon}$ and $\widehat{W}_x$, as stated in the proposition.                        
\end{proof}
By virtue of the fact that the orbit type strata are locally analytic we can define  completions of coherent $\OO_X$-modules locally along the orbit strata. In the following we digress from the flow of the text so far in order to generalize the notion of a formal completion of a sheaf of a global Cherednik algebra at a point of a good orbifold, introduced in \cite{eti04}, to the case of an analytic subset. Such a generalization is needed because the output of the  localization of the Harish-Chandra torsors $\HC$ on the different strata of dimension $l$ is $\Cs$-algebras which are to  be interpreted precisely as formal completions of the sheaf of global Cherednik algebras with respect to particular analytic subsets. Let us be more precise. 
\subsubsection{Formal completion of the sheaf of global Cherednik algebras along analytic sets}
Assume now that $X$ and $G$ are as specified at the beginning of this paper. Define a sheaf $\mc{R}$ of multiplicative subsets of the sheaf of rings $\OO_X$ by 
\begin{equation*}
U\mapsto\mc{R}(U):=\big\{f\in\OO_X(U)|~f|_{D\cap U}=0~\textrm{and}~f(p)=0~\textrm{for all}~p\in U\setminus D\big\}\cup\{1\}.
\end{equation*}
where $D=\cup_{s\in\mc{S}}U\cap Y^s$, $Y^s$ is the codimension $1$ connected component of the fixed point set $X^s$. The localized sheaf of rings $\OO_X[\mc{R}^{-1}]$ has a natural structure of a $\OO_X$-bimodule by the canonical morphism of sheaves of commutative rings $\OO_X\rightarrow\OO_X[\mc{R}^{-1}]$. The sheaf $\mc{D}_X\rtimes G$ is a left $\OO_X$-module in the $G$-equivariant topology of $X$. Thus the extension of scalars $\OO_X[\mc{R}^{-1}]\otimes_{\OO_X}\mc{D}_X\rtimes G$ defines a left $\OO_X[\mc{R}^{-1}]$-module. For every open $U\subseteq X$, any derivation $\theta\in\Der(\OO_X(U))$ composed with the canonical embedding $\OO_X(U)\hookrightarrow\OO_X(U)[\mc{R}(U)^{-1}]$ maps the set $\mc{R}(U)$ in the group of units of the localized ring $\OO_X(U)[\mc{R}(U)^{-1}]$. Thus by the universal property of the localization functor any derivation $\theta$ of $\OO_X(U)$ induces a unique derivation of $\OO_X(U)[\mc{R}(U)^{-1}]$. Hence, the product on $\mc{D}_X\rtimes G$, induced by the isomorphism of $\OO_X$-modules between $\mc{D}_X\rtimes G$ and the $\Cs$-algebra generated by $\mc{D}_X$ and $G$, respects the localization. This way, the extension of scalars $\OO_X[\mc{R}^{-1}]\otimes_{\OO_X}\mc{D}_X\rtimes G$ inherits the structure of a $\Cs$-algebra. We remark that sections of $\OO_X[\mc{R}^{-1}]\otimes_{\OO_X}\mc{D}_X\rtimes G$ contrary to the ones of the sheaf $j_{\*}j^{\*}(\mc{D}_X\rtimes G)$, where $j: X\setminus D\hookrightarrow X$ is the open embedding, do not admit essential singularities. Therefore, $\OO_X[\mc{R}^{-1}]\otimes_{\OO_X}\mc{D}_X\rtimes G$ can be identified with a $\Cs$-subalgebra of $j_{\*}j^{\*}(\mc{D}_X\rtimes G)$. Since the Dunkl operators are by definition sections of $j_{\*}j^{\*}(\mc{D}_X\rtimes G)$ with poles of order $1$ on $D$, they can be seen as local sections of $\OO_X[\mc{R}^{-1}]\otimes_{\OO_X}\mc{D}_X\rtimes G$. Thus, the sheaf $\mc{H}_{1, c, X, G}$ of global Cherednik algebras can be interpreted as a subsheaf of the smaller sheaf $\OO_X[\mc{R}^{-1}]\otimes_{\OO_X}\mc{D}_X\rtimes G$ of $\bb C$-algebras generated locally by $p_{\*}\OO_X$, Dunkl operators and $G$. In particular, $\mc{H}_{1, c, X, G}$ is a $\OO_X$-submodule of $\OO_X[\mc{R}^{-1}]\otimes_{\OO_X}\mc{D}_X\rtimes G$.\\
Let $Y$ be an analytic subset in $X$ and let $\widehat{X}:=(Y, \OO_{\widehat{X}})$ be the formal completion of $X$ with respect to $Y$. Then the $\Cs$-algebra $\OO_{\widehat{X}}$ is a $\OO_X$-bimodule via the natural embedding $\chi: \OO_X\hookrightarrow\OO_{\widehat{X}}$ of sheaves of $\bb C$-algebras. This way the extension of scalars $\OO_{\widehat{X}}\otimes_{\OO_X}\mc{H}_{1, c, X, G}$ becomes a left $\OO_{\widehat{X}}$-module. Since $\OO_{X}$ is a coherent module over itself, its completion $\widehat{\OO_X}=\OO_{\widehat{X}}$ is a flat $\OO_X$-bimodule. Thus the injection of left $\OO_X$-modules $\mc{H}_{1, c, X, G}\hookrightarrow\OO_X[\mc{R}^{-1}]\otimes_{\OO_X}\mc{D}_X\rtimes G$ naturally induces an embedding of left $\OO_{\widehat{X}}$-modules $\OO_{\widehat{X}}\otimes_{\OO_X}\mc{H}_{1, c, X, G}\hookrightarrow\OO_{\widehat{X}}\otimes_{\OO_X}\OO_X[\mc{R}^{-1}]\otimes_{\OO_X}\mc{D}_X\rtimes G$. The right hand side of the embedding is in particular a $\Cs$-algebra, which thanks to the injection induces a $\Cs$-algebra structure on $\OO_{\widehat{X}}\otimes_{\OO_X}\mc{H}_{1, c, X, G}$, as well. In the remainder of this paper we shall interchangeably refer to this $\Cs$-algebra as the \emph{formal completion of the sheaf of global Cherednik algebras on $X$} and the \emph{formally completed sheaf of global Cherednik algebras on $X$}.
\subsubsection{Localization}
From now on till the end of this subsection $V_{\epsilon}$ designates an open set on the total space of the restricted holomorphic bundle $N|_{W_{x, H}^i}$ with properties as in Theorem \ref{completediso}. 
The normal bundle of $X_H^i$ has rank $l>0$. For brevity we shall denote the $(W_{n-l}\ltimes\mf{z}\otimes\hat{\OO}_{n-l}, \GL(n-l)\times Z)$-torsor of formal coordinates of the holomorphic normal bundle $N|_{W_{x, H}^i}$ over $\aff{\mc{N}}$ by $\coor{\mc{N}}$. Also for clarity of the exposition we shall use the notation $\HC$ for the $\bb C$-algebra and $(W_{n-l}\ltimes\mf{z}\otimes\hat{\OO}_{n-l}, \GL(n-l)\times Z)$-module $\widehat{D}_{n-l}\hat{\otimes}\widehat{H}_{1, c}(\bb C^l, H)$. As explained in the last paragraph of Section \ref{hctorsorsonstrata}, $\coor{\mc{N}}\times\HC$ is a flat $\GL(n-l)\times Z$-equivariant holomorphic vector bundle over $\coor{\mc{N}}$ with a flat holomorphic connection $\nabla_H^i:=d+[\Phi_c\circ\omega,~\cdot~]$ 
where $\Phi_c$ is the Lie algebra embedding of $W_{n-l}\ltimes\mf{z}\otimes\hat{\OO}_{n-l}$ into $\HC$ and $\omega$ is the holomorphic $1$-connection form on $\coor{\mc{N}}$ with values in $W_{n-l}\ltimes\mf{z}\otimes\hat{\OO}_{n-l}$, defined in \eqref{connection1formonE}. 
\begin{proposition}
\label{flatsectionarb.stratum}
Given an element $\widehat{D}\in\OO_{\widehat{V}_{\epsilon}}(\Sigma_0\cap V_{\epsilon})\otimes_{\OO_{V_{\epsilon}}(V_{\epsilon})}H_{1, c}(V_{\epsilon}, H)$, the assignment
\begin{equation}
\label{nontrivialflatsection}
s(\widehat{D}): [\phi]\mapsto1\otimes\widehat{\Theta}_{c}^{-1}\Big(T_{\pmb{x}=\pmb{0}}\big(\phi^{\*}\circ\widehat{D}\circ\phi^{-1\*}\big)\Big)
\end{equation}
for every $[\phi]\in\coor{\mc{N}}$, where $T_{\pmb{x}=\pmb{0}}$ denotes the Taylor series operator at $\pmb{x}=\pmb{0}$, 
defines a flat section of the trivial holomorphic bundle $\coor{\mc{N}}\times\HC$.
\end{proposition}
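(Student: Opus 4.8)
The plan is to run the argument of Lemma \ref{diffoperatorflatsec} one level up, using the completed Dunkl embedding $\id\otimes\widehat{\Theta}_c$ of Proposition \ref{hcmorphism} as a bridge between the Cherednik side $\HC$ and the purely differential-operator side $\mc{T}:=\widehat{D}_{n-l}\hat{\otimes}\widehat{\mc{D}}(\bb C^l_{\textrm{reg}})\rtimes\bb CH$. The map $\id\otimes\widehat{\Theta}_c$ is an injective $\bb C$-algebra homomorphism acting by one and the same fiberwise map at every point of $\coor{\mc{N}}$, so it commutes with the de Rham differential of the trivial bundle and respects commutators; combined with the factorization $\sigma=(\id\otimes\widehat{\Theta}_c)\circ\Phi_c$ recorded in \eqref{factorizationofsigma}, it intertwines the connection $\nabla_H^i=d+[\Phi_c\circ\omega,\,\cdot\,]$ on $\coor{\mc{N}}\times\HC$ with $\nabla^{\mc{T}}:=d+[\sigma\circ\omega,\,\cdot\,]$ on $\coor{\mc{N}}\times\mc{T}$. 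Concretely, for any section $s$ one has $(\id\otimes\widehat{\Theta}_c)(\nabla_H^i s)=\nabla^{\mc{T}}\big((\id\otimes\widehat{\Theta}_c)(s)\big)$. Since $\id\otimes\widehat{\Theta}_c$ is injective, $s(\widehat{D})$ is $\nabla_H^i$-flat if and only if its image $(\id\otimes\widehat{\Theta}_c)(s(\widehat{D})):[\phi]\mapsto T_{\pmb{x}=\pmb{0}}(\phi^{\*}\circ\widehat{D}\circ\phi^{-1\*})$ is $\nabla^{\mc{T}}$-flat, which reduces the statement to an assertion purely about differential operators.

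Before invoking this reduction I would settle well-definedness, i.e. that $1\otimes\widehat{\Theta}_c^{-1}$ may legitimately be applied. Realizing $\widehat{D}$ as a section of $\OO_X[\mc{R}^{-1}]\otimes_{\OO_X}\mc{D}_X\rtimes G$ as in the preceding subsection, the conjugation by $\phi^{\*}$ is formal in the base coordinates $(x^1,\dots,x^{n-l})$ and $Z$-linear in the fiber coordinates $(y^1,\dots,y^l)$, by the Harish-Chandra structure of $\coor{\mc{N}}$; together with the Taylor operator $T_{\pmb{x}=\pmb{0}}$, which acts only in the base directions, it carries the completed Dunkl image into itself, so that the output lies in $\Ima(\id\otimes\widehat{\Theta}_c)$. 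Here one also checks, by the induction of Lemma \ref{diffoperatorflatsec} extended to absorb the group factor $\bb CH$ and the first-order poles along the reflection hyperplanes (which $T_{\pmb{x}=\pmb{0}}$ leaves untouched, as they lie in the transversal directions), that $T_{\pmb{x}=\pmb{0}}$ is a $\bb C$-algebra homomorphism on the relevant algebra; this in particular makes the assignment $\widehat{D}\mapsto s(\widehat{D})$ multiplicative.

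For the flatness of $(\id\otimes\widehat{\Theta}_c)(s(\widehat{D}))$ I would mirror the closing computation of Lemma \ref{diffoperatorflatsec}. Fix $X\in T_{[\phi]}\coor{\mc{N}}$, choose a path $[\phi_t]$ with $[\tfrac{d}{dt}|_{t=0}\phi_t]=X$ and $[\phi_0]=[\phi]$, put $\rho_t:=\phi^{-1}\circ\phi_t$, and differentiate
\[
t\longmapsto T_{\pmb{x}=\pmb{0}}\big(\rho_t^{\*}\circ(\phi^{\*}\circ\widehat{D}\circ\phi^{-1\*})\circ\rho_t^{-1\*}\big)
\]
at $t=0$. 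Because $T_{\pmb{x}=\pmb{0}}$ respects composition, the derivative collapses to the commutator $[\,T_{\pmb{x}=\pmb{0}}(\tfrac{d}{dt}|_{t=0}\rho_t^{\*}),\,(\id\otimes\widehat{\Theta}_c)(s(\widehat{D}))([\phi])\,]$, and by the definition of the $W_{n-l}\ltimes\mf{z}\otimes\hat{\OO}_{n-l}$-valued connection form $\omega$ together with the embedding $\sigma$ the first slot equals $-\sigma(\omega(X))$. This is precisely the equation $\nabla^{\mc{T}}\big((\id\otimes\widehat{\Theta}_c)(s(\widehat{D}))\big)=0$, and the reduction of the first paragraph then yields $\nabla_H^i s(\widehat{D})=0$.

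I expect the main obstacle to be the well-definedness step rather than the flatness computation: one must argue carefully that the Taylor-expanded conjugate of a completed Cherednik section again lands in the image of the Dunkl embedding. This rests on the compatibility of the coordinate change $\phi^{\*}$ with the decomposition ``formal in the base, $Z$-linear in the fiber'' encoded by the pair $(W_{n-l}\ltimes\mf{z}\otimes\hat{\OO}_{n-l},\GL(n-l,\bb C)\times Z)$; in particular one uses that every $g\in Z$ scales each root and coroot (Lemma \ref{4subtasks}, $i)$), so that the Dunkl normal form of the transversal part is preserved. Once this is in place, the remaining verifications are routine adaptations of Lemma \ref{diffoperatorflatsec} and of the factorization \eqref{factorizationofsigma}.
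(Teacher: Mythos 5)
Your proposal is correct and follows essentially the same route as the paper's proof: the paper likewise verifies well-definedness generator by generator (formal functions, group elements, and Dunkl operators, the latter via coordinates adapted to each reflection hypersurface so that the transversal part lands in the image of the completed Dunkl embedding), establishes multiplicativity of $T_{\pmb{x}=\pmb{0}}$ by the induction of Lemma \ref{diffoperatorflatsec}, and derives flatness from the identity $T_{\pmb{x}=\pmb{0}}\big(\tfrac{d}{dt}\big|_{t=0}\rho_t^{\*}\big)=\sigma\big(T_{\pmb{x}=\pmb{0}}\big(\tfrac{d}{dt}\big|_{t=0}\rho_t\big)\big)$ together with the factorization $\sigma=(\id\otimes\widehat{\Theta}_c)\circ\Phi_c$ and Proposition \ref{hcmorphism}. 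Your only deviation is cosmetic: you isolate the transport between $\nabla_H^i$ and $\nabla^{\mc{T}}$ as an explicit intertwining statement, whereas the paper performs the same transport inline by wrapping each step of the derivative computation in $1\otimes\widehat{\Theta}_c^{-1}$.
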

\begin{proof}
First, we verify whether for any given element in $\OO_{\widehat{V}_{\epsilon}}(\Sigma_0\cap V_{\epsilon})\otimes_{\OO_{V_{\epsilon}}(V_{\epsilon})}H_{1, c}(V_{\epsilon}, H)$ the image of every point in $\coor{\mc{N}}$ under the section \eqref{nontrivialflatsection} lies in $\HC$. It suffices to explicitly check this for the generators only, that is, for formally completed functions on $\widehat{V}_{\epsilon}$, group elements of $G$ and Dunkl operators. 
To that aim, pay attention that every parametrization $\phi$ of the holomorphic normal bundle $N$ fixes a different set of local coordinates $(x^1, \dots, x^{n-l}, y^1, \dots, y^l)$ on $V_{\epsilon}\subset N$ where $(x^1, \dots, x^{n-l})$ are the local coordinates on $V_{\epsilon}\cap\Sigma_0$ and $(y_1, \dots, y^l)$ are the fiber coordinates. Take a formal holomorphic function $\hat{f}=(f_{\nu} \mod \mc{J}^{\nu+1})_{\nu\in\mc{A}}$ on $\widehat{V}_{\epsilon}$. Its coordinate representation with respect to a given parametrization $\phi$ is $\sum_{|\A|\leq\nu}\frac{1}{\A !}D^{\A}(f_{\nu}\circ\phi)(\*, 0)\otimes y^{\A}$. Thus, for every $[\phi]\in\coor{\mc{N}}$, we have
\begin{equation}
\label{formalfunctionsfilt}
s(\hat{f})([\phi])=\sum_{|\A|\leq\nu}\frac{1}{\A !\B !}D^{\B}D^{\A}(f_{\nu}\circ\phi)(0, 0)x^{\B}\otimes y^{\A},
\end{equation}
which is squarely an element in $\HC$. 
For every group element $h\in H$ and every infinite jet $[\phi]$, we trivially get an element \begin{equation}
\label{groupelementsfilt}
s(h)([\phi])=h
\end{equation}
 from $\HC$. As for the Dunkl operators introduce new coordinates $(x^1, \dots, x^{n-l}, z_{(h, Y^h)}^1, \dots, z_{(h, Y^h)}^l)$ on $V_{\epsilon}$ for every complex reflection $(h, Y^h)\in\mc{S}$ with
 \begin{equation}
 \label{newfibercoord}
 z_{(h, Y^h)}^j=\sum_{k=1}^lA(h)_{jk}y^k~~\textrm{for every}~~j=1, \dots, l,\end{equation} 
in which $A(h)\in\GL(l, \bb C)$ is chosen  such that the hypersurface $Y^h\cap V_{\epsilon}=\{u\in V_{\epsilon}| z_{(y, Y^h)}^1(u)=0\}$. 
For every holomorphic vector field $Z$ on $V_{\epsilon}$, let $
\sum_{j=1}^{n-l}Z^j\pd{x^j}+\sum_{k=1}^lZ^{n-l+k}\pd{y^k}$
and
$\sum_{j=1}^{n-l}\tilde{Z}^j\pd{x^j}+\sum_{k=1}^l\tilde{Z}^{n-l+k}\pd{z^k}$ 
be the representations of $Z$ with respect to the coordinates, determined by the parametrization $\phi$, and the new coordinates, respectively. The relation between the components of $Z$ in the two set of coordinates is 
\begin{align*}
&\tilde{Z}^j=Z^j~~~\textrm{for all}~~j=1, \dots, n-l\\
&\tilde{Z}^{n-l+k}=\sum_{i=1}^lA(h)_{ki}Z^{n-l+i}~~~\textrm{for all}~~k=1, \dots, l.
\end{align*}
Thus, with respect to the new coordinates the map $\xi_{Y^h}$ acquires the form 
\begin{equation*}
\label{xi_{Y^h}2}
\xi_{Y^h}:Z\mapsto\frac{\sum_{k=1}^lA(h)_{1k}Z^{n-l+k}}{z_{(h, Y^h)}^1} \mod \OO_{V_{\epsilon}}(V_{\epsilon}\cap\Sigma_0).
\end{equation*}
The Dunkl operator corresponding to $Z$ has in the new set of coordinates the  representation 
\begin{equation*}
\bb D_{Z}= \mc{L}_Z+\sum_{(h, Y_h)\in\mc{S}}\sum_{k=1}^l\frac{2c((h, Y_h))}{1-\lambda_{(h, Y^h)}}\frac{A(h)_{1k}Z^{n-l+k}}{z_{(h, Y_h)}^1}(h-1)
\end{equation*}
which as an element embedded in $\OO_{\widehat{V}_{\epsilon}}\otimes_{\OO_{V_{\epsilon}}}H_{1, c}(V_{\epsilon}, H)$ has the form 
\begin{align}
\label{formallycompleteddunkl}
\sum_{\pmb{\alpha}}y^{\pmb{\alpha}}\otimes\Bigg(\sum_{j=1}^{n-l}\frac{D^{\pmb{\alpha}}Z^j(\cdot, 0)}{\pmb{\alpha} !}&\pd{x^j}+\sum_{k=1}^l\frac{D^{\pmb{\alpha}}Z^{n-l+k}(\cdot, 0)}{\pmb{\alpha} !}\pd{y^k}+\nonumber\\
&+\sum_{(h, Y_h)\in\mc{S}}\sum_{k=1}^l\frac{2c((h, Y_h))}{1-\lambda_{(h, Y^h)}}\frac{A(h)_{1k}D^{\pmb{\alpha}}Z^{n-l+k}(\cdot, 0)}{\pmb{\alpha} ! z_{(h, Y_h)}^1}(h-1)\Bigg).
\end{align}   
The Taylor expansion of \eqref{formallycompleteddunkl} at $\pmb{x}=\pmb{0}$ 
can be identified with the element 
\begin{align*}
&\sum_{\pmb{\alpha}}\Bigg(\sum_{j=1}^{n-l}\sum_{\pmb{\beta}}\frac{D^{\pmb{\beta}}D^{\pmb{\alpha}}Z^j(0, 0)}{\pmb{\beta} !\pmb{\alpha} !}x^{\pmb{\beta}}\pd{x^j}\otimes y^{\pmb{\alpha}}\Bigg)\otimes1+\\
&+\sum_{k=1}^l\Bigg(\sum_{\pmb{\alpha}}\sum_{\pmb{\beta}}\frac{D^{\pmb{\beta}}D^{\pmb{\alpha}}Z^{n-l+k}(0, 0)}{\pmb{\beta} !\pmb{\alpha} !}x^{\pmb{\beta}}\otimes y^{\pmb{\alpha}}\otimes\Big(\pd{y^k}+\sum_{(h, Y_h)\in\mc{S}}\frac{2c((h, Y_h))}{1-\lambda_{(h, Y^h)}}\frac{A(h)_{1k}}{z_{(h, Y_h)}^1}(h-1)\Big)\Bigg)
\end{align*}
As $z_{(h, Y^h)}^1$ is a linear form on $\bb C^{l}$ with a codimension $1$ kernel, depending on a complex reflection $h$ in $\bb C^{l}$, we can view $z_{(h, Y_h)}^1$ as a root and set $\alpha_s:=z_{(h, Y^h)}^1$. Furthermore, Equation \eqref{newfibercoord} implies $A(h)_{1k}=(u_k, \alpha_s)$. Hence the above expression can be rewritten in the form 
\begin{align}
\label{longexpr}
\sum_{\pmb{\alpha}}\Bigg(\sum_{j=1}^{n-l}\sum_{\pmb{\beta}}\frac{D^{\pmb{\beta}}D^{\pmb{\alpha}}Z^j(0, 0)}{\pmb{\beta} !\pmb{\alpha} !}x^{\pmb{\beta}}\pd{x^j}&\otimes y^{\pmb{\alpha}}\Bigg)\otimes1+\nonumber\\
&+\sum_{k=1}^l\sum_{\pmb{\alpha}}\sum_{\pmb{\beta}}\frac{D^{\pmb{\beta}}D^{\pmb{\alpha}}Z^{n-l+k}(0, 0)}{\pmb{\beta} !\pmb{\alpha} !}x^{\pmb{\beta}}\otimes y^{\pmb{\alpha}}\otimes D_{u_k}
\end{align}
where $D_{u_k}=\pd{y^k}+\sum_{(h, Y_h)\in\mc{S}}\frac{2c((h, Y_h))}{1-\lambda_{(h, Y^h)}}\frac{(u_k, \alpha_s)}{\alpha_s}(h-1)$ is the Dunkl operator to the basis vector $u_k$. Expression \eqref{longexpr} is clearly an element in $\widehat{D}_{n-l}\hat{\otimes}\widehat{\mc{D}}(\bb C_{\textrm{reg}}^l)\rtimes\bb CG$. Therefore, a subsequent application of the map $\id\otimes\widehat{\Theta}_c^{-1}$ on \eqref{longexpr} delivers the result
\begin{align}
\label{dunklfiltr}
s(\bb D_Z)([\phi])=\sum_{\pmb{\alpha}}\Bigg(\sum_{j=1}^{n-l}\sum_{\pmb{\beta}}\frac{D^{\pmb{\beta}}D^{\pmb{\alpha}}Z^j(0, 0)}{\pmb{\beta} !\pmb{\alpha} !}&x^{\pmb{\beta}}\pd{x^j}\otimes y^{\pmb{\alpha}}\Bigg)\otimes1+\nonumber\\
&+\sum_{k=1}^l\sum_{\pmb{\alpha}}\sum_{\pmb{\beta}}\frac{D^{\pmb{\beta}}D^{\pmb{\alpha}}Z^{n-l+k}(0, 0)}{\pmb{\beta} !\pmb{\alpha} !}x^{\pmb{\beta}}\otimes y^{\pmb{\alpha}}\otimes u_k
\end{align}
which, as desired, is an element of $\HC$. It can be shown by induction in a similar fashion as in Proposition \ref{diffoperatorflatsec} that the Taylor series operator at $\pmb{x}=0$ is compatible with the composition of Dunkl operators. More generally, the Taylor operator naturally respects all operations between generators of the $\bb C$-algebra $\OO_{\hat{V}_{\epsilon}}(V_{\epsilon}\cap\Sigma_0)\otimes_{\OO_{V_{\epsilon}}(V_{\epsilon})}H_{1, c}(V_{\epsilon}, H)$. Thus, for every element $\widehat{D}$ in that $\bb C$-algebra and every parametrization of the normal bundle, the image $\id\otimes\widehat{\Theta}_c^{-1}\big(T_{\pmb{x}=\pmb{0}}(\phi^{\*}\circ D\circ\phi^{-1\*})\big)$ lies in the $\bb C$-algebra $\HC$.  
The rest of the proof of this proposition follows, almost verbatim, the proof of Proposition \ref{diffoperatorflatsec}. Take a one-parameter family of $H$-equivariant complex diffeomorphisms $\rho_t: V\times\bb C^l\rightarrow V\times\bb C^l$ with $(x, y)\mapsto(f_t(x), A_t(x)y)$, where $V$ is an open neighborhood of $0$ in $\bb C^{n-l}$, $f_t$ is a family of holomorphic automorphisms of $V$ and $A_t: V\rightarrow Z$ is a holomorphic map  such that $A_t(x)$ varies smoothly in $t$ for all $x\in V$. We have 
\begin{align}
\label{pullbackequalssigma}
T_{\pmb{x}=\pmb{0}}\Big(\frac{d}{dt}\Big|_{t=0}\rho_t^{\*}\big)=
&=\sum_{j=1}^{n-l}\frac{1}{\pmb{\alpha} !}D^{\pmb{\alpha}}(\dot f_{\mathsmaller{t=0}}^{\mathsmaller{j}})(0)x^{\pmb{\alpha}}\pd{x^j}+\sum_{k, m=n-l+1}^n\frac{1}{\pmb{\beta} !}\big(D^{\pmb{\beta}}(\dot A_{\mathsmaller{t=0}})(0)\big)_{mk}x^{\pmb{\beta}}y^k\pd{y^m}\nonumber\\
&=\sigma\Big(T_{\pmb{x}=\pmb{0}}\big(\frac{d}{dt}\Big|_{t=0}\rho_t\big)\Big) 
\end{align}
where $\sigma$ is the Lie algebra embedding of $W_{n-l}\ltimes\mf{z}\otimes\hat{\OO}_{n-l}$ into $\widehat{D}_{n-l}\hat{\otimes}\widehat{D}(\bb C^l_{\textrm{reg}})\rtimes\bb CH$ given by \eqref{sigmaembedding}. Just as in the proof of Proposition \ref{diffoperatorflatsec} it can be  shown here that $T_{\pmb{x}=\pmb{0}}$ is a multiplicative operator. We leave the lengthy verification of this straightforward fact to the interested reader. As a result, for any $[\phi]\in\coor{\mc{N}}$ and any tangent vector $X\in T^{(1, 0)}_{[\phi]}\coor{\mc{N}}$ with $X=[\frac{d}{dt}\Big|_{t=0}\phi\circ\rho_t]$, we have 
\begin{align*}
ds_{[\phi]}(X)&=\frac{d}{dt}\Big|_{t=0}1\otimes\Theta_c^{-1}\Big(T_{\pmb{x}=\pmb{0}}\big((\phi\circ\rho_t)^{\*}\circ\widehat{D}\circ(\phi\circ\rho_t)^{-1\*}\big)\Big)\\
&=1\otimes\Theta_c^{-1}\Big(T_{\pmb{x}=\pmb{0}}\big(\frac{d}{dt}\Big|_{t=0}\rho_t^{\*}(\phi^{\*}\circ\widehat{D}\circ\phi^{-1\*})\circ\id+\id\circ(\phi^{\*}\circ\widehat{D}\circ\phi^{-1\*})\frac{d}{dt}\Big|_{t=0}\rho_t^{-1\*}\big)\Big)\\
&=1\otimes\Theta_c^{-1}\Big(-[T_{\pmb{x}=\pmb{0}}(-\frac{d}{dt}\Big|_{t=0}\rho_t^{\*}), T_{\pmb{x}=\pmb{0}}(\phi^{\*}\circ\widehat{D}\circ\phi^{-1\*})]\Big)\\
&=1\otimes\Theta_c\Big(-[\sigma\Big(T_{\pmb{x}=\pmb{0}}(-\frac{d}{dt}\Big|_{t=0}\rho_t\big)\big), T_{\pmb{x}=\pmb{0}}(\phi^{\*}\circ\widehat{D}\circ\phi^{-1\*})]\Big)\\
&=1\otimes\Theta_c^{-1}\Big(-\big[\sigma(\omega_{[\phi]}(X)), T_{\pmb{x}=\pmb{0}}(\phi^{\*}\circ\widehat{D}\circ\phi^{-1\*})\big]\Big)\\
&=1\otimes\Theta_c^{-1}\Big(-\big[1\otimes\Theta_c\circ\Phi_c(\omega_{[\phi]}(X)), T_{\pmb{x}=\pmb{0}}(\phi^{\*}\circ\widehat{D}\circ\phi^{-1\*})\big]\Big)\\
&=-\big[\Phi_c(\omega_{[\phi]}(X)), s([\phi])\big]
\end{align*}
where in the sixth line we applied \eqref{pullbackequalssigma}, in the seventh line we employed Definition \eqref{connection1formonE} of the holomorphic connection one-form on $\coor{\mc{N}}$, in the fourth to the last line the factorization \eqref{factorizationofsigma} of $\sigma$ was utilized, and in the line afterwards we used Proposition \ref{hcmorphism}. The claim follows immediately.  
\end{proof}
We recall that, given a basis $(u_i)$, $i=1, \dots, l$, of $\bb C^l$ with a corresponding dual basis $(y_i)$, $i=1, \dots, l$, of $\bb C^{l\*}$, the degree-wise completed rational Cherednik algebra $\widehat{H}_{1, c}(\bb C^l, H)$ is spanned by the elements $h\cdot u^{\pmb{I}}\otimes y^{\pmb{J}}$, where $h\in H$ and $\pmb{I}, \pmb{J}, \in\bb N_0^l$ are multi-indices with $|\pmb{J}|<\infty$. Let $\mu_h$ be an index with a domain $\{1, \dots, |H|\}$. Having made these notational clarifications, we are able to formulate the analog of Proposition \ref{determiningcoeff} in the context of an orbit type stratum of codimension $l>0$.   
\begin{proposition}
\label{uniquenessoffs}
A flat section $s: [\phi]\mapsto\sum_{\pmb{\alpha, \beta}}\sum_{\mu_h, \pmb{I}, \pmb{J}}F_{\pmb{\alpha\beta}}^{\mu_h\pmb{IJ}}([\phi])x^{\pmb{\beta}}\partial^{\pmb{\alpha}}\otimes (y^{\pmb{I}}\otimes u^{\pmb{J}}h)$ of the trivial holomorphic bundle $\coor{\mc{N}}\times\HC$ with respect to the flat holomorphic connection $\nabla_H^i$ is an $\Aut_{n-l}\times Z(\OO_{n-l})$-equivariant map which is uniquely determined by the family of holomorphic maps $\{F_{\pmb{\alpha0}}^{\mu_h\pmb{IJ}}\}$ on $\coor{\mc{N}}$.   
\end{proposition}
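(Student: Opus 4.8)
The plan is to follow the blueprint of the proof of Proposition \ref{determiningcoeff} almost verbatim, promoting it from the principal stratum ($l=0$) to the present situation $l>0$, in which the fibre algebra is $\HC=\widehat{D}_{n-l}\hat{\otimes}\widehat{H}_{1, c}(\bb C^l, H)$ and the flat connection is $\nabla_H^i=d+[\Phi_c\circ\omega,\cdot]$. As before the argument falls into two independent parts: first the $\Aut_{n-l}\times Z(\OO_{n-l})$-equivariance of an arbitrary flat section, and second a recursion on the polynomial degree in the tangential variables which reconstructs every coefficient $F_{\pmb{\alpha\beta}}^{\mu_h\pmb{IJ}}$ from the subfamily $\{F_{\pmb{\alpha0}}^{\mu_h\pmb{IJ}}\}$.

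For the equivariance I would reproduce the computation leading to \eqref{liegroupequivariant}. Fix a one-parameter family $\rho_t\colon(x,y)\mapsto(f_t(x),A_t(x)y)$ of $H$-equivariant germs of the kind already used in Proposition \ref{flatsectionarb.stratum}, so that the infinitesimal generator $[\dot\rho_{t=0}]$ is $\omega_{[\phi]}(X)\in W_{n-l}\ltimes\mf{z}\otimes\hat{\OO}_{n-l}$. Differentiating $s([\phi\circ\rho_t])-\rho_t^{\*}\circ s([\phi])\circ\rho_t^{-1\*}$ at $t=0$ and invoking the flatness identity $ds_{[\phi]}(X)=-[\Phi_c(\omega_{[\phi]}(X)),s([\phi])]$ makes the derivative vanish; since the orbit of $[\phi]$ under the profinite group $\Aut_{n-l}\times Z(\OO_{n-l})$ is swept out by such curves and the displayed difference is constant in $t$ and vanishes at $t=0$, equivariance follows exactly as in Proposition \ref{determiningcoeff}.

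For the recursion the essential structural input is the splitting furnished by \eqref{Phi_cembedding}: writing $\omega(X)=v+A\otimes p$ with $v\in W_{n-l}$, $A\in\mf{z}$ and $p\in\hat{\OO}_{n-l}$, one has $\Phi_c(\omega(X))=v\otimes1+p\otimes\varphi_c(A)$, hence
\[
[\Phi_c(\omega(X)),s]=[v\otimes1,s]+[p\otimes\varphi_c(A),s].
\]
The first summand acts solely on the Weyl factor $\widehat{D}_{n-l}$ and is literally the bracket computed in \eqref{summand}--\eqref{negativesummand}; its linear mode $\xi_{\pmb{0}}^{rj}$ contributes $\xi_{\pmb{0}}^{j}(\beta_j+1)F_{\pmb{\alpha},\pmb{\beta}+\pmb{\epsilon}_j}^{\mu_h\pmb{IJ}}$ to the coefficient of $x^{\pmb{\beta}}\partial^{\pmb{\alpha}}\otimes(y^{\pmb I}\otimes u^{\pmb J}h)$, and the matrix $(\xi_{\pmb 0}^{rj})$ evaluated on $n-l$ independent tangent vectors is invertible. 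In the second summand I would expand $\partial^{\pmb\alpha}p$ by the Leibniz rule and apply the identity $[\varphi_c(A),\cdot]=\theta_{\*}(A)$ proven in the first proposition of Section \ref{hctorsorsonstrata}; this shows $[p\otimes\varphi_c(A),s]$ produces only terms of the shape $p\,x^{\pmb\beta'}\partial^{\pmb\alpha'}\otimes\theta_{\*}(A)(\zeta)$ together with lower-order-in-$\partial$ corrections, whose contribution to a fixed $x^{\pmb\beta}$ involves coefficients $F_{\pmb\alpha',\pmb\beta'}^{\cdots}$ with $\pmb\beta'\le\pmb\beta$ only. Consequently the whole $\mf{z}\otimes\hat{\OO}_{n-l}$-part, the $ds$-term, and the higher tangential modes $\xi_{\pmb\mu}^j$ ($\pmb\mu>\pmb0$) all live in tangential degree $\le|\pmb\beta|$, whereas the top degree $|\pmb\beta|+1$ is seen solely through the invertible $\xi_{\pmb0}$-term. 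Solving by Cramer's rule yields a recursion of exactly the form \eqref{recursiveformula1}--\eqref{recursiveformula2}, now carrying the internal indices $(\mu_h,\pmb I,\pmb J)$ reshuffled by $\theta_{\*}(A)$ and expressing $F_{\pmb\alpha,\pmb\beta+\pmb\epsilon_j}^{\mu_h\pmb{IJ}}$ through coefficients of strictly lower tangential degree. Induction on $|\pmb\beta|$ based at $\pmb\beta=\pmb0$ then proves uniqueness.

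The main obstacle I anticipate is bookkeeping rather than conceptual: one must verify that the non-commutativity of $\widehat{H}_{1,c}(\bb C^l,H)$ and the internal action $\theta_{\*}(A)$ never feed a tangential degree-$(|\pmb\beta|+1)$ coefficient back into the right-hand side at degree $|\pmb\beta|$, i.e. that the tangential grading truly decouples from the internal Cherednik grading. This rests on two observations: that $p\in\hat{\OO}_{n-l}$ is an order-zero operator in $\partial_x$, so the $\mf{z}$-part can only preserve or lower the tangential $x$-degree, and that $[\varphi_c(A),\cdot]=\theta_{\*}(A)$ confines the internal reshuffling to a single tangential degree. Once these are in place the triangularity of the recursion with respect to $|\pmb\beta|$ persists and the argument runs word for word as in Proposition \ref{determiningcoeff}.
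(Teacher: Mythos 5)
Your proposal coincides with the paper's own treatment: the paper proves this proposition with the single remark that the argument repeats verbatim that of Proposition \ref{determiningcoeff}, and your adaptation --- equivariance via one-parameter families $\rho_t:(x,y)\mapsto(f_t(x),A_t(x)y)$ together with a Cramer's-rule recursion on the tangential degree $|\pmb{\beta}|$ driven by the invertible constant mode $(\xi_{\pmb{0}}^{rj})$ of the $W_{n-l}$-component --- is exactly that transfer. Your extra verification that the $\mf{z}\otimes\hat{\OO}_{n-l}$-part of $\Phi_c\circ\omega$ and the internal action $[\varphi_c(A),\cdot]=\theta_{\*}(A)$ only feed coefficients with $\pmb{\beta}'\leq\pmb{\beta}$ into the equation at degree $\pmb{\beta}$, so that the recursion stays triangular, is precisely the bookkeeping the paper leaves implicit, and it is correct.
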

\begin{proof}
The proof repeats verbatim the proof of Proposition \ref{determiningcoeff}.
\end{proof}
Let us denote by $\OO_{\textrm{flat}}(\coor{\mc{N}}\times\HC)$ the $\Cs$-submodule of flat sections of the holomorphic pro-finite dimensional vector bundle $\coor{\mc{N}}\times\HC$ with respect to the flat holomorphic connection $\nabla_H^i$. We recall that this $\OO_{\coor{\mc{N}}}$-module has the structure of a $\Cs$-algebra, inherited from the $\bb C$-algebra structure of the Harish-Chandra torsor $\HC$. For the purposes of the next proposition we recall that the sheaf of Cherednik algebras possesses a natural increasing and exhaustive filtration $\mc{F}^{\bullet}$, defined by assigning grades to its generators, which induces an increasing and exhaustive filtration on its formal completion. By abuse of notation we denote the inherited filtration on the formally completed sheaf of Cherednik algebras by $\mc{F}^{\bullet}$, as well. Likewise, we define an increasing filtration $\mc{G}^{\bullet}$ on the $\Cs$-algebra $\OO_{\textrm{flat}}(\coor{\mc{N}}\times\HC)$ by 
\begin{equation}
\mc{G}^p:=\mc{G}^p\OO_{\textrm{flat}}(\coor{\mc{N}}\times\HC):=\OO_{\textrm{flat}}(\coor{\mc{N}}\times F^p\HC)
\end{equation}
for each integer $p\geq0$, where $F^p\HC=\sum_{s+t=p}F^s\widehat{D}_{n-l}\hat{\otimes}F^t\ratCherhat$. It is evident that so defined, the filtration is exhaustive. Let in the following $\tau$ denote the right $\Aut_{n-l}\times Z(\widehat{\OO}_{n-l})$-action on $\coor{\mc{N}}$.
\begin{proposition}
\label{filterediso}
Let $\hat{\zeta}=(\zeta, \zeta^{\#}): \widehat{W}_x\rightarrow\widehat{V}_{\epsilon}$ be the  isomorphism of~~$\bb C$-ringed spaces in Theorem \ref{completediso}. There is a filtered isomorphism of~~$\Cs$-algebras \[\mc{Y}: \zeta^{-1}\big(\OO_{\hat{V}_{\epsilon}}\otimes_{\OO_{V_{\epsilon}}}\mc{H}_{1, c, V_{\epsilon}, H}\big)\rightarrow\coor{\pi}_{\*}\OO_{\textrm{flat}}(\coor{\mc{N}}\times\HC)\] on $W_{x, H}^i$.
\end{proposition}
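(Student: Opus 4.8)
The plan is to mimic, in structure essentially verbatim, the proof of the isomorphism $\mc{Y}:\mc{D}_{\mathring{X}}\to\coor{\pi}_{\*}\OO(\coor{\mathring{X}}\times\widehat{D}_n)$ established on the principal stratum, now feeding in the codimension-$l$ analogues of each ingredient that the preceding text has already assembled. First I would \emph{define} $\mc{Y}$: for a local section $\widehat{D}$ of $\OO_{\widehat{V}_{\epsilon}}\otimes_{\OO_{V_{\epsilon}}}\mc{H}_{1, c, V_{\epsilon}, H}$, transported to $W_{x, H}^i$ via the $\bb C$-ringed-space isomorphism $\hat{\zeta}$ of Theorem \ref{completediso}, I set $\mc{Y}(\widehat{D})$ to be the flat section $s(\widehat{D})$ produced by the assignment \eqref{nontrivialflatsection}. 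Proposition \ref{flatsectionarb.stratum} guarantees that $s(\widehat{D})$ is indeed a flat section of $\coor{\mc{N}}\times\HC$ with respect to $\nabla_H^i$, so $\mc{Y}$ lands in $\coor{\pi}_{\*}\OO_{\textrm{flat}}(\coor{\mc{N}}\times\HC)$. That $\mc{Y}$ is a morphism of $\Cs$-algebras follows from the multiplicativity of the Taylor operator $T_{\pmb{x}=\pmb{0}}$ (established in the proof of Proposition \ref{flatsectionarb.stratum}) together with the fact that $\id\otimes\widehat{\Theta}_c^{-1}$ is a $\bb C$-algebra isomorphism; compatibility with restrictions is immediate from the pointwise nature of the defining formula, so $\mc{Y}$ is a genuine morphism of sheaves of $\Cs$-algebras.

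Next I would verify that $\mc{Y}$ is \emph{filtered}. Inspecting the explicit images of the three families of generators of the formally completed Cherednik algebra --- the formal functions \eqref{formalfunctionsfilt}, the group elements \eqref{groupelementsfilt}, and the Dunkl operators \eqref{dunklfiltr} --- one reads off directly that $\mc{Y}$ carries functions and group elements into $\mc{G}^0$ and each Dunkl operator $\bb D_Z$ into $\mc{G}^1=\OO_{\textrm{flat}}(\coor{\mc{N}}\times F^1\HC)$. Since these generate the algebra and the filtrations $\mc{F}^{\bullet}$ and $\mc{G}^{\bullet}$ are multiplicative and exhaustive, $\mc{Y}$ respects the filtrations, i.e.\ $\mc{Y}(\mc{F}^p)\subseteq\mc{G}^p$ for every $p\geq0$. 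Injectivity I would handle exactly as on the principal stratum: if $s(\widehat{D}_1)=s(\widehat{D}_2)$, then the leading Taylor coefficients of $\widehat{D}_1$ and $\widehat{D}_2$ agree at every $[\phi]\in\coor{\mc{N}}$ and for every parametrization, forcing equality of all their holomorphic coefficients, and since $\id\otimes\widehat{\Theta}_c^{-1}$ is invertible this yields $\widehat{D}_1=\widehat{D}_2$, so $\mc{Y}$ is a monomorphism of sheaves.

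The heart of the argument, and the step I expect to be the main obstacle, is \emph{local surjectivity}. Covering $W_{x, H}^i$ by coordinate charts that induce local holomorphic sections $\varphi_i$ of $\coor{\mc{N}}$, I would, given a flat section $s$, invoke Proposition \ref{uniquenessoffs}: such an $s$ is $\Aut_{n-l}\times Z(\widehat{\OO}_{n-l})$-equivariant and uniquely pinned down by its leading coefficients $\{F_{\pmb{\alpha0}}^{\mu_h\pmb{IJ}}\}$. From these leading coefficients, pulled back along $\varphi_i$, I would build a candidate preimage $\widehat{D}$ in $\OO_{\widehat{V}_{\epsilon}}\otimes_{\OO_{V_{\epsilon}}}\mc{H}_{1, c, V_{\epsilon}, H}$, inserting the transverse Cherednik factor by means of $\id\otimes\widehat{\Theta}_c$, exactly mirroring the construction of $D$ on the principal stratum. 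Applying $\mc{Y}$ and using the transitivity of the right $\Aut_{n-l}\times Z(\widehat{\OO}_{n-l})$-action on the fibres of $\coor{\mc{N}}$ together with the equivariance of both $s$ and $\mc{Y}(\widehat{D})$, I would show the two flat sections share the same leading coefficients, whence they coincide by the uniqueness half of Proposition \ref{uniquenessoffs}. This is where the genuinely new difficulty over the principal-stratum case lies: one must check that the reconstruction is compatible with the enlarged structure group $\Aut_{n-l}\times Z(\widehat{\OO}_{n-l})$ and that the factorization \eqref{factorizationofsigma} of $\sigma$ through $\Phi_c$ correctly matches the transverse Dunkl data under $\id\otimes\widehat{\Theta}_c^{-1}$.

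Granting local surjectivity, $\mc{Y}$ is an injective morphism of sheaves that is surjective on each chart of the cover, hence an isomorphism of $\Cs$-algebras; being filtered with filtered inverse, it is the desired filtered isomorphism on $W_{x, H}^i$, which completes the proof.
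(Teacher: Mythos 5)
Your proposal is correct in substance and shares the paper's skeleton (well-definedness and flatness via Proposition \ref{flatsectionarb.stratum}, filteredness read off from the generator images \eqref{formalfunctionsfilt}, \eqref{groupelementsfilt}, \eqref{dunklfiltr}, injectivity from the injectivity of $T_{\pmb{x}=\pmb{0}}$ and $\id\otimes\widehat{\Theta}_c$, and surjectivity resting on the equivariance-plus-uniqueness statement of Proposition \ref{uniquenessoffs}), but it diverges from the paper on the organization of the surjectivity step. You reconstruct a preimage in one shot from the full family of leading coefficients $\{F_{\pmb{\alpha}\pmb{0}}^{\mu_h\pmb{IJ}}\}$, inserting the transverse factor through $\id\otimes\widehat{\Theta}_c$; the paper instead runs an induction on the filtration degree $p$, treating the base case $p=0$ (formal functions) directly and then proving surjectivity of the graded morphisms $\gr_{p+1}(\mc{Y}):\zeta^{-1}\mc{F}^{p+1}/\zeta^{-1}\mc{F}^{p}\rightarrow\coor{\pi}_{\*}\mc{G}^{p+1}/\coor{\pi}_{\*}\mc{G}^{p}$, concluding via the connecting morphisms and the colimit over the exhaustive filtrations. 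The paper's graded bookkeeping buys two things: first, in the quotient $\mc{F}^{p+1}/\mc{F}^p$ it may write the candidate preimage using plain monomials $\partial^{\A}y^{\Ii}\partial^{\Jj}h$, since the discrepancy between transverse derivatives and genuine Dunkl operators has strictly lower filtration degree and is absorbed modulo $\mc{F}^p$; second, it delivers the degreewise isomorphisms $\zeta^{-1}\mc{F}^p\cong\coor{\pi}_{\*}\mc{G}^p$ for every $p$, from which the filtered inverse is automatic. Your one-shot route is viable precisely because the Dunkl embedding is an algebra morphism, so $\widehat{\Theta}_c(y^{\Ii}u^{\Jj}h)=y^{\Ii}D^{\Jj}h$ holds exactly (not merely up to lower order) and the candidate, built from monomials with $|\A|+|\Jj|\leq p$, genuinely lies in $\mc{F}^p$ of the completed Cherednik sheaf; you should state this degreewise containment explicitly, since your final sentence invokes a \emph{filtered} inverse, which requires $\mc{Y}(\zeta^{-1}\mc{F}^p)=\coor{\pi}_{\*}\mc{G}^p$ and not just global bijectivity. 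Note also that both routes silently use the exhaustiveness of $\mc{G}^{\bullet}$ (every flat section lies in some $\mc{G}^p$, so the coefficient sums over $\A$ and $\Jj$ are finite); the paper needs it for its colimit step, and you need it for your reconstruction to terminate, so neither approach is more vulnerable on this point.
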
    
\begin{proof}
Since $\id\otimes\widehat{\Theta}_c$ together with the Taylor series operator at $\pmb{x}=\pmb{0}$ are injective $\bb C$-algebra morphisms, the mapping \eqref{nontrivialflatsection} in Proposition \ref{flatsectionarb.stratum} induces a well-defined injective morphism of $\Cs$-algebras $\mc{Y}: \zeta^{-1}\big(\OO_{\widehat{V}_{\epsilon}}\otimes_{\OO_{V_{\epsilon}}}\mc{H}_{1, c, V_{\epsilon}, H}\big)\rightarrow\coor{\pi}_{\*}\OO_{\textrm{flat}}(\coor{\mc{N}}\times\HC)$ on $W_{x, H}^i$. Expressions \eqref{formalfunctionsfilt}, \eqref{groupelementsfilt} and \eqref{dunklfiltr} in Proposition \ref{flatsectionarb.stratum} imply $\mc{Y}(\zeta^{-1}\mc{F}^0)\subseteq\coor{\pi}_{\*}\mc{G}^0$ and $\mc{Y}(\zeta^{-1}\mc{F}^1)\subseteq\coor{\pi}_{\*}\mc{G}^1$, respectively. Since all the generators of $\zeta^{-1}\big(\OO_{\widehat{V}_{\epsilon}}\otimes_{\OO_{V_{\epsilon}}}\mc{H}_{1, c, V_{\epsilon}, H}\big)$ lie within $\zeta^{-1}\mc{F}^1$, and for every integer $p\geq0$, $\mc{F}^p=\mc{F}^1\cdot\dots\cdot\mc{F}^1$ ($p$ factors), we get $\mc{Y}(\zeta^{-1}\mc{F}^p)\subseteq\coor{\pi}_{\*}\mc{G}^p$ for all integers $p\geq0$. This entails that $\mc{Y}$ is in fact a filtered monomorphism. 
By virtue of that and the exhaustiveness of the filtartions $\mc{F}^{\bullet}$ and $\mc{G}^{\bullet}$, respectively, to prove that $\mc{Y}$ is an isomorphism, it suffices to show that the map $\mc{Y}:\zeta^{-1}\mc{F}^p\rightarrow\coor{\pi}_{\*}\mc{G}^p$ is a surjection  for every integer $p\geq0$. For that we carry out a proof by induction on the index $p$ of the filtrations $\mc{F}^{\bullet}$ and $\mc{G}^{\bullet}$, respectively. To that aim, let $\phi_0$ be a fixed parametrization of the normal bundle over $W_{x, H}^i$. The set $W_{x, H}^i$ is a coordinate chart on $X_H^i$. Thus, there is a holomorphic section $\varphi: W_{x, H}^i\rightarrow\coor{\mc{N}}$ given by $p\mapsto[\varphi_p]$ where $\varphi_p(x, y)=\phi_0(\phi_0^{-1}\circ\zeta(p)+(x, y))$ for all $p\in W_{x, H}^i$. Suppose $W$ is an open subset of $W_{x, H}^i$.\\    
{\bf{Base Case}}: Let $p=0$. Take a section $s:[\phi]\mapsto\sum_{\A\B}f_{\A\B}([\phi])x^{\A}\otimes y^{\B}$ of $\mc{G}^{0}(W)$ where $(x^{i}, y^{j})$ are the canonical coordinates on $\bb C^{n-l}\times\bb C^l$. Then, for all $u\in\zeta(W)$ with $p=\pi(u)$,  
\[\hat{\xi}(u):=\Big(\sum_{|\B|\leq\nu}(f_{\pmb{0}\B}\circ\varphi\circ\pi)(u)~~\varphi_{p}^{-1\*}\circ y^{\B}\circ\varphi_p^{\*} \mod \mc{J}(\zeta(W))^{\nu+1}\Big)_{\nu},\]
where $\mc{J}$ is the sheaf ideal of $V_{\epsilon}\cap\Sigma$ inside of $V_{\epsilon}$, is a formal holomorphic function on $\Sigma_0|_W$.
The fibers of $\coor{\mc{N}}$ are homegeneous $\Aut_{n-l}\times Z(\widehat{\OO}_{n-l})$-spaces. Therefore, for every $[\phi]$ in $\coor{\mc{N}}$ satisfying $\pi(\phi(0, 0))=p\in W$, there is a unique group element $g$ in $\Aut_{n-l}\times Z(\widehat{\OO}_{n-l})$ such that $\varphi(\pi(\phi(0, 0)))=[\phi\circ g]$. By abuse of notation we denote the unique germ of  $Z$-equivariant holomorphic isomorphisms of trivial bundles $\bb C^{n-l}\times\bb C^{l}\rightarrow\bb C^{n-l}\times\bb C^{l}$ over $\id$ at $0$ in $\bb C^{n-l}$, which represents $g\in\Aut_{n-l}\times Z(\widehat{\OO}_{n-l})$, by $g$, as well.      
With respect to a particular parametrization $\phi$, the formal holomorphic function $\hat{\xi}$ has the coordinate representation 
\begin{align}
\label{suppressprojlimit}
\Big(\sum_{|\B|\leq\nu}T_{\pmb{y}=\pmb{0}}\big(f_{0\B}\circ\varphi\circ\pi\circ\phi\big)T_{\pmb{y}=0}&(y^{\B}\circ\varphi_p^{-1}\circ\phi) \mod \mf{n}^{\nu+1}\Big)_{\nu}\nonumber\\
&=\Big(\sum_{|\B|\leq\nu}f_{0\B}\circ\varphi\circ\pi\circ\phi(\*, 0)(b(x)^{-1}y)^{\B} \mod \mf{n}^{\nu+1}\Big)_{\nu}\nonumber\\
&\equiv\sum_{|\B|<\infty}f_{0\B}\circ\varphi\circ\pi\circ\phi(\*, 0)(b(x)^{-1}y)^{\B}
\end{align}
where $\mf{n}=(y^{1}, \dots, y^{l})$ and $g(x, y)=(f(x), b(x)y)$ for all $(x, y)\in \bb C^{n-l}\times\bb C^l$ with $f$ a germ of biholomorphisms of $\bb C^{n-l}$ at $0$ and $b(x)\in Z$ for every $x\in\bb C^{n-l}$. 
Thus, the image $\mc{Y}(W)(\hat{\xi})$ is the flat section 
\[s':[\phi]\mapsto\sum_{\A\B}F_{\A\B}([\phi])~g^{-1\*}\circ x^{\A}\otimes y^{\B}\circ g^{\*}\]
of $\coor{\mc{N}}\times\HC$ where $F_{\A\B}([\phi])=\frac{1}{\A !}D^{\A}\big(f_{\pmb{0}\B}\circ\varphi\circ\pi\circ\phi\big)(0, 0)$ for every $[\phi]\in\coor{\mc{N}}$ and $g=(\id, T_{\pmb{x}=\pmb{0}}(b^{-1}))$. 
For every $[\phi]\in\coor{\mc{N}}$, we have $F_{\pmb{0}\B}([\phi])=f_{\pmb{0}\B}(\varphi(\pi(\phi(0, 0))))=f_{\pmb{0}\B}([\phi\circ g])$. Hence, for the zeroth order term of $s'([\phi])$ in $\pmb{x}$ we get
\begin{align*}
s'([\phi])|_{\pmb{x}=\pmb{0}}&=\sum_{\B}f_{0\B}([\phi\circ g])~g|_{\pmb{x}=\pmb{0}}^{-1\*}|\circ (1\otimes y^{\B})\circ g|_{\pmb{x}=\pmb{0}}^{\*}\\
&= \sum_{\B}f_{0\B}([\phi])1\otimes y^{\B}
\end{align*}
where in the last line we used the $\Aut_{n-l}\times Z(\widehat{\OO}_{n-l})$-equivariance of the flat section $s'$. As by Proposition \ref{uniquenessoffs} $s'$ is uniquely determined by the holomorphic coefficients $F_{\pmb{0}\B}$, the flat sections $s$ and $s'$ are one and the same. Hence, the monomorphism $\mc{Y}(W): \mc{F}^{0}(\zeta(W))\rightarrow\mc{G}^{0}(\coor{\mc{N}})$ is surjective. Since $W$ is an arbitrary opens subset of $W_{x, H}^{i}$, this implies that $\mc{Y}: \zeta^{-1}\mc{F}^{0}\rightarrow\coor{\pi}_{\*}\mc{G}^{0}$ is an isomorphism of $\Cs$-modules on $W_{x, H}^{i}$.\\   
{\bf{ Induction Hypothesis}}: Assume that $\mc{Y}: \zeta^{-1}\mc{F}^{p}\rightarrow\coor{\pi}_{\*}\mc{G}^{p}$, $p>0$ an integer, is an isomorphism of $\Cs$-modules on $W_{x, H}^{i}$.\\
{\bf{ Induction Step}}: The injectivity of $\mc{Y}$ naturally transfers to the morphism of factor $\Cs$-modules $\gr_{p+1}(\mc{Y}): \zeta^{-1}\mc{F}^{p+1}/\zeta^{-1}\mc{F}^{p}\rightarrow\coor{\pi}_{\*}\mc{G}^{p+1}/\coor{\pi}_{\*}\mc{G}^{p}$. We want to show that $\gr_{p+1}(\mc{Y})$ is surjective. To that aim, let $s:[\phi]\mapsto\sum_{\substack{|\A|+|\Jj|=p+1\\\B~\Ii~\mu_{h}}}F_{\A\B}^{\Ii\Jj\mu_{h}}([\phi])~x^{\B}\partial^{\A}\otimes(y^{\Ii}\otimes u^{\Jj}h) \mod \mc{G}^{p}(\coor{\mc{N}})$ be a general section of the $\Cs$-module $\mc{G}^{p+1}/\mc{G}^{p}$ over $\coor{\mc{N}}$, where $(x^{i}, y^{j})$ as usual are the coordinates on $\bb C^{n-l}\times\bb C^{l}$. For all $u\in\zeta(W)$ with $p=\pi(u)$, the operator
\[D:=\Big(\hspace{-1em}\sum_{\substack{|\A|+|\Jj|=p+1\\|\Ii|\leq\nu,~\mu_{h}}}\hspace{-1.2em}\big(F_{\A\pmb{0}}^{\mu_{h}\Ii\Jj}\circ\varphi\circ\pi\big)(u)~\varphi_{p}^{\*-1}\circ(\partial^{\A}y^{\Ii}\partial^{\Jj}h)\circ\varphi_{p}^{\*} \mod \mc{J}(\zeta(W))^{\nu+1}\Big)_{\nu} \mod \mc{F}^{p}(\zeta(W))\]
with $\partial^{\A}=\Big(\pd{x^{1}}\Big)^{{\alpha_{1}}}\dots\Big(\pd{x^{n-l}}\Big)^{\alpha_{l}}$ and $\partial^{\Jj}:=\Big(\pd{y^{1}}\Big)^{{j_{1}}}\dots\Big(\pd{y^{l}}\Big)^{j_{l}}$ defines a section of the $\Cs$-module $\zeta^{-1}\mc{F}^{p+1}/\zeta^{-1}\mc{F}^{p}$ over $W$. The coordinate representation of $D$ with respect to a fixed parametrization $\phi$ is given by
\begin{align*} 
\Big(\hspace{-1em}\sum_{\substack{|\A|+|\Jj|=p+1\\|\Ii|\leq\nu,~\mu_{h}}}\hspace{-1.2em}\big(F_{\A\pmb{0}}^{\mu_{h}\Ii\Jj}\circ\varphi\circ\pi\circ\phi\big)(\*, 0)~(\varphi_{p}^{-1}\circ\phi)^{\*}\circ(\partial^{\A}y^{\Ii}\partial^{\Jj}h)\circ&(\varphi_{p}^{-1}\circ\phi)^{-1~\*} \mod \mf{n}^{\nu+1}\Big)_{\nu}\\
& \mod \mc{F}^{p}(\phi^{-1}(\zeta(W))) 
\end{align*}
Consequently, suppressing $\mf{n}^{\nu+1}$ as in \eqref{suppressprojlimit}, the image $\mc{Y}(W)(D)$ is the flat section 
\begin{align*}
s'\hspace{-0.2em}:\hspace{-0.1em}[\phi]\mapsto\hspace{-1.9em}\sum_{\substack{|\A|+|\Jj|=p+1\\\B~\Ii~\mu_{h}}}\hspace{-1.3em}\id\otimes\widehat{\Theta}_{c^{-1}}T_{\pmb{x}=\pmb{0}}\big(F_{\A\pmb{0}}^{\mu_{h}\Ii\Jj}\hspace{-0.4em}\circ\varphi\circ\pi\circ\phi(\*, 0)\big)T_{\pmb{x}=\pmb{0}}\big((\varphi_{p}^{-1}\circ\phi)^{\*}\hspace{-0.2em}\circ(\partial^{\A}y^{\Ii}&\partial^{\Jj}h)\hspace{-0.2em}\circ\hspace{-0.2em}(\varphi_{p}^{-1}\circ\phi)^{-1~\*}\big)\\
&\mod \mc{G}^{p}(\coor{\mc{N}}).
\end{align*}
Accordingly, the zeroth order term $s'([\phi])|_{\pmb{x}=\pmb{0}}$ in $\pmb{x}$ is equal to 
\begin{align*}
&\mathsmaller{\sum_{\substack{|\A|+|\Jj|=p+1\\\B~\Ii~\mu_{h}}}\big(F_{\A\pmb{0}}^{\mu_{h}\Ii\Jj}\circ\varphi\circ\pi\circ\phi\big)(0, 0)~g|_{\pmb{x}=\pmb{0}}^{-1~\*}\circ(\partial^{\A}\otimes y^{\Ii}\otimes u^{\Jj}h)\circ g|_{\pmb{x}=\pmb{0}}^{\*} \mod \mc{G}^{p}(\coor{\mc{N}})}\\
&=\hspace{-1em}\sum_{\substack{|\A|+|\Jj|=p+1\\\B~\Ii~\mu_{h}}}\hspace{-1em}F_{\A\pmb{0}}^{\mu_{h}\Ii\Jj}([\phi\circ g])~g|_{\pmb{x}=\pmb{0}}^{-1~\*}\circ
(\partial^{\A}\otimes y^{\Ii}\otimes u^{\Jj}h)\circ g|_{\pmb{x}=\pmb{0}}^{\*} \mod \mc{G}^{p}(\coor{\mc{N}})\\
&=\hspace{-1em}\sum_{\substack{|\A|+|\Jj|=p+1\\\B~\Ii~\mu_{h}}}\hspace{-1em}F_{\A\pmb{0}}^{\mu_{h}\Ii\Jj}([\phi])~\partial^{\A}\otimes y^{\Ii}\otimes u^{\Jj}h \mod \mc{G}^{p}(\coor{\mc{N}})
\end{align*}
whence by Proposition \ref{uniquenessoffs} $s=s'$. This means that $\gr_{p+1}(\mc{Y})(W)$ is surjective. Consequently, $\gr_{p+1}(\mc{Y})$ is surjective, too. Hence, it is an isomorphism. The induction hypothesis finally implies $\mc{Y}: \zeta^{-1}\mc{F}^{p+1}\rightarrow\coor{\pi}_{\*}\mc{G}^{p+1}$ is an isomorphism. Ergo, $\mc{Y}:\zeta^{-1}\mc{F}^{p}\cong\coor{\pi}_{\*}\mc{G}^{p}$ for all integers $p\geq0$.\\
It is a straightforward exercise to check that $\mc{Y}$ commutes with the natural injective connecting morphisms $i_{p}: \zeta^{-1}\mc{F}^{p}\rightarrow\zeta^{-1}\mc{F}^{p+1}$ and $j_{p}:\coor{\pi}_{\*}\mc{G}^{p}\rightarrow\coor{\pi}_{\*}\mc{G}^{p+1}$ of the inductive systems $(\zeta^{-1}\mc{F}^{p}, i_{p})$ and $(\coor{\pi}_{\*}\mc{G}^{p}, j_{p})$, respectively. Hence, $\mc{Y}$ induces an isomorphism between the inductive limits of the inductive systems. Finally, a consequent utilization of the functorial property of the colimit and the exhaustiveness of both filtrations concludes the proof.     
 \end{proof}
 \begin{lemma}
 \label{basenormalbundleiso}
 There is a filtered isomorphism of~~$\Cs$-algebras \[F_{W_{x, H}^i}:\OO_{\widehat{W}_{x}}\otimes_{\OO_{W_{x}}}\mc{H}_{1, c, W_{x}, H}\rightarrow\zeta^{-1}\big(\OO_{\hat{V}_{\epsilon}}\otimes_{\OO_{V_{\epsilon}}}\mc{H}_{1, c, V_{\epsilon}, H}\big)\] on $W_{x, H}^i$.
 \end{lemma}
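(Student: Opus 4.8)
The plan is to transport the sheaf of Cherednik algebras across the biholomorphism $\zeta\colon W_x\to V_{\epsilon}$ of Theorem \ref{completediso} and only afterwards to complete along the analytic subsets using the ringed-space comorphism $\zeta^{\#}$. First I would record that, being an $H$-equivariant biholomorphism, $\zeta$ induces by pullback of holomorphic functions and vector fields an isomorphism of $\Cs$-algebras $\zeta^{\*}\colon\mc{D}_{V_{\epsilon}}\rtimes\bb CH\xrightarrow{\sim}\zeta_{\*}(\mc{D}_{W_x}\rtimes\bb CH)$. Indeed a biholomorphism identifies tangent sheaves and hence sheaves of holomorphic differential operators, while the $H$-equivariance of $\zeta$ guarantees that this identification intertwines the two skew-group structures, so the crossed products correspond.

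The central step is to check that $\zeta^{\*}$ carries the subsheaf of Cherednik algebras isomorphically onto its counterpart. For this I would pass to the localized sheaf $\OO[\mc{R}^{-1}]\otimes_{\OO}\mc{D}\rtimes H$ introduced in the subsection on formal completions, inside which both Cherednik sheaves are realized as the $\Cs$-subalgebras generated by functions, $H$, and Dunkl operators. Because $\zeta$ is $H$-equivariant and maps $W_{x,H}^i$ into the zero section $\Sigma_0$, it carries each reflection hypersurface $Y^h\subset W_x$ onto the corresponding reflection hypersurface $Y^h\cap V_{\epsilon}$; consequently it preserves the multiplicative sheaf $\mc{R}$ and induces an isomorphism of the two localized algebras. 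Under this isomorphism functions go to functions, group elements go to themselves, and a Dunkl operator $\bb D_Z$ maps to a section whose symbol is $\mc{L}_{\zeta_{\*}Z}$ and whose polar part has first-order poles along the $Y^h\cap V_{\epsilon}$ with the prescribed residues through $\xi_{(h,Y)}$; since the definition of $\mc{H}_{1,c,-,H}$ is independent of the particular choice of the functions $f_{(h,Y)}$, this image is again a Dunkl operator. Hence $\zeta^{\*}$ restricts to an isomorphism $\mc{H}_{1,c,V_{\epsilon},H}\cong\zeta_{\*}\mc{H}_{1,c,W_x,H}$, and it is filtered, as generators are sent to generators of the same geometric degree, so that $\zeta^{\*}(\mc{F}^p)=\zeta_{\*}\mc{F}^p$ for every $p$.

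Finally I would complete along the analytic subsets. By Theorem \ref{completediso} the comorphism $\zeta^{\#}$ identifies $\OO_{\widehat{V}_{\epsilon}}$ with $\zeta_{\*}\OO_{\widehat{W}_x}$, and, as recorded in the discussion preceding this lemma, since $\OO_X$ is coherent over itself its completion is a flat $\OO_X$-bimodule. Therefore tensoring the isomorphism of the previous paragraph with the completed structure sheaf is exact and compatible with the embedding into the localized algebra, yielding
\[
\OO_{\widehat{V}_{\epsilon}}\otimes_{\OO_{V_{\epsilon}}}\mc{H}_{1, c, V_{\epsilon}, H}\;\cong\;\zeta_{\*}\big(\OO_{\widehat{W}_{x}}\otimes_{\OO_{W_{x}}}\mc{H}_{1, c, W_{x}, H}\big).
\]
Applying $\zeta^{-1}$, which is legitimate because $\zeta$ is a homeomorphism carrying $W_{x,H}^i$ onto $\Sigma_0\cap V_{\epsilon}$, produces the desired map $F_{W_{x,H}^i}$; it is a filtered isomorphism because completion respects the inherited filtration $\mc{F}^{\bullet}$ degreewise and its inverse is furnished by $\zeta^{-1}$ together with $(\zeta^{\#})^{-1}$.

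The main obstacle is the second step, namely the verification that $\zeta^{\*}$ actually sends Dunkl operators to Dunkl operators, i.e.\ that the first-order-pole and residue data encoded by $\xi_{(h,Y)}$ is preserved. This is precisely where the hypotheses that $\zeta$ be $H$-equivariant and map the stratum into the zero section are essential, and I expect it is cleanest to argue entirely inside $\OO[\mc{R}^{-1}]\otimes_{\OO}\mc{D}\rtimes H$, where the admissible pole structure is controlled by the multiplicative sheaf $\mc{R}$ rather than by explicit residue computations; everything else reduces to the flatness of completion and the functoriality already packaged in Theorem \ref{completediso}.
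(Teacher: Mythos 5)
Your proposal is correct and is in substance the paper's own argument: both transport the generators along the $H$-equivariant biholomorphism $\zeta$ of Theorem \ref{completediso} --- formal holomorphic functions by composition with $\zeta^{-1}$, group elements identically, Dunkl operators by conjugating the vector field and composing the polar coefficients with $\zeta^{-1}$ --- and both deduce invertibility and filtration-compatibility from the invertibility of $\zeta$ together with the fact that generators are sent to generators of the same geometric degree. The only organizational difference is that you first establish the isomorphism at the uncompleted level inside $\OO[\mc{R}^{-1}]\otimes_{\OO}\mc{D}\rtimes H$ and then complete by tensoring with the completed structure sheaf via flatness, whereas the paper defines $F_{W_{x,H}^i}$ directly on the generators of the completed tensor product, justifying the Dunkl-operator assignment simply by the fact that $\zeta$ carries codimension-one hypersurfaces to codimension-one hypersurfaces; your explicit verification that the residue data encoded by $\xi_{(h,Y)}$ is preserved (by $H$-equivariance and naturality of $\xi$) is in fact slightly more careful than the paper's one-line justification.
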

 \begin{proof}
For every open subset $W$ of $W_{x, H}^{i}$ and an open $H$-invariant neighborhood $\tilde{W}$ of $W$ in $W_{x}$, define a map \[F_{W_{x, H}^i}(W): \OO_{\widehat{W}_{x}}(W)\otimes_{\OO_{W_{x}}(\tilde{W})}H_{1, c}(\tilde{W}, H)\rightarrow\OO_{\hat{V}_{\epsilon}}(\zeta(W))\otimes_{\OO_{V_{\epsilon}}(\zeta(\tilde{W}))}H(\zeta(\tilde{W}), H)\] on the generators of $\OO_{\widehat{W}_{x}}(W)\otimes_{\OO_{W_{x}}(\tilde{W})}H_{1, c}(\tilde{W}, H)$ by
\begin{align}
(f_{\nu} \mod \mc{J}^{\nu+1})_{\nu}&\mapsto(f_{\nu}\circ\zeta^{-1} \mod \mc{I}^{\nu+1})_{\nu}\label{as1}\\
\mc{L}_{Z}+\sum_{(h, Y^{h})}\frac{2c((h, Y^{h}))}{1-\lambda_{(h, Y^{h})}}f_{Y^{h}}(h-1)&\mapsto\mc{L}_{\zeta^{\*-1}\circ Z\circ\zeta^{\*}}+\sum_{(h, Y^{h})}\frac{2c((h, Y^{h}))}{1-\lambda_{(h, Y^{h})}}f_{Y^{h}}\circ\zeta^{-1}(h-1)\label{as2}\\
 h&\mapsto h 
  \end{align}
where $Z$ is a holomorphic vector field on $W_{x}$ and as usual $h\in H$. Assignment \eqref{as1} is well-defined by virtue of Theorem \ref{completediso}, assignement \eqref{as2} is well-defined, too by the fact that as a biholomorphism $\zeta$ maps codimension $1$ hypersurfaces in $W_{x}$ to codimension $1$ hypersurfaces in $V_{\epsilon}$. The map $\zeta$ is invertible, whence $F_{W_{x, H}^i}(W)$ is invertible, too. Conspicuously, it respects the $\bb C$-algebra structure of the source and the target and by definition, it also accounts for the filtration degrees, whence $F_{W_{x, H}^i}(W)$ is a filtered isomorphism of $\bb C$-algebras. The claim follows.    
 \end{proof}
Let $\Csf:=(W_{x})_{x\in X_{H}^{i}}$ be a collection of linear slices on $X$ such that the set $\Csf_{H}^{i}:=(W_{x, H}^{i})_{x\in X_{H}^{i}}$  forms an open cover of the stratum $X_{H}^{i}$. Moreover, for any pair of distinct linear slices $W_{x}, W_{y}\in\Csf$ set $\Omega:=W_{x}\cap W_{y}$ and $\Omega_{H}^{i}:=\Omega\cap X_{H}^{i}$.\\
Recall that on each $W_{x, H}^{i}$ the sheaf $\mc{F}_{W_{x, H}^{i}}:=\OO_{\widehat{W}_{x}}\otimes_{\OO_{W_{x}}}\mc{H}_{1, c, W_{x}, H}$ is well-defined, because $W_{x, H}^{i}$ is analytic inside of $W_{x}$. Moreover, due to the obvious fact that $\OO_{\widehat{W}_{x}}|_{\Omega_{H}^{i}}=\OO_{\widehat{W}_{y}}|_{\Omega_{H}^{i}}$, 
the restrictions of the sheaves $\mc{F}_{W_{x, H}^{i}}$ and $\mc{F}_{W_{y, H}^{i}}$ 
to the open intersection $\Omega_{H}^{i}$ coincide. Since the identity map trivially satisfies the cocycle condition, the collection $(\mc{F}_{W_{x, H}^{i}}, \id)$ forms a gluing data. With this one can construct a new auxiliary sheaf $\widehat{\mc{H}}_{1, c, X_{H}^{i}, H}$ on the whole of $X_{H}^{i}$ by the definition 
\begin{equation}
\label{newsheaf}
\widehat{\mc{H}}_{1, c, X_{H}^{i}, H}(U):=\{ (s_{x})_{x\in X_{H}^{i}}~|~s_{x}\in\mc{F}_{W_{x, H}^{i}}(U\cap W_{x, H}^{i})~~\mathrm{and}~~s_{y}|_{U\cap\Omega_{H}^{i}}=s_{x}|_{U\cap\Omega_{H}^{i}}\}  
\end{equation}
for every open $U\subseteq X_{H}^{i}$, where $x$ in the notation $s_{x}$ is an index and is not to be confused with the germ of sections at $x$. Clearly, for every $W_{x, H}^{i}\in\Csf_{H}^{i}$, the definition \eqref{newsheaf} implies 
\[\widehat{\mc{H}}_{1, c, X_{H}^{i}}|_{W_{x, H}^{i}}=\mc{F}_{W_{x, H}^{i}}.\] 
Let $\zeta_{1}$ and $\zeta_{2}$ be the open embeddings of $W_{x}$ and $W_{y}$ in $N|_{W_{x, H}^{i}}$ and $N|_{W_{y, H}^{i}}$, respectively, given in the proof of Thorem \ref{completediso} 
 pursuant to Cartan's Lemma. In the next lemma, we compare the $\Cs$-algebras $\fl$ and $\widehat{\mc{H}}_{1, c, X_{H}^{i}}$ on $X_{H}^{i}$.
\begin{proposition}
 \label{non-canonicaliso}
There is a filtered isomorphism of $\Cs$-algebras \[\mc{Z}: \widehat{\mc{H}}_{1, c, X_{H}^{i}, H}\rightarrow\fl\] on $X_{H}^{i}$. 
\end{proposition}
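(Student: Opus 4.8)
The plan is to produce $\mc{Z}$ by gluing the local isomorphisms already available on each slice of the cover $\Csf_H^i$ and then verifying that these local pieces agree on overlaps. First, for every linear slice $W_x\in\Csf$ I would compose the filtered isomorphism $F_{W_{x, H}^i}$ of Lemma \ref{basenormalbundleiso} with the filtered isomorphism $\mc{Y}$ of Proposition \ref{filterediso} to obtain a filtered isomorphism of $\Cs$-algebras
\[
\mc{Z}_x:=\mc{Y}\circ F_{W_{x, H}^i}:\ \mc{F}_{W_{x, H}^i}=\OO_{\widehat{W}_x}\otimes_{\OO_{W_x}}\mc{H}_{1, c, W_x, H}\ \xrightarrow{\ \sim\ }\ \fl\big|_{W_{x, H}^i}
\]
on $W_{x, H}^i$, where the target is the restriction to $W_{x, H}^i$ of the intrinsically defined sheaf $\fl$ of $\nabla_H^i$-flat sections over $X_H^i$.

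The heart of the matter is the compatibility of these local isomorphisms on the overlaps $\Omega_H^i=W_{x, H}^i\cap W_{y, H}^i$. Because the auxiliary sheaf $\widehat{\mc{H}}_{1, c, X_H^i, H}$ is assembled in \eqref{newsheaf} from the $\mc{F}_{W_{x, H}^i}$ along the identity cocycle --- the formal completion $\OO_{\widehat{W}_x}$ along the stratum being intrinsic, so that $\mc{F}_{W_{x, H}^i}|_{\Omega_H^i}=\mc{F}_{W_{y, H}^i}|_{\Omega_H^i}$ --- it suffices to prove $\mc{Z}_x=\mc{Z}_y$ as maps out of this common restriction into $\fl|_{\Omega_H^i}$.

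For this I would compare the two open embeddings $\zeta_1,\zeta_2$ of $W_x, W_y$ into the normal bundle furnished by Theorem \ref{completediso}. Over $\Omega_H^i$ both restrict to the zero section of $N$, so their formal completions differ by an $H$-equivariant formal automorphism $\Xi:=\widehat{\zeta}_2\circ\widehat{\zeta}_1^{-1}$ of the formal neighborhood of $\Sigma_0$ in $N|_{\Omega_H^i}$ that fixes the base $\Omega_H^i$ pointwise; its normal-direction jet is a $Z$-valued formal gauge transformation. Tracing the defining formula \eqref{nontrivialflatsection} through the transition, one finds that $\mc{Z}_y(\widehat{D})$ is obtained from $\mc{Z}_x(\widehat{D})$ by the right action of the element of $\Aut_{n-l}\times Z(\widehat{\OO}_{n-l})$ that realizes $\Xi$ on formal frames --- exactly the gauge factor $g=(\id, T_{\pmb{x}=\pmb{0}}(b^{-1}))$ that was absorbed in the base case of the proof of Proposition \ref{filterediso}. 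By Proposition \ref{uniquenessoffs} a flat section of $\coor{\mc{N}}\times\HC$ is determined by its leading coefficients $\{F_{\A\pmb{0}}^{\mu_h\Ii\Jj}\}$, i.e. by its value at $\pmb{x}=\pmb{0}$; invoking the $\Aut_{n-l}\times Z(\widehat{\OO}_{n-l})$-equivariance of flat sections of the same proposition shows that this gauge factor is absorbed and that the leading coefficients of $\mc{Z}_x(\widehat{D})$ and $\mc{Z}_y(\widehat{D})$ coincide. Uniqueness then forces $\mc{Z}_x(\widehat{D})=\mc{Z}_y(\widehat{D})$, hence $\mc{Z}_x=\mc{Z}_y$ on $\Omega_H^i$.

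With the cocycle verified, the family $(\mc{Z}_x)_{x\in X_H^i}$ glues --- via the construction \eqref{newsheaf} of $\widehat{\mc{H}}_{1, c, X_H^i, H}$ on the source side and the sheaf property of the intrinsic target $\fl$ --- to a single morphism $\mc{Z}:\widehat{\mc{H}}_{1, c, X_H^i, H}\rightarrow\fl$ of $\Cs$-algebras on $X_H^i$. Since $\mc{Z}$ is an isomorphism on each member of the cover $\Csf_H^i$ it is an isomorphism, and since each $\mc{Z}_x$ is filtered it respects the filtrations $\mc{F}^{\bullet}$ and $\mc{G}^{\bullet}$. I expect the only genuine obstacle to be the overlap compatibility of the previous paragraph, namely the bookkeeping that the slice-dependence of the formal tubular-neighborhood embeddings $\zeta$ is washed out by the equivariance built into \eqref{nontrivialflatsection}; granting this, the gluing and the filtered-isomorphism conclusions are formal.
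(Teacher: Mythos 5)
Your proposal follows essentially the same route as the paper's own proof: both construct $\mc{Z}$ locally as the composition $\mc{Y}_{W_{x, H}^{i}}\circ F_{W_{x, H}^{i}}$ from Propositions \ref{filterediso} and \ref{basenormalbundleiso}, reduce everything to agreement on the overlaps $\Omega_H^i$, and compare the two tubular-neighborhood embeddings of Theorem \ref{completediso} through the transition biholomorphism $\kappa=\zeta_{2}\circ\zeta_{1}^{-1}$ before gluing. The only (cosmetic) difference is the finish on the overlap: the paper cancels the $\kappa$-factors outright by re-parametrizing $\zeta_{2}(\Omega)$ with $\kappa\circ\phi$, using $(\kappa\circ\phi)^{\*}\circ\kappa^{\*-1}=\phi^{\*}$ so that the two Taylor expansions coincide verbatim, whereas you absorb the discrepancy as a frame-dependent gauge factor and invoke the $\Aut_{n-l}\times Z(\widehat{\OO}_{n-l})$-equivariance and uniqueness of flat sections from Proposition \ref{uniquenessoffs} --- the same maneuver the paper itself employs in the base case of Proposition \ref{filterediso}, so your version is a legitimate (if slightly longer) variant of the identical argument.
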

\begin{proof} 
Pursuant to Proposition \ref{filterediso} and Lemma \ref{basenormalbundleiso} for each $W_{x, H}^{i}$ from the open cover $\mathsf{C}_{H}^{i}$, defined above, there is a well-defined filtered isomorphism of $\Cs$-algebras 
\[\mc{Y}_{W_{x, H}^{i}}\circ F_{W_{x, H}^{i}}: \widehat{\mc{H}}_{1, c, X_{H}^{i}}|_{W_{x, H}^{i}}\rightarrow\fl|_{W_{x, H}^{i}}.\] 
It remains to be verified whether for any two open $W_{x, H}^{i}, W_{y, H}^{i}\in\Csf_{H}^{i}$ with $\Omega_{H}^{i}\neq\varnothing$ the corresponding maps $\mc{Y}_{W_{x, H}^{i}}\circ F_{W_{x, H}^{i}}$ and $\mc{Y}_{W_{y, H}^{i}}\circ F_{W_{y, H}^{i}}$ restrict to the same map 
\[\widehat{\mc{H}}_{1, c, X_{H}^{i}}|_{\Omega_{H}^{i}}\rightarrow\fl|_{\Omega_{H}^{i}}.\] 
To that aim, notice that 
although in general $\zeta_{1}|_{\Omega}\neq\zeta_{2}|_{\Omega}$, one can still identify $\zeta_{1}(\Omega)$ with $\zeta_{2}(\Omega)$ via the biholomorphism $\kappa: \zeta_{1}(\Omega)\rightarrow\zeta_{2}(\Omega)$, given by $\kappa:=\zeta_{2}\circ\zeta_{1}^{-1}$. Thus, for every parametrization $\phi$ of $\zeta_{1}(\Omega)\subset N|_{\Omega}$, the post-composition $\kappa\circ\phi$ is a parameterization of $\zeta_{2}(\Omega)$ and vice versa.  Let $D\in\mc{F}_{W_{x, H}^{i}}(\Omega_{H}^{i})=\mc{F}_{W_{y, H}^{i}}(\Omega_{H}^{i})$. Then, a successive  application of the maps $F_{W_{x, H}^{i}}(\Omega_{H}^i)$ and $F_{W_{y, H}^{i}}(\Omega_H^i)$ on $D$ yields accordingly an element 
\begin{equation*}
F_{W_{x, H}^{i}}(\Omega_{H}^{i})(D)=\zeta_{1}^{\*-1}\circ D\circ\zeta_{1}^{\*}~~\mathrm{in}~~\OO_{\hat{V}_{\epsilon}}(\zeta_{1}(\Omega_{H}^{i}))\otimes_{\OO_{V_{\epsilon}}(\zeta_{1}(\Omega))}H_{1, c}(\zeta_{1}(\Omega), H),
\end{equation*}
defined on a formal neighborhood of $\zeta_{1}(\Omega_{H}^{i})=\Sigma_{0}|_{\Omega_{H}^{i}}$ in $\zeta_{1}(\Omega)$, and an element
\begin{equation*}
F_{W_{y, H}^{i}}(\Omega_{H}^{i})(D)=\kappa^{\*-1}\circ\zeta_{1}^{\*-1}\circ D\circ\zeta_{1}^{\*}\circ\kappa^{\*}~~\mathrm{in}~~\OO_{\hat{U}_{\epsilon}}(\zeta_{2}(\Omega_{H}^{i}))\otimes_{\OO_{U_{\epsilon}}(\zeta_{2}(\Omega))}H_{1, c}(\zeta_{2}(\Omega), H),
\end{equation*}
defined on a formal neighborhood of $\zeta_{2}(\Omega_{H}^{i})=\Sigma_{0}|_{\Omega_{H}^{i}}$ in $\zeta_{2}(\Omega)$, where $V_{\epsilon}=\zeta_{1}(W_{x})$ and $U_{\epsilon}=\zeta_{2}(W_{y})$. In order to compare the Taylor series at $\pmb{x}=\pmb{0}$ of different operators on the holomorphic  normal bundle $N|_{\Omega_{H}^{i}}$, they first need to be pulled back to the same region in $\bb C^{n-l}\times\bb C^{l}$. Consequently, for any local parameterization $\phi$ of the normal bundle $N|_{\Omega_{H}^{i}}$, with respect to which we compute the Taylor expansion of $\zeta_{1}^{\*-1}\circ D\circ\zeta_{1}^{\*}$, we have correspondingly  
\begin{align*}
\mc{Y}_{W_{y, H}^{i}}(\Omega_{H}^{i})(\kappa^{\*-1}\circ&\zeta_{1}^{\*-1}\circ D\circ\zeta_{1}^{\*}\circ\kappa^{\*})=\\
&s: [\phi]\mapsto\id\otimes\widehat{\Theta}_{c}T_{\pmb{x}=\pmb{0}}((\kappa\circ\phi)^{\*}\circ\kappa^{\*-1}\circ\zeta_{1}^{\*-1}\circ D\circ\zeta_{1}^{\*}\circ\kappa^{\*}\circ(\phi\circ\kappa)^{\*-1})\\
\end{align*}
where $\kappa^{\*-1}\circ\zeta_{1}^{\*-1}\circ D\circ\zeta_{1}^{\*}\circ\kappa^{\*}$ is pull-backed by $\kappa\circ\phi$ to the same region in $\bb C^{n-l}\times\bb C^{l}$ as $\zeta_{1}^{\*-1}\circ D\circ\zeta_{1}^{\*}$ by $\phi$. We see that $\mc{Y}_{W_{y, H}^{i}}(\Omega_{H}^{i})\circ F_{W_{y, H}^{i}}(\Omega_{H}^{i})(D)$ agrees with $\mc{Y}_{W_{x, H}^{i}}(\Omega_{H}^{i})\circ F_{W_{x, H}^{i}}(\Omega_{H}^{i})(D)$. Ergo, $\mc{Y}_{W_{x, H}^{i}}\circ F_{W_{x, H}^{i}}$ and $\mc{Y}_{W_{y, H}^{i}}\circ F_{W_{y, H}^{i}}$ coincide on $\Omega_{H}^{i}$ and more generally on every open subset $W\subset\Omega_H^i$. Therefore there exists a unique morphism of $\Cs$-algebras
\[\mc{Z}:\widehat{\mc{H}}_{1, c, X_{H}^{i}}\rightarrow\fl\]
such that $\mc{Z}|_{W_{x, H}^{i}}=\mc{Y}_{W_{x, H}^{i}}\circ F_{W_{x, H}^{i}}$ is a filtered isomorphism. From the later we infer that $\mc{Z}$ is a filtered $\Cs$-algebra isomorphism.  
\end{proof}   
\section{Gluing of sheaves on the orbit type strata in $X$}
\label{gluing}
In this section we present the main result of this paper. Namely, we carry out a gluing of the $\Cs$-algebras $\fl$, obtained via localization on different orbit type strata $X_{H}^{i}$, $H\subset G$, $i$ a finite index, into a single sheaf of deformations of the $\Cs$-algebra $\mc{D}_X\rtimes G$. It turns out that the sheaf of deformations we get, is isomorphic to Etingof's sheaf of global Cherednik algebras. The main merit of our construction is twofold. On one hand, it is the first example, which in analogy to Fedosov's quantization gives a detailed recipe how via the framework of formal geometry a sheaf of non-commutative algebras can be deformed. The tools, developed in the exhibited construction, are aimed at a later proof of Dolgushev-Etingof's conjecture for arbitrary symplectic orbifolds. On the other hand, this construction lays down the ground for trace densities, computation of the Hochschild and the cyclic homology of the sheaf of formal global Cherednik algebras and finally, a subsequent derivation of an index theorem in line with \cite{RT12}. 
We start with some preliminaries.  
\subsection{Gluing of the localization sheaf on the principal stratum with the localization sheaves on the codimenstion $1$ strata}
\label{gluingofstrata01}
Let $H\subset G$, not necessarily generated by complex reflections, be such that the codimension of $X_H^i$ is $1$. Then, this stratum lies within the intersection of the complex reflection hypersurfaces $(h, Y)$ of $H$. In a linear space reflection hyperplanes coincide if and only if the corresponding roots and consequently complex reflections, are identical. Thus by Cartan's Lemma no two distinct complex reflection hypersurfaces  coincide. Hence the intersection of any number hyperplanes of $G$ has necessarily a codimension larger than one. That is why, the group $H$ possesses only one complex reflection whose hyperplane $(h, Y)$ entirely contains the stratum $X_{H}^{i}$. We remark however that in general $(Y, h)$ might contain other orbit type strata, as well.\\ 
Consider the disjoint union $\mathring{X}\coprod X_H^i$ and the subspace topology on it and let $j_{1}: \mathring{X}\hookrightarrow\mathring{X}\coprod X_{H}^{i}$ and $j_{2}: X_{H}^{i}\hookrightarrow\mathring{X}\coprod X_{H}^{i}$ be the obvious embeddings.  It can be proven as in Lemma \ref{openfiltration} that $\mathring{X}\coprod X_H^i$ is open in $X$. Moreover, as for every $x\in\mathring{X}$ and $h\in H$, we have that $\Stab(hx)=\id$, $\mathring{X}\coprod X_H^i$ is $H$-invariant. Below we show how to glue the $\Cs$-algebras $\coor{\tilde{\pi}}_{\*}\OO_{\mathrm{flat}}(\mathring{X}\times \widehat{D}_n)$ and $\one$ on $\mathring{X}$ and $X_H^i$, respectively, into a single subsheaf of 
\[j_{1\*}\coor{\tilde{\pi}}_{\*}\OO_{\mathrm{flat}}(\mathring{X}\times \widehat{D}_n)\oplus j_{2\*}\one\] 
on the open $H$-invariant susbspace $\mathring{X}\coprod X_H^i$ which is isomorphic to $\mc{H}_{1, c, \mathring{X}\coprod X_H^i, H}$ as a $\Cs$-algebra. 
The basis $\mf{B}_{\mathsmaller{\mathring{X}\coprod X_H^i}}^H:=\{U\cap\mathring{X}\coprod X_H^i~|~U\in\mf{B}_X^G\}$ for the subspace topology of $\mathring{X}\coprod  X_H^i$ is equivalent to the basis given by 
\begin{align*}
\label{hequivbasis}
&\{W_x~|~W_x~\textrm{is a $H$-invariant linear slice}\}\bigcup\Big\{U\subset\mathring{X}~|\begin{matrix}\mathsmaller{U~\textrm{is an $H$-invariant open set}}\\\mathsmaller{\textrm{s.t. $gU\cap U=\varnothing$ for all}~g\in G/H}\end{matrix}\Big\}.
\end{align*} 
Thus, henceforth we shall abuse notation and language by denoting the latter basis by $\mf{B}_{\mathsmaller{\mathring{X}\coprod X_H^i}}^H$ and by calling it the basis for the $H$-equivariant topology of $\mathring{X}\coprod X_H^i$. Notice that even though $X_{H}^{i}\subseteq(h, Y)$, as per  Cartan's Lemma we have for every $H$-linear slice $W_{x}$ that $W_{x}\cap X_{H}^{i}=W_{x}\cap(h, Y)$. For any $H$-invariant linear slice $W_{x}$ from $\mf{B}_{\mathsmaller{\mathring{X}\coprod X_H^i}}^H$, define 
\begin{align*}
&\mc{P}_0(\mathsmaller{W_{x}}):=\\
&\Bigg\{\begin{pmatrix}s_0\\ s_1\end{pmatrix}\in\begin{matrix}\mathsmaller{\coor{\tilde{\pi}}_{\*}\OO_{\textrm{flat}}(\mathring{X}\times\widehat{D}_n)(W_x\setminus X_{H}^{i})\otimes\bb CH}\\ \times\\\mathsmaller{\one(W_{x, H}^i)}\end{matrix}~~\Bigg|~~\begin{matrix}\mathsmaller{i) \mc{Y}(W_{x}\setminus X_{H}^{i})^{-1}(s_0)\in\OO(W_{x})[\mc{R}(W_{x})^{-1}]\otimes\mc{D}_{X}(W_{x})\rtimes H}\\\mathsmaller{ii)\id\otimes\widehat{\Theta}_c(s_1([\phi]))=i_{\psi}(\mc{Y}(W_x\setminus X_{H}^{i})^{-1}(s_0))~\textrm{for all}~[\phi]\in\coor{\mc{N}}}\\\hspace{15em}\mathsmaller{\textrm{with}~\psi(0, 0)\in W_{x}}\end{matrix}\Bigg\}
\end{align*}
where $\psi=\zeta^{-1}\circ\phi$ is a local parametrization of $W_x$, induced by the parametrization of the normal bundle, and identifying the coordinates on $W_x\subset X$ with the ones on the normal bundle, and  
\begin{equation}
\label{taylorembedding}
i_{\psi}:\OO(W_x)[\mc{R}(W_x)^{-1}]\otimes\mc{D}_{X}(W)\rtimes H\hookrightarrow\bb C\{\pmb{x}, y\}[S^{-1}]\otimes\big<\bb C\{\pmb{x}, y\}, \pd{x^1},\dots, \pd{x^{n-1}}, \pd{y}\big>\rtimes H
\end{equation} 
where $S=\{\alpha_h^{m}~|~\alpha_h~\textrm{is the coroot coresponding to}~d\rho(h)_x~\textrm{and}~m\in\bb N_{0}\}$ 
is a multiplicatively closed set in $\bb C\{\pmb{x}, \pmb{y}\}$. For any $U\in\mc{B}$ with $U\subset\mathring{X}$, define 
\begin{equation}
\label{gluingconditioni}
\mc{P}_0(U):=\{\begin{matrix}s_0\in\coor{\tilde{\pi}}_{\*}\OO_{\textrm{flat}}(\mathring{X}\times\widehat{D}_n)(U)\otimes\bb CH~|~\mc{Y}(U)^{-1}(s_0)\in\mc{D}_{X}(U)\rtimes H\end{matrix}\}.
\end{equation}
\begin{proposition}
\label{Calgebraonbasis}
$\mc{P}_0$ is a $\Cs$-algebra on the basis $\mf{B}_{\mathsmaller{\mathring{X}\coprod X_H^i}}^H$ of the open subspace $\mathring{X}\coprod X_H^i$ of $X$.
\end{proposition}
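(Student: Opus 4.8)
The plan is to show that $\mc{P}_0$, as prescribed on the two sorts of basis elements of $\mf{B}_{\mathsmaller{\mathring{X}\coprod X_H^i}}^H$, is a sheaf of $\Cs$-algebras on that basis. Concretely I must verify three things: (a) each $\mc{P}_0(B)$ is a $\Cs$-subalgebra of the relevant ambient algebra; (b) the evident restriction maps make $\mc{P}_0$ a presheaf of $\Cs$-algebras on the basis; and (c) $\mc{P}_0$ satisfies separation and gluing for covers by basis elements. The guiding principle is that every condition cutting out $\mc{P}_0$ is \emph{algebraic} and \emph{local}, so that each clause carves out a subalgebra and each clause descends along covers.

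First I would settle (a). For an interior set $U\subset\mathring{X}$ the defining clause \eqref{gluingconditioni} says exactly that $\mc{Y}(U)^{-1}(s_0)$ lands in the $\Cs$-subalgebra $\mc{D}_X(U)\rtimes H$; since $\mc{Y}(U)$ is an isomorphism of $\Cs$-algebras and $\bb{C}H$ enters diagonally, $\mc{P}_0(U)=\mc{Y}(U)\big(\mc{D}_X(U)\rtimes H\big)$ is the image of a $\Cs$-subalgebra under an isomorphism, hence a $\Cs$-subalgebra. For a slice $W_x$ I would recognize $\mc{P}_0(W_x)$ as a fibered product: let $A_0$ denote the subalgebra of $\coor{\tilde{\pi}}_{\*}\OO_{\textrm{flat}}(\mathring{X}\times\widehat{D}_n)(W_x\setminus X_H^i)\otimes\bb{C}H$ cut out by condition $i)$, so that on $A_0$ the assignment $s_0\mapsto i_{\psi}\big(\mc{Y}(W_x\setminus X_H^i)^{-1}(s_0)\big)$ is a $\Cs$-algebra homomorphism into the common target $\bb{C}\{\pmb{x},y\}[S^{-1}]\otimes\big<\bb{C}\{\pmb{x},y\},\pd{x^1},\dots,\pd{x^{n-1}},\pd{y}\big>\rtimes H$; on $\one(W_{x,H}^i)$ the assignment $s_1\mapsto \id\otimes\widehat{\Theta}_c(s_1([\phi]))$ is another such homomorphism. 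Condition $ii)$ is precisely the equalizer of these two maps, so $\mc{P}_0(W_x)=A_0\times_{T}\one(W_{x,H}^i)$. Since $\mc{Y}^{-1}$, $i_{\psi}$ and $\id\otimes\widehat{\Theta}_c$ are all $\Cs$-algebra morphisms (multiplicativity of $i_{\psi}$ being the Taylor-series computation of Lemma \ref{diffoperatorflatsec}, that of $\id\otimes\widehat{\Theta}_c$ the Dunkl embedding), a fibered product of $\Cs$-algebras along $\Cs$-algebra homomorphisms is a $\Cs$-algebra, and $\mc{P}_0(W_x)$ is one.

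For (b), restrictions are induced from the ambient sheaves $\coor{\tilde{\pi}}_{\*}\OO_{\textrm{flat}}(\mathring{X}\times\widehat{D}_n)$, $\one$, and $\OO_X[\mc{R}^{-1}]\otimes\mc{D}_X\rtimes H$. Conditions $i)$ and $ii)$ are stable under restriction: $i)$ because $\OO_X[\mc{R}^{-1}]\otimes\mc{D}_X\rtimes H$ is a sheaf, so membership restricts; $ii)$ because it is imposed pointwise in $[\phi]$, and restriction only shrinks the admissible range of $[\phi]$. The one nonformal check is the restriction of a slice $W_x$ to an interior basis element $U\subset W_x\setminus X_H^i$: the pair $(s_0,s_1)$ maps to $s_0|_U$, and I would verify that condition $i)$ on $W_x$ forces $\mc{Y}(U)^{-1}(s_0|_U)\in\mc{D}_X(U)\rtimes H$, i.e. that away from the reflection hypersurface the section acquires no poles, so the restriction indeed lands in $\mc{P}_0(U)$. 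Functoriality is then immediate.

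Finally, for (c), the ambient objects are genuine sheaves, so for a basis cover $\{B_\alpha\}$ of $B$ with compatible families the underlying $s_0,s_1$ glue uniquely, and the locality of $i)$ and $ii)$ passes them to the glued section; separation is inherited directly. \textbf{The step I expect to be the main obstacle} is the interaction, inside a cover of a slice $W_x$, between the two kinds of basis element: one must confirm that the matching condition $ii)$, which lives only along $W_{x,H}^i$ and is phrased through two distinct localizations — the principal-stratum map $\mc{Y}$ on $W_x\setminus X_H^i$ and the Dunkl embedding $\id\otimes\widehat{\Theta}_c$ of the normal-bundle flat section $s_1$ — is coherent across overlaps and compatible with the identification $\psi=\zeta^{-1}\circ\phi$ of slice coordinates with normal-bundle coordinates. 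This reduces to the assertion that both localization procedures produce the \emph{same} Dunkl operators on the overlap, which is supplied by Proposition \ref{hcmorphism} together with the explicit evaluation of $s(\bb{D}_Z)$ in Proposition \ref{flatsectionarb.stratum}; checking that these identifications cohere as $[\phi]$ varies, and that the residue of $s_0$ along the hypersurface matches the leading term of $s_1$, is the technical heart of the argument.
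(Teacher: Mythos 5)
Most of your plan coincides with the paper's actual proof: the algebra structure on $\mc{P}_0(W_x)$ is obtained exactly as you say, from multiplicativity of $\mc{Y}$, $i_{\psi}$ and $\id\otimes\widehat{\Theta}_c$ (your equalizer/fibered-product packaging is a cleaner formulation of the paper's direct check that $(s_0s_0',s_1s_1')$ again satisfies conditions $i)$ and $ii)$); the sheaf axioms are, as in the paper, inherited from the ambient sheaves $\coor{\tilde{\pi}}_{\*}\OO_{\textrm{flat}}(\mathring{X}\times\widehat{D}_n)$ and $\one$ together with the locality of the gluing conditions; and your slice-to-interior check (no poles away from $X_H^i$, since inside an $H$-slice the only reflection hypersurface is $W_x\cap X_H^i$) is sound and in fact more explicit than the paper, which dismisses that case with "there are no gluing conditions."

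The genuine gap is in your step (b), where you assert that condition $ii)$ is stable under restriction "because it is imposed pointwise in $[\phi]$, and restriction only shrinks the admissible range of $[\phi]$." For a slice-to-slice restriction $U_{x'}\subset W_x$ this justification fails: the smaller slice comes with its own embedding $\zeta'$ into the normal bundle (Theorem \ref{completediso}), so a single parametrization $\phi$ of $N$ induces two different coordinate systems $\zeta^{-1}\circ\phi$ and $\zeta'^{-1}\circ\phi$ on $U_{x'}$, and the condition defining $\mc{P}_0(U_{x'})$ is phrased through $\psi'=\zeta'^{-1}\circ\phi$, not through the $\psi$ appearing in the condition your restricted section is known to satisfy. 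This is precisely where the paper invests its effort: setting $\kappa=\zeta'^{-1}\circ\zeta$, it shows both sides of the Taylor-expansion equality in condition $ii)$ transform by the same conjugation $D\mapsto\kappa^{-1\*}\circ D\circ\kappa^{\*}$ (equivalently, by pulling back along $\kappa\circ\phi$), so the equality is coordinate-independent and the restriction maps are well-defined. You do gesture at this issue in your final paragraph, but you misplace it in the gluing step (c) — where, for a fixed slice and a fixed cover, no change of $\zeta$ actually occurs — and you reduce it to "both localization procedures produce the same Dunkl operators," citing Propositions \ref{hcmorphism} and \ref{flatsectionarb.stratum}. Those are ingredients, not the needed statement: condition $ii)$ is an equality of full transversal Taylor/Laurent expansions, not a matching of residues or leading terms, and what must be verified is its covariance under $\kappa$. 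Without that verification the restriction morphisms are not known to be well-defined, so the presheaf — and hence the sheaf — structure of $\mc{P}_0$ on $\mf{B}_{\mathsmaller{\mathring{X}\coprod X_H^i}}^H$ is not established.
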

\begin{proof}
For every $W\in\mf{B}_{\mathsmaller{\mathring{X}\coprod X_H^i}}^H$ with $W\in{\mathring{X}}$, we have $\coor{\tilde{\pi}}_{\*}\OO(\mathring{X}\times\widehat{D}_n)(W)\otimes\bb CH=\mc{P}_0(W)$, whence $\mc{P}_0(W)$ is trivially a $\bb C$-algebra. Further, for every $H$-invariant linear slice $W_{x}\in\mf{B}_{\mathsmaller{\mathring{X}\coprod X_H^i}}^H$, the products in $\coor{\tilde{\pi}}_{\*}\OO(\mathring{X}\times\widehat{D}_n)(W_{x}\setminus X_{H}^{i})\otimes\bb CH$ and $\one(W_{x, H}^{i})$ induce a well-defined binary operation on $\mc{P}_0(W)$ by $(s_0, s_1)\cdot(s'_0, s'_1)=(s_0s'_0, s_1s'_1)$ for all $(s_0, s_1), (s'_0, s'_1)\in\mc{P}_0(W_{x})$. Indeed, we have that
\begin{align*}
i)\quad&\mc{Y}(W_{x}\setminus X_{H}^{i})^{-1}(s_{0}s'_{0})
\in\OO_{X}(W_{x})[\mc{R}(W_{x})^{-1}]\otimes\mc{D}_{X}(W_{x})\rtimes H
\end{align*}
by the fact that $\mc{Y}(W_{x}\setminus X_{H}^{i})$ is a $\bb C$-algebra morphism, and also
\begin{align*}
ii)\quad&\id\otimes\widehat{\Theta}_{c}(s_{1}s'_{1}([\phi]))=i_{\psi}(\mc{Y}(W_{x}\setminus X_{H}^{i})^{-1}(s_{0}))i_{\psi}(\mc{Y}(W_{x}\setminus X_{H}^{i})^{-1}(s'_{0}))\\
&=i_{\psi}(\mc{Y}(W_{x}\setminus X_{H}^{i})^{-1}(s_{0}s'_{0}))~\textrm{for all}~[\phi]\in\coor{\mc{N}}
\end{align*}
by the fact that $\id\otimes\widehat{\Theta}_{c}$ and $i_{\psi}$ are $\bb C$-algebra morphisms, whence $(s_{0}s'_{0}, s_{1}s'_{1})\in\mc{P}_0(W_{x})$. We leave it to the reader to do the easy verification, that the so defined operation endows $\mc{P}_0(W_{x})$ with a $\bb C$-algebra structure.\\
Take $H$-invariant linear slices $W_x, U_{x'}\in\mf{B}_{\mathsmaller{\mathring{X}\coprod X_H^i}}^H$ with $U_{x'}\subset W_x$ and corresponding biholomorphisms $\zeta: W_x\rightarrow V_{\epsilon}$ and $\zeta': U_{x'}\rightarrow V_{\epsilon'}$ as in Thorem \ref{completediso}. Define a restriction morphism $\res_{U_{x'}}^{W_x}: \mc{P}_0(W_x)\rightarrow\mc{P}_0(U_{x'})$ by $(s_0, s_1)\mapsto(s_0|_{U_x\setminus X_H^i}, s_1|_{U_{x, H}^i})$. 
Any given parametrization $\phi$ of $N$ induces two separate sets of coordinates $\zeta^{-1}\circ\phi$ and $\zeta'^{-1}\circ\phi$ on $U_{x'}$. In order for the restriction morphism to be well-defined, we have to make sure that the gluing conditions on $\mc{P}_0(U_{x'})$ are independent with respect to the choice of a parametrization of $U_{x'}$. Requirement $i)$ is apparently coordinate-independent. As for condition $ii)$, suppose that 
\[s_{1}: [\phi]\mapsto\sum_{\pmb{\alpha, \beta}}\sum_{\mu_h, \pmb{I}, \pmb{J}}F_{\pmb{\alpha\beta}}^{\mu_h\pmb{IJ}}([\phi])x^{\pmb{\beta}}\partial^{\pmb{\alpha}}\otimes (y^{\pmb{I}}\otimes u^{\pmb{J}}h)\] 
is an element of $\one(U_{x', H}^{i})$. 
Then, as pointed out in the proof of Proposition \ref{filterediso} 
\begin{equation*}
\id\otimes\widehat{\Theta}_{c}(s_{1}([\phi]))=T_{\pmb{x}=\pmb{0}}\Big(\sum_{\substack{\A~\Jj\\\B~\Ii~\mu_{h}}}\big(F_{\A\pmb{0}}^{\mu_{h}\Ii\Jj}\circ\varphi\circ\pi\circ\phi(\*, 0)\big)~(\varphi_{p}^{-1}\circ\phi)^{\*}\hspace{-0.2em}\circ(\partial^{\A}y^{\Ii}D^{\Jj}h)\hspace{-0.2em}\circ\hspace{-0.2em}(\varphi_{p}^{-1}\circ\phi)^{-1~\*}\Big)
\end{equation*}
where $D_{i}$ is the the Dunkl operator on $\bb C^{l}$ corresponding to the basis vector $u_{i}$.
One can rewrite condition $ii)$ in the form
\begin{align*}
T_{\pmb{x}=\pmb{0}}\Big(\sum_{\substack{\A~\Jj\\\B~\Ii~\mu_{h}}}\big(F_{\A\pmb{0}}^{\mu_{h}\Ii\Jj}\circ\varphi\circ\zeta^{-1}\circ\phi(\*, 0)\big)~(\varphi_{p}^{-1}\circ\phi)^{\*}\hspace{-0.2em}\circ(\partial^{\A}&y^{\Ii}D^{\Jj}h)\hspace{-0.2em}\circ\hspace{-0.2em}(\varphi_{p}^{-1}\circ\phi)^{-1~\*}\Big)=\\
&T_{\pmb{x}=\pmb{0}}((\zeta^{-1}\circ\phi)^{\*}\circ D_0\circ(\zeta^{-1}\circ\phi)^{-1\*}).
\end{align*}
where we account that $\zeta^{-1}=\pi$ on $\Sigma_{0}$. Obviously, a change of the parametrization $\kappa:=\zeta'^{-1}\circ\zeta$ leaves the above equality invariant \footnote{Alternatively, we argue as follows. Notice that with respect to $\zeta^{-1}\circ\phi$ constraint $ii)$ can be rewritten in the form 
\begin{equation}
\label{conditionii}
T_{\pmb{x}=\pmb{0}}(\phi^{\*}\circ(\zeta^{-1\*}\circ D_1\circ\zeta^{\*})\circ\phi^{\*})=T_{\pmb{x}=\pmb{0}}T_{\pmb{y}=\pmb{0}}((\zeta^{-1}\circ\phi)^{\*}\circ D_0\circ(\zeta^{-1}\circ\phi)^{-1\*})
\end{equation}
where we used that $s_1=\mc{Z}|_{W_{x, H}^i}(U_{x', H}^i)(D_1)$ and $s_0=\mc{Y}(U_{x'}\setminus X_H^i)(D_0)$ for some operators $D_1\in\OO_{\widehat{W}_x}(U_{x', H}^i)\otimes_{\OO_{W_x}(U_{x'})}H_{1, c}(U_{x'}, H)$ and $D_0\in\OO(U_{x'})[\mc{R}(U_{x'})^{-1}]\otimes\mc{D}_{X}(U_{x'})\rtimes H$. With respect to the parametrization $\zeta'^{-1}\circ\phi$ of $U_{x'}$ the coordinate representation of $D\in\{D_1, D_0\}$ change according to 
\[(\zeta^{-1}\circ\phi)^{\*}\circ D\circ(\zeta^{-1}\circ\phi)^{-1\*}\mapsto\kappa^{-1\*}\circ(\zeta^{-1}\circ\phi)^{\*}\circ D\circ(\zeta^{-1}\circ\phi)^{-1\*}\circ\kappa^{\*}\] 
which implies that the equality \eqref{conditionii} of Taylor series expansions remains valid in the new set of coordinates.}. For any subset $U\in\mf{B}_{\mathsmaller{\mathring{X}\coprod X_H^i}}^H$ with $U\subset\mathring{X}\cap W_x$, define the corresponding restriction map simply by $(s_0, s_1)\mapsto s_0|_{U}$. For this case there are no gluing conditions. Hence, gluing condition $i)$ and $ii)$ are coordinate independent, and consequently, all restriction morphisms are well-defined. Moreover, it is evident that $\res_W^{W_x}\circ\res_U^W=\res_U^{W_x}$ and $\res_U^U=\id_U$ for all $U, W\in\mf{B}_{\mathsmaller{\mathring{X}\coprod X_H^i}}^H$ with $W\subset U$. Hence, $\mc{P}_0$ is a presheaf. \\
Let $\cup_kW_k$ with $W_k\in\mf{B}_{\mathsmaller{\mathring{X}\coprod X_H^i}}^H$, $k\in\bb Z_{\geq1}$, be a cover of $W_x$. If $(s_0, s_1)\in\mc{P}_0(W_{x})$ such that $(s_0, s_1)|_{W_k}=0$ for all $W_k$ of the cover, then by definition, we have accordingly $s_0|_{W_k\setminus X_H^i}=0$ and $s_1|_{W_k\cap X_H^i}=0$. Since $\coor{\tilde{\pi}}_{\*}\OO(\mathring{X}\times\widehat{D}_n)$ and $\one$ are sheaves, whereas $\cup_k W_k\setminus X_H^i$ and $\cup_kW_k\cap X_H^i$ are covers of $W_x\setminus X_H^i$ and $W_{x, H}^i$, respectively, it follows that $s_0=0$ and $s_1=0$, hence $(s_0, s_1)=0$.  Suppose $s^{(k)}=(s_0^{(k)}, s_1^{(k)})\in\mc{P}_0(W_k)$, such that for all pairs $W_k$ and $W_{k'}$ of the open cover and for all $H$-invariant slices $W_{x'}\in\mf{B}_{\mathsmaller{\mathring{X}\coprod X_H^i}}^H$ with $W_{x'}\subseteq W_k\cap W_{k'}$, we have that $(s_0^{(k)}, s_1^{(k)})|_{ W_{x'}}=(s_0^{(k')}, s_1^{(k')})|_{W_{x'}}$. This means $s_0^{(k)}|_{W_{x'}\setminus X_H^i}=s_0^{(k')}|_{W_{x'}\setminus X_H^i}$ and $s_1^{(k)}|_{W_{x', H}^i}=s_1^{(k')}|_{W_{x', H}^i}$. Since $\coor{\tilde{\pi}}_{\*}\OO_{\textrm{flat}}(\mathring{X}\times\widehat{D}_n)$ and $\one$ are sheaves, there is a section $s_0$ of $\coor{\tilde{\pi}}_{\*}\OO_{\textrm{flat}}(\mathring{X}\times\widehat{D}_n)$ over $W_x\setminus X_H^i$ such that
\[\res_{W_k\setminus X_H^i}^{W_x\setminus X_H^i}(s_0)=s_0^{(k)}\] 
and a section $s_1$ of $\one$ over $W_{x, H}^i$ with
\[\res_{W_k\cap X_H^i}^{W_{x, H}^i}(s_1)=s_1^{(k)}.\] 
Since the gluing condition is local, the pair $(s_0, s_1)$ satisfies the gluing condition on $W_x$. Hence, $\mc{P}_0$ is a sheaf on the basis $\mf{B}_{\mathsmaller{\mathring{X}\coprod X_H^i}}^H$.
\end{proof}     
The sheaf $\mc{P}_0$ on the basis $\mf{B}_{\mathsmaller{\mathring{X}\coprod X_H^i}}^H$ can be uniquely  extended to a sheaf $\mc{P}_H^i$ on the whole subspace $\mathring{X}\coprod X_{H}^{i}$ via the rule
\begin{equation*}
\mc{P}_H^i(U)=\invlim_{\mf{B}_{\mathsmaller{\mathring{X}\coprod X_H^i}}^H\ni W\subset U}\mc{P}_{0}(W)
\end{equation*}
for every open set $U$ in $\mathring{X}\coprod X_{H}^{i}$.
\begin{theorem}
There is an isomorphism of~~$\Cs$-algebras between and $\mc{P}_H^i$ and $\mc{H}_{1, c, \mathring{X}\coprod X_{H}^{i}}$.
\end{theorem}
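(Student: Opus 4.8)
The plan is to build a morphism $\Lambda\colon\mc{H}_{1, c, \mathring{X}\coprod X_{H}^{i}, H}\to\mc{P}_H^i$ on the basis $\mf{B}_{\mathsmaller{\mathring{X}\coprod X_H^i}}^H$, prove it is a filtered isomorphism there, and then invoke the sheaf property (Proposition \ref{Calgebraonbasis}) together with the inverse-limit definition of $\mc{P}_H^i$ to extend it uniquely to the asserted isomorphism on $\mathring{X}\coprod X_H^i$. Since $\mc{H}_{1,c,X,G}$ is a subsheaf of $\OO_X[\mc{R}^{-1}]\otimes_{\OO_X}\mc{D}_X\rtimes G$, a section $D$ of $\mc{H}_{1,c,\mathring{X}\coprod X_H^i, H}$ over an $H$-invariant linear slice $W_x$ is a meromorphic element with at most first-order poles along $X_H^i=W_x\cap(h,Y)$. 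For a basis element $U\subset\mathring{X}$ I would set $\Lambda(U)(D):=\mc{Y}(U)(D)$, which lands in $\mc{P}_0(U)$ because $D$ restricts there to an honest element of $\mc{D}_X(U)\rtimes H$; for a slice I would set
\[\Lambda(W_x)(D):=\big(\mc{Y}(W_x\setminus X_H^i)(D|_{W_x\setminus X_H^i}),\ \mc{Z}|_{W_{x,H}^i}(\widehat{D})\big),\]
where $\widehat{D}$ is the formal completion of $D$ along $X_H^i$, read through the isomorphisms of Lemma \ref{basenormalbundleiso} and Propositions \ref{filterediso}, \ref{non-canonicaliso}.

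The first component $s_0$ makes sense because on $W_x\setminus X_H^i$ the reflection hypersurface is excised, so $D$ is a genuine differential operator and $\mc{Y}$ applies; the second component $s_1$ is the localization of $D$ on the stratum. Gluing condition $i)$ then holds tautologically, since $\mc{Y}^{-1}(s_0)=D|_{W_x\setminus X_H^i}\in\OO(W_x)[\mc{R}^{-1}]\otimes\mc{D}_X\rtimes H$. The key point is condition $ii)$: both $s_0$ and $s_1$ are produced from the single operator $D$ by the Taylor operator $T_{\pmb{x}=\pmb{0}}$ occurring in the definition of $\mc{Y}$, and $\id\otimes\widehat{\Theta}_c$ converts $s_1$ back into differential-operator-with-poles form. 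Equality with $i_{\psi}(D)$ then follows from multiplicativity of $T_{\pmb{x}=\pmb{0}}$ and the factorisation \eqref{factorizationofsigma} of $\sigma$ through the Dunkl embedding, exactly as in Lemma \ref{diffoperatorflatsec} and Proposition \ref{flatsectionarb.stratum}. Compatibility of $\Lambda$ with restrictions is checked on overlaps $U\cap W_x$ and follows from functoriality of $\mc{Y}$ and $F_{W_{x,H}^i}$, so $\Lambda$ is a morphism of $\Cs$-algebras on the basis. Injectivity is immediate: if $\Lambda(W_x)(D)=0$ then $s_0=0$, and injectivity of $\mc{Y}$ gives $D|_{W_x\setminus X_H^i}=0$; since $X_H^i$ has codimension one its complement is dense and $D$ has no essential singularities, forcing $D=0$.

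The main obstacle is surjectivity. Given $(s_0,s_1)\in\mc{P}_0(W_x)$, condition $i)$ lets me define $D:=\mc{Y}(W_x\setminus X_H^i)^{-1}(s_0)\in\OO(W_x)[\mc{R}^{-1}]\otimes\mc{D}_X\rtimes H$, an operator with at most simple poles along $X_H^i$; I must show $D$ is actually generated by $p_{\*}\OO_X$, $H$ and Dunkl operators, i.e. $D\in\mc{H}_{1,c,W_x,H}$. First-order poles alone do not characterise the Cherednik subalgebra — the residue along $(h,Y)$ must be organised precisely into the Dunkl pattern $\frac{2c}{1-\lambda}\frac{1}{\alpha}(h-1)$. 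This is exactly what condition $ii)$ supplies: $s_1$ is a flat section of $\coor{\mc{N}}\times\HC$, so by the explicit computation \eqref{formallycompleteddunkl}--\eqref{dunklfiltr} its Dunkl embedding $\id\otimes\widehat{\Theta}_c(s_1)$ has exactly the pole structure of Dunkl operators; equating it with $i_{\psi}(D)$ forces the residue of $D$ to agree with that of a combination of Dunkl operators with coefficients in $\OO_{W_x}\rtimes H$. Subtracting such a combination yields an element of $\OO[\mc{R}^{-1}]\otimes\mc{D}_X\rtimes H$ regular along $X_H^i$, hence a genuine differential operator lying in $\mc{H}_{1,c}$ trivially, so $D\in\mc{H}_{1,c,W_x,H}$ and $\Lambda(D)=(s_0,s_1)$. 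Carrying this argument out filtration degree by degree, using the filtered isomorphisms of Propositions \ref{filterediso} and \ref{non-canonicaliso} to control leading symbols, upgrades $\Lambda$ to a filtered isomorphism on the basis; the sheaf axioms then deliver the stated isomorphism $\mc{P}_H^i\cong\mc{H}_{1,c,\mathring{X}\coprod X_H^i, H}$.
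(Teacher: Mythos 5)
Your architecture is the paper's proof read in the opposite direction: the paper defines $\mc{X}_0(W_x)\colon(s_0,s_1)\mapsto\mc{Y}(W_x\setminus X_H^i)^{-1}(s_0)$ and proves it is a $\bb C$-algebra isomorphism on the basis, while you build its inverse $D\mapsto\big(\mc{Y}(D|_{W_x\setminus X_H^i}),\mc{Z}(\widehat{D})\big)$; your verification of condition $ii)$ for this pair and your injectivity argument are exactly the paper's surjectivity check and injectivity check. The genuine gap is in your key step, the claim that a pole-free element of $\OO(W_x)[\mc{R}(W_x)^{-1}]\otimes\mc{D}_X(W_x)\rtimes H$ "lies in $\mc{H}_{1,c}$ trivially". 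For $c\neq0$ the Cherednik sheaf does \emph{not} contain $\mc{D}_X\rtimes H$: already in the rank-one rational model ($H=\bb Z_2$ acting on $\bb C$, Dunkl operator $D=\partial-\frac{c}{x}(1-s)$), writing $\partial=p+qs+rD+tDs$ with $p,q,r,t$ polynomial forces $r=1$, $t=0$ and then $p=\frac{c}{x}$, which is not regular, so $\partial\notin\Theta_c\big(H_{1,c}(\bb C,\bb Z_2)\big)$; equivalently, a bare Lie derivative $\mc{L}_Z$ with $Z$ transversal to the reflection hypersurface is regular but is not a section of $\mc{H}_{1,c}$. Moreover, your subtraction scheme does not even preserve regularity: matching the residue (or, degree by degree, the principal symbol) by a combination of Dunkl operators with coefficients in $\OO\rtimes H$ reintroduces first-order poles in the lower-order terms, so the induction on filtration degree does not terminate in a regular remainder. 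Regularity is thus neither achieved by the subtraction nor sufficient for membership, and surjectivity — the only nontrivial point of the theorem — is not established.

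The paper closes exactly this gap by a different mechanism, which you in fact already have available. It never analyzes the pole pattern of $D=\mc{Y}(W_x\setminus X_H^i)^{-1}(s_0)$; instead it uses that $s_1$, being a section of $\one(W_{x,H}^i)$, lies in the image of the isomorphism $\mc{Z}$ of Proposition \ref{non-canonicaliso}, so that $D_1:=\mc{Z}(W_{x,H}^i)^{-1}(s_1)$ is \emph{by construction} an element of the formally completed Cherednik algebra $\OO_{\widehat{W}_x}(W_{x,H}^i)\otimes_{\OO_{W_x}(W_x)}H_{1,c}(W_x,H)$. Condition $ii)$ is then rewritten as the identity \eqref{laurentseries} between the convergent Laurent expansion of $D$ (formal only in the single transversal coordinate $y$) and the expansion of $D_1$; uniqueness of Laurent series gives $D=D_1$ on a small punctured polydisc, and the identity theorem propagates the equality over $W_x$, whence $D\in H_{1,c}(W_x,H)$ because the Cherednik structure is carried by $D_1$. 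In short, membership of $D$ in the Cherednik algebra is imported from the second component $s_1$ through $\mc{Z}^{-1}$, not deduced from the shape of the residues of $s_0$. If you replace your residue-subtraction by this Laurent-comparison (your condition $ii)$ is literally this identity after applying $\id\otimes\widehat{\Theta}_c$ and the factorization \eqref{factorizationofsigma}), the remainder of your proposal — direction of the map, tautological condition $i)$, injectivity, compatibility with restrictions, extension from the basis via the inverse-limit definition of $\mc{P}_H^i$ — coincides with the paper's argument; note also that the paper's own proof leaves the filtration compatibility unfinished, so your proposed "degree by degree" upgrade would in any case have to rest on the corrected surjectivity step.
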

\begin{proof}
Fix a $H$-invariant linear slice $W_{x}\in\mf{B}_{\mathsmaller{\mathring{X}\coprod X_H^i}}^H$. Consider the map $\mc{P}_{0}(W_{x})\rightarrow H_{1, c} (W_{x}, H)$ given by $(s_{0}, s_{1})\mapsto\mc{Y}(W_x\setminus X_{H}^{i})^{-1}(s_{0})$
for every $(s_{0}, s_{1})\in\mc{P}_{0}(W_{x})$. We claim that this map is a well-defined isomorphism of $\bb C$-algebras. Indeed, condition $i)$ implies  
\begin{equation}
\label{conditioni}
\mc{Y}(W_x\setminus X_{H}^{i})^{-1}(s_{0})\in\OO_{X}(W_x)[\mc{R}(W_x)^{-1}]\otimes_{\OO_{X}(W_x)}\mc{D}_{X}(W_x)\rtimes H\cong H_{1, c}(W_x\setminus X_{H}^{i}, X).
\end{equation} 
Condition $ii)$ implies
\begin{align}
\label{laurentseries}
T_{\pmb{x}=\pmb{0}}(\phi^{\*}\circ\zeta^{-1\*}\circ D_{1}\circ\zeta^{\*}\circ\phi^{-1\*})=T_{\pmb{x}=\pmb{0}}T_{y=0}((\zeta^{-1}\circ\phi)^{\*}\circ\mc{Y}(U\setminus X_{H}^{i})^{-1}(s_{0})\circ(\zeta^{-1}\circ\phi)^{-1\*})
\end{align}
where $D_{1}=\mc{Z}(W_{x, H}^{i})^{-1}(s_{1})$ is an element of $\OO_{\widehat{W}_{x}}(W_{x, H}^{i})\otimes_{\OO_{W_{x}}(W_x)}H_{1, c}(W_x, H)$. The left hand side of \eqref{laurentseries} represents a Laurent series of an element of the formally completed global Cherednik algebra on $W_x$ with respect to $W_{x, H}^i$ which is convergent in $\pmb{x}$ and formal in the single transversal coordinate $y$. At the same time the expression on the right hand side is a convergent Laurent series of an element of the global Cherednik algebra on the open set $W_x\setminus X_H^i$. By the uniqueness of the Laurent series of holomorphic functions, \[\phi^{\*}\circ\zeta^{-1\*}\circ D_{1}\circ\zeta^{\*}\circ\phi^{-1\*} =(\zeta^{-1}\circ\phi)^{\*}\circ\mc{Y}(W_{x}\setminus X_{H}^{i})^{-1}(s_{0})\circ(\zeta^{-1}\circ\phi)^{-1\*}\]
on a small polydisc in $\bb C^{n-1}\times\bb C$ punctured at $y=0$. Ergo, $D_{1}=\mc{Y}(W_{x}\setminus X_{H}^{i})^{-1}(s_{0})$ on a small neighborhood in $W_{x}$. Eventually, the identity theorem implies  equality in $W_{x}$. Therefore, $\mc{Y}(W_{x}\setminus X_{H}^{i})^{-1}(s_{0})$ is an element of $H_{1, c}(W_{x}, H)$. That the map is a $\bb C$-algebra morphism follows from the fact that $\mc{Y}(W_{x}\setminus X_{H}^{i})$ is a $\bb C$-algebra morphism. Further,  suppose $\mc{Y}(W_{x}\setminus X_{H}^{i})^{-1}(s_{0})=\mc{Y}(W_{x}\setminus X_{H}^{i})^{-1}(s'_{0})$ for some $(s_{0}, s_{1}), (s'_{0}, s'_{1})\in\mc{P}(W_{x})$. The bijectivity of $\mc{Y}(W_{x}\setminus X_{H}^{i})$ coupled with condition $ii)$ and the injectivity of $\id\otimes\widehat{\Theta}_{c}$ imply successively $s_{0}=s'_{0}$ and then $s_{1}([\phi])=s'_{1}([\phi])$ for all $[\phi]\in\coor{\mc{N}}$, hence $(s_{0}, s_{1})=(s'_{0}, s'_{1})$. This corroborates the injectivity. With regard to the surjectivity, suppose $D\in H_{1, c}(W_x, H)$. Then, $(D \mod \mc{J}^{\nu+1})_{\nu}$, where $\mc{J}$ is the ideal sheaf of $W_{x, H}^i$ in $W_x$, is an element of the formally completed global Cherednik algebra on $W_x$, while $D$ is in particular an element of $H_{1, c}(W_x\setminus X_H^i, H)$. Ergo, $s_0:=\mc{Y}(W_x\setminus X_H^i)(D)$ and $s_1:=\mc{Z}(W_{x, H}^i)((D \mod \mc{J}^{\nu+1}))$ are sections of $\coor{\tilde{\pi}}_{\*}\OO_{\textrm{flat}}(\mathring{X}\times\widehat{D}_n)(W_x\setminus X_{H}^{i})\otimes\bb CH$ and $\one(W_{x, H}^i)$, respectively. For any $[\phi]\in\coor{\mc{N}}$, we have 
\begin{align*}
\id\otimes\widehat{\Theta}_c(s_1([\phi]))&=\big(T_{\pmb{x}=\pmb{0}}(\phi^{\*}\circ\zeta^{-1\*}\circ D\circ\zeta^{\*}\circ\phi^{-1\*}) \mod \mc{I}^{\nu+1}\big)_{\nu}\\
&=T_{\pmb{x}=\pmb{0}}T_{y=0}(\phi^{\*}\circ\zeta^{-1\*}\circ D\zeta^{\*}\circ\phi^{-1\*})\\
&=i_{\psi}(\mc{Y}(W_x\setminus X_H^i)(s_0))
\end{align*}
from which we conclude that $(s_0, s_1)\in\mc{P}_0(W_x)$. 
In the case of $U\in\mf{B}_{\mathsmaller{\mathring{X}\coprod X_H^i}}^H$ with $U\subseteq\mathring{X}$, define a map $\mc{P}_{0}(U)\rightarrow H_{1, c}(U, H)$ by $s_{0}\mapsto\mc{Y}(U)^{-1}(s_{0})$ for every $s_{0} \in\mc{P}_{0}(U)$. From gluing condition $i)$ in \eqref{gluingconditioni} it immediately follows that $\mc{P}_0(U)\cong\mc{D}_{X}(U)\rtimes H=H_{1, c}(U, H)$ as $\bb C$-algebras. Hence for every basic open set $W$, the maps 
\begin{align*}
&\mc{X}_{0}(W): \mc{P}_{0}(W)\rightarrow H_{1, c}(W, H)\\
&(s_{0}, s_{1})\mapsto\mc{Y}(W\setminus X_{H}^{i})^{-1}(s_{0})
\end{align*}
are well-defined. Since these maps furthermore commute with the restriction maps on $\mf{B}_{\mathsmaller{\mathring{X}\coprod X_H^i}}^H$, they give rise to an isomorphism $\mc{X}_{0}: \mc{P}_{0}\rightarrow\mc{H}_{1, c, \mathring{X}\coprod X_{H}^{i}, H}$ of $\Cs$-algebras on the basis $\mf{B}_{\mathsmaller{\mathring{X}\coprod X_H^i}}^H$. This isomorphism extends in turn to an isomorphism $\mc{X}: \mc{P}_H^i\rightarrow\mc{H}_{1, c, \mathring{X}\coprod X_{H}^{i}, H}$ of the induced $\Cs$-algebras on $X$. We are left to show that this map respects the filtartion..... 
\end{proof}
For every codimension $1$ stratum $X_H^i$ in $X$, we can construct a sheaf $\mc{P}_{H}^{i}$ repeating the above steps. Applying Puig's induction functor from Definition \ref{puiginductionfunctor} on each of the sheaves $\mc{P}_H^i$, we obtain sheaves of $\bb CG$-interior algebras on $G(\mathring{X}\coprod X_H^i)$, each of which is isomorphic to $\mc{H}_{1, c, G(\mathring{X}\coprod X_H^i), G}$. Since $\bigcup_{\codim(X_{H}^{i})=1}G(\mathring{X}\coprod X_{H}^{i})$ constitutes an open cover of $F^1(X)$ and the induced sheaves on each of the open sets of the cover coincide on $\bigcap_{\codim(X_H^i)=1}G(\mathring{X}\coprod X_H^i)=\mathring{X}$, we can glue them the standard way into a sheaf $\mc{S}_{1}$ on $F^1(X)$, which is unique up to an isomorphism, and isomorphic to $\mc{H}_{1, c, F^1(X), G}$. 
\subsection{Gluing of the deformation sheaf $\mc{S}_1$ on $F^{1}(X)$ with the localization sheaves on strata with codimension $2$ and higher}
We arrive at the main result of this paper, namely the successive gluing of the sheaf $\mc{S}_{1}$ with the localization sheaves on orbit types of strata which results in a unique extension of $\mc{S}_1$ to $X$ isomorphic to $\mc{H}_{1, c, X, G}$. When $c$ is taken to be a formal parameter, the extension $\mc{S}$ constitutes a formal deformation of $\mc{D}_X\rtimes\bb CG$ spelled entirely in terms of Gel'fand-Kazhdan formal geometry. This construction gives, as argued earlier the in the text, an important insight how to produce formal deformationzs of general Hecke orbifold algebras like the skew-group algebras of formal quantizations of $G$-manifolds. In particular, the formal deformation of $\mc{D}_X\rtimes\bb CG$ we provide is a constructive proof of Dolgushev-Etingof's conjecture in the case of a cotangent orbifold. On a more practical level, the construction of a deformation through localization sheaves gives us access to the homology theory and algebraic index theorems for $\mc{H}_{1, c, X, G}$.\\  
Let $x\in X_{K}^{j}$ such that $\codim(X_{K}^{j})=l+1\geq2$. Let $W_{x}$ be a $K$-invariant linear slice at $x$. As expounded in Subsection \ref{basis/charts}, all strata $X_H^i$ intersecting $W_x$ are such that $X_K^j\subset\overline{X_H^i}$ and all of them have necessarily a codimension lower than $l+1$. 
The complement of the analytic stratum $X_K^j$ in $W_x$, $W:=W_x\setminus X_K^j$, yields an open set in the standard topology of $F^{l}(X)$. Consequently, the inductions $\ind_{K}^{G}(W_x)$ and $\ind_K^G(W)$ are a basic open set in the $G$-equivariant topology of $X$, see Section \ref{basis/charts}, and a non-basic open set of $F^l(X)$ in the $G$-equivariant topology, respectively. In the following, for any $G$-invariant open subspace $Y$ of $X$, we denote by $\mf{B}_Y^G$ the set of all basic open sets $\ind_K^G(W_x)\in\mf{B}_X^G$, which lie in $Y$, and abuse the language by calling that set the basis of the $G$-equivariant subspace topology of $Y$. With these clarifications we can start extending the sheaf $\mc{S}_{1}$ on $F^{1}(X)$ successively to $F^i(X)$ for all $i\in\{1, \dots, l_{\max}\}$, where $l_{\max}= \max\limits_{X_j~\textrm{stratum}}\{\codim(X_j)\}\leq n$.      
\begin{theorem}
On each open set $F^i(X)$ there is a unique $\Cs$-algebra $\mc{S}_{i}$ such that $\mc{S}_{i}\cong\mc{H}_{1, c, F^i(X), G}$ and $\mc{S}_{i}|_{F^{i-1}(X)}\cong\mc{S}_{i-1}$. Consequently, there is maximal positive integer $l_{\max}\leq n$ such the $\Cs$-algebra $\mc{S}:=\mc{S}_{l_{\max}}$ is a sheaf on $X$ with $\mc{S}\cong\mc{H}_{1, c, X, G}$ and $\mc{S}|_{F^i(X)}\cong\mc{S}_{i}$ for all $i\leq l_{\max}$.  
\end{theorem}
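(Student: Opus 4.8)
The plan is to prove the statement by induction on the codimension index $i$, constructing the $\Cs$-algebra $\mc{S}_i$ on the open set $F^i(X)$ out of the already constructed $\mc{S}_{i-1}$ on $F^{i-1}(X)$. The base case $i=1$ is precisely the sheaf $\mc{S}_1$ produced in Subsection \ref{gluingofstrata01}. For the inductive step I would first record the decomposition $F^i(X)=F^{i-1}(X)\sqcup\coprod X_K^j$, the union running over the finitely many $G$-orbits of strata $X_K^j$ of codimension exactly $i$; by Lemma \ref{openfiltration} the subset $F^{i-1}(X)$ is open in $F^i(X)$, and by the frontier condition any $K$-invariant linear slice $W_x$ at a point $x\in X_K^j$ satisfies $W_x\setminus X_K^j\subseteq F^{i-1}(X)$, meeting only strata $X_H^{i'}$ with $H\subsetneq K$ and $\codim(X_H^{i'})<i$.

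Next I would replicate, now at codimension $i$, the local gluing construction carried out for codimension $1$ in Subsection \ref{gluingofstrata01}. On the basis of $K$-invariant linear slices I would define a presheaf $\mc{P}_0$ whose sections over $W_x$ are pairs $(s_0,s_1)$, where $s_0$ is a section of $\mc{S}_{i-1}$ over $\ind_K^G(W_x\setminus X_K^j)$ and $s_1$ is a section of the localization sheaf $\fl$ on $X_K^j$ furnished by Proposition \ref{filterediso} and Proposition \ref{non-canonicaliso}, subject to two compatibility constraints generalizing conditions $i)$ and $ii)$ of Proposition \ref{Calgebraonbasis}: the inverse image of $s_0$ under the fixed isomorphism $\mc{S}_{i-1}\cong\mc{H}_{1,c,F^{i-1}(X),G}$ must lie in the localized algebra $\OO_X[\mc{R}^{-1}]\otimes\mc{D}_X\rtimes K$, and the Taylor expansion at $\pmb{x}=\pmb{0}$ of the Dunkl transform $\id\otimes\widehat{\Theta}_c(s_1)$ must coincide with the image of $s_0$ under an embedding $i_{\psi}$ of the same type as \eqref{taylorembedding}, for every parametrization $\phi$ of the normal bundle. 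I would then verify, exactly as in Proposition \ref{Calgebraonbasis}, that $\mc{P}_0$ is a $\Cs$-algebra on this basis, extend it by the inverse limit to a sheaf $\mc{P}_K^j$ on the open set $\ind_K^G(W_x)$, and prove the analog of the theorem following Proposition \ref{Calgebraonbasis}, namely $\mc{P}_K^j\cong\mc{H}_{1,c,\,\cdot\,,K}$; the crucial input is the uniqueness of Laurent expansions (the identity theorem), which forces $\mc{Y}^{-1}(s_0)$ to extend across $X_K^j$ to a genuine section of the Cherednik algebra.

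I would then globalize. Applying Puig's induction functor of Definition \ref{puiginductionfunctor} to each $\mc{P}_K^j$ yields a sheaf of $\bb CG$-interior algebras on the $G$-saturation of $\ind_K^G(W_x)$ isomorphic to $\mc{H}_{1,c,\,\cdot\,,G}$. Over the overlaps of these induced sheaves with $\mc{S}_{i-1}$, which by the frontier-condition observation all lie in $F^{i-1}(X)$, every relevant sheaf is isomorphic to the restriction of $\mc{S}_{i-1}$; hence the transition data satisfy the cocycle condition and the standard sheaf-gluing lemma produces a $\Cs$-algebra $\mc{S}_i$ on $F^i(X)$ with $\mc{S}_i|_{F^{i-1}(X)}\cong\mc{S}_{i-1}$ and $\mc{S}_i\cong\mc{H}_{1,c,F^i(X),G}$. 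Uniqueness up to isomorphism is immediate from this last isomorphism together with the compatibility of the gluing isomorphisms on $F^{i-1}(X)$. Iterating from $i=1$ to $i=l_{\max}:=\max_j\codim(X_j)\leq n$ and noting $F^{l_{\max}}(X)=X$ yields the sheaf $\mc{S}=\mc{S}_{l_{\max}}$ with all the asserted properties.

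The main obstacle will be the verification of the compatibility condition $ii)$ when the punctured slice $W_x\setminus X_K^j$ meets several strata of different lower codimensions simultaneously, precisely the phenomenon of intersecting fixed point submanifolds flagged in the introduction. In the codimension-$1$ case of Subsection \ref{gluingofstrata01} there is a single transversal coordinate and a single reflection hypersurface, so the matching reduces to the uniqueness of a one-variable Laurent series; here one must instead match formal-then-convergent Laurent expansions in all $i$ transversal directions, respecting the nested arrangement of reflection hyperplanes of $K$ and checking that the Dunkl operators for $K$ restrict correctly to the Dunkl operators for each subgroup $H\subsetneq K$ governing a lower stratum in the closure $\overline{X_H^{i'}}\supseteq X_K^j$. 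Establishing this coherence, equivalently that the local isomorphisms $\mc{Y}_{W_x}$ assemble compatibly along the boundary strata where the reflection data degenerate, is the technical heart of the argument; once it is in place, the sheaf-theoretic gluing and the induction are formal.
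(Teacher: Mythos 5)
Your overall architecture---induction on the filtration $F^i(X)$, pairs $(s_0,s_1)$ on $K$-invariant slices matching a section of $\mc{S}_{i-1}$ on the punctured slice against localization data on the stratum, Puig induction to pass from $K$ to $G$, then gluing the resulting sheaves over their common intersection in $F^{i-1}(X)$---is essentially the paper's (the paper bakes the induction functor $\Induc_K^G$ directly into the gluing data of \eqref{highergluing} rather than applying it after the local construction, but that difference is cosmetic). The genuine gap lies in what you identify as the crucial mechanism. You propose to force $\mc{Y}^{-1}(s_0)$ to extend across $X_K^j$ by uniqueness of Laurent expansions in all transversal directions, and you flag the resulting multi-directional matching along nested reflection hyperplanes as the technical heart of the step. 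That is the wrong tool once $\codim(X_K^j)\geq2$: the one-variable Laurent argument of Subsection \ref{gluingofstrata01} exploits that the stratum there is a hypersurface, along which sections of the Cherednik algebra genuinely carry first-order poles. A stratum of codimension at least $2$ is not a polar locus at all---the reflection hypersurfaces responsible for the poles of the Dunkl operators have codimension $1$ and are already handled inside $F^{i-1}(X)$---and the complement of an analytic set of codimension at least $2$ is connected, so there is no polar part along $X_K^j$ for your proposed expansion-matching to grip.

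What the paper uses instead, and what is absent from your proposal, is the second Riemann extension theorem: restriction induces an isomorphism of $\bb C$-algebras $H_{1, c}(\ind_K^G(W_x), G)\cong H_{1, c}(\ind_K^G(W), G)$, proven by passing to associated graded algebras via the PBW property ($\gr_{\bullet}\cong\Sym^{\bullet}(\mc{T})\rtimes\bb CG$, trivialized over the slice), applying Riemann extension to the graded pieces, and invoking the Five Lemma together with exhaustiveness of the filtration. This automatic extension means the first component $q$ alone already determines a Cherednik section on the full saturated slice; the gluing condition then merely pins $s_1$ down as the formal expansion of the extended $q$ along the stratum, which is exactly what the injectivity and surjectivity of the comparison map $\mf{X}$ require (surjectivity is produced by embedding a given Cherednik section into the formal completion along $\coprod_g gW_{x, K}^j$ and identifying the result via Corollary \ref{inductionssections}). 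In particular, the coherence along lower strata ``where the reflection data degenerate'' that you single out as the main obstacle is already absorbed into the inductive hypothesis $\mc{S}_{i-1}\cong\mc{H}_{1, c, F^{i-1}(X), G}$, so no fresh multi-variable Laurent analysis is required---but without the Riemann extension step your induction step does not close.
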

\begin{proof}
We start with an extension of $\mc{S}_{1}$ on $F^2(X)$. Define first   
\begin{align}
\label{highergluing}
&\mathsmaller{\mc{S}_{2}^{\mathsmaller{GX_K^j}}(\ind_{K}^{G}(W_{x}))}:=\nonumber\\
&\Bigg\{\begin{pmatrix}\mathsmaller{q}\\ \mathsmaller{\sum\limits_{g, g'\in G/K}\hspace{-0.5em}g\otimes s_{g, g'}\otimes g'}\end{pmatrix}\in\begin{matrix}\hspace{-0.2em}\mathsmaller{\mc{S}_{1}(\ind_{K}^{G}(W))}\\ \hspace{-0.2em}\times\\\hspace{-0.2em}\mathsmaller{\Induc_K^G\big(\coor{\pi}_{\*}\OO_{\textrm{flat}}(\coor{\mc{N}}\times\mc{A}_{n-2, 2}^{K})(W_{x, K}^j)\big)}\end{matrix}~~\Bigg|~~\begin{matrix} \hspace{-0.7em}\mathsmaller{\sum\limits_{\hspace{0.5em}g\in G/K}\hspace{-0.7em}g\otimes\big(\hspace{-0.5em}\sum\limits_{g'\in G/K}\hspace{-0.5em}1\otimes\widehat{\Theta}_{c}(s_{g, g'}([\phi]))\otimes g'\big)=}\\\hspace{-1.3em}\mathsmaller{\sum\limits_{g\in G/K}i_{g\circ\psi}\big(\mc{X}(\ind_{K}^{G}(W))(q)|_{gW}\big)}
\end{matrix}\Bigg\}
\end{align}
where we recall that $\psi=\zeta^{-1}\circ\phi$ is the local coordinate chart of $W_x$ and $g\circ\psi$ are the corresponding local coordinate charts of the translates $gW_x$ for all $g\in G/K$, $\mc{X}(\ind_{K}^{G}(W))(q)|_{gW}$ are elements of the global Cherednik algebra on each of the open translates $gW$, and $i_{g\circ\psi}$ is defined in a similar fashion to \eqref{taylorembedding}. Taking into consideration the natural isomorphism \eqref{inductionidentificationsections} from Corollary \ref{inductionssections}, we can rewrite the gluing condition in \eqref{highergluing} in the following form 
\begin{align}
\label{highergluingcond2}
\sum\limits_{g, g'\in G/K}g\otimes(1\otimes\widehat{\Theta}_{c})(s_{g, g'}([\phi]))\otimes g'=\sum\limits_{\bar g, \bar g'\in G/K}\bar g\otimes i_{\psi}(d_{\bar g, \bar g'})\otimes \bar g' 
\end{align} 
where by abuse of notation we implicitly identified $\mc{X}(\ind_{K}^{G}(W))(q)$ with $\sum\limits_{\bar g, \bar g'\in G/K}\bar g\otimes d_{\bar g, \bar g'}\otimes \bar g' $ with $d_{\bar g, \bar g'}\in H_{1, c}(W, K)$ for all $(\bar g, \bar g')\in G/K\times G/K$. We distinguish two cases of restriction. In the first one, when we have $W_y\subseteq W_x$ such that $y\in F^1(X)$ and $\Stab(y)=:L<K$, there is an  (injective) restriction map
\begin{align*}
\mc{S}_{2}^{\mathsmaller{GX_K^j}}(\ind_K^G(W_x))&\longrightarrow\mc{S}_{1}(\ind_L^G(W_y))\\
(q, \sum\limits_{g, g\in G/K}g\otimes s_{g, g'}\otimes g)&\mapsto q|_{\ind_L^G(W_y)}.
\end{align*}
This restriction map is natural and coordinate independent. In the second case, when we have $W_y\subset W_x$ with $y\in X_K^j$, the restriction is given by 
\begin{align*}
\mc{S}_{2}^{\mathsmaller{GX_K^j}}(\ind_K^G(W_x))&\longrightarrow\mc{S}_{2}^{\mathsmaller{GX_K^j}}(\ind_K^G(W_y))\\
(q, \sum\limits_{g, g\in G/K}g\otimes s_{g, g'}\otimes g)&\mapsto (q|_{\ind_K^G(W_y\setminus X_K^j)}, \sum\limits_{g, g\in G/K}g\otimes s_{g, g'}|_{W_{y, K}^j}\otimes g).
\end{align*}
We show that the so-defined restriction map is coordinate independent following verbatim the proof of Proposition \ref{Calgebraonbasis}. In order to avoid unnecessary repetition, we leave the messy affirmation of this claim to the interested reader. As this restriction map is well-defined, as well, $\mc{S}_{2}^{\mathsmaller{GX_K^j}}$ is a presheaf on the basis of the $G$-equivariant subspace topology of $F^1(X)\coprod GX_K^j$. The gluing condition in \eqref{highergluing} or equivalently \eqref{highergluingcond2} is a local statement, whence the sheaf axioms are trivially satisfied by the same token as in the proof of Proposition \ref{Calgebraonbasis}. Therefore, $\mc{S}_{2}^{\mathsmaller{GX_K^j}}$ is a sheaf in the $G$-equivariant subspace topology of $F^1(X)\coprod GX_K^j$. In an analogous manner we obtain a family of sheaves $\mc{S}_{2}^{\mathsmaller{GX_{K'}^j}}$ on all open covering sets $F^1(X)\coprod GX_{K'}^j$ of $F^2(X)$ for all $X_{K'}^j$ with $\codim(X_{K'}^j)=2$. Since all those coincide identically on the sole common intersection $F^1(X)$, they can naturally be glued into a single $\Cs$-algebra $\mc{S}_2$ on $F^2(X)$. The only nontrivial statement from the theorem is to show that $\mc{S}_2\cong\mc{H}_{1, c, F^2(X), G}$. To that aim, for every basic open set $\ind_K^G(W_x)\in\mf{B}_{F^2(X)}^G$ with $x\in X_K^j$,
define a mapping $\mf{X}(\ind_K^G(W_x)): \mc{S}_2^{GX_K^j}(\ind_K^G(W_x))\rightarrow H_{1, c}(\ind_K^G(W_x), G)$ by 
\[(q, \sum_{g, g'\in G/K}g\otimes s_{g, g'}\otimes g')\mapsto\mc{X}(\ind_K^G(W))(q)\] 
for every $(q, \sum_{g, g'\in G/K}g\otimes s_{g, g'}\otimes g')\in\mc{S}_{2}^{GX_K^j}(\ind_K^G(W_x))$. This mapping is well-defined. Indeed, the second Riemann's extension theorem on complex manifolds implies that the restriction map $H_{1, c}(\ind_K^G(W_x), G)\rightarrow H_{1, c}(\ind_K^G(W), G)$ induces at the level of the graded associated algebras an isomorphism 
\begin{align*}
\gr_{\bullet}(H_{1, c}(\ind_K^G(W), G))&\cong\Sym_{\mathsmaller{\OO_{F^1(X)}(\ind_K^G(W))}}^{\bullet}\big(\mc{T}_{F^1(X)}(\ind_K^G(W))\big)\rtimes\bb CG\\
&\cong\Sym_{\mathsmaller{\oplus_g\OO_{F^1(X)}(gW)}}^{\bullet}\big(\oplus_{g\in G/K}\mc{T}_{F^1(X)}(gW)\big)\rtimes\bb CG\\
&\cong\Sym_{\mathsmaller{\oplus_g\OO_{F^1(X)}(gW)}}^{\bullet}\big(\oplus_{g\in G/K}\OO_{F^1(X)}(gW)^{\oplus n}\big)\rtimes\bb CG\\
&\cong\Big(\bigoplus\limits_{g, I}\OO_X(gW)\Big)\rtimes\bb CG\\
&\cong\bigoplus_I\OO_X(\ind_K^G(W_x))\rtimes\bb CG\\
&\cong\gr_{\bullet}(H_{1, c}(\ind_K^G(W_x), G))
\end{align*}
where in the third line we utilized the assumption that $W_x$, consequently $W$ are open sets trivializing the holomorphic tangent bundle. By the Five Lemma, the isomorphism of associated graded algebras implies 
\begin{equation}
\label{filteredisom}
F^i\big(H_{1, c}(\ind_K^G(W_x), G)\big)=F^i\big(H_{1, c}(\ind_K^G(W), G)\big)
\end{equation} 
for all $i$. As the filtration of the global Cherednik algebra is increasing and exhaustive,  \eqref{filteredisom} delivers an isomorphism of $\bb C$-algebras $H_{1, c}(\ind_K^G(W_x), G)\cong H_{1, c}(\ind_K^G(W), G)$, as desired. The map $\mf{X}(\ind_K^G(W_x))$ is a $\bb C$-algebra morphism because $\mc{X}(\ind_K^G(W))$ is a $\bb C$-algebra morphism. Assume now that 
\[\mf{X}(\ind_K^G)\big((q_1, \sum_{g_1, g_1'}g_1\otimes s_{g_1, g_1'}\otimes g_1')\big)=\mf{X}(\ind_K^G)\big((q_2, \sum_{g_2, g_2'}g_2\otimes s_{g_2, g_2'}\otimes g_2')\big).\]
Then, by definition $\mc{X}(\ind_K^G(W))(q_1)=\mc{X}(\ind_K^G(W))(q_2)$. As $\mc{X}(\ind_K^G(W))$ is an isomorphism itself, the aforementioned equality is true if and only if $q_1=q_2$.   
On the other hand, from the last identity together with the gluing condition in \eqref{highergluing} we deduce 
\[\sum\limits_{g_1, g_1'\in G/K}g_1\otimes(1\otimes\widehat{\Theta}_{c})(s_{g_1, g_1'}([\phi]))\otimes g_1'=\sum\limits_{g_2, g_2'\in G/K}g_2\otimes(1\otimes\widehat{\Theta}_{c})(s_{g_2, g_2'}([\phi]))\otimes g_2'.\] 
The latter is obviously true exactly when $\sum_{g_1, g_1'}g_1\otimes s_{g_1, g_1'}\otimes g_1'=\sum_{g_2, g_2'}g_2\otimes s_{g_2, g_2'}\otimes g_2'$. Therefore the morphism $\mf{X}(\ind_K^G(W_x))$ is injective. Before proceeding further with the surjectivity of $\mf{X}(\ind_K^G(W_x))$, note that for all $g\in G/K$, the translates $gW_{x, K}^j$ are analytic within $gW_x$ and hence, the union $\coprod_{g\in G/K}gW_{x, K}^j$ is analytic inside of $\mf{U}:=\coprod_{g\in G/K} gW_x$. 
Now, take an arbitrary element $D\in H_{1, c}(\ind_K^G(W), G)$ and consider the standard embedding 
\[\chi: H_{1, c}(\ind_K^G(W_x), G)\hookrightarrow\OO_{\widehat{\mf{U}}}(\coprod gW_{x, K}^j)\otimes_{\OO_{\mf{U}}(\mathsmaller{\mathsmaller{\coprod gW_x}})}H_{1, c}(\coprod gW_x, G).\]
Again by virtue of Corollary \ref{corinductionisom},  Corollary \ref{inductionssections} and in particular thanks to Isomorphism \eqref{inductionidentificationsections},  there is a family of elements $d_{g, g'}\in\OO_{\widehat{W_x}}(W_{x, K}^j)\otimes_{\OO_{W_x}(W_x)}H_{1, c}(W_x, G)$ such that $\chi(D)$ is uniquely identified with $\sum_{g, g'\in G/K} g\otimes d_{g, g'}\otimes g'$. Then, $\sum_{g, g'\in G/K} g\otimes s_{g, g'}\otimes g'$ with $s_{g, g'}:=\mc{Z}(W_{x, K}^j)(d_{g, g'})$ is an element of $\Induc_K^G\big(\coor{\pi}_{\*}\OO_{\textrm{flat}}(\coor{\mc{N}}\times\mc{A}_{n-l, l}^{K})(W_{x, K}^j)\big)$. On the other hand, since, as expounded in the above, there is a $\bb C$-algebra ismorphism $H_{1, c}(\ind_K^G(W_x), G)\cong H_{1, c}(\ind_K^G(W), G)$, $\mc{X}(\ind_K^G(W))^{-1}(D)$ is an element of $\mc{S}_{1}(\ind_K^G(W))$, as desired. 
Now, we are left to verify whether the gluing condition holds true for the element $(\mc{X}(\ind_K^G(W))^{-1}(D), \sum_{g, g'} g\otimes s_{g, g'}\otimes g')$, induced by $D$. Recall from Proposition \ref{non-canonicaliso} that $\mc{Z}(W_{x, K}^j)=\mc{Y}_{W_{x, K}^j}(W_{x, K}^j)\circ F_{W_{x, K}^j}(W_{x, K}^j)$. Hence, 
\begin{equation*}
\mc{Z}(W_{x, K}^j)(d_{g, g'})=[\phi]\mapsto1\otimes\widehat{\Theta}_c^{-1}\Big(T_{\pmb{x}=\pmb{0}}\big(\phi^{\*}\circ\zeta^{-1\*}\circ d_{g, g'}\circ\zeta^{\*}\circ\phi^{\*}\big)\Big).
\end{equation*}
Inserting this in the left hand side of the gluing condition in \eqref{highergluing}  yields 
\begin{align*}
\sum\limits_{g, g'\in G/K}g\otimes(1\otimes\widehat{\Theta}_c(s_{g, g'}([\phi]))\otimes g'&=\sum\limits_{g, g'\in G/K}g\otimes T_{\pmb{x}=\pmb{0}}\big(\phi^{\*}\circ\zeta^{-1\*}d_{g, g'}\circ\zeta^{\*}\circ\phi^{\*}\big)\otimes g'\\
&=\sum\limits_{g, g'\in G/K}g\otimes i_{\psi}(d_{g, g'})\otimes g'
\end{align*} 
which is identical with \eqref{highergluingcond2}. We infer that $(\mc{X}(\ind_K^G(W))^{-1}(D), \sum_{g, g'}\in g\otimes s_{g, g'}\otimes g')$ is an element in $\mc{S}_2^{\mathsmaller{GX_K^j}}(\ind_K^G(W_x))$ and consequently that $\mf{X}(\ind_K^G(W_x))$ is surjective. The surjectivity means that there are always non-trivial sections in \eqref{highergluing} satisfying the gluing condition. Consequently, $\mf{X}(\ind_K^G(W_x))$ is a $\bb C$-algebra isomorphism. We omit the trivial verification that $\mf{X}(\ind_K^G(W_x))$ is compatible with restriction maps on basic open sets, wherefore $\mf{X}$ is an isomorphism of $\Cs$-algebras on $\mf{B}_{F^2(X)}^G$ which induces an isomorphism  $\widehat{\mf{X}}: \mc{S}_2\rightarrow\mc{H}_{1, c, F^2(X), G}$ on $F^2(X)$.
Assume that there is an extension $\mc{S}_l$ on $F^l(X)$ with the properties specified in the theorem. Repeating the exact same steps as above, we construct an extension $\mc{S}_{l+1}$ on $F^{l+1}(X)$. This implies that for all members $F^i(X)$ of the filtration of $X$ there are $\Cs$-algebras $\mc{S}_i$ with the desired properties. Since the filtration is finite, there is a positive integer $l_{\max}\leq n$ for which $F^{l_{\max}}(X)=X$. Hence, there is a $\Cs$-algebra $\mc{S}$ satisfying $\mc{S}|_{F^i(X)}\cong\mc{S}_i$ for all $i\leq l_{\max}$. We are done.      
\end{proof}
\medskip
\medskip
\subsection*{Acknowledgements}
This work stems from my PhD at ETH Zurich. I am indebted to my supervisor Prof. Giovanni Felder for his assistance in my research process. This paper would not have been completed if not for his involvement in my work. I am grateful to him for his many insightful comments, suggestions and his academic generosity as a whole. His deep and broad knowledge of mathematics steered me out of fallacies on many occasions. I am equally grateful to Prof. Ajay Ramadoss for his interest in this work, his many valuable comments and the fruitful discussions I had with him throughout the completion of that paper.  
\newpage
\appendix
\section{Gel'fand-Kazhdan Formal Geometry}
\label{Chapter2} 
This appendix is a self-contained brief introduction to Gel'fand-Kazhdan formal geometry. It is structured the following way.
In the first subsection we define the notions Harish-Chandra pair, Harish-Chandra module and a (transitive) Harish-Chandra torsor. We devote a paragraph to the localization sheaf associated to a Harish-Chandra module. After a brief discussion of the formal disc and its automorphism group, we continue with a thorough discussion of the bundle of formal coordinates of a complex manifold $X$. In this part the key ingredients of formal geometry are defined using an algebraic-geometric language like in \cite{BK04}. Although we follow the original works by Gel'fand, Kazhdan and Fuks \cite{GK71},  as well as \cite{BR73} by Bernstein and Rozenfeld, our treatise is at least partially inspired by also more modern references such as \cite{calaque10, calaque12} and \cite{shilin15, shilin16, shilin17}. Finally, inspired by \cite{EF08} we generalize the bundle of formal coordinates to the case of vector bundles. In the course of our exposition we try to reconcile our definitions expressed in the language of ringed spaces with the well-known geometric ones given by Gelfand, Kazhdan and Fuks and by Bernstein and Rozenfeld based on jets and jet-bundles. Along the way we will try to be as comprehensive as possible in giving all the relevant references to sources whose material we use. 

\subsection*{Harish-Chandra torsors}
\label{hc}
\addtocontents{toc}{\protect\setcounter{tocdepth}{1}}
In the next we list the definitions of Harish-Chandra pairs and Harish-Chandra modules as found in the literature. Apart from the established introductory material in the theory of Harish-Chandra modules like \cite{BB93} we point to \cite{ggw16} as a very helpful summary of the most important facts related to Harish-Chandra modules. The following definitions are a rehash thereof.  
\begin{definition}
\label{hcpair}
A Harish-Chandra pair is a pair $(\mf{g}, K)$ of a Lie algebra $\mf{g}$ and a Lie group $K$ equipped with
\begin{enumerate}
\item[i)] a linear action $\rho: K\rightarrow\GL(\mf{g})$ of $K$ on $\mf{g}$,
\item[ii)] a $K$-equivariant embedding of Lie algebras $i:\Lie(K)\hookrightarrow\mf{g}$ such that the differential $\rho_{\ast}$ of the $K$-action coincides with the adjoint action of $\Lie(K)$ on $\mf{g}$ induced from the embedding $i$.
\end{enumerate}
\end{definition}
A central derived notion is the $(\mf{g}, K)$-module attached to a given Harish-Chandra pair $(\mf{g}, K)$. 
\begin{definition}
A $(\mf{g}, K)$-module is a complex vector space which has the structure of a $\mf g$-module via a Lie algebra homomorphism $\rho_{\ms{\mf g}}: \mf g\rightarrow \End(V)$ and a $K$-module via a Lie group homomorphism $\rho_{\ms{K}}: K\rightarrow\GL(V)$ such that the composition $\Lie(K)\xrightarrow{i}\mf g\xrightarrow{\rho_{\ms{\mf{g}}}}\End(V)$ recovers the differential  $\rho_{\ms{K}\ast}$ of $\rho_{\ms K}$. 
\end{definition} 
The definition of a \emph{principal $\mf g$-space} for a Lie algebra $\mf g$ was originally given by Gel'fand and Kazhdan in \cite{GK71}. Here we present a sheaf-theoretic definition which is equivalent to the one found in \cite{GK71} and  \cite{BR73}. 
\begin{definition}
A principal homogeneous $\mf g$-space or simply a principal $\mf{g}$-space is a complex manifold $X$ (not neccesarily finite dimensional) equipped with a morphism of Lie algebras $\phi: \mf g\rightarrow \mc{V}(X)$ which induces an isomorphism of vector bundles $\tilde{\phi}:\mf g\otimes_{\bb C}\OO_X\rightarrow\mc{T}_X $. 
\end{definition}

At the level of stalks we we have an isomorphism $\tilde{\phi}_x:\mf g\otimes_{\bb C}\OO_{X, x}\rightarrow\mc{T}_{X, x}$ for every $x\in X$ which induces a $\bb C$-linear isomorphism $\mf g\cong\mf g\otimes_{\bb C}\OO_{X, x}/\mf g\otimes\mf m_x\OO_{X, x}\cong\mc T_{X, x}/\mf m_x\mc{T}_{X, x}=T_x^{(1, 0)}X$, where $\mf m_x$ is the sole maximal ideal of the local $\bb C$-algeba $\OO_{X, x}$. This isomorphism coincides with the restriction $\phi_x$ of $\phi$ to $x$. This way we recover the original definition of a principal $\mf g$-space from \cite{BR73}, as desired. The inverse mapping $\omega$ of $\tilde{\phi}$ is interpreted as a $(1, 0)$-form with values in $\mf g$. 
From the fact that $\phi$ is a Lie algebra homomorphism it can be inferred that $\omega$ is a Maurer-Cartan form \cite[Proposition~2.1]{BR73}, that is, $\omega$ saturates $d\omega+\frac{1}{2}[\omega, \omega]=0$, where $[\omega, \omega]$ is given by the formula $[\omega, \omega](\zeta_1, \zeta_2)=[\omega(\zeta_1), \omega(\zeta_2)]-[\omega(\zeta_2), \omega(\zeta_1)]=2[\omega(\zeta_1), \omega(\zeta_2)]$ and $d=\partial+\bar{\partial}$ is the full de Rham differential on $X$. Herewith we have the needed prerequisites to define the \emph{Harish-Chandra torsor} and the special class of \emph{transitive Harish-Chandra torsors}, which in the present paper we are primarily interested in. From here on till the end of this section we follow \cite{BK04} except for slight changes where deemed necessary. %
\begin{definition}[Definition 2.3, \cite{BK04}]
\label{hctorsor}
A Harish Chandra $(\mf g, K)$-torsor over a complex manifold $X$ is a holomorphic principal $K$-bundle $P\rightarrow X$ equipped with a $K$-equivariant $(1, 0)$-form $\theta_{\ms P}: \mc T_P\rightarrow\mf g\otimes_{\bb C}\OO_P$ such that
\begin{enumerate}
\item[i)] $\theta_{\ms P}$ satisfies the Maurer-Cartan equation $d\theta_{\ms P}+\frac{1}{2}[\theta_{\ms P},  \theta_{\ms P}]=0$,
 \item[ii)] the diagram of $\OO_P$-modules
\begin{equation}
\begin{tikzcd}
\Lie(K)\otimes_{\bb C}\OO_P\arrow[rr, bend right, "i_{\ms P}" below]\arrow[r, hook, "j"]&\mc{T}_P\arrow[r, "\theta_{\ms P}"]&\mf g\otimes_{\bb C}\OO_P
\end{tikzcd} 
\end{equation}
commutes, where $j$ is the $K$-equivariant embedding of $\Lie(K)\otimes_{\bb C}\OO_P$ into $\mf g\otimes_{\bb C}\OO_P$ and $i_{\ms P}$ is the $K$-equivariant embedding induced by the Lie algebra embedding $i: \Lie(K)\hookrightarrow\mf g$ in the given Harish-Chandra pair $(\mf g, K)$. 
\end{enumerate}
\end{definition}
The condition $\theta_{\ms P}\circ j=i_{\ms{P}}$ implies that this differential $(1, 0)$-form restricts to a connection $(1, 0)$-form on the underlying principal $K$-bundle $P\rightarrow X$. %
Since the Maurer-Cartan condition is imposed by defualt on $\theta_{\ms P}$, the restriction of $\theta_{\ms P}$ to the holomorphic principal $K$-bundle is a flat connection $(1, 0)$-form for the principal $K$-bundle. It follows that in the spacial case when $\mf g=\Lie(K)$, the Harish-Chandra $(\mf g, K)$-torsor reduces to a standard principal $K$-bundle with a flat connection form. For that reason we refer to the $K$-equivariant $(1, 0)$-differential form $\theta_{\ms P}$ as the \emph{flat $\mf g$-valued connection $(1, 0)$-form on the Harish-Chandra $(\mf g, K)$-torsor $P\rightarrow X$}.\\
We remark that the Harish-Chandra $(\mf{g}, K)$-torsors over $X$ build a category, which in \cite{BK04} is denoted by $\mc{H}^{1}(X, (\mf g, K))$. As stated earlier, in this work we are almost exclusively interested in the special class of transitive Harish-Chandra torsors. Here we give the definition thereof.%
\begin{definition}[Definition 2.4, \cite{BK04}]
A transitive Harish-Chandra $(\mf g, K)$-torsor over a complex manifold $X$ is a Harish-Chandra $(\mf g, K)$-torsor whose $K$-equivariant connection $(1, 0)$-form $\theta_{\ms P}:\mc T_P\rightarrow\mf g\otimes_{\bb C}\OO_P$ is an isomorphism of vector bundles.
\end{definition}
By \eqref{ksadjointfunc} the inverse bundle map $\theta_{\ms P}^{-1}$ is equivalent to a $K$-equivariant mapping $\mf g\rightarrow\mc{V}(P)$, which due to the flatness of $\theta_{\ms P}$ is a Lie algebra homomorphism. Hence the total space $P$ of a transitive Harish-Chandra torsor has the structure of a homogeneous principal $\mf g$-space which justifies the used terminology. Conversely, if the total space of a holomorphic principal $K$-bundle $P\rightarrow X$ is equipped with the structure of a homogeneous principal $\mf g$-space with a $\mf g$-action on $P$ which is equivariant with respect to the $K$-action on $P$, and such that simultaneously condition $ii)$ of Definition \ref{hctorsor} is fulfilled, then $P\rightarrow X$ is a transitive Harish-Chandra $(\mf g, K)$-torsor.%
\subsection*{Localisation functor $\Loc$}
Given a principal $K$-bundle $P\rightarrow X$ and a pro-finite $K$-module $V$, we denote by $\underline{V_{\ms P}}$ the locally free pro-coherent $\OO_X$-module of sections of the associated vector bundle $P\times_{K}V$ over $X$. The importance of Harish-Chandra torsors derives from the fact that for any given $(\mf g, K)$-torsor together with a $(\mf g, K)$-module $V$, the $\mf g$-action on $V$ produces a flat connection on the associated vector bundle $\underline{V_{\ms P}}$. More concretely, if $\theta_{\ms P}$ is the flat connection $(1, 0)$-form of the $(\mf g, K)$-torsor $P\rightarrow X$, then the mapping $\nabla:\underline{V_{\ms P}}\rightarrow\Omega_{\ms P}^1\otimes\underline{V_{\ms P}}$ defined by %
$\nabla s:=ds+\theta_{\ms P}\cdot s$ 
for any local section $s\in H^0(U, \underline{V_{\ms P}})$ over an open set $U$ in $X$,  where the dot $\cdot$ signifies the $\mf g$-action on the $(\mf g, K)$-module $V$, is a flat connection on $\underline{V_{\ms P}}$. This way, if we denote by $(\mf g, K)-\Mod$ the category of $(\mf g, K)$-modules and by $\VB_{\fflat}$ the category of flat vector bundles over $X$, then a fixed torsor $P\rightarrow X$ defines a functor 
\begin{equation}
\label{loc}
\Loc(P,-): (\mf g, K)-\Mod\rightarrow\VB_{\fflat}
\end{equation}
by $\Loc(P, V)=\underline{V_{\ms P}}$ which in \cite{BK04} is referred to as the  \emph{localization functor}. 
In formal geometry one exploits this localization property of Harish-Chandra torsors and modules to construct various canonical sheaves on manifolds as sheaves of flat sections of vector bundles that descend from Harish-Chandra torsors and modules.  The most wide spread examples of Harish-Chandra torsors in formal geometry are the torsor of formal coordinates on a manifold and that of formal coordinates on a vector bundle. We shall discuss both examples in a greater detail in the upcoming section.
%
%
%
%
\subsection*{The automorphism group of the formal disc}
\label{formaldisc}
Let $\widehat{\OO}_n=\bb C[[y_1, \dots, y_n]]$ be the sheaf of germs of functions on a formal neighborhood of $0$ in $\bb C^n$. Endowed with the Frechet topology $\widehat{\OO}_n$ can be seen as a topological local $\bb C$-algebra. The unique maximal ideal $\widehat{\mf{m}}_0$ of this $\bb C$-algebra consists of formal power series with vanishing constant term and $\widehat{\OO}_n$ is complete with respect to it. We call the local $\bb C$-ringed space $\Spf(\widehat{\OO}_n):=(0, \widehat{\OO}_n)$ the \emph{formal polydisc} at $0$ in $\bb C^n$.
Adhering to the nomenclatur in \cite{ggw16}, we denote by $\Aut_n$  the group of filtration-preserving automorphisms of $\widehat{\OO}_n$. Alternatively, one can see $\Aut_n$ as the group of automorphisms of $\Spf(\widehat{\OO}_n)$ which is the same as $\infty$-jets of local biholomorphisms of $\bb C^n$ fixing the origin $0$. Each automorphism $f\in\Aut_n$ induces an automorphisms of the quotient $\widehat{\OO}_n/\widehat{\mf{m}}_0^{k+1}$ for every $k\in\bb{Z}_{\geq0}$. This gives a group homomorphism $\sigma: \Aut_n\rightarrow\Aut(\widehat{\OO}_n/\widehat{\mf{m}}_0^{k+1})$ whose image in $\Aut(\widehat{\OO}_n/\widehat{\mf{m}}_0^{k+1})$ we designate by $\Aut_{n, k}$ . Since each quotient algebra $\widehat{\OO}_n/\widehat{\mf{m}}_0^{k+1}$ is a finite dimensional vector space of complex dimension $\binom{n+k}{n}$, the group $\Aut_{n, k}$ is a finite dimensional Lie group. The group $\Aut_n$ of automorphisms of $\widehat{\OO}_n$ is the projective limit of the inverse system $\dots\rightarrow\Aut_{n,k}\rightarrow\Aut_{n,k-1}\rightarrow\dots\rightarrow\Aut_{n, 1}=\GL_{n}(\bb C)$. The kernel of the group homomorphism $p: \Aut_n\rightarrow\GL(n, \bb C)$, given by $p(\phi)=d_0\phi$, $\phi\in\Aut_n$, is a normal subgroup of $\Aut_n$, denoted by $\Aut_n^+$. It is a pro-nilpotent group, hence topologically contractible.\\
Let us denote by $W_n$ the Lie algebra of derivations of $\widehat{\OO}_n$, that is, the Lie algebra of formal vector fields on the formal polydisc $\Spf(\widehat{\OO}_n)$. The Lie bracket of formal vector fields $v$ and $w$ from $W_n$ is expressed by their commutator which in coordinates $(y_1, \dots, y_n)$ on $V$ is given by
\[[v, w]=\sum_{i, j=1}^n\big(v_i\pdf{w_j}{ y_i}-w_i\pdf{v_j}{ y_i}\big)\pd{y_j},\] 
where $v_i, w_j\in\widehat{\OO}_n$. Set $W_{n, k}:=W_n/\widehat{\mf{m}}_0^{k+1}$. These are finite dimensional Lie algebras of complex dimension $n\binom{n+k}{n}$, which form a projective system
\[\dots\rightarrow W_{n,k}\rightarrow W_{n,k-1}\rightarrow\dots\rightarrow W_{n, 1}=\bb C\ltimes\mf{gl}_n\]
in which the connecting morphisms are the natural projection mappings. The inverse limit of that projective system is isomorphic to $W_n$ which makes $W_n$ a pro-finite Lie algebra. The Lie algebra of the infinite dimensional Lie group $\Aut_n$ is the Lie subalgebra $W_n^0\subset W_n$ of formal vector fields on $\Spf(\widehat{\OO}_n)$ with zero constant coefficient. Respectively, the Lie algebra of $\Aut_{n, k}$ is the $n\binom{n+k}{n}-n$ dimensional quotient Lie algebra $W_{n, k}^0:=W_n^0/\hat{\mf m}_0W_n^0$ of formal vector fields with vanishing constant terms and terms of power $k+1$ and higher. It is a matter of a straightforward verification that $(\Aut_{n, k}, W_{n, k})$ correspondingly $(\Aut_n, W_n)$ are Harish-Chandra pairs.
\subsection*{The bundle of formal coordinates $\coor{X}$}
\label{formalbundle}
Henceforth in this section we assume that $X$ is an $n$-dimensional complex manifold.  
\begin{definition}
\label{algebraicdef}
$X^k$ is the set of closed immersions of $\bb C$-ringed spaces~~$\pmb\varphi:(0, \OO_{\bb{C}^n}/\mc{I}_0^{k+1})\hookrightarrow(X, \OO_X)$, where $\mc{I}_0$ denotes the ideal sheaf of the origin in $\bb C^n$.
\end{definition}
Let us define a projection map $\pi_k: X^k\rightarrow X$ by $\pi_k((\varphi, \varphi^{\#}))=\varphi(0)$. We equip $X^k$ with the identification topology with respect to which the projection map $\pi^k$ becomes open, closed and continuous. We denote by $X_x^k$ the fiber $\pi_k^{-1}(x)$ of $x\in X$.
\begin{lemma}
\label{fiberiso}
$X^k$ is the disjoint union of the fibers $X_x^k$, that is, $X^k=\coprod_{x\in X}X_x^k$. Moreover, each fiber $X_x^k$ consists of  $\bb C$-algebra isomorphisms $\OO_{X, x}/\mathfrak{m}_x^{k+1}\rightarrow\bb C[t_1, \dots, t_n]/\mf{m}^{k+1}$, where $\mf{m}_x$ is the unique maximal ideal of $\OO_{X, x}$ and $\mf{m}$ is the maximal ideal $(t_1, \dots, t_n)$ in $\bb C[t_1, \dots, t_n]$. 
\end{lemma}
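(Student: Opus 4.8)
The plan is to unravel the definitions of $X^k$ and of the projection $\pi_k$, and then recast the abstract data of a closed immersion of ringed spaces as a concrete isomorphism of truncated local rings. First I would prove the decomposition $X^k=\coprod_{x\in X}X_x^k$. This is essentially formal: by definition $X_x^k=\pi_k^{-1}(x)$, and since every element $(\varphi,\varphi^{\#})\in X^k$ has a well-defined image point $\varphi(0)=\pi_k((\varphi,\varphi^{\#}))\in X$, each element lies in exactly one fiber. Hence the fibers are pairwise disjoint and their union is all of $X^k$, giving the claimed coproduct.

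The substantive part is the identification of an individual fiber $X_x^k$ with the set of $\bb C$-algebra isomorphisms $\OO_{X,x}/\mf{m}_x^{k+1}\to\bb C[t_1,\dots,t_n]/\mf{m}^{k+1}$. Here I would argue as follows. Fix $x\in X$. An element of $X_x^k$ is a closed immersion of $\bb C$-ringed spaces $\pmb\varphi:(0,\OO_{\bb C^n}/\mc{I}_0^{k+1})\hookrightarrow(X,\OO_X)$ whose underlying map sends the single point $0$ to $x$. The source is supported at a single point, and its structure sheaf has stalk $\OO_{\bb C^n,0}/\mc{I}_{0,0}^{k+1}\cong\bb C[t_1,\dots,t_n]/\mf{m}^{k+1}$, where $\mf{m}=(t_1,\dots,t_n)$ is the maximal ideal of the local ring $\bb C[t_1,\dots,t_n]$ (equivalently the stalk of $\OO_{\bb C^n}$ at $0$). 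The comorphism $\varphi^{\#}$ of a closed immersion, passed to stalks at the point $x$, is a surjection of local rings $\OO_{X,x}\twoheadrightarrow\bb C[t_1,\dots,t_n]/\mf{m}^{k+1}$. I would then check that this surjection factors through $\OO_{X,x}/\mf{m}_x^{k+1}$ and induces an isomorphism on the quotient.

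The key step, and the one I expect to require the most care, is verifying that the comorphism of a closed immersion with this particular thickened-point source factors precisely through the $(k+1)$-st power of the maximal ideal and becomes an isomorphism. The local comorphism $\varphi_x^{\#}:\OO_{X,x}\to\bb C[t_1,\dots,t_n]/\mf{m}^{k+1}$ is a local homomorphism, so it carries $\mf{m}_x$ into $\mf{m}/\mf{m}^{k+1}$; consequently it sends $\mf{m}_x^{k+1}$ into $\mf{m}^{k+1}/\mf{m}^{k+1}=0$, so it factors through $\OO_{X,x}/\mf{m}_x^{k+1}$, yielding a surjection $\bar\varphi_x^{\#}:\OO_{X,x}/\mf{m}_x^{k+1}\twoheadrightarrow\bb C[t_1,\dots,t_n]/\mf{m}^{k+1}$. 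Since $X$ is an $n$-dimensional complex manifold, $\OO_{X,x}/\mf{m}_x^{k+1}$ and $\bb C[t_1,\dots,t_n]/\mf{m}^{k+1}$ are finite-dimensional $\bb C$-vector spaces of the same dimension $\binom{n+k}{n}$ (both being the algebra of $k$-jets at a smooth point of an $n$-fold), so a surjection between them is automatically an isomorphism. Conversely, any such $\bb C$-algebra isomorphism determines, via $\Spf$ of its inverse together with the inclusion of the point $x$, a closed immersion with the required source, and these two constructions are mutually inverse.

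Finally I would note that the assignment $\pmb\varphi\mapsto\bar\varphi_x^{\#}$ is manifestly bijective onto the set of such isomorphisms, which is exactly the claimed description of $X_x^k$, completing the proof. The only genuine obstacle is the bookkeeping confirming that the underlying-point condition, the locality of the comorphism, and the closed-immersion (surjectivity) condition together translate cleanly into the statement that $\bar\varphi_x^{\#}$ is an isomorphism of truncated rings; the dimension count at a smooth point makes the surjectivity-implies-bijectivity step immediate, so no serious difficulty remains beyond this identification.
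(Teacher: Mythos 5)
Your proof is correct and follows essentially the same route as the paper's: both unwind the closed immersion into its comorphism on stalks at the supporting point, identify the stalk of the skyscraper pushforward with $\OO_{\bb C^n, 0}/\mf{m}_0^{k+1}\cong\bb C[t_1, \dots, t_n]/\mf{m}^{k+1}$, and recast the datum as a map of truncated local rings, so the fiber decomposition and the identification of $X_x^k$ come out the same way. If anything, you make explicit two steps the paper glosses over --- that the stalk map is automatically local (both stalks have residue field $\bb C$, and a surjection of local rings is local in any case), hence annihilates $\mf{m}_x^{k+1}$, and that the resulting surjection between two $\binom{n+k}{n}$-dimensional $\bb C$-algebras is forced to be an isomorphism --- while sidestepping the paper's properness/Kashiwara--Schapira detour; just correct the minor slip in your wording: $\bb C[t_1, \dots, t_n]$ is not a local ring and is not itself the stalk of $\OO_{\bb C^n}$ at $0$ (the stalk consists of convergent power series; the identification with the polynomial quotient only holds after truncating by $\mf{m}^{k+1}$, which is all your argument actually uses).
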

\begin{proof}
By default the morphism $\pmb{\varphi}$ between the local $\bb C$-ringed spaces $(0, \OO_{\bb{C}^n}/\mathfrak{m}_0^{k+1})$ and $(X, \OO_X)$ is given by a pair $(\varphi, \varphi^{\#})$ of a closed injective holomorphic map $\varphi: \{0\}\hookrightarrow X$ and a  surjective homomorphism of $\Cs$-algebras $\varphi^{\#}:\OO_X\twoheadrightarrow\varphi_{\ast}\OO_{\bb{C}^n}/\mc{I}_0^{k+1}$. Since 
$\supp(\varphi_{\ast}\OO_{\bb{C}^n}/\mc{I}_0^{k+1})=\overline{\varphi(\supp(\OO_{\bb{C}^n}/\mc{I}_0^{k+1}))}=\overline{\{\varphi(0)\}}=\{\varphi(0)\}$, 
the direct image $\varphi_{\ast}\OO_{\bb{C}^n}/\mc{I}_0^{k+1}$ can be viewed as a skyscraper sheaf on $X$ centered at $\varphi(0)\in X$. The morphism of sheaves $\varphi^{\#}$ is a collection of $\bb C$-algebra epimorphisms
\begin{align}
\label{epimor}
&\varphi^{\#}(U):\OO_X(U)\twoheadrightarrow\varphi_{\ast}(\OO_{\bb{C}^n}/\mc{I}_0^{k+1})(U)
\end{align}
for all open set $U$ in $X$, which are compatible with the restriction morphisms. The fact that $\varphi_{\ast}\OO_{\bb{C}^n}/\mc{I}_0^{k+1}$ is a skyscraper sheaf on $X$ entails that
\begin{equation*}
\varphi_{\ast}(\OO_{\bb{C}^n}/\mc{I}_0^{k+1})(U)=\begin{cases}\varphi_{\ast}(\OO_{\bb{C}^n}/\mc{I}_0^{k+1})_{\varphi(0)},~~\textrm{if}~~\varphi(0)\in U\\ 0,~~\textrm{if}~~\varphi(0)\notin U.\end{cases}
\end{equation*}
Therefore, for every open subset $U\subset X$ with $\varphi(0)\in U$ the morphism \eqref{epimor} reduces to the surjective morphism $\varphi^{\#}(U):\OO_X(U)\twoheadrightarrow\varphi_{\ast}(\OO_{\bb{C}^n}/\mc{I}_0^{k+1})_{\varphi(0)}$. For each pair of open sets $V\subseteq U$ the surjective maps $\varphi^{\#}(U)$ and $\varphi^{\#}(V)$ are such that $\varphi^{\#}(V)\circ\res^U_V=\varphi^{\#}(U)$ and hence the universal property of the stalk $\OO_{X,  \varphi(0)}$ induces a surjective morphism $\varphi^{\#}_{\mathsmaller{\varphi(0)}}: \OO_{X, \varphi(0)}\rightarrow\varphi_{\*}(\OO_{\bb C^n, 0}/\mc{I}_0^{k+1})_{\varphi(0)}$ which makes the diagram
\begin{equation}
\begin{tikzcd}[row sep=tiny, column sep=normal]
\label{univpropogerm}
\OO_{X}(U)\arrow[dd, "\res^U_V" left]\arrow[rd, hook]\arrow[rrd, "\varphi^{\#}(U)", bend left, two heads]&&\\
&\OO_{X, \varphi(0)}\arrow[r, dashed, "\varphi^{\#}_{\mathsmaller{\varphi(0)}}"]&\varphi_{\*}(\OO_{\bb C^n, 0}/\mc{I}_0^{k+1})_{\varphi(0)}\\
\OO_X(V)\arrow[ur, hook]\arrow[rru, "\varphi^{\#}(V)" below, bend right, two heads]&&
\end{tikzcd}
\end{equation}
commutative. We observe that the map $\varphi$ is proper since $\varphi^{-1}(\varphi(0))=\{0\}$ is a closed and compact space. Thus by Remark $2.5.3$ in \cite{KS94} we have $\varphi_{\ast}(\OO_{\bb{C}^n}/\mc{I}_0^{k+1})_{\varphi(0)}\cong\Gamma(0, \OO_{\bb{C}^n}/\mc{I}_0^{k+1}|_{0})=\Gamma(0, \OO_{\bb{C}^n}/\mc{I}_0^{k+1})=\OO_{\bb{C}^n, 0}/\mf{m}_0^{k+1}$, 
where in the last equality we accounted for the fact that $\OO_{\bb{C}^n}/\mc{I}_0^{k+1}$ is a constant sheaf on $0$ and $\mf{m}_0$ denotes the stalk of the ideal sheaf $\mc{I}_0$ at $0$. 
Ergo, the commutative diagram \eqref{univpropogerm} implies that the epimorphism $\varphi^{\#}: \OO_{X}\twoheadrightarrow\varphi_{\*}(\OO_{\bb C^n}/\mc{I}_0^{k+1})_{\varphi(0)}\cong\OO_{\bb C^n, 0}/\mf{m}_0^{k+1}$
factors through the surjective morphism on stalks 
\begin{equation}
\label{factoringmap}
\varphi^{\#}_{\mathsmaller{\varphi(0)}}: \OO_{X, \varphi(0)}\twoheadrightarrow\OO_{\bb C^n, 0}/\mf{m}_0^{k+1}
\end{equation}
by means of the natural embedding $\OO_X\hookrightarrow\OO_{X, \varphi(0)}$. Herewith and the natural isomorphism $\OO_{\bb C^n, 0}/\mf{m}_0^{k+1}\cong\bb C[t_1, \dots, t_n]/\mf{m}^{k+1}$ emanating from the $k$-th order Taylor expansion of germs of holomorphic functions on $\bb C^n$ at $0$ we finally get
\begin{align*}
X^k&=\{\pmb{\varphi}: (0, \OO_{\bb C^n}/\mc{I}_0^{k+1})\rightarrow(X, \OO_X)~|~\pmb{\varphi}~\textrm{a closed immersion of ringed spaces}~\}\\
&=\{(\varphi, \Phi_{\ms{\varphi(0)}})~|~\varphi:\{0\}\hookrightarrow X~\textrm{embedding and}~ \Phi_{\ms{\varphi(0)}}:\OO_{X, \varphi(0)}/\mf{m}_{\varphi(0)}^{k+1}\cong\bb C[t_1, \dots, t_n]/\mf{m}^{k+1} \}\\
&=\coprod_{x\in X}\{(x,\Phi_x)~|~\Phi_x:\OO_{X, x}/\mf{m}_{x}^{k+1}\cong\bb C[t_1, \dots, t_n]/\mf{m}^{k+1}\}
\end{align*}
Lastly, from the above we infer that for a fixed $x\in X$ the corresponding fiber $X^k_x$  is comprised of isomorphism of $\bb C$-algebras $\Phi_x:\OO_{X, x}/\mf{m}_x^{k+1}\cong\varphi_{\ast}(\OO_{\bb{C}^n}/\mc{I}_0^{k+1})_x$, as desired. 
\end{proof}
Let $\Delta: X\hookrightarrow X\times X$ be the diagonal embedding. The diagonal $\Delta(X)$ is a closed submanifold, whose $k$-infinitesimal neighborhood $(\Delta(X),\OO_{\Delta^{(k)}}:=\OO_{X\times X}/\mc{I}_{\Delta(X)}^{k+1})$ in $X\times X$ is denote by $\Delta^{(k)}$. Fiurthermore, let $\pr_i: X\times X\rightarrow X$, $i=1, 2$, be the projections onto the first and second component of $X\times X$, respectively. Define the \emph{jet bundle of order $k$} as the sheaf of $\bb C$-algebras $\mc{J}_X^k:=\pr_{1\*}\OO_{\Delta^{(k)}}$. It also has the structure of a finite locally free $\OO_X$-module of rank $\binom{n+k}{n}$. The jet bundle $\mc{J}_X^{\infty}$ of infinite order is defined as the projective limit of the corresponding inverse system. At each $x\in X$ the fiber $\mc{J}_X^{k}$, respectively $\mc{J}_X^{\infty}$ can naturally be identified with the algebra of $k$-jets, respectively $\infty$-jets of holomorphic functions at $x$. 
\begin{corollary}
\label{fiberjet}
The fiber $X_x^k$ is bijective to the set of k-jets of local biholomorphisms $\phi: \bb C^n\rightarrow X$ with $\phi(0)=x$. 
\end{corollary}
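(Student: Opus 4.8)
The plan is to deduce the corollary directly from Lemma \ref{fiberiso}. By that lemma the fiber $X_x^k$ is identified with the set of $\bb C$-algebra isomorphisms $\Phi_x:\OO_{X, x}/\mf{m}_x^{k+1}\xrightarrow{\ \sim\ }\bb C[t_1, \dots, t_n]/\mf{m}^{k+1}$, so it suffices to exhibit a bijection between this set of isomorphisms and the set of $k$-jets at $0$ of local biholomorphisms $\phi:\bb C^n\rightarrow X$ with $\phi(0)=x$. The natural candidate assigns to such a $\phi$ the $\bb C$-algebra map obtained by pulling back germs, $\phi^{\#}:\OO_{X, x}\rightarrow\OO_{\bb C^n, 0}$, reducing modulo the respective powers of the maximal ideals, and composing with the Taylor isomorphism $\OO_{\bb C^n, 0}/\mf{m}_0^{k+1}\cong\bb C[t_1, \dots, t_n]/\mf{m}^{k+1}$ already employed in the proof of Lemma \ref{fiberiso}. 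First I would check that this assignment is well defined on $k$-jets: since $\phi^{\#}f\bmod\mf{m}_0^{k+1}$ depends only on $f\bmod\mf{m}_x^{k+1}$ and on the Taylor polynomial of $\phi$ up to order $k$, the induced isomorphism $\Phi_x$ is unchanged when $\phi$ is replaced by another local biholomorphism with the same $k$-jet.

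Injectivity is then the statement that the $k$-jet can be recovered from $\Phi_x$. Here I would fix a holomorphic chart $\chi:U\rightarrow\bb C^n$ with $\chi(x)=0$ and set $u_i:=t_i\circ\chi\in\mf{m}_x$; these generate $\OO_{X, x}/\mf{m}_x^{k+1}$ as a $\bb C$-algebra. In the chart $\chi$ a local biholomorphism $\phi$ is encoded by the $n$ component functions $\phi^{\#}(u_i)$, and $\Phi_x(u_i)=\phi^{\#}(u_i)\bmod\mf{m}_0^{k+1}$ is precisely the order-$k$ Taylor polynomial of the $i$-th component. Thus two biholomorphisms inducing the same $\Phi_x$ have identical order-$k$ Taylor expansions in the chart, i.e. the same $k$-jet.

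For surjectivity I would, given an abstract isomorphism $\Phi_x$, reconstruct a biholomorphism realizing it. Writing $p_i(t):=\Phi_x(u_i)$ and choosing polynomial representatives of degree $\le k$ with $p_i(0)=0$ (the constant terms vanish because any isomorphism of these local Artinian $\bb C$-algebras carries the maximal ideal onto the maximal ideal), the fact that $\Phi_x$ induces an isomorphism on cotangent spaces $\mf{m}_x/\mf{m}_x^2\cong\mf{m}/\mf{m}^2$ shows that the linear parts of the $p_i$ are linearly independent. Hence $\psi:=(p_1, \dots, p_n):\bb C^n\rightarrow\bb C^n$ has invertible Jacobian at $0$ and defines a local biholomorphism fixing the origin. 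Setting $\phi:=\chi^{-1}\circ\psi$ gives a local biholomorphism $\bb C^n\rightarrow X$ with $\phi(0)=x$ and $\phi^{\#}(u_i)=\psi^{\#}(t_i)=p_i\equiv\Phi_x(u_i)$; since both $\phi^{\#}$ and $\Phi_x$ are algebra homomorphisms agreeing on the generators $u_i$, they coincide, so $\phi$ realizes $\Phi_x$.

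The only genuinely delicate point is this surjectivity step, where one must pass from an abstract algebra isomorphism of truncated local rings back to an honest germ of a biholomorphism; the argument hinges on the observation that $\Phi_x$ preserves the maximal-ideal filtration and is determined by its effect on a coordinate system, so that polynomial lifts of the images of the coordinates automatically have invertible linear part. Everything else is routine bookkeeping of the identification of $k$-jets with order-$k$ Taylor data in a fixed chart, which is exactly the dictionary already set up in the proof of Lemma \ref{fiberiso}.
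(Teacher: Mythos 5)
Your proof is correct and follows essentially the same route as the paper: both identify $X_x^k$ with algebra isomorphisms via Lemma \ref{fiberiso}, fix chart coordinates, and reconstruct a local biholomorphism from the images of the coordinate generators (the paper then packages this as a factorization through the jet algebra $\mc{J}_{X,x}^k$). If anything, your surjectivity step is more careful than the paper's, since your cotangent-space argument makes explicit why the reconstructed map has invertible differential at the origin, a point the paper passes over with ``by shrinking the image we can make $\psi$ a biholomorphism.''
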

\begin{proof}
$\OO_{X, x}$ and $\bb C[t_1, \dots, t_n]$ are commutative rings with maximal ideals $\mf{m}_x$ and $\mf{m}$, respectively. Therefore, the quotient $\bb C$-algebras $\OO_{X, x}/\mf{m}_x^{k+1}$ respectively $\bb C[t_1, \dots, t_n]/\mf{m}^{k+1}$ are local rings with corresponding maximal ideals $\mf{m}_x/\mf{m}_x^{k+1}$ and $\mf{m}/\mf{m}^{k+1}$. Let $\Phi: \OO_{X, x}/\mf{m}_x^{k+1}\rightarrow\bb C[t_1, \dots, t_n]/\mf{m}^{k+1}$ be an element in the fiber $X^k_x$. The fact that $\Phi$ is a $\bb C$-algebra isomorphism between two local rings has as a consequence that $\Phi(\mf{m}_x/\mf{m}_x^{k+1})=\mf{m}/\mf{m}^{k+1}$. This along with the fact that $\mf{m}/\mf{m}^{k+1}$ has a basis consisting of the residue classes $t_1\mod \mf{m}^{k+1},\dots, t_n\mod\mf{m}^{k+1}$ implies that $\mf{m}_x/\mf{m}_x^{k+1}$ has $n$ basis elements, too which we designate by $f_1 \mod \mf{m}_x^{k+1}, \dots, f_n\mod\mf{m}_x^{k+1}$. Define a map $\psi: U\rightarrow\bb C^n$ by $\psi(y):=(f_1(y), \dots, f_n(y))$ for every $y\in U$ where $U$ is an open  neighbourhood of $x\in X$ and $f_1, \dots, f_n$ are representatives of the respective residue classes of basis vectors in $\mf{m}_x/\mf{m}_x^{k+1}$. By shrinking the image $\Ima(\psi)$ we can make $\psi$ a biholomorphism. Let us designate by $\phi$ the local inverse $\bb{C}^n\supset V\rightarrow U$ of $\psi$ which defines a local parametrization at $x$ and let us set $\Phi(f_i \mod \mf{m}_x^{k+1})=: r_i \mod \mf{m}^{k+1}$ for $i=1, \dots, n$ which are basis elements of $\mf{m}/\mf{m}^{k+1}$. Then the map $\Phi$ factors through the $\OO_{X, x}$-module $\mc{J}_{X, x}^{k}$ of $k$-jets of holomorphic functions on $X$ at $x$ with factoring $\bb C$-algebra isomorphisms $F: \OO_{X, x}/\mf{m}_x^{k+1}\rightarrow\mc{J}_{X, x}^{k}$, $f \mod \mf{m}_x^{k+1}\mapsto[f]_x^k$ and $G: \mc{J}_{X, x}^{k}\rightarrow\bb C[t_1, \dots, t_n]/\mf{m}^{k+1}$, $[f]_x^{k}\mapsto\sum_{|\alpha|\leq k}([f\circ\phi]_x^{k})_{\alpha}r^{\alpha}\mod\mf{m}^{k+1}$, respectively, as shown in the diagram
\[
\begin{tikzcd}
\OO_{X, x}/\mf{m}_x^{k+1}\arrow[d, "F"]\arrow[r, "\Phi"]&\bb C[t_1, \dots, t_n]/\mf{m}^{k+1}\\
\mc{J}_{X, x}\arrow[ru, "G", dashed]&
\end{tikzcd}
\]
Indeed, $G\circ F=\Phi$ on generators $f_i \mod \mf{m}_x^{k+1}$ and since $\OO_{X, x}/\mf{m}_x^{k+1}\cong\mf{m}_x/\mf{m}_x^{k+1}\oplus\bb C$ and $\bb C[t_1, \dots, t_n]/\mf{m}^{k+1}\cong\mf{m}/\mf{m}^{k+1}\oplus\bb C$ as $\bb C$-vector spaces, it holds that $G\circ F=\Phi$ on every element from $\OO_{X, x}/\mf{m}_x^{k+1}$. This implies \[\Iso_{\bb C}(\OO_{X, x}/\mf{m}_x^{k+1}, \bb C[t_1, \dots, t_n]/\mf{m}^{k+1})\cong\Iso_{\bb C}(\mc{J}_{X, x}^k,  \bb C[t_1, \dots, t_n]/\mf{m}^{k+1}).\] 
The claim follows.

\end{proof}
The last corollary provides an alternative definition of $X^k$ as a collection of pairs consisting of a point $x$ in $X$ and a $k$-jet at $0\in\bb C^n$ of a parametrization $\phi: \bb C^n\supset V\rightarrow U$ with $0\in V$ and $\phi(0)=x\in U$. Contrary to Definition \ref{algebraicdef} which makes sense only in the algebraic and in the complex analytic set ups where one can define a completion of schemes along closed Noetherian subschemes respectively a completion of complex spaces along analytic subsets, Corollary \ref{fiberjet} makes sense also in the category of  smooth manifolds in which the concept of completed spaces cannot be defined. Furthermore, it has the merit that it permits the definition of a finite-dimensional complex manifold structure on $X^k$. In the following we outline how this is done as shown in \cite{BR73}. Let $U$ be a local coordinate chart in $X$ with local coordinates $x_1, \dots, x_n$. By default $U^{k}=\pi_k^{-1}(U)$ is an open subset of $X^k$. Each point $(x, \Phi_x)$ in $U^k$ determines by Corollary \ref{fiberjet} a unique $k$-jet  $[\phi]_x^k$ of a local parametrization map $\phi: \bb C^n\supset V\rightarrow U$. The latter defines in turn an embedding \[\beta_U^k: U^k\rightarrow\bb C[t_1, \dots, t_n]/\mf{m}^{k+1}\times\dots\times\bb C[t_1, \dots, t_n]/\mf{m}^{k+1}\cong\underbrace{\bb C^{\ms{\binom{n+k}{n}}}\times\dots\times\bb C^{\ms{\binom{n+k}{n}}}}_{\textrm{$n$-times}}\cong\bb C^{\ms{n\binom{n+k}{n}}}\] 
by $(x, [\phi]_x^k)\mapsto([x_1\circ\phi)]_0^k, \dots, [x_n\circ\phi]_0^k)$. In this way every open cover $\{U_i\}$ of $X$ induces a complex analytic structure $\{(U_i^k, \beta_{U_i})\}$ on $X^k$ which makes $X^k$ an $n\binom{n+k}{n}$-dimenional complex analytic manifold. Moreover, it is a direct consequence of  Lemma \ref{fiberiso} that the Lie group $\Aut_{n, k}$ acts freely and transitively on the fibers of $\pi_k: X^k\rightarrow X$ thus making $\pi_k: X^k\rightarrow X$ into a principal $\Aut_{n, k}$-bundle.
\begin{lemma}
\label{xcoorthct}
$X^k$ is a transitive Harish-Chandra $(W_{n, k}, \Aut_{n, k})$-torsor.
\end{lemma}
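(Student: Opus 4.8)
The plan is to exhibit on the total space $X^k$ a canonical $\Aut_{n,k}$-equivariant $(1,0)$-form $\theta$ with values in $W_{n,k}\otimes_{\bb C}\OO_{X^k}$ and verify it satisfies the two conditions of Definition \ref{hctorsor} together with the transitivity condition that $\theta$ is an isomorphism of vector bundles. Since the previous discussion already established that $\pi_k:X^k\rightarrow X$ is a principal $\Aut_{n,k}$-bundle and that $(\Aut_{n,k},W_{n,k})$ is a Harish-Chandra pair, the only thing left is to produce the flat connection $(1,0)$-form and check the Maurer-Cartan identity and compatibility with the embedding $\Lie(\Aut_{n,k})=W_{n,k}^0\hookrightarrow W_{n,k}$.

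First I would construct $\theta$ explicitly in terms of jets. Using the description from Corollary \ref{fiberjet}, a point of $X^k$ over $x$ is a $k$-jet $[\phi]_x^k$ of a parametrization $\phi:\bb C^n\supset V\rightarrow X$ with $\phi(0)=x$. Given a tangent vector $\zeta\in T_{[\phi]_x^k}^{(1,0)}X^k$, represent it by a smooth path $[\phi_t]_x^k$ with $[\phi_0]_x^k=[\phi]_x^k$, and set
\begin{equation}
\label{canonicalform}
\theta_{[\phi]^k}(\zeta):=\Big[-(d\phi_0)^{-1}\tfrac{d}{dt}\Big|_{t=0}\phi_t\Big]_0^k\in W_{n,k}.
\end{equation}
This is the finite-order truncation of the same $W_n$-valued connection form used implicitly in Lemma \ref{diffoperatorflatsec} and Proposition \ref{determiningcoeff}; the prefactor $(d\phi_0)^{-1}$ guarantees that the result is a formal vector field and that the assignment is well defined on $k$-jets. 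I would check that \eqref{canonicalform} is $\Aut_{n,k}$-equivariant by a direct computation: precomposing $\phi$ by a jet $g\in\Aut_{n,k}$ transforms the right-hand side by the adjoint action $\rho(g)$ of $\Aut_{n,k}$ on $W_{n,k}$, which is exactly the equivariance required of $\theta_{\ms P}$. The compatibility condition $ii)$ of Definition \ref{hctorsor}, namely $\theta\circ j=i_{\ms P}$, follows because the vertical tangent vectors to the fibers of $\pi_k$ are generated by the $\Aut_{n,k}$-action, whose infinitesimal generators are precisely the elements of $W_{n,k}^0=\Lie(\Aut_{n,k})$, and on these $\theta$ restricts to the tautological inclusion $W_{n,k}^0\hookrightarrow W_{n,k}$.

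Next I would verify that $\theta$ is a bundle isomorphism $\mc{T}_{X^k}\rightarrow W_{n,k}\otimes_{\bb C}\OO_{X^k}$, which is the defining property of a \emph{transitive} Harish-Chandra torsor. A dimension count settles injectivity versus surjectivity: both $X^k$ and $W_{n,k}$ have complex dimension $n\binom{n+k}{n}$, so it suffices to show $\theta$ is fiberwise injective, and I would do this by noting that $\theta$ splits as the tautological identification on the vertical subbundle (by condition $ii)$) together with the identification of the horizontal directions, encoded by the $x$-dependence of the jet, with the complementary graded pieces of $W_{n,k}$. Finally I would confirm the Maurer-Cartan equation $d\theta+\tfrac12[\theta,\theta]=0$; this is the infinitesimal expression of the associativity of composition of parametrizations, and it is cleanest to deduce it from the fact that $\theta^{-1}:W_{n,k}\rightarrow\mc{V}(X^k)$ is a Lie algebra homomorphism, which in turn follows because the $W_{n,k}$-action on $X^k$ induced by reparametrization respects the Lie bracket of formal vector fields. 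The main obstacle I anticipate is the careful bookkeeping in \eqref{canonicalform}: showing that the truncated expression genuinely descends to $k$-jets independently of the chosen representative path $\phi_t$ and of the representative $\phi$ of the jet, and that the adjoint transformation law matches the prescribed $\Aut_{n,k}$-action on $W_{n,k}$ rather than merely agreeing up to higher-order corrections. Once the equivariance and well-definedness of $\theta$ are pinned down, the Maurer-Cartan identity and the isomorphism property are formal consequences, and the lemma follows; the passage to $k=\infty$ is then obtained by taking the projective limit over $k$, as in the construction of $W_n$ and $\Aut_n$.
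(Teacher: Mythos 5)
Your proposal is correct and takes essentially the same route as the paper's proof: you construct the same $W_{n,k}$-valued form $\theta_{[\phi]^k}(\zeta)=\bigl[-(d\phi)^{-1}\tfrac{d}{dt}\big|_{t=0}\phi_t\bigr]_0^k$, verify $\Aut_{n,k}$-equivariance, derive the Maurer--Cartan identity from the fact that $\theta^{-1}:W_{n,k}\rightarrow\mc{V}(X^k)$ is a Lie algebra homomorphism, and check condition $ii)$ of Definition \ref{hctorsor} on the vertical subbundle, exactly as the paper does. The only difference is that you spell out the isomorphism property via a dimension count and fiberwise injectivity, a verification the paper leaves to the reader.
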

\begin{proof}
As indicated in Corollary \ref{fiberjet} every point $(x, \Phi_x)$ in $X^k$ is identified with a pair $(x, [\phi]_x^k)$, where $x\in X$ and $\phi:\bb C^n\rightarrow X$ is some local parametrization of $X$ with $\phi(0)=x$. We define the following $\bb C$-linear map 
\begin{align*}
T_{\ms{(x, \Phi_x)}}^{(1, 0)}&X^k\rightarrow W_{n, k}\nonumber\\
&\xi\mapsto\mathrm{Taylor~expansion~ at~0~ of~}~-d\phi_{y}^{-1}(\frac{d}{dt}\big|_{t=0}\phi_t)
\end{align*}
where $[\phi_t]_x^k$ is a path in $X^k$ such that $\phi_{t=0}(0)=x$ and $[\frac{d}{dt}\big|_{t=0}\phi_t]_x^k=\xi$. We leave to the reader the straighforward verification that the above mapping is in fact an $\Aut_{n, k}$-equivariant $\bb C$-linear isomorphism. Moreover, it induces an $\Aut_{n, k}$-equivariant holomorphic vector bundle isomorphism $T^{(1, 0)}X^k\rightarrow X^k\times W_{n, k}$ which is equivalent to an $\Aut_{n, k}$-equivariant $\OO_{X^k}$-module isomorphism $\omega: \mc{T}_{X^k}\rightarrow\OO_{X^k}\otimes_{\bb C}W_{n, k}$ which in turn we interpret as an $\Aut_{n, k}$-equivariant differential $(1, 0)$-form.
The inverse thereof yields a homomorphism of $\bb C$-modules $f:W_{n, k}\rightarrow\mc{V}(X^k)$, which one explicitly checks to be a Lie algebra homomorphism as well. This Lie algebra homomorphism implies that $\omega$ satisfies the Maurer-Cartan condition. Furthermore, it is a matter of direct computation to demonstrate that the composition of mappings $\OO_{X^k}\otimes_{\bb C}W_{n, k}^0\overset{j}{\hookrightarrow}\mc{T}_{X^k}\overset{\omega}{\rightarrow}\OO_{X^k}\otimes_{\bb C}W_{n, k}$, where $j$ is the morphism of $\OO_{X^k}$-modules induced by the embedding of the vertical subbundle of $T^{(1, 0)}X^k$ into $T^{(1, 0)}X^k$, coincides with the embedding $i_{\ms{X^k}}:\OO_{X^k}\otimes_{\bb C}W_{n, k}^0\hookrightarrow W_{n, k}\otimes_{\bb C}\OO_{X^k}$ emanating from the Harish-Chandra pair $(W_{n, k}, \Aut_{n, k})$. Herewith the proof is concluded.
\end{proof}
Consider now the projective system of transitive Harish-Chandra $(W_{n, k},  \Aut_{n, k})$-torsors
\begin{align*}
X\xleftarrow{\pi_1} X^1\xleftarrow{\pi_1^2}\dots \longleftarrow X^{k-1}\xleftarrow{\pi_{k-1}^k} X^k\xleftarrow{\pi_k^{k+1}}X^{k+1}\longleftarrow\dots.
\end{align*}
The projective limit of the above inverse system is in turn a transitive Harish-Chandra $(W_n, \Aut_n)$-torsor over $X$ which is called \emph{the bundle of formal coordiante systems} on $X$. The subgroup of linear transformations $\GL(n, \bb C)$ in $\Aut_n$ acts (from the right) on $\coor{X}$ and the quotient $\aff{X}:=\coor{X}/\GL(n, \bb C)$ is called \emph{the bundle of formal affine coordinate systems} on $X$. Although the projection $\aff{X}\rightarrow X$ can be given the structure of a principal $\Aut_n^+$-bundle, it is neither a Harish-Chandra torsor nor is $\aff{X}$ a principal $W_n$-space. Nevertheless, $\coor{X}\rightarrow\aff{X}$ is a $(W_{n}, \GL(n, \bb C))$-torsor.
\subsection*{The bundle of formal coordinates $\coor{\mc{E}}$ of a holomorphic vector bundle $E\rightarrow X$}
\label{ecoor}
Following the expositions of formal geometry in \cite{EF08} and \cite{Kho07} we aim in the current subsection to extend the notion of a formal coordinate bundle of a complex manifold defined in the preceding subsection to the case of a holomorphic vector bundle of a finite rank. The definitions given in the aforementioned sources are geometric in nature. Here we attempt to formulate equivalent definitions using the language of ringed spaces which suits the work in this paper better.\\
Let from now on until the end of this section $\pi:E\rightarrow X$ denote a holomorphic vector bundle of a finite rank $l$ over the $n$-dimensional complex manifold $X$ and let $\mc{E}$ be the corresponding finite locally free $\OO_X$-module. In the course of this paper we shall interchangeably use the notations $\mc{E}$ and $E$ for a holomorphic vector bundle depending on whether we want to emphasise the algebraic $\OO_X$-module structure or the geometric structure of the bundle. Under a \emph{local parametrization} of $E$ we shall understand a holomorphic vector bundle isomorphism $U\times\bb C^l\rightarrow E|_V$ from the trivial bundle of rank $l$ over some open neighborhood $U\subset\bb C^n$ of $0$ to the restriction of $E$ to some trivializing open set $V\subset X$. Denote by $G$ the structure Lie group of $E$ and by $\mf{g}$ its Lie algebra, respectively. Furthermore, for any holomorphic morphism $f$ from $X$ to any other complex manifold $Y$, let $f^{\*}\mc{E}$ stand for the inverse image $\OO_{Y}$-module $\OO_Y\otimes_{f^{-1}\OO_X}f^{-1}\mc{E}$. Then
\begin{definition}
$\mc{E}^k$ is defined as the set consisting of pairs $(\pmb{\varphi}, f)$ of a closed immersion of $\bb C$-ringed spaces $\pmb{\varphi}=(\varphi, \varphi^{\#}): (0, \OO_{\bb C^n}/\mc{J}_0^{k+1})\hookrightarrow(X, \OO_X)$ and an isomorphism of $\OO_{\bb C^n/\mc{I}_0^{k+1}}$-modules $f:\big(\OO_{\bb C^n}/\mc{I}_0^{k+1}\big)^{\oplus l}\rightarrow\varphi^{\*}\mc{E}$.
\end{definition}  
In a similar fashion to $X^k$ the map $\presup{\mc{E}}{\pi}_k:\mc{E}^k\rightarrow X$ given by $\presup{\mc{E}}{\pi}_k(\pmb{\varphi}, f)=\varphi(0)$ defines a projection which induces a topology $\mc{T}_{\mc{E}}:=\{U^k:=\presup{\mc{E}}{\pi}_k^{-1}(U)~|~U~\textrm{is open in $X$}\}$ on $\mc{E}^k$ 
with respect to which the projection map is continuous, open and closed. Designate by $\mc{E}_x^k$ the topologically closed fiber $\presup{\mc{E}}{\pi}_k^{-1}(x)$ of $x\in X$. In an analogous manner to the proof of Lemma \ref{fiberiso} one demonstrates that
\begin{align}
\label{ekformal}
\mc{E}^k&=\{(\pmb{\varphi}, f)~|~\pmb{\varphi}=(\varphi, \varphi^{\#}):(0, \OO_{\bb C^n}/\mc{I}_0^{k+1})\hookrightarrow(X, \OO_X), f:\big(\OO_{\bb C^n}/\mc{I}_0^{k+1}\big)^{\oplus l}\rightarrow\varphi^{\*}\mc{E}\}\nonumber\\
&=\{(X_{\varphi(0)}^k, f_0)~|~f_0:\big(\OO_{\bb C^n, 0}/\mf{m}_0^{k+1}\big)^{\oplus l}\rightarrow(\varphi^{\*}\mc{E})_0 \}\nonumber\\
&=\{(X_{\varphi(0)}^k, f_{0})~|~f_0:\big(\OO_{\bb C^n, 0}/\mf{m}_0^{k+1}\big)^{\oplus l}\rightarrow\big(\OO_{\bb C^n, 0}/\mf{m}_0^{k+1}\big)\otimes_{\OO_{X, \varphi(0)}}\mc{E}_{\varphi(0)}\}\nonumber\\
&=\coprod_{x\in X}\{(X_x^k, f_0)~|~f_0:\big(\OO_{\bb C^n, 0}/\mf{m}_0^{k+1}\big)^{\oplus l}\rightarrow\big(\OO_{\bb C^n, 0}/\mf{m}_0^{k+1}\big)\otimes_{\OO_{X, x}}\mc{E}_{x}\}\nonumber\\
&=\coprod_{x\in X}\mc{E}_x^k,
\end{align}
that is, the disjoint union of fibers of $\presup{\mc{E}}{\pi}_k$ gives $\mc{E}^k$.\\
Let $\Delta$, $\Delta^{(k)}$ and $\pr_i$, $i=1, 2$, be as in the previous subsection. Further, let  $p_i$ denote the composition of the natural morphism of $\bb C$-ringed spaces $\Delta^{(k)}\rightarrow X\times X$ with $\pr_{i}$ for $i=1, 2$. We define the \emph{jet bundle of $k$-th order} of $\mc{E}$  of rank $l\binom{n+k}{n}$ as the finite locally free $\OO_X$-module $J^k(\mc{E}):=p_{1\*}p_2^{\*}\mc{E}$. The corresponding holomorphic vector bundle over $X$ of the same rank $l\binom{n+k}{n}$ is denoted by $J^k(E)$. We observe that $p_1$ is a proper map whose fiber is an one-point space. Therefore, the stalk of $J^k\mc{E}$ at a point $x\in X$ is given by
\begin{align}
\label{jetbundlestalk}
J^k(\mc{E})_x&=\big(p_{1\*}p_2^{\*}\mc{E}\big)_x\nonumber\\
&=\Gamma(p_1^{-1}(x), p_2^{\*}\mc{E}|_{p_1^{-1}(x)})\nonumber\\
&\cong\OO_{X, x}/\mf{m}_x^{k+1}\otimes_{\OO_{X, x}}\mc{E}_x
\end{align}     
where in the second line we again applied \cite[Remark 2.5.3]{KS94} and in the second to the last line we used the fact that $(p_2^{\*}\mc{E})_{(x, x)}$ is a constant sheaf on the  one-point space $(x, x)\in\Delta(X)$.\\
The isomorphism induced by epimorphism \eqref{factoringmap} delivers an $\OO_{\bb C^n, 0}/\mf{m}_0^{k+1}$-module isomorphism 
\[\big(\OO_{\bb C^n, 0}/\mf{m}_0^{k+1}\big)\otimes_{\OO_{X, \varphi(0)}}\mc{E}_{\varphi(0)}\cong\big(\OO_{\bb C^n, 0}/\mf{m}_0^{k+1}\big)\otimes_{\frac{\OO_{X, \varphi(0)}}{\mf{m}_{\varphi(0)}^{k+1}}}\big(\OO_{X, \varphi(0)}/\mf{m}_{\varphi(0)}^{k+1}\big)\otimes_{\OO_{X, \varphi(0)}}\mc{E}_{\varphi(0)}\]
which combined with the $\OO_{\bb C^n, 0}/\mf{m}_0^{k+1}$-module isomorphism $f_0$ from the third line in \eqref{ekformal} along with isomorphism \eqref{jetbundlestalk} and the well known identification $\OO_{\bb C^n, 0}/\mf{m}_{0}^{k+1}\cong\mc{J}^k_{\bb C^n, 0}$ ultimately yields the isomorphism of $\OO_{\bb C^n, 0}/\mf{m}_0^{k+1}$-modules
$\big(\mc{J}_{\bb C^n, 0}^{k}\big)^{\oplus l}\cong\OO_{\bb C^n, 0}/\mf{m}_0^{k+1}\otimes_{\OO_{X, \varphi(0)}/\mf{m}_{\varphi(0)}^{k+1}}J^k(\mc{E})_{\varphi(0)}$.
Since $\OO_{\bb C^n, 0}/\mf{m}_0^{k+1}$ is local, the latter isomorphism induces a unique linear $\bb C$-isomorphism between the fibers $J_0^k(\bb C^n\times\bb C^l)$ and  $J_{\varphi(0)}^k(E)$ of the vector bundles $J^k(\bb C^n\times\bb C^l)$ and $J^k(E)$ corresponding to the sheaves $\mc{J}_{\bb C^n}^k$ and $J^k(\mc{E})$, respectively.
The linear $\bb C$-isomorphism is equivalent to a $k$-jet at $0\in\bb C^n$ of a pointed local biholomorphism $(\bb C^n\times\bb C^l, 0)\rightarrow (E, \varphi(0))$ which in turn is equivalent to a $k$-jet at $0\in\bb C^n$ of a local parametrization of the vector bundle $E$. Thus $\mc{E}^k_x$ is bijective to the set $\{\textrm{$k$-jet at $0\in\bb C^n$ of a local parametrization $\phi: \bb C^n\times\bb C^l\rightarrow E$ at $0$}\}$. In analogous manner to $X^k$ this identification between abstract points on $\mc{E}^k$ and $k$-jets of local parametrizations of $E$ makes it possible to give complex analytic structure on $\mc{E}^k$. More precisely, let $U$ be a local chart on $X$ which trivializes $E$ by virtue of a trivialization mapping $\psi: E|_U\rightarrow U\times\bb C^l$.
First, we have by default that $U^k\in\mc{B}$. Second, as elucidated in the previous paragraph, each point in $\mc{E}^k$ is uniquely identified with a $k$-jet of a local parametrization $\phi: W\times\bb C^l\rightarrow E|_U$ at $0\in\bb C^n$, $W$ open in $\bb C^n$. This data allows us to set up an embedding
\begin{equation*}
\label{ecoorchart} 
\beta_U^k: U^k\rightarrow \bb{C}[t_1, \dots, t_n]/\mf{m}^{k+1}\times\dots\times \bb{C}[t_1, \dots, t_n]/\mf{m}^{k+1}\times\bb{C}^{l^2}\cong\prod_{i=1}^n\bb C^{\ms{\binom{n+k}{n}}}\times\bb C^{q^2}\cong\bb C^{\ms{n\binom{n+k}{n}+q^2}} 
\end{equation*} 
by $(x, \Phi_x, f_0)\mapsto[\phi]_0^k\mapsto([\psi^1\circ\phi]_0^k, \dots, [\psi^n\circ\phi]_0^k, [\psi^{n+1}\circ\phi]_0^k, \dots, [\psi^{n+l}\circ\phi]_0^k)$, where $q=\dim_{\bb C}\mf g$. We also take into account the fact that the parametrization $\phi$ and trivialization $\psi$ are bundle maps which impose the additional conditions $\psi^i\circ\phi(x, y)=f^i(x)$ for $i=1, \dots, n$ and $\psi^{n+i}\circ\phi(x, y)=\sum_{j=1}^lb(x)_{ij}y_j$ for $i=1, \dots, l$ and $(x,y)\in\bb C^n\times\bb C^l$. The set $\{U^k, \beta_U^k\}$ defines an $n\binom{n+k}{n}+q^2$-dimensional complex analytic structure on $\mc{E}^k$. From \eqref{ekformal} we draw the inference that the fibers of $\mc{E}^k$ are right $\Aut_{n,k}\times G$-torsors where $G$ is the structure group of of the vector bundle $E$. 
This implies that $\presup{\mc{E}}{\pi}_k: \mc{E}\rightarrow X$ is a principal $\Aut_{n,k}\times G$-bundle. The Lie algebra of $\Aut_{n, k}\times G$ is the direct sum $W_{n, k}^0\oplus\mf{g}$ of the Lie algebras $W_{n, k}^0$ and $\mf{g}$. It is embedded in $W_{n, k}\ltimes\mf{g}\otimes_{\bb C}\widehat{\OO}_n/\hat{\mf{m}}^{k+1}$, the semidirect product of the Lie algebra $W_{n, k}$ and the Lie algebra of $\mf{g}$-valued power series up to $k$-th order. The semidirect product here betokens that, while the underlying  complex vector space of $W_{n, k}\ltimes\mf{g}\otimes_{\bb C}\widehat{\OO}_n/\hat{\mf{m}}^{k+1}$ is isomorphic to the direct sum of its constituents, the Lie bracket is twisted by the $W_{n, k}$-action on $\mf g\otimes_{\bb C}\hat{\OO}_/\hat{\mf{m}}^{k+1}$, that is,
 \begin{equation}
 \label{twliebracket}[v+g_1\otimes p_1, w+g_2\otimes p_2]=[v, w]+[g_1, g_2]\otimes p_1p_2+g_2\otimes v(p_2)-g_1\otimes w(p_1),
 \end{equation} 
where $v, w\in W_{n, k}$,~$g_1, g_2\in\mf{g}$ and $p_1, p_2\in\widehat{\OO}_n/\hat{\mf{m}}^{k+1}$. Next, we continue with the Harish-Chandra torsor structure of $\mc{E}^k$ but beforehand we show that $(W_{n, k}\ltimes\mf{g}\otimes\widehat{\OO}_n/\hat{\mf{m}}^{k+1}, \Aut_{n, k}\times G)$ is a Harish-Chandra pair. 
\begin{lemma}
\label{extendedhcpair}
Let $G$ be an arbitrary matrix Lie subgroup of $\GL(m, \bb C)$ with corresponding Lie algebra $\mf{g}$. Then, $(W_{n, k}\ltimes\mf{g}\otimes\widehat{\OO}_n/\hat{\mf{m}}^{k+1}, \Aut_{n, k}\times G)$ is a Harish-Chandra pair. 
\end{lemma}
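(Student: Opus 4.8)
The plan is to verify the two defining conditions of a Harish-Chandra pair from Definition \ref{hcpair} for the data $(W_{n, k}\ltimes\mf{g}\otimes\widehat{\OO}_n/\hat{\mf{m}}^{k+1}, \Aut_{n, k}\times G)$. First I would record that the underlying objects are legitimate: the Lie algebra $W_{n, k}\ltimes\mf{g}\otimes\widehat{\OO}_n/\hat{\mf{m}}^{k+1}$ carries the twisted bracket \eqref{twliebracket}, and $\Aut_{n, k}\times G$ is a (finite-dimensional) Lie group, being the product of the Lie group $\Aut_{n, k}$ with the matrix Lie group $G\subset\GL(m, \bb C)$. Since $(W_{n, k}, \Aut_{n, k})$ is already known to be a Harish-Chandra pair (this was verified in the discussion preceding Lemma \ref{xcoorthct}) and $(\mf{g}, G)$ is a Harish-Chandra pair in the trivial way (any matrix Lie group with its own Lie algebra and the adjoint action), the task is essentially to glue these two pairs together compatibly with the semidirect-product structure.

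Next I would define the linear action $\rho: \Aut_{n, k}\times G\rightarrow\GL(W_{n, k}\ltimes\mf{g}\otimes\widehat{\OO}_n/\hat{\mf{m}}^{k+1})$. On the summand $W_{n, k}$ the group $\Aut_{n, k}$ acts by its natural adjoint-type action (coordinate changes on formal vector fields) and $G$ acts trivially; on the summand $\mf{g}\otimes\widehat{\OO}_n/\hat{\mf{m}}^{k+1}$ the group $\Aut_{n, k}$ acts through its action on the second tensor factor $\widehat{\OO}_n/\hat{\mf{m}}^{k+1}$ by substitution, while $G$ acts through $\Ad$ on the first tensor factor $\mf{g}$. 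I would then check that $\rho$ is an action by Lie algebra automorphisms of the twisted bracket \eqref{twliebracket}; the crucial point is that the cross terms $g_2\otimes v(p_2)-g_1\otimes w(p_1)$ transform correctly, which follows because $\Aut_{n, k}$ acts by algebra automorphisms on $\widehat{\OO}_n/\hat{\mf{m}}^{k+1}$ (so it commutes with the derivation action $v\mapsto v(\cdot)$ up to the coordinate change) and because $G$ acting by $\Ad$ on $\mf{g}$ is a Lie algebra automorphism.

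For condition ii) I would exhibit the embedding $i:\Lie(\Aut_{n, k}\times G)=W_{n, k}^0\oplus\mf{g}\hookrightarrow W_{n, k}\ltimes\mf{g}\otimes\widehat{\OO}_n/\hat{\mf{m}}^{k+1}$. This sends $(v, g)\mapsto v+g\otimes 1$, where $v\in W_{n, k}^0\subset W_{n, k}$ is a formal vector field with vanishing constant term and $g\otimes 1$ is the constant $\mf{g}$-valued power series. I would verify that $i$ is a Lie algebra homomorphism using \eqref{twliebracket}: for two constant elements $g_1\otimes 1, g_2\otimes 1$ the bracket reduces to $[g_1, g_2]\otimes 1$ since $v(1)=w(1)=0$, and the $W_{n, k}^0$-part is the inclusion of a Lie subalgebra, so the two summands do not interfere. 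Then I would confirm $\rho$-equivariance of $i$ and that the differential $\rho_{\*}$ restricted along $i$ recovers the adjoint action of $W_{n, k}^0\oplus\mf{g}$ inside the big Lie algebra; this is a direct unwinding of the definitions of the coordinate-substitution action and the $\Ad$-action, matched against $\adlie$ computed via \eqref{twliebracket}.

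The main obstacle I anticipate is purely bookkeeping rather than conceptual: one must be careful that the $\Aut_{n, k}$-action on the semidirect factor $\mf{g}\otimes\widehat{\OO}_n/\hat{\mf{m}}^{k+1}$ and the derivation action implicit in the twisted bracket interact exactly so that $\rho$ preserves \eqref{twliebracket}, i.e.\ that substitution by $\phi\in\Aut_{n, k}$ intertwines $v(p)$ with $(\phi_{\*}v)(\phi_{\*}p)$. Once this compatibility is in place, the remaining verifications are formal. I would therefore present the proof by first stating $\rho$ and $i$ explicitly, then checking automorphy of $\rho$ on \eqref{twliebracket}, then checking that $i$ is a $\rho$-equivariant Lie algebra embedding, and finally matching $\rho_{\*}\circ i$ with the adjoint action, leaving the elementary substitution identities to a short verification.
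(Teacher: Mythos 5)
Your proposal is correct and follows essentially the same route as the paper: the paper's proof simply writes down the explicit $\Aut_{n, k}\times G$-action on $W_{n, k}\ltimes\mf{g}\otimes\widehat{\OO}_n/\hat{\mf{m}}^{k+1}$ (coordinate change on the $W_{n, k}$-summand, conjugation by $A\in G$ on $\mf{g}$ and substitution by $\phi\in\Aut_{n, k}$ on $\widehat{\OO}_n/\hat{\mf{m}}^{k+1}$) and asserts that its differential coincides with the adjoint action of the big Lie algebra restricted to $W_{n, k}^0\oplus\mf{g}$, which are exactly the two checks you carry out. Your write-up merely makes explicit what the paper leaves implicit, namely the embedding $(v, g)\mapsto v+g\otimes 1$ (whose homomorphism property follows from $v(1)=0$ in the bracket \eqref{twliebracket}) and the verification that the action preserves the twisted bracket.
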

\begin{proof}
The action of an element $(\phi, A)\in\Aut_{n, k}\times G$ on an element $D+B\otimes p$ from $W_{n, k}\ltimes\mf{g}\otimes\widehat{\OO}_n/\hat{\mf{m}}^{k+1}$ is given by 
$(\phi, A)\cdot(D+B\otimes p)=\phi
\circ D\circ\phi^{-1}+ABA^{-1}\otimes p+B\otimes\phi(p)$.
From that we infer that the action of the Lie algebra $W_{n, k}^0\oplus\mf{g}$ of $\Aut_{n, k}\times G$ on $W_{n, k}\ltimes\mf{g}\otimes\widehat{\OO}_n/\hat{\mf{m}}^{k+1}$ coincides precisely with the adjoint action of  $W_{n, k}\ltimes\mf{g}\otimes\widehat{\OO}_n/\hat{\mf{m}}^{k+1}$, restricted to $W_{n, k}^0\oplus\mf{g}$. This concludes the proof.  
\end{proof}
\begin{corollary}
Let $G$ be as in Lemma \ref{extendedhcpair}. Then $(W_n\ltimes\mf{g}\otimes_{\bb C}\widehat{\OO}_n, \GL(n, \bb C)\times G)$ is a Harish-Chandra pair. 
\end{corollary}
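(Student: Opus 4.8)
The plan is to deduce the corollary directly from Lemma \ref{extendedhcpair} by passing to the projective limit over $k$. First I would observe that taking the inverse limit of the projective system
\[
\dots\rightarrow W_{n, k}\ltimes\mf{g}\otimes_{\bb C}\widehat{\OO}_n/\hat{\mf{m}}^{k+1}\rightarrow W_{n, k-1}\ltimes\mf{g}\otimes_{\bb C}\widehat{\OO}_n/\hat{\mf{m}}^{k}\rightarrow\dots
\]
recovers the pro-finite Lie algebra $W_n\ltimes\mf{g}\otimes_{\bb C}\widehat{\OO}_n$, since $\invlim_k W_{n, k}\cong W_n$ (as discussed in the subsection on the automorphism group of the formal disc) and $\invlim_k\mf{g}\otimes_{\bb C}\widehat{\OO}_n/\hat{\mf{m}}^{k+1}\cong\mf{g}\otimes_{\bb C}\widehat{\OO}_n$. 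I would check that the connecting morphisms are Lie algebra homomorphisms compatible with the twisted bracket \eqref{twliebracket}, so that the limit carries the semidirect-product bracket of the corollary. Similarly, $\invlim_k\big(\Aut_{n, k}\times G\big)\cong\Aut_n\times G$, because $\invlim_k\Aut_{n, k}\cong\Aut_n$ and $G$ is constant in the system; the projection to the linear part identifies $\Aut_{n, 1}=\GL(n, \bb C)$, which is why the group in the corollary appears as $\GL(n, \bb C)\times G$ at the bottom level.

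Next I would verify the two axioms of Definition \ref{hcpair} for the pair $(W_n\ltimes\mf{g}\otimes_{\bb C}\widehat{\OO}_n, \GL(n, \bb C)\times G)$. The linear action $\rho$ of $\GL(n, \bb C)\times G$ on $W_n\ltimes\mf{g}\otimes_{\bb C}\widehat{\OO}_n$ is obtained as the limit of the actions furnished by Lemma \ref{extendedhcpair}, namely $(\phi, A)\cdot(D+B\otimes p)=\phi\circ D\circ\phi^{-1}+ABA^{-1}\otimes p+B\otimes\phi(p)$, now with $\phi\in\Aut_n$ restricted along $\GL(n, \bb C)\hookrightarrow\Aut_n$. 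The $K$-equivariant embedding $i:\Lie(\GL(n, \bb C)\times G)=\mf{gl}(n, \bb C)\oplus\mf{g}\hookrightarrow W_n\ltimes\mf{g}\otimes_{\bb C}\widehat{\OO}_n$ is the limit of the embeddings $W_{n, k}^0\oplus\mf{g}\hookrightarrow W_{n, k}\ltimes\mf{g}\otimes\widehat{\OO}_n/\hat{\mf{m}}^{k+1}$, restricted to the degree-one part $\mf{gl}(n, \bb C)=W_{n, 1}^0$; concretely it sends $(A, B)$ to the linear vector field associated to $A$ plus $B\otimes1$. The compatibility condition that $\rho_{\ast}$ agrees with the adjoint action induced by $i$ holds at each finite level $k$ by Lemma \ref{extendedhcpair}, and since both $\rho_{\ast}$ and $\ad\circ i$ are continuous for the pro-finite topology and compatible with the connecting maps, the equality passes to the inverse limit.

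The only genuinely delicate point — and the step I expect to be the main obstacle — is the interchange of the inverse limit with the Harish-Chandra pair structure: one must confirm that the embedding $i$ and the action $\rho$ obtained as limits are well-defined Lie-algebra, respectively Lie-group, morphisms into the limit objects, and that the adjoint action of the pro-finite Lie algebra $W_n\ltimes\mf{g}\otimes_{\bb C}\widehat{\OO}_n$ on itself is continuous so that the identity $\rho_{\ast}=\ad\circ i$, verified levelwise, is not destroyed in the limit. This amounts to checking that all the structure maps in the tower are morphisms of Harish-Chandra pairs and invoking the fact that the category of pro-finite Harish-Chandra pairs is closed under inverse limits. Once this functoriality is in place, the corollary follows immediately from Lemma \ref{extendedhcpair} applied to every $k$, and I would close by noting that the bottom term of the group tower is precisely $\GL(n, \bb C)\times G$, matching the stated pair.
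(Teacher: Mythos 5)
Your proof is correct and follows essentially the same route as the paper, whose entire proof is the single observation that $\GL(n, \bb C)\times G$ is a closed Lie subgroup of $\Aut_{n, k}\times G$ for every $k$ (hence an admissible subgroup of the pro-finite group $\Aut_n\times G$), so that the Harish-Chandra structure of Lemma \ref{extendedhcpair} restricts after the implicit, standard passage to the projective limit — exactly the two steps you carry out, just with the limit made explicit. One caveat of phrasing only: the group in the corollary is not the \emph{bottom term} of the group tower of the limit pair (the limit group is $\Aut_n\times G$, and $\Aut_{n,1}\times G$ is merely a quotient level); it is the closed subgroup $\GL(n, \bb C)\times G\hookrightarrow\Aut_n\times G$ along which you correctly restrict the action $\rho$ and the embedding $i$ earlier in your argument, which is where the actual content lies.
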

\begin{proof}
This follows from the fact that $\GL(n, \bb C)\times G$ is a closed Lie subgroup of the Lie group $\Aut_{n, k}\times G$. 
\end{proof}
At any point $(x, \Phi_x, f_0)\in\mc{E}^k$ the mapping $T_{(x, \Phi_x, f_0)}^{(1, 0)}\mc{E}^k\rightarrow W_{n, k}\ltimes\mf{g}\otimes_{\bb C}\widehat{\OO}_n/\hat{\mf{m}}^{k+1}$ given by 
\begin{equation}
\label{connection1formonE}
\xi\mapsto\textrm{Taylor expansion at $0\in\bb C^n$ of} -d\phi_{(x, y)}^{-1}\frac{d}{dt}|_{t=0}\phi_t,
\end{equation}
where $[\phi]_0^k$ is the unique $k$-jet at $0\in\bb C^n$ of a local parametrization with  corresponding to $(x, \Phi_x, f_0)$ and $[\phi_t]_0^k$ is a path in $\mc{E}^k$ such that $[\phi_{t=0}]_0^k=[\phi]_0^k$ and $\xi=[\frac{d}{dt}\big|_{t=0}\phi_t]_0^k$, is a $\bb C$-linear isomorphism which intertwines with the $\Aut_{n, k}\times G$-action . This in turn induces a holomorphic parallelism on $\mc{E}^k$ and consequently an $\Aut_{n, k}\times G$-equivariant differential $(1, 0)$-Maurer Cartan form on $\mc{E}^k$ which endow $\mc{E}^k$ with the structure of a transitive $(W_{n, k}\ltimes\mf{g}\otimes_{\bb C}\widehat{\OO}_n/\mf{m}^{k+1}, \Aut_{n, k}\times G)$-torsor. The projective limit of the inverse system $(\mc{E}^k, \pi_{k-1}^{k})$ of Harish-Chandra torsors, where $\pi_{k-1}^{k}$ are the natural connecting morphisms, yields the \emph{the bundle of formal coordinate systems} $\coor{\mc{E}}$ on $\pi: E\rightarrow X$, which is a transitive Harish-Chandra $(W_n\ltimes\mf{g}\otimes_{\bb C}\widehat{\OO}_n, \Aut_n\times G)$-torsor. The quotient of $\coor{\mc{E}}$ by the subgroup $\GL(n, \bb C)\times G$ of $\Aut_n\times G$ yields $\aff{\mc{E}}\rightarrow X$, the \emph{the bundle of affine coordinate systems} on $E\rightarrow X$. It is a principal $\Aut_n^+\times G$-bundle. Moreover, $\GL(n, \bb C)\times G$ as a closed subgroup of the Lie group $\Aut_{n, k}\times G$ for every $k\in\bb Z_{\geq0}$, is in fact an admissible Lie subgroup of the pro-finite Lie group $\Aut_n\times G$. Therefore, the projection map $\Aut_n\times G\rightarrow\Aut_n\times G/\GL(n, \bb C)\times G$ can be given the structure of a principal $\GL(n, \bb C)\times G$-bundle. Consequently, the mapping
\[\coor{\mc{E}}\times_{\Aut_n\times G}(\Aut_n\times G)\rightarrow\coor{\mc{E}}\times_{\Aut_n\times G}\big(\Aut_n\times G/\GL(n, \bb C)\times G\big),\]
 which is identical to the projection $\coor{\mc{E}}\rightarrow\aff{\mc{E}}$, is a principal $\GL(n, \bb C)\times G$-bundle. Beyond that, the total space of this principal bundle is endowed with a $W_n\ltimes\mf{g}\otimes_{\bb C}\widehat{\OO}_n$-action, compatible with the preexisting $\Aut_n\times G$-action on $\coor{\mc{E}}$, hence compatible with the $\GL(n, \bb C)\times G$-action. This gives $\coor{\mc{E}}\rightarrow\aff{\mc{E}}$ the structure of a Harish-Chandra $(W_n\ltimes\mf{g}\otimes_{\bb C}\widehat{\OO}_n, \GL(n, \bb C)\times G)$-torsor. 
\section{Induction of sheaves of $\bb CG$-interior algebras}
\label{Appendix A}
Let $\Sh_{H}(X)$ and $\Sh_{G}(X)$ designate the categories of $\Cs$-algebras on $X$ in the $H$ and the $G$-equivariant topologies, respectively. 
We discuss a number of relevant to this work isomorphisms between inductions of $G$-algebras and $\bb CG$-interior algebras. The main results here are Corollary B.8 and Corollary B.9.    
\begin{definition}
A $G$-algebra is an associative $\bb C$-algebra $A$ with an algebraic representation $G\rightarrow\Aut(A)$ of a finite group $G$. 
\end{definition}
For a $H$-algebra $A$ of a subgroup $H$ of $G$, Turull defined in \cite{tur06} the induced $G$-algebra $\Indu_{H}^{G}(A):=\bb CG\otimes_{\bb CH}A$ with product, given by 
\begin{equation}
\label{turullprod}
(g_{1}\otimes a_{1})(a_{2}\otimes g_{2})=
\begin{cases}
g\otimes a_{1}a_{2}~~ \textrm{if}~~g_{1}=g_{2}=g\\
0~~ \textrm{if}~~g_{1}g_{2}^{-1}\notin H 
\end{cases}
\end{equation} 
and the $G$-action is given by 
\begin{equation*}
\presup{g}{(g'\otimes a)}=gg'\otimes a.
\end{equation*}
for all $g_{1}, g_{2}, g, g'\in G$, $a_{1}, a_{2}, a\in A$.
\begin{definition}
A $\bb CG$-interior algebra is an associative $\bb C$-algebra endowed with a homomorphism of~~$\bb C$-algebras $\bb CG\rightarrow A$.
\end{definition}
The most important example of an $\bb CG$-interior algebra is the smash product algebra $A\rtimes \bb CG$ of a $G$-algebra $A$ and the group $G$. As $G$ is among the generators of the rational Cherednik algebra $H_{t, c}(\mf{h}, G)$, as well as Etingof's global Cherednik algebra $H_{t, c}(X, G)$, both are $\bb CG$-interior algebras. For any subgroup $H$ of $G$, Luis Puig defined in \cite{puig81} an induction functor from the category of $\bb CH$-interior algebras to the category of $\bb CG$-interior algebras by 
$\Indu_H^G(A):=\bb CG\otimes_{\bb CH}A\otimes_{\bb CH}\bb CG$ for a $\bb CH$-interior algebra $A$ with product given by 
\begin{equation}
\label{puigprod}
(g_{1}\otimes a_{1}\otimes g'_{1})(g_{2}\otimes a_{2}\otimes g'_{2})=\begin{cases}
g_1\otimes a_1g'_1g_2a_2\otimes g'_2~~\textrm{if} ~~g'_1g_2\in H\\
0~~\textrm{if}~~ g'_1g_2\notin H 
\end{cases}
 \end{equation}
for every pair of elements $g_{1}\otimes a_{1}\otimes g'_{1}, g_{2}\otimes a_{2}\otimes g'_{2}\in\Indu_H^{G}(A)$. From the definition of the product it is immediately evident that every element in $\Indu_H^G(A)$ is a nonzero zero divisor: for any $g_{1}\otimes a_{1}\otimes g'_{1}$, the product with $g_1'^{-1}g\otimes1\otimes1$, where $g\in G/H$ some representative, yields zero by \eqref{puigprod}. 
In the following we generalize Turull's and Puig's induction functors to the case of sheaves of $G$-algebras and $\bb CG$-interior algebras, respectively and prove some properties thereof  which are later employed in the gluing procedure carried out in Section \ref{gluing}. Let $\Sh_{H}(X)$ and $\Sh_{G}(X)$ designate the categories of $\Cs$-algebras on $X$ in the $H$- and the $G$-equivariant topologies, respectively.  
\begin{definition}
Define the functor $\mathsf{Ind}_{H}^{G}: \Sh_{H}(X)\rightarrow\Sh_{G}(X)$ by $\mc{F}\mapsto\mathsf{Ind}_{H}^{G}(\mc{F})$ with 
\[\mathsf{Ind}_{H}^{G}(\mc{F}):=\Sheaf(\bb CG\otimes_{\bb CH}\mc{F})\] 
where~~$\Sheaf$ is the sheafification functor and the product of sections in $\mathsf{Ind}_{H}^{G}$ is given by \eqref{turullprod}. 
\end{definition}
In a similar fashion, we introduce a sheaf-theoretic version of Puig's induction of interior algebras.  
\begin{definition}
\label{puiginductionfunctor}
For every sheaf of~~$\bb CH$-interior algebras $\mc{F}$ in the $H$-equivariant topology define the functor~~$\Induc_{H}^{G}: \Sh_H(X)\rightarrow\Sh_G(X)$ by
$\mc{F}\mapsto\Induc_{H}^{G}(\mc{F})$ with 
\[\Induc_{H}^{G}(\mc{F}):=\Sheaf(\bb CG\otimes_{\bb CH}\mc{F} \otimes_{\bb CH}\bb CG)\] 
where the product of sections in $\Induc_{H}^{G}$ is given by \eqref{puigprod}.
\end{definition}
The sheaf of skew group $\Cs$-algebras $\mc{F}\rtimes\bb CG$ of a sheaf of $G$-algebras $\mc{F}$ is, as explained in the above, naturally a $\bb CG$-interior algebra in the $G$-equivariant topology. The focus of our work are sheaves of $\bb CH$-interior algebras, defined in the $G$-equivariant topology. We state the following important isomorphism for sheaves of skew group $\Cs$-algebras.  
\begin{theorem}
\label{theoreminduction1}
Let $Y$ be a $H$-invariant subset of the $G$-space $X$. Suppose $\mc{F}$ is a sheaf of $H$-algebras in the $H$-equivariant topology on $Y$. Then, there exists an isomorphism of sheaves of~~$\bb CG$-interior algebras between $\Induc_{H}^{G}(\mc{F}\rtimes H)$ and $\mathsf{Ind}_{H}^{G}(\mc{F})\rtimes\bb CG$ on $GY$ in the $G$-equivariant topology.
\end{theorem}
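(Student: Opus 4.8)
The strategy is to construct an explicit isomorphism at the level of presheaves on a basis of the $G$-equivariant topology of $GY$ and then to promote it to an isomorphism of the sheafifications, exploiting the fact that both $\Induc_H^G$ and $\Ind_H^G$ are defined via the sheafification functor $\Sheaf$. The natural candidate is the $\bb C$-linear map
\begin{equation}
\label{inductioncandidate}
\Phi: \bb CG\otimes_{\bb CH}(\mc{F}\rtimes H)\otimes_{\bb CH}\bb CG\longrightarrow(\bb CG\otimes_{\bb CH}\mc{F})\rtimes\bb CG,\quad g_1\otimes(f\ast h)\otimes g_2\mapsto (g_1\otimes f)\ast(hg_2),
\end{equation}
where on the left $f\ast h$ is a local section of $\mc{F}\rtimes H$ and on the right $\ast$ denotes the twisted multiplication in the skew-group algebra $(\Ind_H^G\mc{F})\rtimes\bb CG$. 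First I would verify that $\Phi$ is well defined with respect to the two balanced tensor products over $\bb CH$, i.e.\ that it respects the relations $g_1h'\otimes\xi\otimes g_2 = g_1\otimes \presup{h'}{\xi}\otimes g_2$ and $g_1\otimes\xi h'\otimes g_2 = g_1\otimes\xi\otimes h'g_2$ for $h'\in H$; this is a direct check using that the $H$-action on $\mc{F}\rtimes H$ is the inner conjugation action inherited from the interior algebra structure.

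\textbf{Key steps.} The computational heart of the argument is to confirm that $\Phi$ is multiplicative. On the source the product is Puig's product \eqref{puigprod}, which vanishes unless the middle group elements multiply into $H$; on the target the product is the smash-product multiplication twisted by Turull's product \eqref{turullprod}, which likewise vanishes off the diagonal of $G/H$-cosets. I would take two generators $g_1\otimes(f_1\ast h_1)\otimes g_1'$ and $g_2\otimes(f_2\ast h_2)\otimes g_2'$ and compute both $\Phi$ of their Puig product and the target product of their images, checking that the case distinction $g_1'g_2\in H$ versus $g_1'g_2\notin H$ matches on both sides and that in the nonzero case the surviving term is $(g_1\otimes f_1\cdot\presup{h_1 g_1' g_2}{f_2})\ast(h_1 g_1' g_2 h_2 g_2')$ on either side. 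The interior-algebra homomorphism $\bb CG\to(\Ind_H^G\mc{F})\rtimes\bb CG$ is preserved because $\Phi(g_1\otimes(1\ast e)\otimes g_2)=(g_1\otimes 1)\ast g_2$ recovers the canonical structure map. Once multiplicativity and the interior structure are checked on the presheaf $\bb CG\otimes_{\bb CH}(\mc{F}\rtimes H)\otimes_{\bb CH}\bb CG$, bijectivity is transparent: an explicit two-sided inverse is given by $(g_1\otimes f)\ast g\mapsto g_1\otimes(f\ast e)\otimes g$ after decomposing $g$ through its coset, and this is clearly $\bb C$-linear and compatible with restriction morphisms. Applying the functor $\Sheaf$ and using that sheafification is a left adjoint, hence preserves the isomorphism, yields the claimed isomorphism of sheaves of $\bb CG$-interior algebras on $GY$ in the $G$-equivariant topology.

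\textbf{Main obstacle.} The step I expect to require the most care is ensuring that all the bookkeeping respects the $G$-equivariant topology rather than merely the $H$-equivariant one. The source sheaf $\mc{F}\rtimes H$ lives on the $H$-invariant set $Y$ in the $H$-equivariant topology, whereas the target is a sheaf on $GY$ in the $G$-equivariant topology; the induction functors bridge these, but one must check that $\Phi$ is natural with respect to the restriction maps of $\mf{B}_{GY}^G$ and that the coset representatives chosen in the decomposition $G=\coprod_{g\in G/H}gH$ do not affect the construction. Because the ambiguity in a representative $g$ is absorbed precisely by the two $\bb CH$-balancings in the source and by Turull's relation $\presup{h'}{(g'\otimes a)}=g'h'^{-1}\otimes\presup{h'}{a}$ on the target, the map $\Phi$ is representative-independent, but verifying this cleanly is where the bulk of the honest work lies. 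Everything else is formal once the multiplicativity computation in the key step is carried out in full.
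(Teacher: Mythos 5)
Your overall scaffolding (define the map on the presheaf $\bb CG\otimes_{\bb CH}(\mc F\rtimes H)\otimes_{\bb CH}\bb CG$ over basic open sets, verify it is a morphism of $\bb CG$-interior algebras, then sheafify) is the right shape --- the paper itself disposes of this theorem in one line, citing the proof of Theorem~1 of \cite{tib09}, and that proof has exactly this architecture. But your candidate formula $\Phi\big(g_1\otimes(f\ast h)\otimes g_2\big)=(g_1\otimes f)\ast(hg_2)$ is wrong, and the error is not cosmetic: the map is neither well defined nor multiplicative. On well-definedness: in Puig's induction the two $\bb CH$-balancings act by left and right \emph{multiplication} through the structural map $\bb CH\to\mc F(U)\rtimes H$, not by conjugation as you assert; hence $g_1h'\otimes(f\ast h)\otimes g_2=g_1\otimes(\presup{h'}{f}\ast h'h)\otimes g_2$, and your formula sends these two representatives to $(g_1\otimes\presup{h'}{f})\ast(hg_2)$ and $(g_1\otimes\presup{h'}{f})\ast(h'hg_2)$, which differ by $h'$ in the group coordinate. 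On multiplicativity: your claimed matching of vanishing loci fails, since on the target the product of the images $\big((g_1\otimes f_1)\ast(h_1g_1')\big)\big((g_2\otimes f_2)\ast(h_2g_2')\big)$ vanishes by \eqref{turullprod} unless $g_1^{-1}h_1g_1'g_2\in H$, not unless $g_1'g_2\in H$ as in \eqref{puigprod}. Concretely, with $G=\bb Z/2=\{e, s\}$, $H=\{e\}$ and a section $f=1$, take $u=s\otimes(1\ast e)\otimes e$ and $v=e\otimes(1\ast e)\otimes e$: Puig's product gives $uv=u\neq0$, while $\Phi(u)\Phi(v)=\big((s\otimes1)\ast e\big)\big((e\otimes1)\ast e\big)=0$ because Turull's product kills $(s\otimes1)(e\otimes1)$.

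The repair is to thread the left coset representative into the group coordinate:
\begin{equation*}
\Phi\big(x\otimes(f\ast h)\otimes y\big)=(x\otimes f)\ast(xhy).
\end{equation*}
Then both balancings check out, since $xh'\otimes(f\ast h)\otimes y$ and $x\otimes(\presup{h'}{f}\ast h'h)\otimes y$ both land on $(x\otimes\presup{h'}{f})\ast(xh'hy)$; the vanishing conditions agree, since $\presup{xhy}{(x'\otimes f')}=xhyx'\otimes f'$ lies in the coset $xhyx'H$, which equals $xH$ precisely when $yx'\in H$; the surviving term is $\big(x\otimes f\,\presup{hyx'}{f'}\big)\ast(xhyx'h'y')$ on both sides; and the two-sided inverse becomes $(x\otimes f)\ast g\mapsto x\otimes(f\ast e)\otimes x^{-1}g$ rather than the formula you wrote. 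With this corrected map the remainder of your argument --- the interior structure map, compatibility with the restriction morphisms of the basis of the $G$-equivariant topology, independence of coset representatives (now genuinely absorbed by the balancing), and the final application of $\Sheaf$ --- goes through as you describe and is in substance the sheaf-theoretic version of the cited proof that the paper invokes.
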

\begin{proof}
A sheaf theoretic version of the proof of \cite[Theorem 1]{tib09}. 
\end{proof}
\begin{theorem}
\label{theoreminduction2}
There is an isomorphism of sheaves of~~$\bb CG$-interior algebras 
\[\mathsf{Ind}_{H}^{G}(\mc{D}_{\mathring{X}\coprod X_{H}^{i}})\rtimes\bb CG\rightarrow\big(\mc{D}_{G(\mathring{X}\coprod X_{H}^{i})}\big)\rtimes\bb CG \]
on $G(\mathring{X}\coprod X_{H}^{i})$ in the $G$-equivariant topology.
\end{theorem}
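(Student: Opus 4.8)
The plan is to construct the isomorphism locally on the basic open sets $\ind_H^G(V)$ of the $G$-equivariant topology of $G(\mathring{X}\coprod X_H^i)$, where $V$ ranges over $H$-invariant open subsets of $\mathring{X}\coprod X_H^i$ satisfying $gV\cap V=\varnothing$ for all $g\in G\setminus H$, and then check compatibility with restrictions and the $\bb CG$-interior structure so the local maps glue. The key observation is that on such a basic open set the disjoint translates $gV$, $g\in G/H$, are mutually disjoint, so $\mc{D}_{G(\mathring{X}\coprod X_H^i)}(\ind_H^G(V))\cong\bigoplus_{g\in G/H}\mc{D}_X(gV)$, and the $G$-action simply permutes the summands (twisting each by the corresponding holomorphic automorphism). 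I would therefore first write down the natural map on sections
\begin{align*}
\mathsf{Ind}_H^G(\mc{D}_{\mathring{X}\coprod X_H^i})(\ind_H^G(V))&=\bb CG\otimes_{\bb CH}\mc{D}_{\mathring{X}\coprod X_H^i}(V)\\
&\longrightarrow\mc{D}_{G(\mathring{X}\coprod X_H^i)}(\ind_H^G(V))
\end{align*}
by sending $g\otimes D$ to the operator $\leftidx{^g}{D}{}$ supported on $gV$, i.e. the pushforward of $D$ under the biholomorphism $g:V\to gV$, and extending by zero on the other translates.

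Next I would verify that this assignment is an isomorphism of $\bb C$-algebras with respect to Turull's product \eqref{turullprod}: the case distinction in \eqref{turullprod} precisely mirrors the fact that operators living on $gV$ and $g'V$ have disjoint supports unless $gH=g'H$, in which case their composition is the honest composition of differential operators transported to the common translate. Because $\mathsf{Ind}_H^G$ is defined via a sheafification, it suffices to produce the isomorphism on the presheaf level on the basis $\mf{B}^G_{G(\mathring{X}\coprod X_H^i)}$ and observe that both sides are already sheaves there, so sheafification is harmless; the universal property of sheafification then extends the local isomorphism to all of $G(\mathring{X}\coprod X_H^i)$. Taking the smash product with $\bb CG$ on both sides adds the group algebra as generators acting by the permutation-of-translates automorphisms, and one checks directly that the extended map $\id\otimes g\mapsto\id\otimes g$ intertwines the two $\bb CG$-interior structures, since the $G$-action on $\mathsf{Ind}_H^G(\mc{D}_{\mathring{X}\coprod X_H^i})$ is $\leftidx{^g}{(g'\otimes D)}{}=gg'\otimes D$ by definition, matching the geometric $G$-action permuting the summands $\mc{D}_X(gV)$.

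The main obstacle will be the careful bookkeeping of the $H$-equivariance versus $G$-equivariance of the two topologies and making sure the identification $\mc{D}_{G(\mathring{X}\coprod X_H^i)}(\ind_H^G(V))\cong\bigoplus_{g\in G/H}\mc{D}_X(gV)$ is $\bb C$-algebra-compatible and not merely $\OO_X$-module-compatible; concretely, one must confirm that the twisting by the automorphisms $g$ is consistent across overlaps $\ind_H^G(V')\subset\ind_H^G(V)$, so that the restriction maps of $\mathsf{Ind}_H^G(\mc{D}_{\mathring{X}\coprod X_H^i})\rtimes\bb CG$ (which act componentwise on the tensor factors) correspond to the honest restriction of differential operators under the disjoint-support decomposition. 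This is where the hypothesis that $V$ is a basic open set with $gV\cap V=\varnothing$ for $g\notin H$ is essential: it guarantees the decomposition into translates is canonical and the cocycle condition holds trivially, so the locally defined isomorphisms patch into a global isomorphism of sheaves of $\bb CG$-interior algebras. The remaining verifications—that the map respects filtrations and commutes with the structural $\bb CG\to(-)$ morphisms—are routine once the support bookkeeping is in place.
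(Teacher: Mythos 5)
Your proposal is correct and follows essentially the same route as the paper's proof: both work on the basic open sets $\ind_H^G(V)$ of the $G$-equivariant topology, use the disjoint-translate decomposition of $\mc{D}_{G(\mathring{X}\coprod X_H^i)}$ over such sets to match Turull's product \eqref{turullprod} with the componentwise composition of operators (vanishing across distinct cosets), define the map by $g\otimes d\mapsto\presup{g}{d}$, and extend from the basis to the whole space by the equivalence of sheaves on a basis with sheaves on the space. The only cosmetic difference is that the paper builds the smash-product factor $\bb CG$ into the local map $\eta$ from the start, whereas you first construct the isomorphism on the induced sheaf and then adjoin $\bb CG$; the verifications are the same.
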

\begin{proof}
Consider the basis $\mc{B}$ on $\mathring{X}\coprod X_{H}^{i}$,  given at the beginning of Section \ref{gluingofstrata01}. Then  $\mc{B}^{G}:=\{\ind_{H}^{G}(U)\in\mc{B}~|~U\in\mc{B}\}$ is a basis for $G(\mathring{X}\coprod X_{H}^{i})$. Clearly, $\big(\mc{D}_{G(\mathring{X}\coprod X_{H}^{i})}\big)\rtimes\bb CG$ is a sheaf on $\mc{B}^{G}$. We infer from $\mc{D}_{\mathring{X}\coprod X_{H}^{i}}(\coprod_{g\in G/H}gU)\cong\oplus_{g\in G/H}\mc{D}_{\mathring{X}\coprod X_{H}^{i}}(gU)=\mc{D}_{\mathring{X}\coprod X_{H}^{i}}(U)$ that $\mathsf{Ind}_{H}^{G}(\mc{D}_{\mathring{X}\coprod X_{H}^{i}})\rtimes\bb CG$ is a well-defined sheaf on $\mc{B}^{G}$. For any $\ind_{H}^{G}U\in\mc{B}^G$, the product in $\mc{D}_{G(\mathring{X}\coprod X_{H}^{i})}(\ind_{H}^{G}(U))$ is defined by 
\begin{equation*}
(d_{1}|_{g_{1}U})(d_{2}|_{g_{2}U})=
\begin{cases}
d_{1}d_{2}|_{gU},~~\textrm{if}~~g_{1}=g_{2}=g\\
0,~~\textrm{if}~~ g_{1}g_{2}^{-1}\notin H
\end{cases}
\end{equation*}
for all $d_{1}, d_{2}\in\mc{D}_{G(\mathring{X}\coprod X_{H}^{i})}(\ind_{H}^{G}(U))$. We notice that the so-defined product resembles Turull's product \eqref{turullprod}. With that knowledge define a mapping 
\begin{align*}
\eta: \bb CG\otimes_{\bb CH}\mc{D}_{\mathring{X}\coprod X_{H}^{i}}(U)\rtimes\bb CG&\rightarrow\mc{D}_{G(\mathring{X}\coprod X_{H}^{i})}(\ind_{H}^{G}(U))\rtimes\bb CG\\
(g\otimes d)\*\bar g&\mapsto \presup{g}{d}\*\bar g. 
\end{align*}
A verification shows that $\eta$ is compatible with Turull's product and the product on $\mc{D}_{G(\mathring{X}\coprod X_{H}^{i})}(\ind_{H}^{G}(U))$. Hence $\eta$ is a $\bb C$-algebra morphism. 
Moreover, this morphism is compatible with restriction maps on open basic sets. Surjectivity and injectivity of $\eta$ are evident, so the morphism defines an isomorphism at the level of $\mc{B}$.  By the equivalence of the categories of sheaves on $X$ and $\mc{B}$ the isomorphism extends uniquely to the desired isomorphism in the theorem.
\end{proof}
The preceding theorems imply the next relevant result.
\begin{corollary}
\label{corollaryinduction1}
There is an isomorphism of sheaves of~~$\bb CG$-interior algebras 
\begin{equation}
\label{inducmorphism}
\Induc_{H}^{G}(\mc{D}_{\mathring{X}\coprod X_{H}^{i}}\rtimes\bb CH)\rightarrow\mc{D}_{G(\mathring{X}\coprod X_{H}^{i})}\rtimes\bb CG
\end{equation}
in the $G$-equivariant topology of $G(\mathring{X}\coprod X_{H}^{i})$. 
\end{corollary}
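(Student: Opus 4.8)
The plan is to deduce Corollary \ref{corollaryinduction1} from the two preceding theorems by composing their isomorphisms. The target statement asserts an isomorphism $\Induc_{H}^{G}(\mc{D}_{\mathring{X}\coprod X_{H}^{i}}\rtimes\bb CH)\cong\mc{D}_{G(\mathring{X}\coprod X_{H}^{i})}\rtimes\bb CG$ of sheaves of $\bb CG$-interior algebras. First I would apply Theorem \ref{theoreminduction1} with $\mc{F}:=\mc{D}_{\mathring{X}\coprod X_{H}^{i}}$ and $Y:=\mathring{X}\coprod X_{H}^{i}$, which is $H$-invariant as noted in Subsection \ref{gluingofstrata01}. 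Since $\mc{D}_{\mathring{X}\coprod X_{H}^{i}}$ is a sheaf of $H$-algebras in the $H$-equivariant topology on $Y$, Theorem \ref{theoreminduction1} yields an isomorphism of sheaves of $\bb CG$-interior algebras
\[
\Induc_{H}^{G}(\mc{D}_{\mathring{X}\coprod X_{H}^{i}}\rtimes\bb CH)\;\cong\;\mathsf{Ind}_{H}^{G}(\mc{D}_{\mathring{X}\coprod X_{H}^{i}})\rtimes\bb CG
\]
on $GY=G(\mathring{X}\coprod X_{H}^{i})$ in the $G$-equivariant topology.

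Next I would invoke Theorem \ref{theoreminduction2}, which provides an isomorphism of sheaves of $\bb CG$-interior algebras
\[
\mathsf{Ind}_{H}^{G}(\mc{D}_{\mathring{X}\coprod X_{H}^{i}})\rtimes\bb CG\;\cong\;\mc{D}_{G(\mathring{X}\coprod X_{H}^{i})}\rtimes\bb CG
\]
on the same space in the same topology. Composing the two isomorphisms gives the required isomorphism \eqref{inducmorphism}. The composite is automatically a morphism of sheaves of $\bb CG$-interior algebras, since both factors are, and the structural homomorphism $\bb CG\to A$ is preserved at each stage; being a composition of isomorphisms, it is itself an isomorphism. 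This is the entire content of the corollary.

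The only point requiring a little care is that the two cited theorems must be applicable to the \emph{same} sheaf in the \emph{same} equivariant topology on the \emph{same} underlying space, so that their outputs can be chained. I would verify that $\mc{D}_{\mathring{X}\coprod X_{H}^{i}}$ genuinely qualifies as a sheaf of $H$-algebras on the $H$-invariant set $\mathring{X}\coprod X_{H}^{i}$: the $H$-action on $X$ restricts to this subset, and $H$ acts by biholomorphisms, hence by algebra automorphisms on the sheaf of holomorphic differential operators, giving the required algebraic representation $H\to\Aut(\mc{D}_{\mathring{X}\coprod X_{H}^{i}})$. With this compatibility in place, the hypotheses of both theorems are met verbatim. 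I expect no genuine obstacle here: the result is a formal consequence of the functorial isomorphisms already established, and the work is confined to checking that the domains and equivariant topologies match so that the composition is legitimate.

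Thus the proof reduces to the single sentence that \eqref{inducmorphism} is obtained by composing the isomorphism of Theorem \ref{theoreminduction1} (applied to $\mc{F}=\mc{D}_{\mathring{X}\coprod X_{H}^{i}}$) with that of Theorem \ref{theoreminduction2}, both taken in the $G$-equivariant topology of $G(\mathring{X}\coprod X_{H}^{i})$.
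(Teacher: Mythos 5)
Your proposal is correct and follows exactly the paper's own argument: the paper proves this corollary by composing the isomorphism of Theorem \ref{theoreminduction1} (applied to $\mc{F}=\mc{D}_{\mathring{X}\coprod X_{H}^{i}}$) with that of Theorem \ref{theoreminduction2}. Your additional verification that $\mathring{X}\coprod X_{H}^{i}$ is $H$-invariant and that $H$ acts by algebra automorphisms on the sheaf of differential operators is a sound (if implicit in the paper) hypothesis check and does not change the route.
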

\begin{proof}
The isomorphism in question follows from the composition of the isomorphism from Theorem \ref{theoreminduction2} with the isomorphism
$\Induc_{H}^{G}(\mc{D}_{\mathring{X}\coprod X_{H}^{i}}\rtimes\bb CH)\cong\mathsf{Ind}_{H}^{G}(\mc{D}_{\mathring{X}\coprod X_{H}^{i}})\rtimes\bb CG$ from Theorem \ref{theoreminduction1}. 
\end{proof}
We arrive at the foolowing aimed result
\begin{corollary}
\label{corinductionisom}
There is an isomorphism of sheaves of~~$\bb CG$-interior algebras \[\Induc_{H}^{G}(\mc{H}_{1, c, \mathring{X}\coprod X_{H}^{i}, H})\rightarrow\mc{H}_{1, c, G(\mathring{X}\coprod X_{H}^{i}), G}\]
in the $G$-equivariant topology of $G(\mathring{X}\coprod X_{H}^{i})$. 
\end{corollary}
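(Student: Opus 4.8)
The plan is to deduce the desired isomorphism
\[
\Induc_{H}^{G}(\mc{H}_{1, c, \mathring{X}\coprod X_{H}^{i}, H})\rightarrow\mc{H}_{1, c, G(\mathring{X}\coprod X_{H}^{i}), G}
\]
by transporting the already-established isomorphism of Corollary \ref{corollaryinduction1} for the undeformed sheaves $\mc{D}\rtimes\bb CG$ through the respective filtrations. First I would recall that the global Cherednik algebra carries the exhaustive increasing filtration $\mc{F}^{\bullet}$ (the analogue of the geometric filtration) with generators $\OO_X$ and $\bb CG$ in degree $0$ and Dunkl operators $\bb D_Z$ in degree $1$, and that by \cite[Theorem 2.17]{eti04} its associated graded recovers $\Sym^{\bullet}(\mc{T}_X)\rtimes\bb CG$. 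The two induction functors $\Indu_H^G$ and $\Induc_H^G$ are exact and manifestly filtration-preserving, since they are built from tensoring with $\bb CG$ over $\bb CH$ followed by sheafification; both operations are flat and do not interfere with the grading coming from the Dunkl-operator degree. Hence $\mc{F}^{\bullet}$ on $\mc{H}_{1, c, \mathring{X}\coprod X_{H}^{i}, H}$ induces a filtration on the left-hand side of \eqref{inducmorphism}, and the degree-$0$ and degree-$1$ pieces on both sides are governed by $\OO$, $\bb CG$ and vector fields, i.e.\ precisely the data appearing in Corollary \ref{corollaryinduction1}.

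The key steps, in order, are as follows. I would define the candidate morphism on a basis $\mf{B}^G$ of $G(\mathring{X}\coprod X_{H}^{i})$ consisting of the induced basic open sets $\ind_{H}^{G}(W)$, exactly as in the proofs of Theorems \ref{theoreminduction1} and \ref{theoreminduction2}: on a section $g_1\otimes \bb D\otimes g_1'$ of $\Induc_{H}^{G}(\mc{H}_{1, c, \mathring{X}\coprod X_{H}^{i}, H})$ one sends it, via Puig's product \eqref{puigprod}, to the translate $\presup{g_1}{\bb D}$ restricted to $g_1 W$ and glued across the $G$-orbit. Because both sides are subsheaves of the corresponding localized skew-group sheaves $\OO_X[\mc{R}^{-1}]\otimes_{\OO_X}\mc{D}_X\rtimes G$ in which the Dunkl operators live, the map is forced to agree with the isomorphism of Corollary \ref{corollaryinduction1} after restricting to the fixed-point-free locus $\mathring{X}$; this pins down the morphism uniquely and shows it is a morphism of $\bb CG$-interior algebras. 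I would then verify compatibility with the restriction maps on $\mf{B}^G$ and with the cocycle data, so that the basis-level morphism extends to a global morphism of sheaves by the equivalence between sheaves on $X$ and sheaves on a basis.

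To prove bijectivity I would argue by the filtration. On the associated graded both sides become $\Induc_{H}^{G}(\Sym^{\bullet}(\mc{T})\rtimes\bb CH)$ and $\Sym^{\bullet}(\mc{T})\rtimes\bb CG$ over the respective spaces, and the induced graded morphism is exactly the isomorphism furnished by Corollary \ref{corollaryinduction1} composed with the PBW identifications; hence $\gr(\eta)$ is an isomorphism. Since the filtrations are exhaustive and bounded below, a standard five-lemma/induction-on-filtration-degree argument (entirely parallel to the induction step in the proof of Proposition \ref{filterediso}) promotes the isomorphism on associated graded to an isomorphism on each filtered piece, and then on the colimit. The main obstacle I anticipate is the bookkeeping needed to show that the Puig product \eqref{puigprod} on the Dunkl-operator generators is intertwined by $\eta$ with the smash-product multiplication on $\mc{H}_{1, c, G(\mathring{X}\coprod X_{H}^{i}), G}$; one must check that a Dunkl operator associated to a vector field $Z$ on $\mathring{X}\coprod X_H^i$ is carried to the Dunkl operator associated to the $G$-translated vector field, and that the reflection hypersurfaces and the coefficients $c((g,Y))$ match under the $G$-action. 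This is precisely where the $G$-invariance of $c$ and Lemma \ref{(co)root} on the behaviour of roots and eigenvalues under conjugation are used, and it is the only place where genuinely new verification (beyond citing the earlier corollaries) is required.
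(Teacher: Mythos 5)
Your proposal is correct in substance, and it defines the morphism exactly as the paper does---on basic open sets $\ind_H^G(U)$ by $g\otimes d\otimes g'\mapsto \presup{g}{(d)}gg'$, induced from Corollary \ref{corollaryinduction1}, with injectivity coming from the fact that both sheaves embed in the localized skew-group sheaves $j_{*}j^{*}(\mc{D}\rtimes\bb CG)$ where the map restricts an isomorphism---but it proves surjectivity by a genuinely different mechanism. The paper argues directly on generators: writing a function or Dunkl operator $d$ over $\ind_H^G(U)$ as $d=\sum_{g\in G/H}d|_{gU}$ (using the disjointness of the translates $gU$), it exhibits the explicit preimage $1\otimes\sum_{g\in G/H}\presup{g^{-1}}{(d|_{gU})}\otimes 1$, a two-line computation with no filtered machinery at all. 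You instead pass to the associated graded, invoke PBW on both sides, and promote the isomorphism $\gr(\eta)$ to an isomorphism $\eta$ by induction on filtration degree with the five lemma; this is valid since the filtration is exhaustive and bounded below and since $\bb CG\otimes_{\bb CH}(-)\otimes_{\bb CH}\bb CG$ and sheafification are exact, so $\Induc_H^G$ commutes with $\gr$. One point of bookkeeping your sketch glosses over: Corollary \ref{corollaryinduction1} as stated concerns $\mc{D}\rtimes\bb CG$, not $\Sym^{\bullet}(\mc{T})\rtimes\bb CG$, so to obtain the graded statement you must either apply $\gr$ for the order filtration to \eqref{inducmorphism} (again using exactness of induction) or rerun Theorems \ref{theoreminduction1} and \ref{theoreminduction2} for the symmetric algebra---both repairs are routine. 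What each approach buys: yours is more robust, in that it would apply verbatim to any filtered deformation with the same PBW associated graded, while the paper's explicit-preimage argument is shorter and stays entirely at the level of generators. The verification you flag as the only substantive new check---that $G$-translation carries the Dunkl operator of $Z$ to the Dunkl operator of $\presup{g}{Z}$, using the $G$-invariance of $c$ and the conjugation behaviour of roots---is indeed exactly what is hidden in the paper's one-sentence remark that the generators on the left are mapped to elements of the Cherednik algebra on the right.
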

\begin{proof}
Denote by the same symbol $D$ the union of all reflection hypersurfaces in $\mathring{X}\coprod X_{H}^{i}$ and $G(\mathring{X}\coprod X_{H}^{i})$, respectively, and let $j_{H}$ and $j_{G}$ be the corresponding embeddings of $\mathring{X}\coprod X_{H}^{i}\setminus D$ in $\mathring{X}\coprod X_{H}^{i}$ and of $G(\mathring{X}\coprod X_{H}^{i})$ in $D\hookrightarrow G(\mathring{X}\coprod X_{H}^{i})$, respectively. Then by definition we have that $\Induc_{H}^{G}(\mc{H}_{1, c, \mathring{X}\coprod X_{H}^{i}, H})$ and $\mc{H}_{1, c, G(\mathring{X}\coprod X_{H}^{i}), G}$ are subsheaves of $\Induc_{H}^{G}(j_{H\*}j_{H}^{\*}\mc{D}_{\mathring{X}\coprod X_{H}^{i}}\rtimes\bb CH)$ and $j_{G\*}j_{G}^{\*}\mc{D}_{G(\mathring{X}\coprod X_{H}^{i})}\rtimes\bb CG$, respectively. We check that Isomorphism \eqref{inducmorphism} induces a morphism
 \begin{align}
 \label{inducedmorphism}
\Induc_{H}^{G}(\mc{H}_{1, c, \mathring{X}\coprod X_{H}^{i}, H})(\ind_H^G(U))&\rightarrow\mc{H}_{1, c, G(\mathring{X}\coprod X_{H}^{i}), G}(\ind_H^G(U))\nonumber\\
g\otimes d\otimes\bar g&\mapsto\presup{g}{(d)}g\bar g
\end{align} 
for all basic open sets $\ind_H^G(U)$ of the $G$-equivariant topology. Indeed, the  generators of the algebra on the left hand side are mapped on elements of the Cherednik lagebra on the right hand side of \eqref{inducedmorphism}. The injectivity thereof follows from the fact that \eqref{inducedmorphism} is induced by the isomorphism \eqref{inducmorphism}. Therefore, the only thing that is left to be verified is the surjectivity. It suffices to check whether the induced morphism is surjective on generators of $\mc{H}_{1, c, G(\mathring{X}\coprod X_{H}^{i}), G}$. Assume that $d|_{\ind_H^G(U)}$ is either a function or a Dunkl opdam operator. Accounting for the fact that thanks to the disjointness of $\ind_H^G(U)$ we have $d=\sum_{g\in G/H}d|_{gU}$, where $d|_{gU}$ is the restriction to $gU$, the image of $1\otimes\sum_{g\in G/H}\presup{g^{-1}}{(d|_{gU})}\otimes1$ from $\Induc_{H}^{G}(\mc{H}_{1, c, \mathring{X}\coprod X_{H}^{i}, H})(\ind_H^G(U))$ is precisely $d$. Thus, the induced morphism \eqref{inducedmorphism} is surjective, hence an isomorphism of $\bb CG$-interior algebras for all basic open sets $\ind_H^G(U)$ of the $G$-equivariant topology. This implies the claim of the theorem.     
\end{proof}
The statement of the above corollary holds true also for sections of $\mc{H}_{1, c, X, G}$ over disjoint unions $\coprod_{g\in G/K}gU$ of $K$-invariant open sets. 
\begin{corollary}
\label{inductionssections}
Let $K$ be an arbitrary subgroup of $G$ and let $U$ be a an arbitrary $K$-invariant open subset of $F^1(X)$ such that $gU\cap U=\varnothing$ for all $g\in G/K$. Then 
\begin{align}
\label{inductionidentificationsections}
\bb CG\otimes_{\bb CH}H_{1, c}(U, K)\otimes_{\bb CK}\bb CG&\rightarrow H_{1, c}(\ind_H^G(U), G)\nonumber\\
g\otimes d\otimes g'&\mapsto\presup{g}{(d)}gg'
\end{align}
is an isomorphism of $\bb C$-algebras.
\end{corollary}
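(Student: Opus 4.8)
The plan is to reduce Corollary \ref{inductionssections} to the already-established isomorphism of sheaves of $\bb CG$-interior algebras from Corollary \ref{corinductionisom} by evaluating the latter on a suitable basic open set. Since $U$ is $K$-invariant and satisfies $gU\cap U=\varnothing$ for all $g\in G/K$, the disjoint union $\ind_K^G(U)=\coprod_{g\in G/K}gU$ is a basic open set in the $G$-equivariant topology. The map \eqref{inductionidentificationsections} is precisely the map \eqref{inducedmorphism} of Corollary \ref{corinductionisom} evaluated on the open set $\ind_K^G(U)$, upon using the identifications of global sections over disjoint unions. First I would record the elementary fact that for a sheaf $\mc{F}$ on the $K$-equivariant topology, taking sections over the disjoint union yields $\mc{F}(\coprod_{g\in G/K}gU)\cong\bigoplus_{g\in G/K}\mc{F}(gU)$, so that $H_{1, c}(U, K)$ is recovered via the $K$-invariance of $U$, and the tensor-over-$\bb CK$ bookkeeping in $\bb CG\otimes_{\bb CK}H_{1, c}(U, K)\otimes_{\bb CK}\bb CG$ matches the section-level description of Puig's induction functor $\Induc_K^G$ given in Definition \ref{puiginductionfunctor}.

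Next I would verify directly that \eqref{inductionidentificationsections} is a morphism of $\bb C$-algebras. The source carries Puig's product \eqref{puigprod}, so for two generators $g_1\otimes d_1\otimes g_1'$ and $g_2\otimes d_2\otimes g_2'$ the product is nonzero only when $g_1'g_2\in K$, in which case it equals $g_1\otimes d_1\,\presup{g_1'g_2}{d_2}\otimes g_2'$; applying the map sends this to $\presup{g_1}{(d_1\,\presup{g_1'g_2}{d_2})}\,g_1g_2'$. On the other hand, computing the product of the images $\presup{g_1}{(d_1)}g_1g_1'$ and $\presup{g_2}{(d_2)}g_2g_2'$ inside $H_{1, c}(\ind_K^G(U), G)$, one uses that the skew-group relations move $g_1g_1'$ past $\presup{g_2}{(d_2)}$, producing $\presup{g_1g_1'g_2}{(d_2)}$, together with the localization condition that the product of sections supported on $g_1U$ and $g_2U$ vanishes unless $g_1g_1'g_2U=g_1U$, i.e.\ unless $g_1'g_2\in K$. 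A short calculation confirms the two expressions agree, so the map respects multiplication; compatibility with the $\bb CG$-bimodule (interior) structure is immediate from the formula $g\otimes d\otimes g'\mapsto\presup{g}{(d)}gg'$.

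For bijectivity I would argue exactly as in the proof of Corollary \ref{corinductionisom}. Injectivity follows because the map is the section-level incarnation of the sheaf isomorphism \eqref{inducedmorphism}, which is induced by the bona fide isomorphism \eqref{inducmorphism}; alternatively one observes that the summands $\presup{g}{(d)}$ are supported on the pairwise disjoint translates $gU$, so no cancellation can occur. For surjectivity it suffices to hit the generators of $H_{1, c}(\ind_K^G(U), G)$, namely functions, Dunkl operators, and group elements. Given such a generator $d$, the disjointness of $\ind_K^G(U)$ gives the decomposition $d=\sum_{g\in G/K}d|_{gU}$, and then $1\otimes\sum_{g\in G/K}\presup{g^{-1}}{(d|_{gU})}\otimes1$ maps onto $d$, precisely as in Corollary \ref{corinductionisom}. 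The main obstacle I anticipate is bookkeeping: one must carefully track how the skew-group relations in $H_{1, c}(\ind_K^G(U), G)$ interact with the supports of the translated sections so that the vanishing clause of Puig's product \eqref{puigprod} is faithfully reproduced by the localization property of the Cherednik sheaf. Once this compatibility is pinned down, the statement follows as a clean specialization of the earlier sheaf-level result.
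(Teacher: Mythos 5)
Your argument is correct and is essentially the paper's: the paper proves this corollary with the single word ``Clear.'', treating it as the section-level specialization of Corollary \ref{corinductionisom} over the basic open set $\ind_K^G(U)$, which is precisely what you spell out (disjointness of the translates $gU$ giving the direct-sum decomposition of sections, multiplicativity against Puig's product \eqref{puigprod}, injectivity from disjoint supports, surjectivity via $d=\sum_{g\in G/K}d|_{gU}$ and the preimage $1\otimes\sum_{g}\presup{g^{-1}}{(d|_{gU})}\otimes1$). One bookkeeping slip to fix: Puig's product gives the middle factor $d_1\,(g_1'g_2)\,d_2=d_1\,\presup{g_1'g_2}{(d_2)}\,(g_1'g_2)$, and the element $g_1'g_2\in K$ must be slid across $\otimes_{\bb CK}$ into the right-hand tensor slot, so the image of the product is $\presup{g_1}{\big(d_1\,\presup{g_1'g_2}{(d_2)}\big)}\,g_1g_1'g_2g_2'$ rather than your $\presup{g_1}{\big(d_1\,\presup{g_1'g_2}{(d_2)}\big)}\,g_1g_2'$ --- with that correction both sides equal $\presup{g_1}{(d_1)}\,\presup{g_1g_1'g_2}{(d_2)}\,g_1g_1'g_2g_2'$ and the verification closes as you claim.
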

\begin{proof}
Clear.
\end{proof}


 \printbibliography
\end{document}